\numberwithin{equation}{section}
\newtheorem{theorem}{Theorem}[section]
\newtheorem*{maintheorem}{Main Theorem}
\newtheorem{lemma}[theorem]{Lemma}
\theoremstyle{definition} %fettes label, aber nich kursiv
\theoremstyle{plain} %fettes label. text kursiv
\newtheorem{question}[theorem]{Question}
\newtheorem{proposition}[theorem]{Proposition}
\newtheorem{corollary}[theorem]{Corollary}
\newtheorem{fact}[theorem]{Fact}
\newtheorem{claim}[theorem]{Claim}
\newtheorem*{maintheorem*}{Main Theorem}
\newtheorem*{conjecture*}{Conjecture}
\newtheorem{definition}[theorem]{Definition}
\theoremstyle{remark}
\newtheorem{remark}[theorem]{Remark}
\theoremstyle{remark}  %kursives label. text roman
\newcommand{\nc}{\newcommand}
\nc{\nothing}[1]{}
\nc{\added}[1]{#1} % Neu Sept 2000, fuer wichtige inhaltliche
\nc{\comment}[1]{#1} % Neu Sept 2000, fuer Kommentare
\nc{\nco}{\DeclareMathOperator}
\nco{\tp}{tp}
\nco{\halv}{half}
\nco{\order}{o}
\nco{\ppower}{pp}
\nco{\pcf}{pcf} %possible cofinality
\nco{\tcf}{tcf} %true cofinality
\nco{\tlim}{tlim} %true lim
\nco{\limtext}{lim} %true lim
\nco{\prodt}{{\textstyle \prod}}
\nco{\symdiff}{\triangle}
\nco{\dom}{dom}
\nco{\card}{card}
\nco{\lh}{lh}
\nco{\lhg}{lg}
\nco{\rge}{rge}
\nco{\otp}{otp}
\nco{\trunk}{tr}
\nco{\cf}{cf}
\nco{\nex}{next}
\nc{\uhr}{\restriction}
\nco{\supt}{supt}
\nco{\supp}{supp}
\nco{\Lim}{Lim}
\nco{\Leb}{Leb}
\nco{\modd}{mod}
\nco{\invariant}{inv}
\nco{\RO}{RO}
\nco{\Dp}{Dp} %depth all from here on for creatures
\nco{\pss}{ps}
\nco{\acc}{acc}
\nco{\spec}{spec}
\nco{\pr}{pr}
\nco{\rt}{rt}
\nco{\suc}{suc}
\nco{\splitt}{split}
\nc{\potom}{\ensuremath{{\cal P}(\omega)}}
\nc{\potinf}{\ensuremath{[\omega]^\omega}}
\nc{\pfin}{\ensuremath{{\cal P}(\omega)/{\rm fin}}}
\nc{\potfin}{\ensuremath{[\omega]^{<\omega}}}
\nc{\inn}{\ensuremath{{\omega^{\uparrow \omega}}}}
\nc{\hoch}{^{<\omega}}
\nc{\hocho}{^{\omega}}
\nc{\tree}[1]{{[} #1 {]}_0}
\nc{\tre}[2]{ {#1}_{#2}}
\nc{\prooff}[1]{{\bf Proof} of #1:}
\nc{\proofend}{\makebox{} \hfill ${\bf \dashv}$ \\}
\nc{\proofendof}[1]{\makebox{} \hfill $\boldmath{\square}_{\rm #1}$
\\} \nc{\beq}{\begin{eqnarray*}} \nc{\eeq}{\end{eqnarray*}}
\nc{\bde}{\begin{list}} \nc{\ede}{\end{list}}
\newcounter{subalph}
{\end{list}}
\newcommand{\greek}[1]{\ifthenelse{\value{#1}=1}{\mbox{$\alpha$}}%
  {\ifthenelse{\value{#1}=2}{\mbox{$\beta$}}{%
   \ifthenelse{\value{#1}=3}{\mbox{$\gamma$}}{%
   \ifthenelse{\value{#1}=4}{\mbox{$\delta$}}{%
   \ifthenelse{\value{#1}=5}{\mbox{$\varepsilon$}}{%
   \ifthenelse{\value{#1}=6}{\mbox{$\zeta$}}{%
   \ifthenelse{\value{#1}=7}{\mbox{$\eta$}}{%
   \ifthenelse{\value{#1}=8}{\mbox{$\theta$}}{%
   \ifthenelse{\value{#1}=9}{\mbox{$\iota$}}{%
   \ifthenelse{\value{#1}=10}{\mbox{$\kappa$}}{%
   \ifthenelse{\value{#1}=11}{\mbox{$\lambda$}}{%
   \ifthenelse{\value{#1}=12}{\mbox{$\mu$}}{%
   \ifthenelse{\value{#1}=13}{\mbox{$\nu$}}{%
   \ifthenelse{\value{#1}=14}{\mbox{$\xi$}}{%
   \ifthenelse{\value{#1}=15}{\mbox{$\rm o$}}{%
   \ifthenelse{\value{#1}=16}{\mbox{$\pi$}}{%
   \ifthenelse{\value{#1}=17}{\mbox{$\varrho$}}{%
   \ifthenelse{\value{#1}=18}{\mbox{$\sigma$}}{%
   \ifthenelse{\value{#1}=19}{\mbox{$\tau$}}{%
   \ifthenelse{\value{#1}=20}{\mbox{$\upsilon$}}{%
   \ifthenelse{\value{#1}=21}{\mbox{$\varphi$}}{%
   \ifthenelse{\value{#1}=22}{\mbox{$\chi$}}{%
   \ifthenelse{\value{#1}=23}{\mbox{$\psi$}}{\mbox{$\omega$}%
  }}}}}}}}}}}}}}}}}}}}}}}}%24 Zuklammern
\newcounter{subgreek}
{\end{list}}
\newcounter{subarabic}
{\end{list}}
\newcounter{subroman}
{\end{list}}
\def\mathunderaccent#1#2 {\let\theaccent#1\skewfactor#2
\mathpalette\putaccentunder}
\def\putaccentunder#1#2{\oalign{$#1#2$\crcr\hidewidth
\vbox to.2ex{\hbox{$#1\skew\skewfactor\theaccent{}$}\vss}\hidewidth}}
\def\name{\mathunderaccent\tilde-3 }
\nc{\nname}{\name}
\nc{\even}{\ensuremath{\rm Even}}
\nc{\odd}{\ensuremath{\rm Odd}}
\nc{\al}{$\alpha$\  }
\nc{\om}{\omega}
\nc{\omm}{\ensuremath{\omega_1}}
\nc{\ep}{\varepsilon}
\nc{\tk}{\tilde{K}}
\nc{\concat}{{}^\smallfrown{}}   %im math mode: 2017 in Luca's style
\nc{\force}{\models}
\nc{\fb}{f_{\bar{M}}}
\nc{\such}{\, : \,}
\nc{\meager}{\ensuremath{{\cal M}}}
\nc{\lebesgue}{\ensuremath{{\cal N}}}
\nc{\nulll}{\ensuremath{{\cal N}}}
\nc{\ksigma}{\ensuremath{{\bf K}_\sigma}}
\nc{\ideal}{\ensuremath{{\cal I}}}
\nc{\ga}{\ensuremath{\frak a}}
\nc{\AAA}{{\cal A}}   %im math mode
\nc{\gc}{\ensuremath{\frak c}}
\nc{\gs}{\ensuremath{\frak s}}
\nc{\gh}{\ensuremath{\frak h}}
\nc{\gd}{\ensuremath{\frak d}}
\nc{\gb}{\ensuremath{\frak b}}
\nc{\gro}{\ensuremath{\frak g}}
\nc{\gu}{\ensuremath{\frak u}}
\nc{\gr}{\ensuremath{\frak r}}
\nc{\gt}{\ensuremath{\frak t}}
\nc{\fff}{\ensuremath{\frak f}}
\nc{\gm}{\ensuremath{\mathfrak{mcf}}}
\nc{\gge}{\ensuremath{\mathfrak e}}
\nc{\cfupro}{\ensuremath{\cf(\upro)}}
\nc{\cfvpro}{\ensuremath{\cf(\vpro)}}
\nc{\gp}{\ensuremath{\frak p}}
\nc{\gk}{\ensuremath{\frak k}}
\nc{\add}[1]{\mbox{\ensuremath{{\rm add}(#1)}}}
\nc{\cov}[1]{\mbox{\ensuremath{{\rm cov}(#1)}}}
\nc{\unif}[1]{\mbox{\ensuremath{{\rm unif}(#1)}}}
\nc{\addd}[2]{\mbox{\ensuremath{{\rm add}^{#1}(#2)}}}   %fuer ch4.
\nc{\covv}[2]{\mbox{\ensuremath{{\rm cov}^{#1}(#2)}}}   %unter #1 steht
\nc{\uniff}[2]{\mbox{\ensuremath{{\rm unif}^{#1}(#2)}}} %das Modell
\nc{\coff}[2]{{\mbox{\ensuremath{\rm cof}^{#1}(#2)}}}
\nc{\cd}{Cicho\'n's Diagram}
\nc{\COF}{\mbox{\bf Cof}}
\nc{\Pieinseins}{\mbox{${\bf \Pi}^1_1$}}
\nc{\seinseins}{\mbox{${\bf\Sigma}^1_1$}}
\nc{\seinszwei}{\mbox{${\bf\Sigma}^1_2$}}
\nc{\seinsdrei}{\mbox{${\bf\Sigma}^1_3$}}
\nc{\Deleinszwei}{\mbox{${\bf\Delta}^1_2$}}
\nc{\up}{\ensuremath{{\cal U}\mbox{\ensuremath{\rm -prod}}\,\omega}}
\nc{\upp}{\ensuremath{{\cal U}'\mbox{\ensuremath{\rm -prod}}\,\omega}}
\nc{\upro}{\ensuremath{{\cal U}\mbox{\ensuremath{\rm -prod}}\,\om}}
\nc{\fupro}{\ensuremath{f({\cal U})\mbox{\ensuremath{\rm -prod}}\,\om}}
\nc{\vpro}{\ensuremath{{\cal V}\mbox{\ensuremath{\rm -prod}}\,\om}}
\nc{\fpro}{\ensuremath{{\cal F}\mbox{\ensuremath{\rm -prod}}\,\om}}
\nc{\cff}[1]{{\text{cf}\,(#1)}}           %cf mit richtiger parameter
\nc{\cu}{\ensuremath{\cal U}}             %Vorsicht, dies gibt im
\nc{\ai}{\ensuremath{\forall^\infty}}     %geaendert ensuremath dazu
\nc{\ei}{\ensuremath{\exists^\infty}}     %auch geaendert
\nc{\ww}{\ensuremath{\omega^\omega}}      %auch geaendert
\nc{\N}{\mathbb N}
\nc{\gw}{groupwise dense}
\nc{\kk}{car\-dinal cha\-rac\-teris\-tic}
\nc{\joker}{\ast}
\nc{\gtc}{Galois-Tukey connection} %Vorsicht: heisst generalized
\nc{\av}[1]{{\rm Av}_{#1}}
\nc{\eps}{\varepsilon}
\nc{\n}{{\bf n}}                 %for the blueprints
\nc{\m}{{\bf m}}
\nc{\marginparr}[1]%{}
{\marginpar{#1}}
\nc{\footnoteee}{} % I think the first footnote is no more a question
\nc{\footnotee}{}  % for some ad libitum notes
\newcommand{\cal}{\mathcal}
\nc{\divs}{{c_0 \setminus \ell^1}}
\nc{\divser}{(\divs, \leq^*)/\thickapproy}
\nc{\bfin}{\RO(\pfin \setminus\{0\},\subseteq^*)}
\nc{\bdivser}{\RO(\divser)}
\nc{\inc}{{\rm INC}}
\nc{\com}{{\rm COM}}
\nc{\thickapproy}{\makebox{}\!\!\thickapprox}
\nc{\approy}{\makebox{}\!\!\approx}
\nc{\lessi}{\leqslant}
\nc{\gessi}{\geqslant}
\nc{\interior}[1]{{\rm int}(#1)}
\nc{\closure}[1]{{\rm cl}(#1)}
\nc{\Vo}{Vojt\'a\v{s}}
\nc{\precedeseq}{\leq^*} %%%take your favorite partial order!!!!
\nc{\precedes}{\prec}
\nc{\stronger}{\leqslant_{\bf P}}
\nc{\underlline}[1]{\hat{#1}}
\nc{\PO}{{\bf P}}
\nc{\charak}{\text{ch}}
\nc{\needed}{needed\ }
\nc{\neededc}{needed}
\nc{\Needed}{Needed\ }
\nc{\wneeded}{weakly needed\ }
\nc{\Wneeded}{Weakly needed\ }
\nc{\wneededc}{weakly needed}
\nc{\mup}{m_{\rm up}}
\nc{\mdn}{m_{\rm dn}}
\nco{\may}{may}
\nco{\aver}{av} % neu fuer F427
\nco{\norm}{nor} % neu fuer F427  SCHOENER ohne fett
\nco{\val}{val} % neu fuer F427
\nco{\dis}{dis} % neu fuer F427
\nco{\basis}{basis} \nco{\pos}{pos}
\nco{\hg}{ht} %height
\nc{\err}{\mbox{err}}
\nc{\eee}{\mbox{e}}
\nco{\Expect}{Exp}
\nc{\bc}{{\bf c}} \nc{\bd}{{\bf d}}
\nc{\la}{\langle}
\nc{\ra}{\rangle}
\nc{\bP}{\mathbb P}
\nc{\bQ}{\mathbb Q}
\nc{\rest}{\restriction}
\nc{\bV}{\bf V}
\nc{\bG}{\bf G}
\nc{\bT}{\bf T}
\nc{\geqqeq}{\geq}
\nc{\leqqeq}{\leq}
\nc{\reri}{\uparrow}
\nc{\cS}{\mathscr S}
\nc{\sL}{\mathscr L}
\nc{\cT}{\mathcal T}
\nc{\sG}{\mathscr G}
\renewcommand{\phi}{\varphi}
\nco{\Pred}{Pred}
\nco{\Cone}{Cone}
\nc{\bZ}{{\mathbb Z}}
\newcommand\V{\mathsf{v}}
\newcommand\bN{\boldsymbol{{\rm N}}}
\renewcommand{\L}{\mathcal{L}}
\newcommand{\sS}{\mathscr{S}}
\newcommand{\T}{\mathcal{T}}
\newcommand{\ZZ}{\mathbb{Z}}
\newcommand{\Hes}{\mathcal{H}}
\newcommand{\pre}[2]{{}^{#1} #2}
\newcommand{\seq}[2]{\langle #1 \mid #2 \rangle}
\newcommand{\set}[2]{\{ #1 \mid #2 \}}
\newcommand{\proj}{\operatorname{p}}
\newcommand{\Mod}{\operatorname{Mod}}
\newcommand{\range}{\operatorname{Range}}
\newcommand{\pred}{\operatorname{Pred}}
\newcommand{\cone}{\operatorname{Cone}}
\newcommand{\SUCC}{\operatorname{Succ}}
\newcommand{\leng}[1]{\operatorname{length}(#1)}
\newcommand{\On}{{\sf On}}
\newcommand{\Linf}{\L_{\kappa^+ \kappa}}
\newenvironment{enumerate-(a)}{\begin{enumerate}[label={\upshape (\alph*)}, leftmargin=2pc]}{\end{enumerate}}
\newenvironment{enumerate-(a)-r}{\begin{enumerate}[label={\upshape (\alph*)}, leftmargin=2pc,resume]}{\end{enumerate}}
\newenvironment{enumerate-(A)}{\begin{enumerate}[label={\upshape (\Alph*)}, leftmargin=2pc]}{\end{enumerate}}
\newenvironment{enumerate-(A)-r}{\begin{enumerate}[label={\upshape (\Alph*)}, leftmargin=2pc,resume]}{\end{enumerate}}
\newenvironment{enumerate-(i)}{\begin{enumerate}[label={\upshape (\roman*)}, leftmargin=2pc]}{\end{enumerate}}
\newenvironment{enumerate-(i)-r}{\begin{enumerate}[label={\upshape (\roman*)}, leftmargin=2pc,resume]}{\end{enumerate}}
\newenvironment{enumerate-(I)}{\begin{enumerate}[label={\upshape (\Roman*)}, leftmargin=2pc]}{\end{enumerate}}
\newenvironment{enumerate-(I)-r}{\begin{enumerate}[label={\upshape (\Roman*)}, leftmargin=2pc,resume]}{\end{enumerate}}
\newenvironment{enumerate-(1)}{\begin{enumerate}[label={\upshape (\arabic*)}, leftmargin=2pc]}{\end{enumerate}}
\newenvironment{enumerate-(1)-r}{\begin{enumerate}[label={\upshape (\arabic*)}, leftmargin=2pc,resume]}{\end{enumerate}}
\newenvironment{itemizenew}{\begin{itemize}[leftmargin=2pc]}{\end{itemize}}
\newenvironment{enumerate-(Ia)}{\begin{enumerate}[label={\upshape (I\alph*)}, leftmargin=2pc]}{\end{enumerate}}
\newenvironment{enumerate-(IIa)}{\begin{enumerate}[label={\upshape (II\alph*)}, leftmargin=2pc]}{\end{enumerate}}
\newenvironment{enumerate-(1a)}{\begin{enumerate}[label={\upshape (1\alph*)}, leftmargin=2pc]}{\end{enumerate}}
\newenvironment{enumerate-(2a)}{\begin{enumerate}[label={\upshape (2\alph*)}, leftmargin=2pc]}{\end{enumerate}}
\newenvironment{enumerate-(C1)}{\begin{enumerate}[label={\upshape (C\arabic*)}, leftmargin=2pc]}{\end{enumerate}}
\begin{document}
%\selectlanguage{english}

%____________Title-------------------

\title[Bi-embeddability on uncountable structures]{Uncountable structures are not classifiable \\ up to bi-embeddability}

\author{Filippo Calderoni}
\address{Institut f\"ur Mathematische Logik, Mathematisches Institut,
Universit\"at M\"unster,
Einsteinstrasse 62, 48149 M\"unster, Germany}
\email{calderfi@uni-muenster.de}

\author{Heike Mildenberger}
%\nothing{
\address{Abteilung f\"ur Mathematische Logik,
Mathematisches Institut, Universit\"at Freiburg, Eckerstr.~1,
 79104 Freiburg im Breisgau, Germany}
\email{heike.mildenberger@math.uni-freiburg.de}

\author{Luca Motto Ros}
\address{Dipartimento di matematica \guillemotleft{Giuseppe Peano}\guillemotright, Universit\`a di Torino, Via Carlo Alberto 10, 10123 Torino, Italy}
 \email{luca.mottoros@unito.it}

\begin{abstract}
Answering some of the main questions from~\cite{mottoros2011}, we show that whenever \( \kappa \) is a cardinal satisfying \( \kappa^{< \kappa} = \kappa > \omega \), then the embeddability relation between \( \kappa \)-sized structures is strongly invariantly universal, and hence complete for (\( \kappa \)-)analytic quasi-orders. We also prove that in the above result we can further restrict our attention to various natural classes of structures, including (generalized) trees, graphs, or groups. This fully generalizes to the uncountable case the main results of~\cite{louveau-rosendal,friedman-mottoros,Wil14,calderoni-mottoros}.
\end{abstract}

%\subjclass{03E15, 03E17, 03E35, 03D65}
%}
\date{\today} %Dec 17, 2017

\thanks{The authors would like to thank the anonymous referee for carefully reading the manuscript and providing valuable suggestions.}

\maketitle

% ----------------end title-----------------------------
%\tableofcontents %(fuer lange Artikel)
\section{Introduction}\label{S0}

The problem of classifying countable structures up to isomorphism and bi\hyp{}embeddability has been an important theme in modern descriptive set theory (see e.g.\ \cite{friedman-stanley,thomas-velickovic,camerlo-gao,thomas-velickovic2,gao,thomas2,hjorth,thomas3,clemens,mottorosPAMS,coskey,williams3} and~\cite{louveau-rosendal,friedman-mottoros,thomas-williams,Wil14,thomas-williams2,calderoni-mottoros, calderoni-thomas}, respectively).
In this framework, such classification problems are construed as analytic equivalence relations on standard Borel spaces, and their complexity is measured using the theory of Borel reducibility.

If one wants to perform a similar analysis for classification problems concerning uncountable structures, then the usual setup is of no use, as there is no natural way to code uncountable structures as elements 
of a Polish or standard Borel space. The natural move is thus to consider what is now called \emph{generalized descriptive set theory}. In this theory, one fixes an arbitrary uncountable cardinal \( \kappa \) and then considers 
the so-called \emph{generalized Cantor space}, that is, the space \( {}^\kappa 2 \) of binary \( \kappa \)-sequences equipped with the \emph{bounded topology}, which is the one generated by the sets of the form
\[ 
\bN_s = \set{ x \in
\pre{\kappa}{2} }{s \subseteq x } %change 1 by Heike \lambda ---> 2	
 \] 
for \( s \) a binary sequence of length \(  < \kappa \). (The \emph{generalized Baire space} \( {}^\kappa \kappa \) is defined analogously.) Notice that this naturally generalizes the topology of the (classical) Cantor space, 
which corresponds to the case \( \kappa = \omega \); however, when \( \kappa > \omega \) the bounded topology no longer coincides with the product topology, and other unexpected quirks suddenly show up. 

Building on 
the topology just defined, one can in turn recover in a straightforward way all other descriptive set-theoretical notions like (\( \kappa \)-)Borel sets, (\( \kappa \)-)analytic sets, also called \( \boldsymbol{\Sigma}^1_1 \) sets, standard 
Borel (\( \kappa \)-)spaces, and so on (see Section~\ref{subsec:basicstopology} for more details).

Using characteristic functions of its predicates, every (relational) structure with domain \( \kappa \) can be naturally coded as an element of (a space homeomorphic to) \( {}^\kappa 2 \). For example, if \( G \) is a graph 
on \( \kappa \), then it can be coded as a point \( x \in \pre{\kappa \times \kappa}{2} \approx {}^\kappa 2\) by stipulating that \( x(\alpha,\beta) = 1 \) if and only if \( \alpha \) and \( \beta \) are adjacent in \( G \). 
This coding procedure allows us to construe the relations of isomorphism and embeddability between structures of size \( \kappa \) as (\( \kappa \)-)analytic relations on a suitable standard Borel \hbox{(\( \kappa \)-)space}
(see Section~\ref{subsec:inflogic}). 
Finally, by introducing the analogue of the notion of Borel reducibility in this generalized context (see Section~\ref{subsec:Borelreducibility}), 
one can then analyze the complexity of such relations mimicking what has been done for countable structures in the classical setup.

The first two seminal papers exploiting this approach were~\cite{FriedmanHyttinenKulikov}, where the complexity of the isomorphism relation between uncountable structures is remarkably connected to Shelah's stability theory, 
and~\cite{mottoros2011}, where it is shown that if \( \kappa \) is a weakly compact cardinal, then structures of size \( \kappa \) belonging to various natural classes (graphs, trees, and so on) are unclassifiable 
up to bi-embeddability. The latter is a generalization of a similar result first obtained for countable structures in~\cite{louveau-rosendal}, and then strengthened in~\cite{friedman-mottoros}.

The fact that in~\cite{mottoros2011} only the case of a weakly compact cardinal \( \kappa \) was treated relies on the fact that in such a situation the behavior of the space \( {}^\kappa 2 \) is somewhat closer to the one 
of the usual Cantor space \( {}^\omega 2 \), while when we lack such a condition its behavior is much wilder (see~\cite{luecke-schlicht,luecke-mottoros-schlicht,andmot}  for more on this). For example, it is not hard to see that \( \pre{\kappa}{2} \), endowed with the 
bounded topology, is never compact, but it is at least \( \kappa \)-compact (i.e.\ every open covering of it can be refined to a subcovering of size \( < \kappa \)) if and only if \( \kappa \) is weakly compact, if and only if 
\( {}^\kappa 2 \) is not homeomorphic to the generalized Baire space \( {}^\kappa \kappa \). 

Nevertheless, we are going to show that the assumption that \( \kappa \) be a large cardinal is not necessary to prove that uncountable structures are unclassifiable up 
to bi-embeddability, answering in particular Question 11.1 and the first part of Question 11.5 from~\cite{mottoros2011}. More precisely, we prove that 
\begin{maintheorem}
For every uncountable cardinal \( \kappa \) satisfying \( \kappa^{ < \kappa} = \kappa \),  the embeddability relation on all structures of size \( \kappa \) is \emph{strongly invariantly universal}, that is: For every (\( \kappa \)-)analytic quasi-order \( R \) on \( {}^\kappa 2 \) there is an \(\mathcal{L}_{\kappa^+ \kappa} \)-sentence 
\( \upvarphi \) such that the embeddability relation on the \( \kappa \)-sized models of \( \upvarphi \) is \emph{classwise Borel isomorphic}%
\footnote{Classwise Borel isomorphism is a natural strengthening of Borel bi-reducibility, see Section~\ref{subsec:Borelreducibility}.}
 to \( R \). 
 \end{maintheorem}
 In particular, this implies that all (\( \kappa \)-)analytic equivalence relations on \( {}^\kappa 2 \) are Borel reducible to the bi-embeddability relation on structures of size \( \kappa \), so that the latter relation is 
 as complicated as possible. This technical fact proves (in a very strong sense!) that uncountable structures are essentially unclassifiable up to bi-embeddability.
As done in~\cite{mottoros2011}, we also show that in the Main Theorem one could further restrict the 
 attention to some particular classes of structures, such as generalized trees or graphs.
 Notice also that the required cardinal condition \( \kappa^{< \kappa} = \kappa \) is very mild: in a model of 
 \( \mathsf{GCH} \), the Generalized Continuum Hypothesis, all regular cardinals satisfy it.

Our construction  follows closely the one from~\cite{mottoros2011}. In the original argument, the fact that \( \kappa \) was assumed to be weakly compact was crucially exploited several times:
\begin{itemize}
\item
when providing a sufficiently nice tree representation for the (\( \kappa \)-)analytic quasi-order \( R \) on \( {}^\kappa 2 \), it was used the fact that \( \kappa \) is inaccessible and has the tree property%
\footnote{Recall that an uncountable cardinal is weakly compact exactly when it is inaccessible and has the tree property.}
  (see~\cite[Lemma 7.2]{mottoros2011});
\item
when defining the complete quasi-order \( \leq_{\mathrm{max}} \), the inaccessibility of \( \kappa \) was used to provide the auxiliary map \( \# \), a key tool in the main construction (see~\cite[Proposition 7.1 and 
Theorem 9.3]{mottoros2011});
\item
when constructing suitable labels to code up the quasi-order \( R \), it was again used the fact that \( \kappa \) is inaccessible  (see~\cite[Section 8]{mottoros2011});
\item
finally, when proving strongly invariant universality of the embeddability relation, it was used the fact that \( {}^\kappa 2 \) is a \( \kappa \)-compact space, which as recalled is a condition equivalent to \( \kappa \) being 
weakly compact  (see~\cite[Section 10, and in particular (the proof of) Theorem 10.23]{mottoros2011}).
\end{itemize}
The main technical contribution of this paper is to show how to overcome all these difficulties when \( \kappa \) is not even inaccessible. This lead us to a substantial modification of all the coding processes 
(Sections~\ref{sec:max}--\ref{sec:completeness}), as well as to a
%brand (Change 2 by Heike, Dec 17, 2017)
new argument to establish the strongly invariant universality of the embeddability relation between uncountable structures 
(Section~\ref{sec:universality}).

In Section~\ref{sec:groups} we further show that in the main result one could also consider groups of size \( \kappa \) instead of trees or graphs, a result which is new also in the case of a weakly compact \( \kappa \) 
and generalizes to the uncountable case one of the main results of~\cite{calderoni-mottoros}. This is obtained by providing a way for interpreting (in a very strong model-theoretic sense) graphs into groups. Such technique works well 
also in the countable case, and provides an alternative proof of~\cite[Theorem 3.5]{calderoni-mottoros}. Finally, in  Section~\ref{sec:questions} we collect some 
further corollaries of our Main Theorem concerning non-separable complete metric spaces and non-separable Banach spaces, and ask some questions motivated by our analysis.

We conclude this introduction with a general remark. There is a common trend in generalized descriptive 
set theory:  the natural generalizations to the uncountable context of any nontrivial result from classical 
descriptive set theory are either simply false, or independent of \( \mathsf{ZFC} \) --- their truth can be 
established only under extra assumptions (in particular, large cardinal assumptions on \( \kappa \) itself), 
or by working in some very specific model of \( \mathsf{ZFC} \). Somewhat unexpectedly after~\cite{mottoros2011}, the results of this paper constitute a 
rare exception: indeed, invariant universality results tend to be quite sophisticated (see~\cite{friedman-mottoros,CamerloMarconeMottoros2013,calderoni-mottoros,CamerloMarconeMottoros2017}), yet here we 
demonstrate that some of them fully generalize to the uncountable setup  without any extra set-theoretical 
assumption (and with the only commonly accepted requirement that \( \kappa^{< \kappa} = \kappa \)).

\medskip

\textbf{Acknowledgments.} The results in Sections~\ref{sec:max}--\ref{sec:completeness} are due to the second and third authors, while the results in Section~\ref{sec:groups} are due to the first and third authors.
Until September 2014 the third author was a member of the Logic Department of the Albert-Ludwigs-Universit\"at Freiburg, which supported him at early stages of this research. After that, he was supported by the Young
Researchers Program ``Rita Levi Montalcini'' 2012 through the project ``New advances in Descriptive Set Theory''. The first author was funded by the Deutsche Forschungsgemeinschaft (DFG, German Research Foundation) under Germany's Excellence Strategy --- EXC 2044 --- <ID 390685587>, Mathematics M\"unster:
Dynamics -- Geometry -- Structure.

\section{Preliminaries and basic notions} \label{S1}

Throughout the paper, we will use the terminology and notation from~\cite{mottoros2011}. For the reader's convenience, we will recall in this section all the relevant basic facts and definitions, referring him/her to~\cite{mottoros2011} for motivations and more detailed discussions on these notions and results. For all other undefined notation and concepts, we refer the reader to~\cite{Kechris, Jech3, Kanamori}.

\subsection{Ordinals and cardinals}

We let $\On$ be the class of all ordinals. The Greek letters \( \alpha, \beta, \gamma, \delta \) (possibly 
with various decorations) will usually denote ordinals, while the letters \( \nu, \lambda, 
\kappa \) will usually denote cardinals. 

We let \( |A| \) be the \emph{cardinality of the set \( A \)}, i.e.\ 
the unique cardinal \( \kappa \) such that \( A \) is in bijection with \( \kappa 
\). Given a cardinal \( \kappa \), we denote with \( \kappa^+ \) the smallest 
cardinal (strictly) greater than \( \kappa \). Moreover, we let \( [A]^\kappa \) 
be the collection of all subsets of \( A \) of cardinality \( \kappa \), and \( 
[A]^{< \kappa}  = \bigcup_{\gamma < \kappa} [A]^\gamma \) be the collection of all subsets of \( A \) of cardinality \( < 
\kappa \).

We denote by $\Hes \colon \On \times \On \to \On$ the Hessenberg pairing function for the class of all ordinals
$\On$ (see e.g.\ \cite[p.\ 30]{Jech3}), i.e.\ the unique surjective function such that for all \( \alpha, \alpha',\beta, \beta' \in \On \)
\begin{align*} 
\Hes((\alpha,\beta)) \leq \Hes((\alpha',\beta')) \iff &\max { \{ \alpha,\beta \} < \max \{ \alpha', \beta' \} }\vee \\
& ({\max\{ \alpha,\beta \} = \max \{ \alpha', \beta' \}} \wedge {(\alpha , \beta ) \leq_{\mathsf{lex}} (\alpha',\beta')}),
 \end{align*} 
where \( \leq_{\mathsf{lex}} \) is the lexicographical ordering on \( \On \times \On \).
%Then define by induction the bijections $\Hes_n \colon \pre{n}{\On} \to \On$
%(for $n \geq 2$) by letting $\Hes_2 = \Hes$ and
%$\Hes_{n+1}(\alpha_0, \dotsc , \alpha_n) = \Hes( \alpha_0 ,
%\Hes_n (\alpha_1, \dotsc, \alpha_n))$ for $\alpha_0, \dotsc,
%\alpha_n \in \On$.

\subsection{Sequences and functions}

Given a nonempty set $A$ and \( \kappa \in \On \), we denote by $\pre{<\gamma}{A}$ the set of all sequences of length \( < \gamma \) with values in \( A \), i.e.\ the set of all 
functions of the form $f \colon \alpha \to A$ for some $\alpha <\gamma$ (we call $\alpha$
the \emph{length of $f$} and denote it by $\lh(f)$). The set of all functions from \(\gamma\) to \( A \) is denoted by \( \pre{\gamma}{A} \), so that \( \pre{< \gamma}{A} = \bigcup_{\alpha < \gamma} \pre{\alpha}{A} \).  We also set
\[ 
\pre{\SUCC(<\gamma)}{A} = \{ s \in \pre{< \gamma}{A} \mid \leng{s}\text{ is a successor ordinal} \} = \bigcup\nolimits_{\alpha + 1 < \gamma} \pre{\alpha + 1}{A}.
\]

When \( f \in \pre{\gamma}{A} \) and \( \alpha < \gamma \), we let \( f \restriction \alpha \) be the restriction of \( f \) to \(\alpha\). We write $s\concat t$  to denote the concatenation of the sequences \( s \) and \( t \), $\la \alpha \ra$ for the singleton sequence $\{(0,\alpha)\}$, and we write $\alpha \concat s$ and
$s\concat \alpha$ for $\la \alpha \ra \concat s$ and $s \concat \la \alpha \ra$.  For \( \gamma \in 
\On \) and \( a \in A \), we denote by \( a^{(\gamma)} \) the \( \gamma \)-sequence constantly equal to \( a \). 
If $A = A_0 \times 
\dotsc \times A_k$ we will identify each element
 $s \in \pre{< \kappa}{A}$ with a sequence $(s_0,\dots, s_k)$ of elements of the same length such that $s_i \in \pre{< \kappa}{A_i}$. 
% If \( f \colon A \to B \) is a function and \( \gamma \in \On \), we let \( f^{(\gamma)} \colon \pre{\gamma}{A} \to \pre{\gamma}{B} \) be defined coordinatewise, i.e.\ \( f^{(\gamma)}(s) = \seq{f(s_i)}{i < \leng{s}} \). We also set \( f^{(< \gamma)} = \bigcup_{\alpha < \gamma} f^{(\alpha)} \colon \pre{< \gamma}{A} \to \pre{<\gamma}{B} \).

If \(f \) is a function between two sets \( X \) and \( Y \) and \( C \subseteq X \) we set
\[ 
f``C = \{ y \in Y \mid \exists a \in C \, (f(a) = y) \}.
 \]

\subsection{Trees}

In this paper we will consider several kind of trees, so to disambiguate the terminology we recall here the main definitions. 
\begin{definition}\label{2.1}
  Let \( \L \) be a language consisting of just one binary relation symbol \( \preceq \). An \( \L \)-structure \( T = (T, \preceq^T ) \) will be called
  a \emph{generalized tree} if \( \preceq^T \) is a partial order on the set \( T \) such that 
the set
\[ 
\pred(x) = \{ y \in T \mid y \preceq^T x \wedge y \neq x \}
 \] 
of predecessors of any point \( x \in T \) is linearly ordered by \( \preceq^T \) (in particular, any linear order is a generalized tree).

A \emph{set-theoretical tree}  $T$ is a generalized tree such that \( (\pred(x), \preceq^T \restriction \pred(x)) \) is well-founded (hence a well-order) for every \( x \in T \).

A \emph{descriptive set-theoretical tree (on a set \( A \))}  is a set-theoretical tree such that there is an ordinal \( \gamma \) such that \( T \subseteq \pre{< \gamma}{A} \), \( T \) is closed under initial segments, and \( {\preceq^T} = {\subseteq} \) is the initial segment relation between elements of \( T \). Descriptive set-theoretical trees will be sometimes briefly called \emph{DST-trees}.
\end{definition}

We often write just tree when we mean a generalized tree.
%We also informally call \emph{mixed tree} any \( \L \)-structure which is a combination of various kind of trees described above. 
The elements of a tree (of any kind) are called indifferently points or nodes.
%For the sake of simplicity, a generalized tree will be always called just \emph{tree}. 
%A tree is called a \emph{rooted tree} if it has exactly one minimal element, called the root of \( T \) and denoted by $\rt({\mathcal T})$. \todo{Is the notion of root used elsewhere?}

Given a tree \( T \) and a point \( x \in T \), the \emph{upper cone above \( x \)} is the set
\[ 
\cone(x) = \{ y \in T \mid x \preceq y \}.
 \] 
Two distinct nodes \( x,y \in T \) are said \emph{comparable} 
if \( {x \in \pred(y)} \vee {y \in \pred(x)} \), and are said 
\emph{compatible} if there is 
\( z \in \pred(x) \cap \pred(y) \) (given such a \( z \), we will also say that 
\( x \) and \( y \) are \emph{compatible via \( z \)}).  A tree \( T \)  is \emph{connected} if every two points in \( T \) are compatible. A subtree \( T' \) of a tree \( T \) is called \emph{maximal connected component of \( T \)} if   
it is connected and such that all the points in \( T \) which are comparable (equivalently, compatible) with an element of \( T' \) belong to \( T' \) themselves.

Notice that if \( T_0, T_1 \) are trees and \( i \) is an embedding of \( T_0 \) 
into \( T_1 \) then for every point \( x \) of \( T_0 \) we have 
\( i``\pred(x) \subseteq \pred(i(x)) \) and 
\( i``\cone(x) \subseteq \cone(i(x)) \): in particular, \( i \) preserves 
(in)comparability. Notice however that compatibility is preserved by \( i \) in the forward direction but, in general, not in the backward direction.

If \( T \) is a DST-tree on \(A \), we call \emph{height of \( T \)} the minimal \( \alpha \in \On \) such that \( \lh(x) < \alpha \) for every \( x \in T \) (such an ordinal must exist because by definition \( T \) is a set). Let \( \kappa \) be a cardinal. If \( T \subseteq \pre{< \kappa}{A} \) is a DST-tree, we call \emph{branch} (of \( T \)) any maximal linearly ordered subset of \( T \). A branch \( b \subseteq T\) is called \emph{cofinal} if the set \( \{ \leng{s} \mid s \in b \} \) is cofinal in \( \kappa \), i.e.\ if \( \bigcup b \in \pre{\kappa}{A} \). We call \emph{body of \( T \)} the set
\begin{align*}
[T] &= \{ f \in \pre{\kappa}{A} \mid \forall \alpha < \kappa \, (f \restriction \alpha \in T) \} \\
& = \Big \{ \bigcup b \mid b \text{ is a cofinal branch of } T \Big \}.
 \end{align*}
When \( X = Y \times \kappa \) we let
\[ 
\proj[T] = \{ f \in \pre{\kappa}{Y} \mid \exists g \in \pre{\kappa}{\kappa} \, ((f,g) \in [T]) \}
 \] 
be the \emph{projection} (on the first coordinate) of the body of \( T \).

\subsection{Standard Borel $\kappa$-spaces} \label{subsec:basicstopology}

Given cardinals \( \lambda \leq \kappa \), we endow the space $\pre{\kappa}{\lambda}$ with the topology 
${\mathscr O} = \mathscr{O}(\pre{\kappa}{\lambda})$ generated by the basis consisting of sets of the form
\begin{equation} \label{eqN_S}
\bN_s = \set{ x \in
\pre{\kappa}{\lambda} }{s \subseteq x }
\end{equation}
for $s \in \pre{< \kappa}{\lambda}$. Finite products of spaces of the form \( \pre{\kappa}{\lambda} \) will be endowed with the corresponding products of the topologies \( \mathscr{O}(\pre{\kappa}{\lambda}) \). The topology \( \mathscr{O} \) is usually called \emph{bounded topology}, and when \( \kappa > \omega \) differs from the product topology of the discrete topology on \(\lambda\). If instead \( \kappa = \omega \), then \( \mathscr{O}(\pre{\kappa}{\lambda}) \) is the usual topology on the Baire space \( \pre{\omega}{\omega} \) and its subspace of the form \( \pre{\omega}{n} \), which are all homeomorphic to the Cantor space \( \pre{\omega}{2}\).
Here we collect some basic properties of the bounded topology \( \mathscr{O} (\pre{\kappa}{\lambda}) \).

\begin{fact}
\begin{enumerate-(1)}
\item
The intersection of fewer than $\cf(\kappa)$ basic open sets is either empty or basic open.
\item
The intersection of fewer than $\cf(\kappa)$ open sets is open.
\item
Each basic open set is closed.
\item
There are exactly $\lambda^{<\kappa}$ basic open sets and $ 2^{(\lambda^{<\kappa})}$ 
open sets in $\pre{\kappa}{\lambda}$.
\nothing{Is the number of open sets really $2^{(\lambda^{<\kappa})}$? Answer by Luca: yes, it is... Indeed one can embed \( \lambda^{< \kappa} \) into itself so that the range consists of pairwise incomparable sequences...}
\item
For each closed subset $C$ of $\pre{\kappa}{\lambda}$ the DST-tree
$T=\{ s\in \pre{<\kappa}{\lambda} \such N_s \cap C \neq \emptyset\}$ is \emph{pruned} (i.e.\ such that for all \( s \in T \) and \( \alpha < \kappa \) there is some \( t \in T \) such that \( \lh(t) = \alpha \) and \( t \) is comparable with \( s \)) and such that 
$[T] = C$. Conversely, for every DST-tree \( T \subseteq \pre{<\kappa}{\lambda} \) the set \( [T] \) is closed in \( \pre{\kappa}{\lambda} \).	
\end{enumerate-(1)}
\end{fact}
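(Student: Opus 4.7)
The plan is to verify the five items in the order listed, exploiting at each step the combinatorial feature that two basic open sets $N_s, N_t$ are either disjoint (when $s, t$ are $\subseteq$-incomparable) or satisfy $N_s \cap N_t = N_{s \cup t}$ (when they are comparable). This reduces all topological questions to combinatorial statements on the tree $\pre{<\kappa}{\lambda}$.

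For (1), a family $\{N_{s_i}\}_{i \in I}$ with $|I| < \cf(\kappa)$ having non-empty intersection must have the $s_i$ pairwise comparable, hence $\subseteq$-linearly ordered; the lengths $\lh(s_i)$ then form a set of fewer than $\cf(\kappa)$ ordinals below $\kappa$, so by definition of cofinality their supremum is still $< \kappa$, and $s = \bigcup_i s_i \in \pre{<\kappa}{\lambda}$ yields $\bigcap_i N_{s_i} = N_s$. Item (2) follows immediately: for $x \in \bigcap_i U_i$, pick for each $i$ a basic open $N_{x \rest \alpha_i} \subseteq U_i$ containing $x$ and invoke (1). For (3), one has
\[
\pre{\kappa}{\lambda} \setminus N_s \;=\; \bigcup_{\beta < \lh(s)} \;\bigcup_{\mu \in \lambda \setminus \{s(\beta)\}} \{x \in \pre{\kappa}{\lambda} \mid x(\beta) = \mu\},
\]
and each inner set is the union of all basic opens $N_t$ with $\lh(t) = \beta + 1$ and $t(\beta) = \mu$, hence open.

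Item (4) splits into two counts. There are $|\pre{<\kappa}{\lambda}| = \sum_{\alpha < \kappa} \lambda^\alpha = \lambda^{<\kappa}$ many basic open sets, and every open set is a union of such, so the upper bound $2^{\lambda^{<\kappa}}$ is immediate. The matching lower bound is the step I expect to be the main obstacle: it requires exhibiting an antichain $A \subseteq (\pre{<\kappa}{\lambda}, \subseteq)$ of size $\lambda^{<\kappa}$, for then the $\{N_s\}_{s \in A}$ are pairwise disjoint and distinct subfamilies $A' \subseteq A$ produce distinct open unions $\bigcup_{s \in A'} N_s$. When $\lambda^{<\kappa} = \kappa$ one may take the explicit antichain $\{1^{(\alpha)} \concat \la 0 \ra \mid \alpha < \kappa\}$ (the sequences disagree at coordinate $\min(\alpha,\beta)$). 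When $\lambda^{<\kappa} > \kappa$, either the supremum $\lambda^{<\kappa} = \sup_{\alpha < \kappa} \lambda^\alpha$ is attained by some single level $\pre{\alpha_0}{\lambda}$, which is itself an antichain of the requisite size, or else one combines antichains taken from a cofinal sequence of levels, using $\lambda \ge 2$ to branch off disjoint cones at successive levels.

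For (5), if $C$ is closed then $T = \{s \mid N_s \cap C \neq \emptyset\}$ is closed under initial segments (since $s \subseteq t$ implies $N_t \subseteq N_s$), hence is a DST-tree on $\lambda$; it is pruned because any point $x \in N_s \cap C$ witnesses $x \rest \alpha \in T$ for every $\alpha < \kappa$ with $\alpha \ge \lh(s)$. The equality $[T] = C$ holds because the basic open sets form a neighborhood basis at every point: $x \in [T]$ iff every $N_{x \rest \alpha}$ meets $C$ iff $x \in \overline{C} = C$. Conversely, for any DST-tree $T \subseteq \pre{<\kappa}{\lambda}$ the manifest identity
\[
\pre{\kappa}{\lambda} \setminus [T] \;=\; \bigcup \{ N_s \mid s \in \pre{<\kappa}{\lambda} \setminus T \}
\]
exhibits the complement of $[T]$ as open, so $[T]$ is closed.
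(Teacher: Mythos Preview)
The paper records this as a \emph{Fact} and supplies no proof, so there is nothing to compare against; your argument is correct throughout.

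One cosmetic remark on item~(4): the three-way case split (on whether $\lambda^{<\kappa}=\kappa$, and if not, on whether the supremum is attained) works, but it can be replaced by a single uniform construction. The map $s \mapsto 1^{(\lh s)}\concat\langle 0\rangle\concat s$ injects $\pre{<\kappa}{\lambda}$ into itself with pairwise $\subseteq$-incomparable image: sequences coming from $s,t$ of different lengths disagree at coordinate $\min(\lh s,\lh t)$, while sequences of equal length disagree in the tail copy of $s$ versus $t$. (Since $\kappa$ is a cardinal, $\lh(s)+1+\lh(s)<\kappa$ always.) This is essentially what the authors have in mind, as a suppressed comment in the source indicates.
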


When \( \kappa \) is regular, the topology \( \mathscr{O}(\pre{\kappa}{\lambda}) \) is also generated by the basis
\begin{equation}\label{eqbasistau_b}
\mathcal{B} = \{ \bN_s \mid s \colon d \to \lambda \text{ for some } d \in [\kappa]^{< \kappa} \}.
 \end{equation}
This definition of \( \mathscr{O} \) can be easily generalized to arbitrary spaces of the form \( \pre{B}{A} \) where \( |B| = \kappa \) and \( |A| = \lambda \) in the obvious way, i.e.\ we can let \( \mathscr{O } = \mathscr{O}(\pre{B}{A}) \) be the topology on \( \pre{B}{A} \) generated by the basis
\begin{equation}\label{eqbasic} 
\mathcal{B} = \{ \bN_s = \{ x \in \pre{B}{A} \mid s \subseteq x \} \mid s \colon d \to A \text{ for some } d \in [B]^{< \kappa} \}. 
\end{equation} 
It is easy to check that any pair of bijections between, respectively, \( B \) and \( \kappa \) and and \( A \) and \( \lambda \) canonically induce an homeomorphism between the spaces \( \pre{B}{A} \) and \( \pre{\kappa}{\lambda } \).

As noticed e.g.\  in~\cite{mottoros2011}, to have an acceptable descriptive set theory on spaces of the form \( \pre{\kappa}{\lambda} \) for \( \lambda \leq \kappa \) one needs to require at least that
\begin{equation} \label{eq:kappa}
\kappa^{< \kappa} = \kappa.
\end{equation}
For this reason, 
\begin{quote}
\emph{unless otherwise explicitly stated we will tacitly assume throughout this paper that \( \kappa \) is an uncountable cardinal satisfying~\eqref{eq:kappa}}, which implies that \(\kappa\) is regular.
\end{quote}

\begin{definition} \label{def:Borel}
Let $X,Y$ be a topological spaces and \( \mu \) be an ordinal.
\begin{enumerate-(1)}
\item
The \emph{Borel $\mu$-algebra on $X$, ${\bf B}_\mu(X)$} is the smallest subset of ${\mathcal P}(X)$ that contains every open set and is closed under complements and under unions of size $< \mu$.
A set $B \subseteq X$ is \emph{$\mu$-Borel} if it is
in the Borel $\mu$-algebra. 
\item
A function $f \colon X \to Y$ is \emph{$\mu$-Borel
 measurable} if $f^{-1}(U) \in {\bf B}_\mu(X)$ for every open set
$ U \subseteq Y$ (equivalently, \( f^{-1}(B) \in {\bf B}_\mu(X) \) for every \( B \in {\bf B}_\mu(Y) \).
\item
The spaces $X$ and $Y$ are \emph{$\mu$-Borel isomorphic} 
if there is a bijection $f\colon X\to Y$ such that both $f$ and $f^{-1}$ are 
$\mu$-Borel functions.
\end{enumerate-(1)}
\end{definition}

\begin{remark}
When \( \mu = \kappa^+ \) for some cardinal \( \kappa \) satisfying~\eqref{eq:kappa}, we will systematically suppress any reference to it in all the terminology and 
notation introduced in Definition~\ref{def:Borel} whenever \( \kappa \) will be clear from the context (as in the rest of this subsection): therefore, in such a situation the name Borel  will be used as a 
synonym of \( \kappa^+ \)-Borel. 
\end{remark}

Let \( B \in {\bf B}_\mu(X) \) be endowed with the relative topology inherited from \( X \): then
\( {\bf B}_\mu(B) \subseteq {\bf B}_\mu(X) \). Moreover, it is easy to check that any two spaces of the form 
\( \pre{\kappa}{\lambda} \) (for \( \lambda \leq \kappa \) and \( \kappa \) satisfying~\eqref{eq:kappa}) 
are Borel isomorphic, but as noticed in~\cite[Remark 3.5]{mottoros2011} there can be Borel (and 
even closed) subsets of \( \pre{\kappa}{\kappa} \) which are not Borel isomorphic to 
e.g.\ \( \pre{\kappa}{2} \). Notice also that by our assumption~\eqref{eq:kappa}, the collection \( {\bf B}_{\kappa^+}(\pre{\kappa}{\lambda}) \) coincides with the collection of all (\( \kappa^+ \)-)Borel subsets of \( \pre{\kappa}{\lambda} \) when this space is endowed with the product topology instead of \( \mathscr{O}(\pre{\kappa}{\lambda}) \).

%Note that under $|A|^{<\kappa} \geq \mu$ there are open sets 
%that are not the union of $<\mu$ basis open sets.
%\nothing{So we have to reckon that descriptive set theory differs 
%from the case $\kappa=\omega$, in particular in the case of
%$|A|^{<\kappa} =\kappa$ when there are various possibilities for $\mu$.
%}

\begin{definition}[Definition 3.6 in~\cite{mottoros2011}]
A topological space is called a \emph{$\kappa$-space} if it has a basis
 of size $\leq \kappa$.  A $\kappa$-space is called a \emph{standard 
Borel space} if it is ($\kappa^+$-)Borel isomorphic to a 
($\kappa^+$-)Borel subset of $\pre{\kappa}{\kappa}$.
\end{definition} 

\noindent
Thus, in particular,  every space of the form \( \pre{\kappa}{\lambda} \) (for \( \lambda \leq \kappa \)) is a standard Borel \( \kappa \)-space when endowed with \( \mathscr{O}(\pre{\kappa}{\lambda}) \), \emph{provided that \( \kappa \) satisfies~\eqref{eq:kappa}}.

When \( \kappa = \omega \), the notion of standard Borel \( \kappa \)-space coincides with that of a standard 
Borel space as introduced e.g.\ in~\cite[Chapter 12]{Kechris}. The collection of standard Borel 
\( \kappa \)-spaces is closed under Borel subspaces and products of size \( \leq \kappa \), and a reasonable 
descriptive set theory can be developed for these spaces as long as we are interested in results concerning 
only their Borel structure (as we do in this paper). 
%In this kind of situation, one can without loss of generality restrict the attention to one specific standard Borel \( \kappa \)-space, like e.g.\ \( \pre{\kappa}{2} \) or its finite powers.

\begin{definition}
Let \( X \) be a standard Borel \( \kappa \)-space. A set
$A \subseteq X$ is called \emph{analytic} if it is either empty or a continuous image of a closed subset of \( \pre{\kappa}{\kappa} \). The collection of all analytic subsets of \( X \) will be denoted by \( \boldsymbol{\Sigma}^1_1 (X) \). 
\end{definition}

As for the classical case \( \kappa = \omega \), we get that every Borel set is analytic by~\cite[Proposition 3.10]{mottoros2011}, and that a nonempty set \( A \subseteq X \) is analytic if and only if it is a Borel image of a Borel subset of \( \pre{\kappa}{\kappa} \) (equivalently, of a standard Borel space) by~\cite[Proposition 3.11]{mottoros2011}. In particular, \( \boldsymbol{\Sigma}^1_1(B) \subseteq \boldsymbol{\Sigma}^1_1(X) \) for every Borel subset \( B \) of the standard Borel \( \kappa \)-space \( X \). When \( X \) is of the form \( \pre{\kappa}{S} \) for some set \( S \) of cardinality \( \leq \kappa \),
then $X$ is a Hausdorff space and hence a set \( A \subseteq X \) is analytic if and only if \( A = \proj[T] \) for some DST-tree \( T \) on \( S \times \kappa \).
For a proof, see \cite[end of Section 3]{mottoros2011}. We will work with this definition of analytic for the rest of the paper.

\subsection{Infinitary logics and structures of size $\kappa$} \label{subsec:inflogic}

\emph{For the rest of this section, we fix a countable language $\L = \set{R_i }{i \in I }$ ($|I| \leq \omega$) consisting of finitary
relational symbols, and let $n_i$ be the arity of $R_i$.} The symbol $R^X_i$ denotes the interpretation of
 $R_i \in \L$ in the \( \L \)-structure $X$, so that $R^X_i \subseteq \pre{n_i}{X}$. 
With a little abuse of notation, when there is no danger of
confusion the domain of $X$ will be denoted by $X$ again. Therefore, unless otherwise specified the \( \L \)-structure denoted by \( X \) is construed as \( (X, \{ R_i^X \mid i \in I \} ) \), where \( X \) is a set and each \( R^X_i \) is an \( n_i \)-ary relation on \( X \). If  \( X \) is an \( \L \)-structure and \( Y \subseteq X \), we denote by \( X \restriction Y \) the \emph{restriction of the \( \L \)-structure \( X \) to the domain \( Y \)}, i.e., the substructure
\( ( Y, \{ R_i^X \cap {}^{n_i} Y \mid i \in I \}) \).

For cardinals $\lambda \leq \kappa$, we let $\L_{\kappa \lambda}$ be defined as
in \cite{tarski58} (see also~\cite{andmot, mottoros2011} for more details): there are fewer than $\lambda$ free variables in every 
\( \L_{\kappa \lambda} \)-formula, and quantifications can range over $<\lambda$ variables. Conjunctions and disjunctions can be of any size $<\kappa$. We will often use the letters $x,y,z,x_j,y_j,z_j$
(where the \( j \)'s are elements of some set of indexes \( J \) of size \( < \lambda \)) as (meta-)variables, and when writing $\upvarphi(\seq{x_j}{j \in J})$
($|J| < \lambda$) we will always tacitly assume that the
variables $x_j$ are distinct (and similarly with the $y_j$'s and the
$z_j$'s in place of the $x_j$'s). 
%The semantic of
%$\L_{\kappa \lambda}$, as well as all other standard logic
%notions like e.g.\ subformula, sentence, universal closure,
%derivation and so on, are defined in the obvious way. In particular, 
If \( \upvarphi(\langle x_j \mid j \in J \rangle) \) is an \( \L_{\kappa \lambda} \)-formula and \( \langle a_j \mid j \in J \rangle \in \pre{J}{X} \) is a sequence of elements of an \( \L \)-structure \( X \), 
\[ 
X \models \upvarphi[\langle a_j \mid j \in J \rangle]
 \] 
will have the usual meaning, i.e.\ that the formula obtained by replacing each variable \( x_j \) with the corresponding \( a_j \) is true in \( X \).

We naturally identify each \( \L \)-structure \( X \) of size $\kappa$ (up to isomorphism) with an element $y^X = \seq{y^X_i}{i \in I}$ of the space $\Mod^\kappa_\L = \prod_{i \in I}
\pre{({}^{n_i} \kappa)}{2}$, which  is endowed with the product\footnote{The regular product or the $<\kappa$-box product or anything in between are fine, since
  only the Borel structure of the space matters.}
of the topologies \( \mathscr{O}(\pre{({}^{n_i} \kappa)}{2}) \) (this is possible because $|\pre{n_i}{\kappa}| = \kappa$). When \( \kappa \) satisfies~\eqref{eq:kappa}, any bijection $\nu \colon I \times
\kappa \to \kappa$ canonically induces
a natural homeomorphism \label{homeomorphism} between
$\Mod^\kappa_\L$ and $\pre{\kappa}{2}$ (in fact, the requirement that \( \kappa \) satisfies~\eqref{eq:kappa} is not really needed when \( I \) is finite).

\begin{definition}\label{defModkappaphi}
Given an infinite cardinal $\kappa$ and an
$\L_{\kappa^+ \kappa}$-sentence $\upvarphi$, we denote by
$\Mod_\upvarphi^\kappa$ the set of those structures in $\Mod^\kappa_\L$ which are models of $\upvarphi$.
\end{definition}

By the generalized Lopez-Escobar theorem (see e.g.\ \cite[Theorem 4.7]{andmot} and \cite[Theorem
24]{FriedmanHyttinenKulikov}), if \( \kappa \) satisfies~\eqref{eq:kappa} then a set $B \subseteq
\Mod^\kappa_\L$ is Borel and
closed under isomorphism if and only if there is an
$\L_{\kappa^+ \kappa}$-sentence $\upvarphi$ such that $B =
\Mod^\kappa_\upvarphi$.\footnote{As discussed in~\cite[Section 4]{mottoros2011}, both directions of the generalized Lopez-Escobar theorem may fail if \( \kappa^{< \kappa} > \kappa \).}
Therefore, under the usual assumption on \( \kappa \) the space \( \Mod^\kappa_\upvarphi \) is a standard Borel \( \kappa \)-space when endowed with the relative topology inherited from \( \Mod^\kappa_\L \).

We denote by $\sqsubseteq$ the relation of embeddability between \( \L \)-structures, and by $\equiv$ the corresponding relation of bi-embeddability, i.e.\ for \( \L \)-structures \( X \) and \( Y \) we set
$X \equiv Y \iff X \sqsubseteq Y \sqsubseteq X$.%
\footnote{Here is a caveat for model theorists: The relation $\equiv$ is not the usual elementary equivalence!}
The relation of isomorphism between \( \L \)-structures will be denoted by \( \cong \). In this paper, we will mainly be concerned with the restrictions of \( \sqsubseteq \), \( \equiv \), and \( \cong \) to spaces of the form \( \Mod^\kappa_\upvarphi \) for suitable \( \L_{\kappa^+ \kappa } \)-sentences \( \upvarphi \).

\subsection{Analytic quasi-orders and Borel reducibility} \label{subsec:Borelreducibility}

\begin{definition}
Let \( X \) be a standard Borel \( \kappa \)-space. A binary relation \( R \) on \( X \) is called \emph{analytic quasi-order} (respectively, \emph{analytic equivalence relation}) if \( R \) is a quasi-order (respectively, an equivalence relation) and is an analytic subset of the space \( X \times X \).
\end{definition}

When \( \kappa \) satisfies~\eqref{eq:kappa} and \( \upvarphi \) is an \( \Linf \)-sentence, the relations \( {\sqsubseteq} \restriction \Mod^\kappa_\upvarphi \) and \( {\cong} \restriction \Mod^\kappa_\upvarphi \) are (very important) examples of, respectively, an analytic quasi-order and an analytic equivalence relation.

Given a quasi-order \( R \) on \( X \), we denote by \( E_R \) the associated equivalence relation defined by
\( x \mathrel{E_R} y \iff x \mathrel{R} y \wedge y \mathrel{R} x \) (for every \( x,y \in X \)). Notice that if \( R \) is analytic then so is \( E_R \). The partial order canonically induced by \( R \) on the quotient space \( X / E_R \) will be called \emph{quotient order of \( R \)}. To compare the complexity of two analytic quasi-orders we use the (nowadays standard) notion of Borel reducibility.

\begin{definition}
Let $R,S$ be quasi-orders on the standard Borel \( \kappa \)-spaces \( X ,Y \) respectively. A \emph{reduction} of \( R \) to \( S \) is a function \( f \colon X \to Y \) such that for every \( x,y \in X \)
\[ 
x \mathrel{R} y \iff f(x) \mathrel{S} f(y).
 \] 
When \( R \) and \( S \) are analytic quasi-orders, we say that \( R \) is \emph{Borel reducible} to \( S \) (in symbols \( R \leq_B S \)) if there is a Borel reduction of \( R \) to \( S \), and that \( R \) and \( S \) are \emph{Borel bi-reducible} (in symbols \( R \sim_B S \)) if \( R \leq_B S \leq_B R \).
\end{definition}

By~\cite[Lemma 6.8]{mottoros2011}, every analytic quasi-order is Borel bi-reducible with (in fact, even classwise Borel isomorphic to, see below for the definition) an analytic quasi-order defined on the whole \( \pre{\kappa}{2} \). Therefore, when we are interested in analytic quasi-orders up to these notions of equivalence, as we do here, we can restrict our attention to analytic quasi-orders on \( \pre{\kappa}{2} \).

\begin{definition}
An analytic quasi-order $S$ on a standard Borel \( \kappa \)-space \( X \) is said to be \emph{complete} if 
$R\leq_BS$ for every analytic quasi-order $R$, and similarly 
for equivalence relations.
\end{definition}

Notice that under assumption~\eqref{eq:kappa} there are universal analytic sets by e.g.\ \cite[Lemma~3]{MeklerVaananen}, and therefore the proof of \cite[Proposition~1.3]{louveau-rosendal} shows that
then there are also complete quasi-orders and equivalence relations on \( \pre{\kappa}{2} \). 

When \( S \) is of the form \( {\sqsubseteq} \restriction \Mod^\kappa_\upvarphi \), the notion of completeness can be naturally strengthened to the following.

\begin{definition}[Definition 6.5 in~\cite{mottoros2011}] \label{def:invariantuniversality}
Let $\kappa$ be an infinite cardinal satisfying~\eqref{eq:kappa}, $\L$ be a countable relational 
language, and $\upvarphi$ be an $\Linf$-sentence.
 The embeddability relation $ {\sqsubseteq} \restriction \Mod^\kappa_\upvarphi$ is called 
\emph{invariantly universal} if for every analytic quasi-order $R$ 
there is an $\Linf$-sentence $\uppsi$ such that 
$\Mod^\kappa_\uppsi \subseteq\Mod^\kappa_\upvarphi$ (i.e.\ such that \( \uppsi \Rightarrow \upvarphi \)) and $R \sim_B 
{{\sqsubseteq} \restriction \Mod^\kappa_\uppsi}$.

Invariant universality of \( {\equiv} \restriction \Mod^\kappa_\upvarphi \) is defined in a similar way by replacing quasi-order $\sqsubseteq$  with the  equivalence relation $\equiv$.
\end{definition}

Notice: For \( \upvarphi \) such that \( {\sqsubseteq} \restriction \Mod^\kappa_\upvarphi \) is invariantly universal, also the relation \( {\equiv} \restriction \Mod^\kappa_\upvarphi \) is invariantly universal as well, and both relations are complete.	

If \( R \) and \( S \) are analytic quasi-orders such that \( R \sim_B S \), then their quotient orders are mutually embeddable, but not necessarily isomorphic. Based on this observation, it is natural to introduce
the following strengthening of the notion of Borel bi-reducibility.

\begin{definition}[Definition 6.6 in~\cite{mottoros2011}]
Let $X$, $Y$ be two standard Borel $\kappa$-spaces and 
$R$ and $S$ be analytic quasi-orders on $X$, $Y$ respectively.  We say that 
$R$ and $S$ are \emph{classwise Borel isomorphic} (in symbols $R \simeq_B S$)
 if there is an isomorphism \( f \colon X/E_R \to Y/E_S \) between the quotient orders of $R$ and $S$ such that both $f$ and 
$f^{-1}$ admit Borel liftings.
\end{definition}

Replacing Borel bi-reducibility with classwise Borel isomorphism in Definition~\ref{def:invariantuniversality} we get the following notion.

\begin{definition}[Definition 6.7 in~\cite{mottoros2011}] \label{def:stronglyinvariantlyuniversal}
Let $\kappa$, \( \L \) and \( \upvarphi \) be as in Definition~\ref{def:invariantuniversality}.
The relation of (bi-)embeddability on $\Mod_\upvarphi^\kappa$ is called 
\emph{strongly invariantly universal} if for every analytic quasi-order 
(respectively, equivalence relation) $R$ 
there is an $\Linf$-sentence $\uppsi$ such that 
$\Mod^\kappa_\uppsi \subseteq\Mod^\kappa_\upvarphi$ and $R\simeq_B 
{{\sqsubseteq} \restriction \Mod^\kappa_\uppsi}$ (respectively, $R\simeq_B 
{{\equiv} \restriction \Mod^\kappa_\uppsi}$).
\end{definition}

As for invariant universality, we again have that if \( \upvarphi \) is such that \( {\sqsubseteq} \restriction \Mod^\kappa_\upvarphi \) is invariantly universal, then so is \( {\equiv} \restriction \Mod^\kappa_\upvarphi \).

\section{The quasi-order $\leq_{\mathrm{max}}$} \label{sec:max}

Following
%\footnote{The reader may notice that our definitions of \( \rho \) and \( \oplus \) are slightly different from those in~\cite{mottoros2011}. However, it can be easily checked that these modifications do not affect the proofs of the results from~\cite{mottoros2011} that will be needed here.}
~\cite[Section 7]{mottoros2011}, 
we let $\rho \colon \On \times \On \to \On \setminus \{ 0 \}
\colon (\gamma, \gamma') \mapsto \Hes(\gamma,
\gamma')+1$, so that in particular $\rho `` \, \omega \times \omega =
\omega \setminus \{ 0 \}$, and for every infinite cardinal
$\kappa$ and $\gamma, \gamma' < \kappa$ one has $\gamma,
\gamma' \leq \rho(\gamma, \gamma') < \kappa$. Then we define by recursion on $\gamma \leq \kappa$ a Lipschitz (i.e.\ a monotone%
\footnote{With respect to the end-extension order on \( {}^{\leq \kappa} (\kappa \times \kappa) \).} 
and length preserving) map
$\oplus \colon {}^{\leq \kappa} (\kappa \times \kappa) \to
{}^{\leq \kappa} \kappa$ as follows: 
If $x \in {}^{\leq \kappa} (\kappa \times \kappa)$ for $\alpha \in \dom(x)$,
we let $x(\alpha) = (s(\alpha), t(\alpha))$. Henceforth we write $s \oplus t $ for $\oplus(x)$.
\begin{description}
\item[$\gamma = 0$] $\emptyset \oplus \emptyset = \emptyset$;
\item[$\gamma = 1$] if $s = \langle \mu \rangle$ and $t = \langle \nu \rangle$, then $s \oplus t = \langle \rho(\mu,\nu) \rangle$;
\item[$\gamma = \gamma'+1$ for $\gamma' \neq 0$] let $s , t \in {}^{\gamma} \kappa$. Then 
\[ s \oplus t = (s' \oplus t') {}^\smallfrown{}  \rho\left( (\sup_{\alpha \leq \gamma'}s(\alpha))+\omega, \rho(s(\gamma'),t(\gamma'))\right), \]
where $s' = s \restriction \gamma'$ and $t' = t \restriction \gamma'$;
\item[$\gamma$ limit] $s \oplus t = \bigcup_{\alpha < \gamma} (s \restriction \alpha \oplus t \restriction \alpha )$.
\end{description}

Our next goal is to drop the requirement that \( \kappa \) be inaccessible in the second part of~\cite[Proposition 7.1]{mottoros2011}.

\begin{proposition}\label{propoplus}
Let \( \kappa \) be a regular  uncountable cardinal and let \( \oplus \) be defined as above.
\begin{enumerate-(i)}
\item \label{propoplusc1}
The map $\oplus$ is injective.
\item \label{propoplusc2}
If \( \kappa \) further satisfies~\eqref{eq:kappa}, then there is a map \( \# \colon \pre{\SUCC(< \kappa)}{\kappa} \to \kappa \) such that
\begin{enumerate-(a)}
\item \label{propoplusc2a}
for every \( s,t \in \pre{\SUCC(<\kappa)}{\kappa} \) such that \( \leng{s} = \leng{t} \)
\[ 
\# s \leq \#(s \oplus t);
 \] 
\item \label{propoplusc2b}
for every \( \gamma < \kappa \), \( \# \restriction \pre{\gamma+1}{\kappa} \) is a bijection between \( \pre{\gamma+1}{\kappa} \) and \( \kappa \).
\end{enumerate-(a)}
\end{enumerate-(i)}
\end{proposition}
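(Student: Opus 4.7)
The plan is to treat the two parts separately.

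\emph{Part (i).} I would prove injectivity of $\oplus$ by transfinite induction on the common length $\gamma = \leng{s} = \leng{t}$, showing how to uniquely recover $(s,t)$ from $s \oplus t$. The engine is the bijectivity of $\Hes$ (equivalently, of $\rho$). For $\gamma = 1$, the single entry of $s \oplus t$ equals $\rho(s(0), t(0))$, and $\rho^{-1}$ returns $(s(0), t(0))$. For $\gamma = \gamma'+1$ with $\gamma' \neq 0$, the last entry of $s \oplus t$ has the form $\rho(A, \rho(s(\gamma'), t(\gamma')))$ for some $A$; one application of $\rho^{-1}$ isolates $\rho(s(\gamma'), t(\gamma'))$ and a second recovers the pair $(s(\gamma'), t(\gamma'))$, while $(s \oplus t) \restriction \gamma' = s' \oplus t'$ (by the Lipschitz property of $\oplus$) lets the induction hypothesis recover $(s',t')$. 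For $\gamma$ a limit ordinal, taking restrictions below $\gamma$ reduces to earlier stages and a final union recovers $(s,t)$.

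\emph{Part (ii).} The hypothesis $\kappa^{<\kappa} = \kappa$ gives $|\pre{\gamma+1}{\kappa}| = \kappa$ for every $\gamma < \kappa$, so I would build $\# \restriction \pre{\gamma+1}{\kappa}$ separately for each $\gamma$ and then paste the pieces together. The key structural fact is the following: consider on $\pre{\gamma+1}{\kappa}$ the relation
\[
s \prec u \;\iff\; \exists t \in \pre{\gamma+1}{\kappa} \; (u = s \oplus t).
\]
Then $\prec$ is \emph{functional} --- every $u$ has at most one $\prec$-predecessor, by the injectivity of $\oplus$ established in part (i) --- and every backward $\prec$-chain is \emph{finite}. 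The latter holds because the successor clause of the definition forces $(s \oplus t)(\gamma') \geq (\sup_{\alpha \leq \gamma'} s(\alpha)) + \omega > \sup_\alpha s(\alpha)$, so $\sup_\alpha (s \oplus t)(\alpha) > \sup_\alpha s(\alpha)$ strictly, and an infinite descending $\prec$-chain would yield an infinite strictly decreasing sequence of ordinals.

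With these facts in hand, I would define $\#_\gamma \colon \pre{\gamma+1}{\kappa} \to \kappa$ by a transfinite ``topological sort'' along $\prec$. Fix any bijection $\pre{\gamma+1}{\kappa} = \{u_\beta \mid \beta < \kappa\}$ and process the $u_\beta$'s in order of $\beta$: at each stage, if $u_\beta$ has already been placed, do nothing; otherwise trace its finite backward $\prec$-chain up to the first $\prec$-minimal or already-placed element, and assign the unplaced part of this chain to the next available positions of $\kappa$, oldest ancestor first. By construction $\#_\gamma$ is injective (positions are never reused) and the required monotonicity $\#_\gamma(s) \leq \#_\gamma(s \oplus t)$ holds since a $\prec$-predecessor is always placed before its (unique) $\prec$-successor within a chain. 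Surjectivity is the only remaining thing to verify: each stage contributes finitely many placements and every $u \in \pre{\gamma+1}{\kappa}$ is eventually processed, so the total number of positions used is $\kappa \cdot \omega = \kappa$. Letting $\#$ agree with $\#_\gamma$ on sequences of length $\gamma+1$ gives the desired global map.

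The main obstacle, compared with the weakly compact setting of~\cite{mottoros2011}, is the bijectivity requirement in (b): without inaccessibility of $\kappa$, a single $\prec$-rank class inside $\pre{\gamma+1}{\kappa}$ may already have cardinality $\kappa$, so one cannot simply enumerate the rank classes as consecutive initial segments of $\kappa$. The topological-sort construction circumvents this by interleaving all (finite) levels while still respecting the one-step predecessor relation.
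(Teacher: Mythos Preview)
Your argument for part~(i) is correct and is just a fleshed-out version of the paper's one-line remark that injectivity of $\oplus$ follows from injectivity of $\rho$.

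For part~(ii), your approach is correct but genuinely different from the paper's. Both hinge on the same structural fact, namely that $\sup_\alpha(s\oplus t)(\alpha) > \sup_\alpha s(\alpha)$ strictly: this is exactly your finiteness-of-backward-chains observation, and in the paper it appears as $\sigma^0_\gamma(r) = \sup(r) < \sup(r\oplus t) = \sigma^0_\gamma(r\oplus t)$. From there the two constructions diverge. The paper builds an auxiliary \emph{injective} map $\sigma_\gamma \colon \pre{\gamma+1}{\kappa} \to \kappa \times \kappa$ by transfinite recursion along the well-order ``compare $\sup$ first, then break ties via a fixed enumeration $\zeta_\gamma$'', arranging that $\Hes \circ \sigma_\gamma$ strictly increases from $r$ to $r\oplus t$, and then lets $\#$ be the order-collapse of $\Hes \circ \sigma_\gamma$; this requires a four-case injectivity check for $\sigma_\gamma$. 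Your topological-sort construction is more direct and combinatorial: functionality of $\prec$ (from part~(i)) gives each element a unique finite ancestry, so one can process any enumeration of $\pre{\gamma+1}{\kappa}$ and place unplaced ancestors first. This bypasses the auxiliary $\sigma_\gamma$ and its case analysis entirely, at the cost of a slightly more informal description.

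Two small points worth tightening in your sketch. First, your strict-increase inequality $(s\oplus t)(\gamma') \geq (\sup s) + \omega$ only covers the clause $\gamma' \neq 0$, i.e.\ lengths $\geq 2$; for length $1$ you need the separate (easy) observation $\rho(\mu,\nu) > \mu$, which follows from $\Hes(\mu,\nu) \geq \mu$. Second, your surjectivity paragraph conflates two issues: the phrase ``$\kappa\cdot\omega = \kappa$'' is really the argument that the construction stays below $\kappa$ at each stage $\beta$ (using regularity of $\kappa$), whereas surjectivity proper needs the additional remark that positions are consumed as an \emph{initial segment} of $\kappa$ (no gaps at limit stages), so that an injective map with domain of size $\kappa$ and initial-segment image must hit all of $\kappa$.
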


\begin{proof}
Part~\ref{propoplusc1} follows from the injectivity of \(\rho\). For part~\ref{propoplusc2}, we define the function \( \# \) separately  on each \( \pre{\gamma+1}{\kappa} \) (for \( \gamma < \kappa \)).
The case $\gamma = 0$ is trivial, as one can simply take \( \# \) to be the identity function, so let us assume that $\gamma \geq 1$. Let \( \zeta_\gamma \colon \pre{\gamma+1}{\kappa} \to \kappa \) be any bijection. Define
\[ 
\sigma_\gamma \colon \pre{\gamma+1}{\kappa} \to \kappa \times \kappa \colon s \mapsto (\sigma_\gamma^0(s), \sigma_\gamma^1(s))
 \] 
by induction on the well-order of \( \pre{\gamma+1}{\kappa} \) given by
\[ 
s \preceq t \iff \sup(s) < \sup(t) \vee (\sup(s) = \sup(t) \wedge \zeta_\gamma(s) \leq \zeta_\gamma(t)).
 \] 
Given \( s \in \pre{\gamma+1}{\kappa} \), set \( \sigma_\gamma^0(s) = \sup(s) \). By definition of \(\oplus \) we have for any $r,t$, \( \sigma_\gamma^0(r) = \sup(r) < \sup(r \oplus t) = \sigma_\gamma^0(r \oplus t) \). Hence when considering a sequence of the form \( r \oplus t \) we may assume that \( \sigma^1_\gamma (r) \) is already defined. Let 
\[ 
\pi(s) = \sup \{ \sigma_\gamma^1(t) \mid t \in \pre{\gamma+1}{\kappa}, \sup(t) = \sup(s), \zeta_\gamma(t) < \zeta_\gamma(s) \} +1,
\]
and set
\[ 
\sigma^1_\gamma(s) = 
\begin{cases}
\pi(s) & \text{if \( s \) is not of the form } r \oplus t \\
\Hes(\sigma^1_\gamma(r), \pi(s)) & \text{if } s = r \oplus t.
\end{cases}
\]
Then we have that $\sigma_\gamma$ is injective.
Suppose $s_1  \neq s_2 \in {}^{\gamma+1} \kappa$ are given. We assume that
$\zeta_\gamma(s_1) < \zeta_\gamma(s_2)$.
Now there are four cases:

Case 1: For $i=1,2$, $s_i$ is not of the form $r_i \oplus t_i$. Then either
$\sigma_\gamma^0(s_1) \neq \sigma^0_\gamma(s_2)$ and we are done,
or $\sup(s_1)=\sigma_\gamma^0(s_1) = \sigma^0_\gamma(s_2)=\sup(s_2)$. 
In this case, by the definition of the mapping $\pi$,  $\sigma^1_\gamma(s_1)=\pi(s_1) < \pi(s_2)=\sigma^1_\gamma(s_2)$.

Case 2: For $i=1,2$, $s_i$ is of the form $r_i \oplus t_i$. If
$\sup(s_1) \neq \sup(s_2)$, we are done. So we assume that $\sup(s_1) = \sup(s_2)$. Then   $\sigma^1_\gamma(s_2) = \Hes(\sigma^1_\gamma(r_2),\pi(s_2)) \geq \pi(s_2) > \sigma^1_\gamma(s_1)$.

Case 3: $s_1$ is of the form $r_1 \oplus t_1$,
and $s_2$ is not of the form $r_2 \oplus t_2$.
Again we assume that $\sup(s_1) = \sup(s_2)$.
Then $\sigma^1_\gamma(s_2) = \pi(s_2) > \sigma^1_\gamma(s_1)$.

Case 4: $s_1$ is not of the form $r_1 \oplus t_1$,
and $s_2$ is of the form $r_2 \oplus t_2$.
Again we assume that $\sup(s_1) = \sup(s_2)$.
Then $\sigma^1_\gamma(s_2) = \Hes(\sigma^1_\gamma(r_2), \pi(s_2))
\geq  \pi(s_2) > \sigma^1_\gamma(s_1)$.

For every $r,t \in \pre{\gamma+1}{\kappa}$ 
\[
\sigma^0_\gamma(r) < \sigma^0_\gamma(r \oplus t) \quad \text{and} \quad \sigma^1_\gamma(r) < \sigma^1_\gamma(r \oplus t),
\]
whence
\[
\Hes(\sigma_\gamma(r)) < \Hes(\sigma_\gamma(r\oplus t)).
\]
Let $\# \restriction \pre{\gamma+1}{\kappa}$ be the collapsing map of \( \Hes \circ \sigma_\gamma  \), i.e.\ the map recursively defined by setting for every \( s \in \pre{\gamma+1}{\kappa} \) 
\[ 
\#(s) = \max (0, \sup \{ \#(t) +1 \mid t \in \pre{\gamma+1}{\kappa}, \Hes(\sigma_\gamma(t)) < \Hes(\sigma_\gamma(s) \}).
 \] 
Then the resulting \( \# \) is as required.
\end{proof}

%Let $b \colon \kappa \times \kappa \to \kappa$ \todo{\( b \) is actually denoted by \( \Hes \) in the previous parts...} be a bijection
%as in the proof of Hessenberg's theorem, see e.g. \cite[p.~30]{Jech3}.
%We let $B \colon \{(f,g) \in 
%\kappa^{\leq \kappa} \times \kappa^{\leq \kappa}\such \lh(f) = \lh(g)\} \to
%\kappa^{\leq\kappa} $ be defined
%by: For $\alpha < \lh(f)$,  $B(f,g)(\alpha) = b(f(\alpha),g(\alpha))$. 
%Then $\lh(B(f,g))=\lh(f)$, and $B\rest{}^\kappa \kappa$ is continuous and 
%$B \rest{}^{<\kappa} \kappa$ is {\em Lipshitz}, that is, it is injective,
%it preserves the 
%lengths, i.e., $\lh(\phi(s))=\lh(s)$, and  
%and it preserves extension, i.e., $s\subseteq t$ implies $\phi(s) \subseteq \phi(t)$.
% Moreover for every $\alpha, \beta\in \kappa$
%$b(\alpha,\beta) \geq\alpha, \beta$ and hence
%if we define $\leq$ on ${}^{\leq\kappa}  \kappa$ by $f\leq g$ iff for every 
%$\alpha \in \dom(f)\cap \dom(g)$,
%$f(\alpha)\leq g(\alpha)$, then $f,g\leq B(f,g)$.

The following is a modification of known constructions (see~\cite[Prop.~2.4]{louveau-rosendal} and~\cite[Prop.~2.1]{friedman-mottoros} for \( \kappa = \omega \), and~\cite[Section~7]{mottoros2011} for a weakly compact $\kappa$;
similar constructions for uncountable \( \kappa \)'s may also be found in~\cite{andmot}.

Let \( \kappa \) be an arbitrary uncountable cardinal. For \( \emptyset \neq u \in \pre{<
\kappa}{2} \) set \( u^- = u \restriction \leng{u}-1 \) if \( \leng{u} \) is finite
and \( u^- = u \) otherwise. Similarly, for \( \gamma \in \On \) we set \( \gamma^- = \gamma -1 \) if \( \gamma < \omega \) and \( \gamma^- = \gamma \) otherwise. 
Moreover, consider the variant \( \widetilde{\oplus} \colon  \pre{ \leq \kappa}{(\kappa \times \kappa)} \to \pre{\leq \kappa}{\kappa} \) of \( \oplus \) defined by
\[ 
s \mathrel{\widetilde{\oplus}} t = \langle ({0 {}^\smallfrown{} s} \oplus {0 {}^\smallfrown{} t})(1+\gamma) \mid \gamma < \leng{s}. \rangle
 \] 
Then \( \widetilde{\oplus} \) is monotone and on the infinite sequences it is lengths preserving (since \( \oplus \) is such a function), and it is straightforward  to check that for every \( n,m \in \omega \) and \( s,t \in \pre{\leq \kappa}{\kappa} \) it holds
\begin{equation} \label{eq:variantoplus}
{( n {}^\smallfrown{} s )} \oplus {( m {}^\smallfrown{}  t )} = \rho(n,m) {}^\smallfrown{}  ( s \mathrel{\widetilde{\oplus}} t ).
\end{equation}
(This uses the definition of \( \oplus \) in the successor step: in fact, \( (( n {}^\smallfrown{} s ) \oplus ( m {}^\smallfrown{}  t )) (1)= \rho(s(0), t(0))  \) if one of \( s(0), t(0) \) is infinite, and \( (( n {}^\smallfrown{} s ) \oplus ( m {}^\smallfrown{}  t ))(1) = \omega \) otherwise. In any case, the value of \( (( n {}^\smallfrown{} s ) \oplus ( m {}^\smallfrown{}  t ))(1) \) is independent from the natural numbers \( n,m \) we are using.)

Given a DST-tree \( T \) on \( 2 \times 2 \times \kappa \) of
height \(  \leq \kappa \), let 
\[ 
\hat{T} = T \cup \set{ (u,u,0^{(\gamma)}) \in
{}^{< \kappa} 2 \times {}^{< \kappa} 2 \times {}^{< \kappa}
\kappa }{\leng{u} = \gamma } .
\]
Then inductively define \( S^T_n \) as
follows:
\begin{align*}
S^T_0 = & \{ \emptyset \} \cup \left \{ (u,v,0 {}^\smallfrown{}  s)  \mid  (u^-,v^-,s) \in
\hat{T} \right\}\\
S^T_{n+1} = & \{ \emptyset \} \cup \left\{ (u,v,(n+1) {}^\smallfrown{}  s)  \mid  (u,v,n
{}^\smallfrown{}  s) \in S^T_n \right\} \cup \\
& \cup \left\{ (u,w,(n+1) {}^\smallfrown{}  s \mathrel{\widetilde{\oplus}} t)  \mid  \exists v
\left[(u,v,n {}^\smallfrown{}  s),(v,w,n {}^\smallfrown{}  t) \
\in S^T_n \right] \right\} .
\end{align*}
Finally, set 
\begin{equation} \label{eqS_T}
S_T = \bigcup\nolimits_n S^T_n .
%\setminus \left\{ (u,v,0^{(\leng{u})})
%\in \pre{< \kappa}{2} \times \pre{< \kappa}{2} \times \pre{< \kappa}
%{\kappa}  \mid  u \neq v \right\}. 
\end{equation}
Notice that \( \bigcup_n S^T_n \) and  \( S_T \) are  DST-trees on
\( 2 \times 2 \times \kappa \) (because an easy induction on \( n \in
\omega \) shows that each \( S^T_n \) is a DST-tree on the same space) of
height \( \leq \kappa \), and that if \( (u,v,s) \in S_T \setminus \{ \emptyset \} \) then \( s(0) \in \omega \).

\begin{lemma}\label{lemmanormalform}
Let \( \kappa \) be an uncountable cardinal, and \( R \) be an analytic quasi-order. Then there is a DST-tree \( T \) on \( 2 \times 2 \times \kappa \) such that \( R = \proj[T] \) and the following conditions hold:
\begin{enumerate-(i)}
\item \label{lemmanormalformc1}
 for any $t \in \pre{< \kappa}{\kappa}$ there is at most one $(u,v)$ with $(u,v,t) \in T$;
\item \label{lemmanormalformc2}
for every \( u,v \in {}^{< \kappa} 2 \),  \( (u,u,0^{(\lh(u))}) \in S_T \), and if \( (u,v,0^{(\lh(u)}))\in S_T \) with \( \lh(u) \geq \omega \), then \( u = v \); 
\item \label{lemmanormalformc3}
if \( u,v,w \in {}^{< \kappa} 2 \) and \( s,t
\in {}^{< \kappa} \kappa \) are such that \( (u,v,s) , (v,w,t) \in S_T \) then \( (u,w,s \oplus t) \in S_T \);
\item\label{lemmanormalformc4} 
for all \( s \in \pre{< \kappa}{\kappa} \) of infinite length \(\gamma\), either \( s = 0^{(\gamma)} \), or else there are only finitely many pairs \( (u,v) \in \pre{\gamma}{2} \times \pre{\gamma}{2} \) such that \( u \neq v \) and \( (u,v,s) \in S_T \);
\item \label{lemmanormalformc5}
\( R = \proj[S_T] = \set{ (x,y) \in \pre{\kappa}{2} \times
\pre{\kappa}{2} }{ \exists \xi \in \pre{\kappa}{\kappa} \left
((x,y,\xi) \in [S_T] \right) } \).
\end{enumerate-(i)}
\end{lemma}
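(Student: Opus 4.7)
Since $R$ is analytic, start from any DST-tree $T_0$ on $2 \times 2 \times \kappa$ with $R = \proj[T_0]$. My first move is to replace $T_0$ by a tree $T$ satisfying~\ref{lemmanormalformc1} using a pointwise pairing trick: fix a bijection $\langle \cdot, \cdot, \cdot \rangle \colon 2 \times 2 \times \kappa \to \kappa$ sending $(0,0,0)$ to $0$, and replace each $(u,v,s) \in T_0$ by $(u,v,s^*)$, where $s^*(\alpha) = \langle u(\alpha), v(\alpha), s(\alpha) \rangle$. Because the coding acts level by level, the resulting $T$ is still a DST-tree with $\proj[T] = R$; the third coordinate of a triple in $T$ now uniquely determines the first two, so~\ref{lemmanormalformc1} holds. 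The specific choice of the pairing also ensures that $s^* = 0^{(\gamma)}$ forces $u(\alpha) = v(\alpha)$ for every $\alpha$, which will be crucial for~\ref{lemmanormalformc2}.

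With $T$ fixed, I turn to $S_T$. For~\ref{lemmanormalformc2}, the inclusion $(u,u,0^{(\lh u)}) \in S^T_0 \subseteq S_T$ is immediate from the diagonal augmentation in $\hat T$. Conversely, every non-empty clause producing an element of $S^T_{n+1}$ (either rule (a), bumping the leading entry from $n$ to $n+1$, or rule (b), composing via $\widetilde{\oplus}$) produces a label with strictly positive leading value $n+1$, so a label of the form $0^{(\gamma)}$ can only appear at stage $n=0$. Inspection of $S^T_0$ then yields $(u,v,0^{(\gamma)}) \in \hat T$, and for infinite $\gamma$ we have $u^- = u$, so either the triple is diagonal (giving $u = v$) or it comes from $T$; the latter is ruled out by our encoding.

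Property~\ref{lemmanormalformc3} is essentially built into the construction. Given non-empty $(u,v,s), (v,w,t) \in S_T$, write $s = n \concat s'$ and $t = m \concat t'$ with $n,m \in \omega$. By iterating rule (a) one lifts both triples into $S^T_N$ for $N = \max(n,m)$, then rule (b) yields $(u,w, (N+1) \concat (s' \mathrel{\widetilde{\oplus}} t')) \in S^T_{N+1}$; by~\eqref{eq:variantoplus} this is exactly $(u,w, s\oplus t)$ after one further chain of bumps up to the leading value $\rho(n,m) \geq N+1$. For~\ref{lemmanormalformc5}, the inclusion $\proj[T] \subseteq \proj[S_T]$ follows from the construction of $S^T_0$, while $\proj[S_T] \subseteq R$ is an easy induction on stages: rule (a) preserves the underlying $(u,v)$-pair (so preserves being in $R$), the $\hat T$-diagonal additions project to the identity (in $R$ by reflexivity), and rule (b) yields a composition (in $R$ by transitivity).

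The main obstacle is~\ref{lemmanormalformc4}. The plan is to prove by induction on $n$ that for every infinite-length label $\sigma \neq 0^{(\lh \sigma)}$ the set $\{(u,v) : u \neq v,\ (u,v,\sigma) \in S^T_n\}$ is finite. The base case $n=0$ follows from~\ref{lemmanormalformc1} after unwinding the $0 \concat s''$ shape of the label. In the inductive step, a triple $(u,v,\sigma) \in S^T_{k+1}$ arose either via rule (a) (contributing finitely many pairs by the inductive hypothesis applied to the shifted label) or via rule (b), in which case $\sigma = (k+1) \concat (\sigma' \mathrel{\widetilde{\oplus}} \sigma'')$. The crucial input is the injectivity of $\oplus$ from Proposition~\ref{propoplus}\ref{propoplusc1}, which propagates to injectivity of $\mathrel{\widetilde{\oplus}}$ and uniquely determines the decomposition $(\sigma', \sigma'')$ from $\sigma$. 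Once $(\sigma', \sigma'')$ is fixed, the intermediate vertex $w$ ranges over pairs available in $S^T_k$ for the determined labels, and an elementary case analysis — splitting on whether $k \concat \sigma'$ and $k \concat \sigma''$ are diagonal, and using~\ref{lemmanormalformc2} (already established) to collapse diagonal contributions to a single vertex — yields only finitely many triples $(u,w,v)$, hence finitely many $(u,v)$. Managing these diagonal versus non-diagonal sub-cases cleanly, and ensuring the induction interacts correctly with the bumping rule so that no branch proliferates uncontrollably, will be the most delicate bookkeeping of the proof.
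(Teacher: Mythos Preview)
Your treatment of \ref{lemmanormalformc1}--\ref{lemmanormalformc4} follows the paper's approach and is essentially correct (the pairing trick, the diagonal analysis for~\ref{lemmanormalformc2}, the lift-and-compose for~\ref{lemmanormalformc3}, and the induction with diagonal/non-diagonal case split for~\ref{lemmanormalformc4} are all exactly what the paper does).

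The gap is in~\ref{lemmanormalformc5}. Your ``easy induction on stages'' does not work as stated. Suppose $(x,y) \in \proj[S^T_{n+1}]$ with witness $(n+1)\concat\xi$. For each $\gamma<\kappa$ the triple $(x\restriction\gamma,\,y\restriction\gamma,\,(n+1)\concat\xi\restriction\gamma^-)$ lies in $S^T_{n+1}$, but whether it arose via rule~(a) or rule~(b) can depend on $\gamma$, and in the rule~(b) case the intermediate node $v_\gamma$ is \emph{not} uniquely determined by the label (property~\ref{lemmanormalformc1} holds for $T$, not for $S^T_n$). So you cannot simply say ``rule~(b) yields a composition'': the $v_\gamma$'s need not cohere into a single $z\in\pre{\kappa}{2}$, and without such a $z$ you have no pair $(x,z),(z,y)$ in $\proj[S^T_n]$ to which transitivity applies.

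This is in fact the hardest part of the lemma, and where~\ref{lemmanormalformc4} is actually used. The paper argues as follows: if rule~(a) applies cofinally, then $(x,y,n\concat\xi)\in[S^T_n]$ and you are done; otherwise rule~(b) applies for every $\gamma$, with $\xi$ decomposing uniquely as $\xi_0\mathrel{\widetilde{\oplus}}\xi_1$. After disposing of the degenerate sub-cases $n=0$, $\xi_i=0^{(\kappa)}$ via~\ref{lemmanormalformc2}, one forms the DST-tree $V\subseteq\pre{<\kappa}{2}$ generated by the possible intermediates $v_\gamma$ and uses~\ref{lemmanormalformc4} to show every level of $V$ is finite. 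A K\"onig/Kurepa argument (finite-width trees of regular uncountable height have cofinal branches) then produces a coherent $z\in[V]$, giving $(x,z),(z,y)\in\proj[S^T_n]\subseteq R$. You have located the bookkeeping difficulty in the wrong place: \ref{lemmanormalformc4} is routine, but~\ref{lemmanormalformc5} needs this tree argument.
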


\begin{proof}
\ref{lemmanormalformc1}
Let $R = p[T']$ for some tree 
$T' $ on $2 \times 2 \times \kappa$. Let \( \Hes_3 \colon \On \times \On \times \On \to \On \) be the bijection defined by \( \Hes_3(\alpha,\beta,\gamma) = \Hes( \alpha , \Hes(\beta. \gamma)) \) (so that, in particular, \( \Hes_3(0,0,0) = 0 \)), and let \( \Hes_3^{\uparrow} \) be its (coordinatewise) extension to \( \pre{\leq \kappa}{(\On \times \On \times \On)} \), namely for all \( r,s,t \in \pre{\leq \kappa}{(\On \times \On \times \On)} \) of the same length let \( \Hes_3^{\uparrow}(r,s,t) = \langle \Hes_3(r_i,s_i,t_i) \mid i < \lh(s) \rangle \).
Set 
\[
T = \{ (u,v,\Hes_3^{\uparrow}(u,v,s)) \mid (u,v,s) \in T'\}.
\]
Then
\begin{equation} \label{eq:uniquewitness}
\text{for all } t \in \pre{< \kappa}{\kappa} \text{ there is at most one  } (u,v) \in \pre{< \kappa}{2} \times \pre{< \kappa}{2} \text{ with } (u,v,t) \in T.
\end{equation}
Namely, \( u,v \) are the unique sequences such that \( t = \Hes_3^{\uparrow}(u,v,s) \) for some \( s \in \pre{< \kappa}{\kappa} \).

Then we have \( \proj[T] = \proj[T'] = R \). Indeed, if \( (x,y,\xi) \in [T'] \) then \( (x,y,\Hes_3^{\uparrow}(x,y,\xi)) \in [T] \), and conversely, using the fact that \( \Hes_3 \) is injective and the definition of \( T \), from any \( \xi \) witnessing \( (x,y) \in \proj[T] \) we can decode a unique \( \xi' \) (namely, the unique \( \xi' \) such that \( \xi = \Hes_3^{\uparrow}(x,y,\xi') \)) witnessing \( (x,y) \in \proj[T'] \).

For property~\ref{lemmanormalformc2}, observe  that for all \( u \in \pre{< \kappa}{2} \) we have \( (u,u, 0^{(\lh(u))}) \in \hat{T} \), whence \( (u,u, 0^{(\lh(u))}) \in S^T_0 \subseteq S_T\). Conversely, given \(u,v \) with \( \lh(u) = \lh(v) = \gamma \geq \omega \) we have \( (u,v,0^{(\gamma)}) \in S_T \) if and only if \( (u,v,0^{(\gamma)}) \in S^T_0 \), if and only if \( (u^-, v^-, 0^{(\lh(u^-))}) \in \hat{T} \). Since \(  (u^-, v^-, 0^{(\lh(u^-))}) \in T \) implies \( u^- = v^- = 0^{(\lh(u^-))} \) by \( \Hes_3^{-1}(0) = (0,0,0) \), by the definition of \( \hat{T} \) we have \( u^- = v^- \), whence  \( u = v \) because \( u = u^- \) and \( v = v^- \) by \( \lh(u) = \lh(v) \geq \omega \).

To see that  also satisfies~\ref{lemmanormalformc3} is satisfied, we argue as in the proof of~\cite[Lemma 7.2]{mottoros2011}.  Clearly  we can assume that both \( s \) and \( t \) are nonempty, and hence let \( s',t' \in \pre{< \kappa}{\kappa} \) and \( n,m \in \omega \) be such that \( s = n {}^\smallfrown{}  s' \) and \( t = m {}^\smallfrown{}  t' \), so that, in particular
 \( (u,v,s) \in S^T_n \) and
\( (v,w,t) \in S^T_m \). Since%
\footnote{Notice that \( \rho(n,m) -1 \) is always defined because \( \rho(n,m) \) is by definition a successor ordinal.}
 \( n,m \leq
\rho(n,m)-1 = k \), we have that both \( (u,v, k {}^\smallfrown{}  s') \) and
\( (v,w,k {}^\smallfrown{}  t') \) belong to \( S^T_k \): hence 
\begin{multline*} 
(u,w,(k+1)
{}^\smallfrown{}  (s' \mathrel{\widetilde{\oplus}} t')) = (u,w, \rho(n , m)
{}^\smallfrown{}  (s' \mathrel{\widetilde{\oplus}} t')) \\ = (u,w,s \oplus t) \in S^T_{\rho(n,m)} \subseteq \bigcup\nolimits_l S^T_l  = S_T
\end{multline*}
by
definition of the \( S^T_n \)'s and~\eqref{eq:variantoplus}. % Since \( (s \oplus t) (0) = \rho(n,m) \neq 0 \), \( (u,w, s \oplus t) \in S_T \), as required.

Let us now consider condition~\ref{lemmanormalformc4}. Fix \( s \in \pre{< \kappa}{\kappa} \) of length \( \gamma \geq \omega \) such that \( s \neq 0^{(\gamma)} \). If \( (u,v,s) \in S_T \) for some \( u,v \in \pre{< \kappa}{2} \), then \( s(0) = n < \omega \). Moreover, by definition of the \( S^T_n \) we have \( (u,v,s) \in S_T \iff (u,v,s) \in S^T_n \), so it is enough to prove by induction on \( n < \omega \) that 
the set
\begin{equation}\label{eq:finite}
\{ (u,v) \in \pre{\gamma}{2} \times \pre{\gamma}{2} \mid u \neq v \wedge (u,v,s) \in S^T_n \}
 \end{equation}
is finite. Let \( s' \) be the unique sequence such that \( s = n {}^\smallfrown{}  s' \). The case \( n = 0 \) is easy. Since \( s \neq 0^{(\gamma)} \), we have \( (u,v,s) \in S^T_0 \iff (u^-, v^-, s')  \in T \), and since \( \lh(u) = \lh(v) = \lh(s) = \gamma \geq \omega \) we have that \( u = u^- \) and \( v = v^- \). By~\eqref{eq:uniquewitness}, there is at most one pair \( (u,v) \) that can satisfy \( (u,v,s') \in T \), hence we are done. 
Let now consider the inductive step \( n = k+1 \). By definition of \( S^T_n \), the set in~\eqref{eq:finite} is the union of 
 \begin{equation} \label{eq:finite1}
 \{ (u,v)  \in \pre{\gamma}{2} \times \pre{\gamma}{2} \mid u \neq v \wedge (u,v,k {}^\smallfrown{} s') \in S^T_k\}
 \end{equation}
 and
 \begin{equation} \label{eq:finite2}
 \{ (u,v) \in \pre{\gamma}{2} \times \pre{\gamma}{2} \mid u \neq v \wedge \exists w \,  ( (u,w,k {}^\smallfrown{} t_0) \in S^T_k \wedge (w,v,k {}^\smallfrown{} t_1) \in S^T_k \},
 \end{equation}
 where \( t_0, t_1 \) are the unique sequences in \( \pre{\lh(s')}{\kappa} \) such that \( s' = t_0 \mathrel{\widetilde{\oplus}} t_1 \).
If \( k \neq 0 \) or \( s' \neq 0^{(\lh(s'))} \), the first set is finite by inductive hypothesis; otherwise it is empty by~\ref{lemmanormalformc2}. Let us now consider the second set. By inductive hypothesis, if \( k \neq 0 \) or \( t_0, t_1 \neq 0^{(\lh(s'))} \) then there are only finitely many \( w \) such that \(  (u,w,k {}^\smallfrown{} t_0) \in S^T_k \), and finitely many \( w \) such that \( (w,v,k {}^\smallfrown{} t_1) \in S^T_k \), whence also the set in~\eqref{eq:finite2} is finite and we are done. If \( k = 0 \) and \(  t_1 = 0^{(\lh(s'))} \) (so that \( k {}^\smallfrown{} t_1 = 0^{(\gamma)} \)), then by~\ref{lemmanormalformc2} any \( w \) such that
 \( (w,v,k {}^\smallfrown{}  t_1) \in S^T_k \) must equal  \( v \). Therefore the set in~\eqref{eq:finite2} reduces to 
\[ 
\{ (u,v) \in  \pre{\gamma}{2} \times \pre{\gamma}{2} \mid u \neq v \wedge (u,v, 0 {}^\smallfrown{} t_0) \in S^T_0  \} .
\] 
If \( t_0 \neq 0^{(\lh(s'))} \), then the latter set is finite  by inductive hypothesis; if instead \( t_0 = 0^{(\lh(s'))} \), then  by~\ref{lemmanormalformc2} such set is empty, and thus so is the set  in~\eqref{eq:finite2}.
The case when \( k = 0 \) and \(  t_0 = 0^{(\lh(s'))} \) can be dealt similarly, hence we are done.

It remains only to prove~\ref{lemmanormalformc5}. Arguing as in the proof of~\cite[Claim 7.2.1]{mottoros2011}, we have \( R = \proj[\hat{T}] = \proj[S^T_0] \subseteq \proj[\bigcup_{n } S^T_n] = \proj[S_T] \) because \( R \) is reflexive
and \( R = \proj[T] \). Since every branch $(x,y,\xi)$ of $\bigcup_{n } S^T_n$
is a branch of $S^T_{\xi(0)}$, we have
\(\proj[\bigcup_{n } S^T_n] = \bigcup_n \proj[S^T_n]\).
Hence for the reverse inclusion is enough to prove by induction on \( n \) that
\( \proj[S^T_n] \subseteq R \). 
The case \( n=0 \) is obvious because \( \proj[S^T_0] = \proj[\hat{T}] =
R \), so assume
\( \proj[S^T_n] \subseteq R \), choose an arbitrary \( (x,y) \in
\proj[S^T_{n+1}] \) and
let \( \xi \in \pre{\kappa}{\kappa} \) be such that \( (x,y,(n+1)
{}^\smallfrown{} 
\xi) \in [S^T_{n+1}] \). We distinguish two cases: if for
cofinally many \( \gamma
< \kappa \) we have \( (x \restriction \gamma, y \restriction
\gamma , n
{}^\smallfrown{}  {\xi \restriction \gamma^-}) \in S^T_n \)
 then \( (x,y,n
{}^\smallfrown{}  \xi)
\in [S^T_n] \), so that \( (x,y) \in \proj[S^T_n] \subseteq R \) by inductive hypothesis.
Otherwise, for almost
all \( \gamma < \kappa \) (hence for every \( \gamma < \kappa \),
since \( S^T_n \) is a
DST-tree) there is a \( v_\gamma \in {}^{< \kappa} 2 \) such that \( (x
\restriction
\gamma , v_\gamma, n {}^\smallfrown{}  \xi_0 \restriction
\gamma^-), (v_\gamma, y
\restriction \gamma, n {}^\smallfrown{}  \xi_1 \restriction
\gamma^-) \in S^T_n \),
where%
\footnote{Such  \( \xi_0 \) and \( \xi_1 \) exist and are unique by the fact that \( \mathrel{\widetilde{\oplus}} \) is injective and that clearly \( (s \mathrel{\widetilde{\oplus}} t ) \restriction \alpha = (s \restriction \alpha) \mathrel{\widetilde{\oplus}} (t \restriction \alpha) \) for every \( \alpha \leq \leng{s} = \leng{t} \).}
 \( \xi_0,\xi_1 \in \pre{\kappa}{\kappa} \) are  such that
\( \xi = \xi_0 \mathrel{\widetilde{\oplus}} \xi_1 \). 
%
%If \( v_\gamma = x \restriction \gamma \) for cofinally many \( \gamma < \kappa \), then \( (x \restriction \gamma, y \restriction \gamma, n {}^\smallfrown{}  \xi_1 \restriction (\gamma-1)) \in S^T_n \) for those \(\gamma\)'s and hence \( (x,y) \in \proj[S^T_n] \subseteq R \) (by inductive hypothesis). Similarly, if \( v_\gamma = y \restriction \gamma \) for cofinally many \( \gamma < \kappa \), then \( n {}^\smallfrown{} x_0 \) would be a witness for \( (x,y) \in \proj[S^T_n] \subseteq R \). Therefore for all large enough \( \gamma < \kappa \) we can assume \( v_\gamma \neq x \restriction \gamma, y \restriction \gamma \).
%

Assume first that \( n = 0 \) and \( \xi_0 = 0^{(\kappa)} \). Then by~\ref{lemmanormalformc2} we have \( v_\gamma = x \restriction \gamma \) for all \( \gamma < \kappa \), and thus \( n {}^\smallfrown{}  \xi_1 \) is a witness for \( (x,y) \in \proj[S^T_n] \subseteq R \) (the latter inclusion follows from the inductive hypothesis). The case \( n = 0 \) and \( \xi_1 = 0^{(\kappa)} \) is similar, hence we can assume without loss of generality
that \( n \neq 0 \) or both \( \xi_0 \) and \( \xi_1 \) are different from \( 0^{(\kappa)} \). Consider the DST-tree 
\[ 
V = \{ w \in \pre{< \kappa}{2} \mid \exists \gamma < \kappa \, (\gamma \geq \omega \wedge n {}^\smallfrown{}  \xi_0 \restriction \gamma^- \neq 0^{(\gamma)} \wedge  n {}^\smallfrown{}  \xi_1 \restriction \gamma^- \neq 0^{(\gamma)} \wedge w \subseteq v_\gamma) \} 
\]
generated by all large enough
\( v_\gamma \)'s. It  is a subtree of \( {}^{< \kappa} 2 \) of
height \( \kappa \) (as
\( \leng{v_\gamma} = \gamma \)). Let \( \gamma \geq \omega \) be smallest such that \( n {}^\smallfrown{}  \xi_0 \restriction \gamma^-, n {}^\smallfrown{}  \xi_1 \restriction \gamma^- \neq 0^{(\gamma)} \). Assume towards a contradiction that there is 
\( \gamma \leq \alpha < \kappa \) such that \( V_{=\alpha} =  \{ w \in V \mid \lh(w) = \alpha \} \) is infinite. Then infinitely many elements of \( V_{=\alpha} \) would be different from both \( x \restriction \alpha \) and
 \( y \restriction \alpha \), and since \( n {}^\smallfrown{} \xi_0 \restriction \alpha^-, n {}^\smallfrown{}  \xi_1 \restriction \alpha^- \neq 0^{(\alpha)} \) and 
 \( (x \restriction \alpha, w, n {}^\smallfrown{} \xi_0 \restriction \alpha^-) \in S^T_n \) and \( (w, y \restriction \alpha , n {}^\smallfrown{} \xi_1 \restriction \alpha^-) \in S^T_n \) for all such \( w \) (because 
 all of them are restrictions of the \( v_\gamma \)'s), this would contradict property~\ref{lemmanormalformc4}. It follows that all \( V_{= \alpha} \) with \( \alpha \geq \gamma \) are finite, and so 
 \( V \subseteq \pre{<\kappa}{2} \) is a tree of height \( \kappa \) all of whose levels \( V_{= \alpha} \) are finite (for levels \( < \gamma \) notice that they consist exactly of the restriction of the sequences in 
 \( V_{=\gamma} \), hence they are finite as well).

\begin{claim} \label{claim:narrowtrees}
If \( T \) is a descriptive set-theoretical tree of infinite height \( \kappa \) and all of whose levels are finite, then there is a cofinal branch \( z \) through \( T \).
\end{claim}

\begin{proof}
  This follows for $\cf(\kappa) = \omega$ from K\"onig's lemma and for $\cf(\kappa) > \omega$ from a theorem by Kurepa, see \cite[Proposition 7.9]{Kanamori}
  or \cite[Proposition 2.32Ac]{Levy}, the latter explicitly includes
  singular $\kappa$ of uncountable cofinality.
  \nothing{
    If \( \cf(\kappa) > \omega \), this follows from~\cite[Proposition 2.32Ac]{Levy}.  Assume now \( \cf(\kappa) = \omega \), and let \( \langle \mu_i \mid i < \omega \rangle \) be a strictly increasing sequence of ordinals which is cofinal in \(\mu\). Consider the  set \( T' = \{ x \in T \mid \lh(x) = \mu_i \text{ for some } i < \omega \} \) ordered by the inclusion relation. By our assumptions, \( T' \) is a set-theoretical finitely branching tree of height \( \omega \), hence there is an infinite branch \( z' \) through it. Then \( z = \bigcup z' \) is a cofinal branch through \( T \).
  }
\end{proof}

By Claim~\ref{claim:narrowtrees}, let \( z \) be a cofinal branch through \( V \). Then \( (x \restriction \gamma, z
\restriction
\gamma, n {}^\smallfrown{}  \xi_0 \restriction \gamma^-), (z
\restriction \gamma,
y \restriction \gamma, n {}^\smallfrown{}  \xi_1 \restriction
\gamma^-) \in S^T_n \)
for every \( \gamma < \kappa \). Therefore \( (x,z),(z,y) \in
\proj[S^T_n] \subseteq R \), hence \( (x,y) \in R \) by the transitivity of \( R \).  
\end{proof}

Recall that a map \( \varphi \colon \pre{< \kappa}{\kappa} \to \pre{<\kappa}{\kappa} \) is called \emph{Lipschitz} if it is monotone and length-preserving. Clearly, every Lipschitz map is completely determined by its values on \( \pre{\SUCC(< \kappa)}{\kappa} \).

\begin{definition}\label{definmax}
Let \( \kappa \) be an infinite cardinal. Given two DST-trees \( \T,\T' \subseteq \pre{< \kappa}{(2
\times \kappa)} \), we let \( \T \leq_{\max} \T' \) if and only if there
is a Lipschitz 
\emph{injective} function \( \varphi \colon {}^{< \kappa} \kappa \to {}^{<
\kappa} \kappa \) such that for all \( (u,s) \in \pre{< \kappa}{(2 \times \kappa)} \)
\[ (u,s) \in \T \Rightarrow (u, \varphi(s)) \in \T'. \]
\end{definition}

Assume now that \( \kappa^{< \kappa} = \kappa \). By identifying each DST-tree \( T \subseteq \pre{< \kappa}{(2 \times \kappa)} \) with its characteristic function, the quasi-order \( \leq_{\max} \) may be construed as a quasi-order on the space \( \pre{ \pre{< \kappa}{(2 \times \kappa)} }{2} \), which is in turn naturally identified to \( \pre{\kappa}{2} \) via the homeomorphism induced by any bijection between \(  \pre{< \kappa}{(2 \times \kappa)}  \) and \( \kappa \); it is easy to check that once coded in this way, the quasi-order \( \leq_{\max} \) is analytic. In fact, it can be shown that it is also complete arguing as follows. 
Given a DST-tree \( T \subseteq \pre{< \kappa}{(2 \times 2 \times \kappa)} \) of height \( \leq
\kappa \), let \( S_T \) be the DST-tree defined in~\eqref{eqS_T}. Then define the map \( s_T \) from \( \pre{\kappa}{2} \)
to the space of the DST-subtrees of \( \pre{< \kappa} {(2 \times \kappa)} \) by setting 
\begin{equation} \label{eqs_T}
s_T(x) =
S_T^x = \left\{ (u,s)  \mid  (u,x \restriction \leng{u}, s) \in S_T \right\}.
\end{equation}
Notice that by  Lemma~\ref{lemmanormalform}~\ref{lemmanormalformc2} the map \( s_T \) is injective in a strong sense, that is for every \( x,y \in \pre{\kappa}{2} \)
\begin{equation} \label{eqs_Tinjective}
x \neq y \Rightarrow s_T(x) \cap \pre{\SUCC(< \kappa)}{(2 \times \kappa)} \neq s_T(y) \cap \pre{\SUCC(< \kappa)}{(2 \times \kappa)}.
\end{equation}

Indeed, if \(\omega < \alpha+ 1 < \kappa \) is such that \( x \restriction \alpha+1 \neq y \restriction \alpha + 1 \) then \( (x \restriction \alpha+1 , 0^{(\alpha+1)})  \in s_T(x) \setminus s_T(y) \).

The proof of the following lemma is identical to that of~\cite[Lemma 7.4]{mottoros2011} (together with~\cite[Remark 7.5]{mottoros2011}) and thus will be omitted here --- the unique difference is that, because of Lemma~\ref{lemmanormalform}\ref{lemmanormalformc2}, in the first part of such proof one should systematically take \( \xi = 0^{(\kappa)} \) rather than an arbitrary \( \xi \in \pre{\kappa}{\kappa} \) with \( \xi(0)  \in \omega \).

\begin{lemma}\label{lemmamax}
Let \( \kappa \) be an uncountable cardinal satisfying~\eqref{eq:kappa}. Let 
\( R \) be an analytic quasi-order on \( \pre{\kappa}{2} \) and let \( T \) be the tree given by Lemma~\ref{lemmanormalform}. Then  for every \( x,y \in \pre{\kappa}{2} \)
\begin{enumerate-(i)}
\item
if \( S_T^x \leq_{\max} S_T^y \), then \( x \mathrel{R} y \);
\item
  conversely, if \( x \mathrel{R} y \) and this is witnessed by $(x,y,\xi) \in S_T$ then there is the Lipschitz map \( \varphi \) with $\varphi(s) = s \oplus \xi \restriction \lg(s)$  witnessing \( S_T^x \leq_{\max} S_T^y \).
Moreover,  \( \# s \leq \# \varphi(s) \), where \( \# \) is as in Proposition~\ref{propoplus}\ref{propoplusc2}).
\end{enumerate-(i)}
In particular, \( s_T \) reduces \( R \) to
\( \leq_{\max} \), and thus \( \leq_{\max} \) is complete for analytic
quasi-orders.
\end{lemma}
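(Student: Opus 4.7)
The plan is to follow the template of~\cite[Lemma 7.4]{mottoros2011}, with the adjustment flagged after the statement. For direction (i), the key observation is that for every \( \omega \leq \alpha < \kappa \), Lemma~\ref{lemmanormalform}\ref{lemmanormalformc2} gives \( (x \restriction \alpha, x \restriction \alpha, 0^{(\alpha)}) \in S_T \), and hence \( (x \restriction \alpha, 0^{(\alpha)}) \in S_T^x \). Applying the Lipschitz injection \( \varphi \) witnessing \( S_T^x \leq_{\max} S_T^y \) yields \( (x \restriction \alpha, y \restriction \alpha, \varphi(0^{(\alpha)})) \in S_T \). Because \( \varphi \) is Lipschitz, the family \( \{ \varphi(0^{(\alpha)}) \mid \alpha < \kappa \} \) coheres into a single \( \xi \in \pre{\kappa}{\kappa} \); so \( (x, y, \xi) \in [S_T] \), giving \( (x,y) \in \proj[S_T] = R \) by Lemma~\ref{lemmanormalform}\ref{lemmanormalformc5}. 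The departure from the original argument is forced by Lemma~\ref{lemmanormalform}\ref{lemmanormalformc2}: instead of feeding \( \varphi \) an arbitrary sequence starting with a natural number, one must systematically use the constantly-zero sequences \( 0^{(\alpha)} \), since these are the only nodes of \( S_T^x \) of this ``diagonal'' form at infinite length.

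For direction (ii), I would set \( \varphi(s) = s \oplus ( \xi \restriction \lh(s) ) \). This map is Lipschitz by the recursive definition of \( \oplus \) (which is monotone and length-preserving), and it is injective by Proposition~\ref{propoplus}\ref{propoplusc1}: if \( \varphi(s) = \varphi(t) \) then length preservation forces \( \lh(s) = \lh(t) \), and then injectivity of \( \oplus \) forces \( s = t \). To verify the embedding property, suppose \( (u,s) \in S_T^x \), i.e., \( (u, x \restriction \lh(u), s) \in S_T \). Since \( (x,y,\xi) \in [S_T] \) we have \( (x \restriction \lh(u), y \restriction \lh(u), \xi \restriction \lh(u)) \in S_T \), and Lemma~\ref{lemmanormalform}\ref{lemmanormalformc3} concatenates these to yield \( (u, y \restriction \lh(u), s \oplus (\xi \restriction \lh(u))) = (u, y \restriction \lh(u), \varphi(s)) \in S_T \), as desired. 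The ``moreover'' clause \( \# s \leq \# \varphi(s) \) is then immediate from Proposition~\ref{propoplus}\ref{propoplusc2}\ref{propoplusc2a}.

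The final ``In particular'' assertion combines the two directions with the Borel measurability of \( s_T \): directions (i) and (ii) together show \( (x,y) \in R \iff s_T(x) \leq_{\max} s_T(y) \), and \( s_T \) is manifestly Borel from its definition~\eqref{eqs_T}, so \( s_T \) is a Borel reduction of \( R \) to \( \leq_{\max} \); since \( R \) was an arbitrary analytic quasi-order on \( \pre{\kappa}{2} \) and \( \leq_{\max} \) was previously observed to be analytic, it is complete. The main conceptual subtlety — rather than a difficult step — is the one already described above, namely the rigidity imposed by Lemma~\ref{lemmanormalform}\ref{lemmanormalformc2}, which is what requires the witness \( \xi \) in direction (i) to be reconstructed from the evaluations of \( \varphi \) on the constantly-zero sequences rather than on arbitrary inputs.
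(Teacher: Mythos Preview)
Your proposal is correct and follows essentially the same route as the paper, which explicitly defers to~\cite[Lemma 7.4 and Remark 7.5]{mottoros2011} with the single modification that in direction~(i) one must feed $\varphi$ the constantly-zero sequences $0^{(\alpha)}$ rather than an arbitrary $\xi$ with $\xi(0)\in\omega$ --- exactly the adjustment you identify and justify via Lemma~\ref{lemmanormalform}\ref{lemmanormalformc2}. (A minor remark: the restriction to $\alpha\geq\omega$ in your argument for~(i) is unnecessary, since Lemma~\ref{lemmanormalform}\ref{lemmanormalformc2} gives $(u,u,0^{(\lh(u))})\in S_T$ for all $u$, but this is harmless as the resulting set of $\alpha$'s is cofinal anyway.)
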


\section{Labels} \label{sec:labels}

Recall that we fixed an uncountable cardinal \( \kappa \)  satisfying~\eqref{eq:kappa}. Further assuming that \( \kappa \) be inaccessible, in~\cite[Section 8]{mottoros2011} three sets of labels \( \{ \sL_\gamma = (D_\gamma, \preceq_\gamma) \mid \gamma < \kappa \} \), \( \{ \sL_s =  (D_s, \preceq_s) \mid s \in \pre{\SUCC(<\kappa)}{\kappa} \} \), and \( \{ \sL^*_u = (D^*_u, \preceq^*_u) \mid u \in \pre{\SUCC(<\kappa)}{2} \} \) (called respectively \emph{labels of type I}, \emph{II}, and \emph{III}) were constructed so that the following conditions were satisfied.
\begin{enumerate-(C1)}
\item \label{cond:C1}
Each of the labels is a generalized tree of size \( \leq  \kappa  \).
%\item \label{cond:C2}
%Labels \( \sL_\gamma \) of type I and labels \( \sL^*_u \) of type III have size \( \kappa \), while labels \( \sL_s \) of type II have size \( < \kappa \).
\item \label{cond:C3}
If \( \sL \), \( \sL' \) are labels of a different type, then \( \sL \not \sqsubseteq \sL' \). In particular, two label \( \sL' \) and \( \sL'' \) of different type cannot be simultaneously embedded into the same label \( \sL \).
\item \label{cond:C4}
  If \( \gamma, \gamma' < \kappa \) are distinct, then \( \sL_\gamma \not \sqsubseteq \sL_{\gamma'} \).
  
\item \label{cond:C5}
  If \( \gamma < \kappa \) and \( s,t \in \pre{\gamma+1}{\kappa} \), then \( \sL_s \sqsubseteq \sL_t \iff \# s \leq \# t \), and moreover \( \sL_s \cong \sL_t \iff s = t \) for every \( s,t \in \pre{\SUCC(<\kappa)}{\kappa} \).

\item \label{cond:C6}
If \( \gamma < \kappa \) and \( u,v \in \pre{\gamma+1}{2} \) are distinct, then \( \sL^*_u \not \sqsubseteq \sL^*_v  \), and moreover \( \sL^*_u \cong \sL^*_v \iff u = v \) for every \( u,v \in \pre{\SUCC(<\kappa)}{2} \).
\end{enumerate-(C1)}

Our next goal is to provide a construction of such labels
for an arbitrary \( \kappa \) satisfying~\eqref{eq:kappa} so that conditions~\ref{cond:C1}--\ref{cond:C6}  are still satisfied.
The definition of the labels of type III required the inaccessibility of \( \kappa \) and now we replace it by a different construction.
As the reader may easily check, labels of type I and of type II are instead minor simplifications of the structures
given in \cite{mottoros2011} which do not destroy their main properties.

%Let  \todo{Why do we need this function?}
%\[
%\theta \colon 2 \times\kappa \times\kappa \to \kappa \colon 
%(i,\gamma,\alpha) \mapsto 1 + 2 \cdot \Hes_2(\gamma,\alpha)  + i.
%\]
%The map $\theta$ is a bijection and for every $i\in 2$,
%$\alpha, \beta, \gamma < \kappa$,
%\[
%\alpha\leq\beta \iff 
%\theta(i,\gamma,\alpha) \leq \theta(i,\gamma,\beta).
%\]
We will use the following result of Baumgartner.

\begin{lemma}[{\cite[Corollary~5.4]{Baumgartner}}] \label{2.3}
Let \( \kappa \) be an uncountable regular cardinal. Then
 there are $2^\kappa$-many linear orders
$\seq{ L_{\gamma} =  (\kappa,\preceq^{L_{\gamma}})}{\gamma<2^\kappa}$
such that  $L_{\gamma}
\not\sqsubseteq  L_{\gamma'}$ for distinct $\gamma, \gamma'<2^\kappa$.
\end{lemma}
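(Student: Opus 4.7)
The plan is to realize the $2^\kappa$ linear orders by coding elements of a $2^\kappa$-sized family $\mathcal{A}\subseteq\mathcal{P}(\kappa)$ consisting of subsets that pairwise differ on a stationary set (such $\mathcal{A}$ exists by splitting $\kappa$ into $\kappa$-many disjoint stationary sets and taking characteristic-function combinations). For each $A\in\mathcal{A}$, I would build a linear order $L_A$ on $\kappa$ whose isomorphism type rigidly records $A$, so that any embedding $L_A\hookrightarrow L_B$ forces $A=B$.

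Concretely, choose two linear orders $B_0$ and $B_1$ of size $\kappa$ as rigid building blocks: for instance, let $B_0$ be a $\kappa$-saturated dense linear order without endpoints of size $\kappa$ (which exists using $\kappa^{<\kappa}=\kappa$ by the standard back-and-forth), and let $B_1 = B_0 + 1 + B_0$, so that $B_1$ contains a distinguished element with an internal ``gap'' feature that $B_0$ does not possess. Set
\[
L_A \;=\; \sum_{\alpha<\kappa}\bigl(1 + B_{\chi_A(\alpha)}\bigr),
\]
where the initial singleton of each summand acts as a ``marker''; the sum is indexed by the natural well-order on $\kappa$. Thus $L_A$ decomposes into a well-ordered sequence of marked blocks, each of order type either $1+B_0$ or $1+B_1$ depending on whether $\alpha\in A$.

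The heart of the proof is to show that if $f\colon L_A\hookrightarrow L_B$ is order-preserving, then there is a club $C\subseteq\kappa$ on which $f$ respects the block decomposition: for $\alpha\in C$, the image of the $\alpha$-th marker of $L_A$ is the $\alpha$-th marker of $L_B$, and $f$ maps the $\alpha$-th block of $L_A$ into the $\alpha$-th block of $L_B$. I would establish this by transfinite induction on $\alpha$, showing that the set of $\alpha$ for which $f$ ``synchronises'' the block indices is closed unbounded; the unboundedness uses regularity of $\kappa$ together with the fact that the initial segment $f(L_A\!\restriction\!\alpha)$ must be cofinal inside some initial block of $L_B$, and the closure follows from continuity. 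Once synchronisation holds on a club $C$, comparing blocks at $\alpha\in C$ forces $\chi_A(\alpha)=\chi_B(\alpha)$: the distinguishing feature of $B_1$ (a single element with gaps on both sides not filled densely) cannot embed into $B_0$ consistently with the marker positions. Hence $A\triangle B$ is nonstationary, contradicting the choice of $\mathcal{A}$ unless $A=B$.

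The main obstacle is precisely the synchronisation step: an \emph{a priori} embedding could misalign blocks, stretching or compressing them. To force alignment, the markers must be recognisable by $f$, which is why $B_0, B_1$ are chosen dense without endpoints (so the only ``first element'' in each marked block is the marker itself) and why regularity of $\kappa$ is essential — it both supplies the Fodor/club machinery and rules out embedding a block of higher ``internal cofinality type'' into one of lower type. Any residual flexibility is absorbed into the club $C$, so the coding of $A$ survives modulo a nonstationary error, which is exactly what the pairwise stationary-difference condition on $\mathcal{A}$ was designed to handle.
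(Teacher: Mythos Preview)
The paper does not prove this lemma; it simply quotes it from Baumgartner's survey as a black box. So your proposal must be judged on its own merits, and unfortunately the specific implementation does not work, even though the overall strategy (coding stationary sets into block sums and arguing via club synchronisation) is indeed close in spirit to Baumgartner's actual argument.

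The fatal problem is your choice of building blocks. You take $B_0$ to be a $\kappa$-saturated dense linear order of size $\kappa$. But a $\kappa$-saturated model of DLO is universal for linear orders of size $\le\kappa$, so in particular the entire order $L_A$ embeds into a single copy of $B_0$. Hence an embedding $f\colon L_A\hookrightarrow L_B$ can simply collapse all of $L_A$ into the very first block of $L_B$, and your synchronisation claim (``on a club the $\alpha$-th block maps into the $\alpha$-th block'') is false outright, not merely hard to prove. Even if you somehow forced block-to-block alignment, the same universality shows $B_1=B_0+1+B_0$ embeds into $B_0$, so an embedding of the $\alpha$-th block into the $\alpha$-th block would not let you read off $\chi_A(\alpha)=\chi_B(\alpha)$; the ``distinguishing feature'' you describe for $B_1$ does not survive embedding. (Also note that invoking a $\kappa$-saturated order already smuggles in $\kappa^{<\kappa}=\kappa$, which the lemma does not assume.)

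To make this style of argument go through you need blocks that are \emph{non-absorptive}: typically short well-ordered or reverse-well-ordered pieces, so that any embedding is forced by cofinality considerations to be eventually ``diagonal'' on a club. Baumgartner's construction uses exactly this kind of idea: one codes a stationary set $S$ by inserting a recognisable local feature (e.g.\ a point with an immediate predecessor, or a specific short pattern) at stage $\alpha$ iff $\alpha\in S$, with the surrounding blocks chosen so that an embedding must respect the $\kappa$-indexing on a club and must send such features to features. Your outline would be repairable along these lines, but as written the argument collapses at the first step.
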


%For technical reasons, we replace each \( L_{ \gamma} \) with \( L_{ \gamma } \times \ZZ \) (ordered lexicographically), and then add a minimal element to it, that can be always assumed to be the ordinal \( 0 \): 
%the resulting linear orders contain infinite descending chains, have a minimum but no maximal element, and all their points except the minimum have an immediate successor. Notice that these modifications do not 
%destroy the property that such linear orders are mutually non-embeddable because if \( L \times \ZZ  \sqsubseteq L' \times \ZZ \) then \( L \sqsubseteq L' \).

For technical reasons, we replace each \( L_{ \gamma} \) with (an isomorphic copy with domain \( \kappa \) of) \( 1+ \ZZ + L_\gamma + \ZZ \): the resulting linear orders have a minimum but no maximal element, and the minimum, that can be assumed to be the ordinal \( 0 \), has no immediate successor. Notice that this modification does not destroy the property that such linear orders are mutually non-embeddable.

\medskip

\textbf{Labels of type I}. 
Take the first \( \kappa \)-many linear orders $\seq{L_{\kappa,\gamma} = (\kappa, \preceq^{L_{\kappa,\gamma}})}{ \gamma <\kappa}$ from the modifications after Lemma~\ref{2.3}, 
so that $L_{\gamma}
\not\sqsubseteq L_{\gamma'}$ for distinct $\gamma, \gamma' < \kappa$.
We let $\sL_{\gamma} = (D_{\gamma}, \preceq_{\gamma})$ be defined as follows:
\begin{itemize}
\item 
$D_{\gamma} = \kappa \cup \{(\alpha,\beta) \mid 0 < \alpha<\kappa 
\wedge \beta< \alpha\}$,

\item 
$\preceq_{\gamma}$ is the partial order on $D_{\gamma}$ defined by

\begin{enumerate-(1)}
\item 
$\forall \alpha,\alpha' <\kappa \, [\alpha \preceq_{\gamma} \alpha' \iff \alpha \preceq^{L_{\kappa,\gamma}} \alpha']$

\item 
$\forall \alpha' <\kappa \, \forall 0 < \alpha < \kappa,  \forall \beta<\alpha \, [ \alpha'  \preceq_{\gamma} (\alpha,\beta) \iff \alpha' \preceq_\gamma \alpha]$

\item 
$\forall 0 <  \alpha<\kappa \, \forall \beta,\beta' <\alpha \, [(\alpha,\beta) \preceq_{\gamma} (\alpha,\beta') \iff  \beta\leq\beta']$

\item 
no other $\preceq_{\gamma}$-relation holds.
\end{enumerate-(1)}
\end{itemize}

The restriction of \( \sL_\gamma \) to \( \kappa \) (i.e.\ the linear order \( L_{\kappa,\gamma} \)) is called \emph{spine} of \( \sL_\gamma \).
Notice that each \( \sL_\gamma \) has size \( \kappa \), has a minimum, that is \( 0 \), and such a minimum has no immediate successor. 
%All other points of \( \sL_\gamma \) have at least one immediate successor (unless they are terminal nodes), and 
Moreover, a point \( x \) is in the spine if and only if \( \Cone(x) \) is not a linear order, and if \( x,y \) are incomparable in \( \sL_\gamma \) then at least one of \( \Cone(x) \) and \( \Cone(y) \) is a linear order.
%, if and only if either \( x \) is \( 0 \) or it has exactly two immediate successors (if \(x = \alpha \) these successors are \( \alpha+1 \) and \( (\alpha, 0 ) \)). 
Finally, we say that a tree is a \emph{code for \(\gamma\)}  if it is isomorphic to \( \mathscr{L}_\gamma \). 

\medskip

\textbf{Labels of type II}.
Let \( \gamma < \kappa \). Given \( s \in \pre{\gamma+1}{\kappa} \), set \( \theta(s) = \Hes(\dom(s), \# s) \) with \( \# \) is as in Proposition~\ref{propoplus}\ref{propoplusc2}, and let \( \sL_s = (D_s, \preceq_s) \) be the tree defined as follows:\footnote{Here we do not identify elements of ${}^1 \kappa$ with $\kappa$.}
\begin{itemize}
\item
\( D_s \) is the disjoint union of the ordinal \(\theta(s)\), \( \omega^* = \{ n^* \mid n \in \omega \} \), and \( A_s = \{ a,a^+,a^-,b,b^+,b^- \} \), ;

\item
\( \preceq_s \) is the partial order on \( D_s \) defined by

\begin{enumerate-(1)}
\item
\( \forall \alpha,\beta <  \theta(s)  \, \left[{\alpha \preceq_s \beta} \iff {\alpha \leq \beta}\right] \)

\item
\( \forall n,m \in \omega \, \left[{n^* \preceq_s m^*} \iff {n \geq m}\right] \)

\item
\( x \preceq_s x^+, x^- \) for \( x \in \{ a,b \} \)

\item
\( \forall \alpha < \theta(s) \, \forall n \in \omega \, \forall x \in A_s \, \left[{\alpha \preceq_s n^*} \wedge {n^* \preceq_s x}\right] \)

\item
no other \( \preceq_s \)-relation holds.
\end{enumerate-(1)}
\end{itemize}

Notice that each \( \sL_s \) has size strictly smaller than \( \kappa \), and that there are two incomparable points, namely \( a \) and \( b \), whose upper cone is not a linear order. 
A tree isomorphic to \( \sL_s \) is called a \emph{code for \( s \)}. 
%The restriction of \( \sL_s \) to \( \theta(1, \gamma, \#s) \) is called the \emph{initial part} of \( \sL_s \). In particular, the initial part of \( \sL_s \) is isomorphic to the ordinal \( \theta(1, \gamma, \# s) \) (hence it is well-founded). 

\medskip 

\textbf{Labels of type III}. 
Fix another sequence \( \langle L_u  = (\kappa, \preceq^{L_u}) \mid u \in \pre{\SUCC(< \kappa)}{2} \rangle \) of pairwise non-embeddable linear orders of size \( \kappa \) such that \( L_u \not \sqsubseteq L_{\kappa,\gamma} \) and \( L_{\kappa,\gamma} \not \sqsubseteq L_u \) for every \( \gamma < \kappa \), where the \( L_{\kappa,\gamma} \)'s are the linear order used to construct the labels of type I (for example, we can choose the \( L_u \)'s in the set \( \{ L_{\kappa,\gamma} \mid \kappa \leq \gamma < \kappa+\kappa \} \)). Then for every \( \gamma < \kappa \) and \( u \in \pre{\gamma+1}{2} \), we let $\sL^*_{u} = (D^*_u, \preceq^*_{u})$ be defined as follows:
\begin{itemize}
\item 
$D^*_{u} = \kappa \cup \{ c_u \} \cup \{(\alpha,\beta) \mid 0 < \alpha<\kappa 
\wedge \beta<\alpha\}$,

\item 
$\preceq^*_{u}$ is the partial order on $D^*_{u}$ defined by

\begin{enumerate-(1)}
\item 
$\forall \alpha,\alpha' <\kappa \, [\alpha \preceq^*_{u} \alpha' \iff \alpha \preceq^{L_u} \alpha']$

\item 
$0 \preceq^*_u c_u$ and \( c_u \) is \( \preceq^*_u \)-incomparable with any other point of \( D^*_u \)

\item 
$\forall \alpha' <\kappa \, \forall 0 < \alpha < \kappa,  \forall \beta<\alpha \, [ \alpha'  \preceq^*_{u} (\alpha,\beta) \iff \alpha' \preceq^*_u \alpha]$

\item 
$\forall 0 < \alpha<\kappa \, \forall \beta,\beta' <\alpha \, [(\alpha,\beta) \preceq^*_u (\alpha,\beta') \iff 
\beta\leq\beta']$

\item 
no other $\preceq^*_u$-relation holds.
\end{enumerate-(1)}
\end{itemize}

Thus labels of type III are constructed exactly as the labels of type I, except that we add a unique immediate successor \( c_u \) (which is also a terminal node in \( \sL^*_u \)) to its minimum \( 0 \).
As in the case of type I labels, we call the restriction of \( \sL^*_u \)  to \( \kappa \) (i.e.\ the linear order \( L_u \))  \emph{the spine of \( \sL^*_u \)}. Points in the spine are distinguished from the other ones by the fact that their upper cone is not a linear order. 
%Each \( \sL^*_u \) has size \( \kappa \), has a minimum, that is \( 0 \), but, as opposed to the case of labels of type I, such a minimum has exactly one immediate successor, namely \( c_u \) (which is a terminal node in \( \sL^*_u \)). All other points of \( \sL_\gamma \) have at least one immediate successor, and a point \( x \) is in the spine if and only if \( \Cone(x) \) is not a linear order, if and only if either \( x \) is \( 0 \) or it has exactly two immediate successors (if \(x = \alpha \) these successors are again \( \alpha+1 \) and \( (\alpha, 0 ) \)). So a label of type III is distinguished from a label of type I only by the fact that in the former case the minimum has an immediate successor. 
Similarly to the previous cases, we say that a tree is a \emph{code for \(u\)} if it is isomorphic to \( \mathscr{L}^*_u \). Notice that all the \( \sL^*_u \)'s have size exactly \( \kappa \).

\medskip

We now argue that also with our new definitions conditions~\ref{cond:C1}--\ref{cond:C6} are satisfied. This is obvious for condition~\ref{cond:C1}. Conditions~\ref{cond:C4}--\ref{cond:C5} can be proved as in~\cite[Lemmas 8.3 and 8.4]{mottoros2011} (the reader can easily check that our minor modifications have no influence on the arguments used there). Finally, the following proposition ensures that also the remaining conditions~\ref{cond:C3} and~\ref{cond:C6}  are still satisfied.

\begin{proposition} \label{prop:succofsingular}
\begin{enumerate-(i)}
\item  \label{prop:succofsingular-i}
If \( \sL \), \( \sL' \) are labels of different type, then \( \sL \not \sqsubseteq \sL' \).
\item \label{prop:succofsingular-ii}
If \( u,v \in \pre{\SUCC(< \kappa)}{2} \) are distinct, then \( \sL^*_u \not \sqsubseteq \sL^*_v \).
\end{enumerate-(i)}
\end{proposition}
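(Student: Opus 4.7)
The plan is to exploit a single structural invariant---whether the upper cone of a node is a linear order---to identify the spine in a uniform way and to rule out most embeddings simultaneously. Since $|\sL_s|<\kappa$ while $|\sL_\gamma|=|\sL^*_u|=\kappa$, neither a type I nor a type III label can embed into a type II label, so for~\ref{prop:succofsingular-i} it remains to treat the three directions not ruled out by cardinality. The key preservation principle I will use throughout is: if $i\colon \sL\to\sL'$ is an embedding and $\cone_{\sL}(x)$ contains two incomparable points $y,z$, then $i(y),i(z)$ are incomparable in $\sL'$ and both lie in $\cone_{\sL'}(i(x))$, so $\cone_{\sL'}(i(x))$ is also non-linear.

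For an embedding of a type II label $\sL_s$ into $\sL_\gamma$ or $\sL^*_u$, I would apply this to the incomparable pair $a,b\in A_s$, whose upper cones in $\sL_s$ are non-linear (witnessed by $a^+,a^-$ and $b^+,b^-$, respectively). The images $i(a),i(b)$ would then be two incomparable points of $\sL_\gamma$ (respectively, $\sL^*_u$) both having non-linear upper cones. But by the characterization of the spine recalled in the construction of labels of type I and type III, any point with non-linear upper cone must lie on the spine, and the spine is linearly ordered; hence $i(a)$ and $i(b)$ would have to be comparable, a contradiction.

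For embeddings between type I and type III labels, and for the remaining part~\ref{prop:succofsingular-ii}, the same observation shows that a hypothetical embedding sends the spine of the source into the spine of the target, and it restricts to an order-preserving injection between the corresponding linear orders. Thus an embedding $\sL_\gamma\sqsubseteq\sL^*_u$ (respectively, $\sL^*_u\sqsubseteq\sL_\gamma$) would yield $L_{\kappa,\gamma}\sqsubseteq L_u$ (respectively, $L_u\sqsubseteq L_{\kappa,\gamma}$), and an embedding $\sL^*_u\sqsubseteq\sL^*_v$ with $u\neq v$ would yield $L_u\sqsubseteq L_v$. Each of these contradicts the pairwise non-embeddability, guaranteed by the choice of the $L_u$'s and $L_{\kappa,\gamma}$'s among Baumgartner's family.

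The only step requiring care is the verification that the spine characterization genuinely holds in $\sL^*_u$ in spite of the extra node $c_u$ attached above $0$: since $\cone_{\sL^*_u}(c_u)=\{c_u\}$ is linear and the cones of the comb points $(\alpha,\beta)$ are linear chains, while every spine point $\alpha$ has at least two incomparable successors $(\alpha',0),(\alpha'',0)$ above it on incomparable combs (using that the spine has no maximum), the equivalence ``$x$ is in the spine iff $\cone(x)$ is not a linear order'' still holds; this is the main, and only mildly delicate, point I expect to have to check explicitly before the uniform argument above kicks in.
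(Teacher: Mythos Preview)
Your proposal is correct and follows essentially the same approach as the paper: cardinality rules out embeddings into type~II labels, the incomparable pair $a,b$ with non-linear cones rules out embeddings of type~II into type~I or~III, and the spine characterization via non-linear cones forces spine-to-spine maps, reducing the remaining cases to the pairwise non-embeddability of the underlying Baumgartner linear orders. Your explicit check that the spine characterization survives the addition of $c_u$ in $\sL^*_u$ is a point the paper simply asserts in the label construction, so you are being slightly more careful there, but otherwise the arguments coincide.
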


\begin{proof} \ref{prop:succofsingular-i}
If \( \sL' \) is of type II (and \( \sL \) is of a different type), then \( \sL \not \sqsubseteq \sL' \) because \( |\sL| = \kappa > | \sL' | \). Vice versa, if \( \sL \) is of type II and \( \sL' \) is either of type I or of type III, then \( \sL \not \sqsubseteq \sL' \) because in \( \sL \) there are incomparable points whose upper cone is not a linear order  (e.g.\ the points \( a \) and \( b \)), while this property fails for \( \sL' \).

If \( \sL = \sL_\gamma \) is of type I and \( \sL' = \sL^*_u \) is of type III, then any embedding of \( \sL \) into \( \sL' \) would map the spine of \( \sL \) into the spine of \( \sL' \) because in both cases the points in the spine are characterized by the fact that their upper cone is not a linear order. It would then follow that \( L_ \gamma \sqsubseteq L_u \), a contradiction. The case where \( \sL \) is of type III and \( \sL' \) is of type I is similar.

For ~\ref{prop:succofsingular-ii}, arguing as in the previous paragraph we get that if \( \sL^*_u \sqsubseteq \sL^*_v \), then \( L_u \sqsubseteq L_v \) because these are the spines of \( \sL^*_u \) and \( \sL^*_v \), respectively, whence \( u = v \).
\end{proof}

\section{Completeness}\label{sec:completeness}

Following~\cite[Section 9]{mottoros2011}, we now show that the embeddability relation on generalized trees of size \( \kappa \) is complete as soon as \( \kappa \) satisfies~\eqref{eq:kappa}, thus dropping the previous large 
cardinal requirements from~\cite{mottoros2011}.
The construction we use here is exactly the one employed there (except that our labels are now defined differently): indeed the reader may check that all proofs 
in~\cite[Section 9]{mottoros2011} needs only that the labels \( \sL_\gamma \), \( \sL_s \), and \( \sL^*_u \) satisfy conditions~\ref{cond:C1}--\ref{cond:C6} --- the appeal to inaccessibility or weak compactness of \( \kappa \) 
was necessary only because the construction of the old labels \( \sL^*_u \) required the first condition, while the proof of the analogue of our Lemma~\ref{lemmanormalform} (namely,~\cite[Lemma 7.2]{mottoros2011}) 
required the latter. For the reader's convenience, and because it will turn out to be useful to precisely know how the involved trees are constructed, we report here the definition of the trees \( G_0 \) and \( G_{\mathcal{T}} \) and state the relevant results related to them. Proofs will be systematically omitted, but the interested reader may consult the analogous results from~\cite[Section 9]{mottoros2011} which are mentioned before each of the statements.

\emph{We assume that \( \kappa \) is an uncountable cardinal  satisfying~\eqref{eq:kappa} as before}. Considering suitable isomorphic copies, we can assume without loss of generality that for every 
\( \gamma < \kappa \), \( s,t \in \pre{\SUCC(< \kappa)}{\kappa} \), and 
\( u,v \in \pre{\SUCC(< \kappa)}{2} \) our labels further satisfy the following conditions:
\begin{enumerate-(i)}
\item
\( \sL_\gamma\), \( \sL_s \), and \( \sL^*_u \) have pairwise disjoint 
domains;
\item
\( \sL^*_u \) and \( \sL^*_v \) have disjoint domains if and only if 
\( u \neq v \);
\item
 if \( \leng{s} = \leng{t} \), then the domain of \( \sL_s \) is contained in the 
domain of \(\sL_t \) if and only if \( \# s \leq \# t \).
\end{enumerate-(i)}

\noindent
These technical assumptions will ensure that the trees \( G_{\mathcal{T}} \) are well-defined avoiding unnecessary complications in the notation.

Let us now first define the generalized tree \( G_0 \) (which is independent of the choice of the DST-tree
\( \mathcal{T} \)). Roughly speaking, \( G_0 \) will be constructed by appending 
to the nodes of the tree \( \left(\pre{\SUCC(< \kappa)}{\kappa}, \subseteq \right) \)
 some labels as follows. Let \( \bar{\gamma} \colon \pre{\SUCC(<\kappa)}{\kappa} \to \kappa \colon s \mapsto \leng{s}-1 \). For every \( s \in 
\pre{\SUCC(< \kappa)}{\kappa} \) we fix a distinct copy of \( (\ZZ , \leq ) \)  and append it to \( s \): each of these copies of \( \ZZ \) will be called a
\emph{stem}, and if such a copy is appended to \( s \) it will be called the 
\emph{stem of \( s \)}. Then for every such \( s \) we fix also distinct 
copies \( \sL_{\bar{\gamma}(s),s} \) and \( \sL_{s,s} \) of, respectively, \( \sL_{\bar{\gamma}(s)} \) and \( \sL_s \), and then append both of them to the stem of \( s \).   
More formally, we have the following definition.

\begin{definition}
The tree \( G_0  \) is defined by the following conditions:
\begin{itemize}
\item
  \( G_0 = \pre{\SUCC(< \kappa)}{\kappa}\cup \bigcup_{ s \in \pre{\SUCC(< \kappa)}{\kappa}} ( \{ (s,x) \mid x \in \ZZ \cup D_{\bar{\gamma}(s)}  \cup  D_s \}   ) \), where \( D_{\bar{\gamma}(s)} \)'s and \( D_s \) are the domains of, respectively,
  the labels \( \sL_{\bar{\gamma}(s)} \) of type I and the label \( \sL_s \) of type II;
\item
the partial order \( \preceq^{G_0} \) on \( G_0 \) is defined as follows:
\begin{enumerate-(1)}
\item
\( \forall s,t \in \pre{\SUCC(<\kappa)}{\kappa}  \left({s \preceq^{G_0} t} \iff {s \subseteq t}\right) \)
\item
\( \forall s \in \pre{\SUCC(<\kappa)}{\kappa} \, \forall z,z' \in \ZZ  \left({(s,z) \preceq^{G_0} (s,z')} \iff {z \leq z'}\right) \)
\item
\( \forall s \in \pre{\SUCC(< \kappa)}{\kappa}\, \forall x,x' \in D_{\bar{\gamma}(s)}  \left({(s,x) \preceq^{G_0} (s,x')} \iff {x \preceq_{\bar{\gamma}(s)} x'}\right) \)
\item
\( \forall s \in \pre{\SUCC(<\kappa)}{\kappa}\, \forall x,x' \in D_s  \left({(s,x) \preceq^{G_0} (s,x')} \iff {x \preceq_s x'}\right) \)
\item
\( \forall s,t\in \pre{\SUCC(<\kappa)}{\kappa}\, \forall x \in \ZZ \cup D_{\bar{\gamma}(t)} \cup D_t  \left({s \preceq^{G_0} (t,x)} \iff {s \subseteq t}\right) \)
\item
\( \forall s \in \pre{\SUCC(< \kappa)}{\kappa}\,  \forall z \in \ZZ \, \forall x \in D_{\bar{\gamma}(s)} \cup D_s  \left({(s,z) \preceq^{G_0} (s,x)} \right) \)
\item
no other \( \preceq^{G_0} \)-relation holds.
\end{enumerate-(1)}
\end{itemize}
\end{definition}

\noindent
So the \emph{stem of \( s \)} is \( G_0 \restriction \{ s \} \times \ZZ \). Substructures of the form \( G_0 \restriction \{ s \} \times D_{\bar{\gamma}(s)} \) and \( 
G_0 \restriction \{ s \} \times D_s \) will be called \emph{labels} (of type I 
and II, respectively). 

Let now \( \mathcal{T} \) be a DST-tree on \( 2 \times \kappa \) of height \( \kappa \). The tree \( G_{\mathcal{T}} \) will be constructed by appending a distinct copy of the label \( \sL^*_u \) to the stem of \( s \) for every \( (u,s) \in \mathcal{T} \) with \( s \) of successor length.

\begin{definition}
The tree \( G_{\mathcal{T}} = (D_{\mathcal{T}}, \preceq_{\mathcal{T}}) \) is defined as follows:
\begin{itemize}
\item
\( D_{\mathcal{T}} = G_0 \cup \bigcup_{\substack{(u,s) \in \mathcal{T} \\ s \in \pre{\SUCC(< \kappa)}{\kappa}}} \{ (s,x) \mid x \in D^*_u \} \), where \( D^*_u \) is the domain on \( \sL^*_u \);
\item
\( \preceq_{\mathcal{T}} \) is the partial order on \( D_{\mathcal{T}} \) defined by:
\begin{enumerate-(1)}
\item
\( \forall x,y \in G_0  \left({x \preceq_{\mathcal{T}} y } \iff {x \preceq^{G_0} y}\right) \)
\item
\(  \forall (u,s) \in \mathcal{T}  \left[s \in \pre{\SUCC(<\kappa)}{\kappa} \Rightarrow \forall x,y \in D^*_u \left ({(s,x) \preceq_{\mathcal{T}} (s,y)} \iff {x \preceq^*_u y}\right) \right]\)
\item
\( \forall t \in \pre{\SUCC(<\kappa)}{\kappa} \, \forall (u,s) \in \mathcal{T}  \left [s \in \pre{\SUCC(<\kappa)}{\kappa} \Rightarrow \forall x \in D^*_u  \left ({t \preceq_{\mathcal{T}} (s,x) } \iff {t \subseteq s} \right)\right]  \)
\item
\( \forall (u,s) \in \mathcal{T}  \left[s \in \pre{\SUCC(<\kappa)}{\kappa} \Rightarrow \forall x \in D^*_u \, \forall z \in \ZZ \left ((s,z) \preceq_{\mathcal{T}} (s,x)\right)\right] \)
\item
no other \( \preceq_{\mathcal{T}} \)-relation holds.
\end{enumerate-(1)}
\end{itemize}
\end{definition}

Substructures of the form \( G_{\mathcal{T}} \restriction \{ s \} \times \ZZ \),  \( G_{\mathcal{T}} \restriction \{ s \} \times D_{\bar{\gamma}(s)} \), and \( G_{\mathcal{T}} \restriction \{ s \} \times D_s \) will again be called, respectively, \emph{stem of \( s \)},  \emph{labels of type I} and \emph{labels of type II}, and be denoted by, respectively, \( \sS^{\mathcal{T}}_{s} \), \( \sL^{\mathcal{T}}_{\bar{\gamma}(s),s} \), and \( \sL^{\mathcal{T}}_{s,s} \). Similarly, substructures of the form \( G_{\mathcal{T}} \restriction \{ s \} \times D^*_u \) (for \( (u,s) \in \mathcal{T} \)) will be called \emph{labels of type III}, and be denoted by \( \sL^{\mathcal{T}}_{u,s} \).
Notice that if \( \sL \) is a label of \( G_{\mathcal{T}} \) with domain \( D_{\sL} \) and \( x  \in D_{\sL} \), then \( \cone(x) \subseteq D_{\sL} \).

For \( s \in \pre{\SUCC(< \kappa)}{\kappa} \), we let
\[ 
\cone(\sS^{\mathcal{T}}_s) = \bigcap_{z \in \ZZ} \cone((s,z)).
 \] 
Therefore,  \( \cone(\sS^{\mathcal{T}}_s) \) consists of a disjoint union of 
labels of various type. In particular, it contains exactly one label of type I 
(namely, \( \sL^{\mathcal{T}}_{\bar{\gamma}(s),s} \)), one label of type II (that is, 
\( \sL^{\mathcal{T}}_{s,s} \)) and, depending on \( \mathcal{T} \), a variable 
number of labels of type III (namely, a label of the form 
\(\sL^{\mathcal{T}}_{u,s} \) for every \( (u,s) \in \mathcal{T} \)). Notice 
also that every label \( \sL \subseteq \cone(\sS^\mathcal{T}_s) \) is a 
maximal connected component of \( G_\mathcal{T} \restriction 
\cone(\sS^\mathcal{T}_s) \). The next theorem is the analogue of~\cite[Theorem 9.3]{mottoros2011} and can be proved in the same way.

\begin{theorem} \label{theorcomplete}
Let \( \kappa \) be any cardinal satisfying~\eqref{eq:kappa}, let \( \mathcal{T}, \mathcal{T}' \) be two DST-trees on \( 2 \times \kappa \) of height \( \kappa \), and let \( \# \) be as in 
Proposition~\ref{propoplus}\ref{propoplusc2}.
\begin{enumerate-(1)}
\item\label{theorcomplete1}
\( {G_{\mathcal{T}} \sqsubseteq G_{\mathcal{T}'}} \iff \) there is a witness \( \varphi \colon \pre{< \kappa}{\kappa} \to \pre{< \kappa}{\kappa} \) of \( {\mathcal{T} \leq_{\max} \mathcal{T}'} \) such that \( \forall  s\in \pre{\SUCC(< \kappa)}{\kappa} \left (\#s \leq \# \varphi(s) \right)  \);
\item \label{theorcomplete2}	
\( {G_{\mathcal{T}} \cong G_{\mathcal{T}'}} \iff {\mathcal{T} \cap \pre{\SUCC(<\kappa)}{(2 \times \kappa)} = \mathcal{T}' \cap \pre{\SUCC(< \kappa)}{(2 \times \kappa)}}  \).
\end{enumerate-(1)}
\end{theorem}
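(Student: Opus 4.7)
The plan is to adapt the proof of~\cite[Theorem~9.3]{mottoros2011} essentially verbatim, relying only on the properties~\ref{cond:C1}--\ref{cond:C6} which are still satisfied by the new labels constructed in Section~\ref{sec:labels}. I would treat the two directions of the two equivalences separately.

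For the ``$\Leftarrow$'' direction of~\ref{theorcomplete1}, given a Lipschitz injection $\varphi$ witnessing $\mathcal{T} \leq_{\max} \mathcal{T}'$ with $\# s \leq \# \varphi(s)$ for every $s \in \pre{\SUCC(<\kappa)}{\kappa}$, I would define $i \colon G_{\mathcal{T}} \hookrightarrow G_{\mathcal{T}'}$ piecewise. Set $i(s) = \varphi(s)$ on the backbone, map the stem $\sS^{\mathcal{T}}_s$ isomorphically onto $\sS^{\mathcal{T}'}_{\varphi(s)}$ (both are copies of $\ZZ$), and map the type~I label $\sL^{\mathcal{T}}_{\bar{\gamma}(s),s}$ isomorphically onto $\sL^{\mathcal{T}'}_{\bar{\gamma}(\varphi(s)),\varphi(s)}$, which is possible because $\varphi$ is length-preserving and hence $\bar{\gamma}(s)=\bar{\gamma}(\varphi(s))$. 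For the type~II label, use~\ref{cond:C5} together with the hypothesis $\# s \leq \# \varphi(s)$ to embed $\sL^{\mathcal{T}}_{s,s}$ into $\sL^{\mathcal{T}'}_{\varphi(s),\varphi(s)}$. Finally, for each $(u,s) \in \mathcal{T}$ with $s$ of successor length, the definition of $\leq_{\max}$ yields $(u,\varphi(s)) \in \mathcal{T}'$, so one maps the type~III label $\sL^{\mathcal{T}}_{u,s}$ isomorphically onto $\sL^{\mathcal{T}'}_{u,\varphi(s)}$. A routine check then shows that $i$ preserves the partial order.

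For the ``$\Rightarrow$'' direction of~\ref{theorcomplete1}, given an embedding $i \colon G_{\mathcal{T}} \hookrightarrow G_{\mathcal{T}'}$, I would analyze the image of each maximal connected component $\sL$ of $G_{\mathcal{T}} \restriction \cone(\sS^{\mathcal{T}}_s)$. Since $i$ preserves compatibility in the forward direction, $i``\sL$ lies inside a single maximal connected component of the analogous cone in $G_{\mathcal{T}'}$, and conditions~\ref{cond:C3}--\ref{cond:C6} force $\sL$ to embed into a label of $G_{\mathcal{T}'}$ of the same type. Since all labels sitting above the stem of $s$ share a common $\ZZ$-shaped descent, their images are pinned to the labels sitting above a single stem in $G_{\mathcal{T}'}$, that is, above $\sS^{\mathcal{T}'}_t$ for a unique $t \in \pre{\SUCC(<\kappa)}{\kappa}$; set $\varphi(s) = t$. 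Condition~\ref{cond:C4} forces $\lh(\varphi(s))=\lh(s)$ (via the type~I label), condition~\ref{cond:C5} forces $\# s \leq \# \varphi(s)$ (via the type~II label), and condition~\ref{cond:C6} combined with the location of type~III labels in $G_{\mathcal{T}'}$ ensures $(u,\varphi(s)) \in \mathcal{T}'$ whenever $(u,s) \in \mathcal{T}$. Monotonicity of $\varphi$ follows from the fact that $s \subseteq t$ implies $\sS^{\mathcal{T}}_s$ lies below $\sS^{\mathcal{T}}_t$ in $G_{\mathcal{T}}$, and injectivity from the disjointness of distinct stems in $G_{\mathcal{T}'}$; together with length-preservation this yields the Lipschitz property and completes the proof that $\varphi$ witnesses $\mathcal{T} \leq_{\max} \mathcal{T}'$ in the desired way.

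Part~\ref{theorcomplete2} then follows quickly. The ``$\Leftarrow$'' direction repeats the construction above with $\varphi = \id$, upgrading each label mapping to an isomorphism (allowed by the sharper halves of~\ref{cond:C5} and~\ref{cond:C6}, which give $\sL_s \cong \sL_t \iff s=t$ and $\sL^*_u \cong \sL^*_v \iff u=v$). Conversely, any isomorphism $G_{\mathcal{T}} \cong G_{\mathcal{T}'}$ produces, via the backward direction of~\ref{theorcomplete1} applied to it and to its inverse, a pair of Lipschitz injections whose composition is the identity; the rigidity $\sL_s \cong \sL_t \iff s=t$ then forces $\varphi = \id$, whence the successor-length parts of $\mathcal{T}$ and $\mathcal{T}'$ coincide. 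The main obstacle in the entire argument is the ``$\Rightarrow$'' direction of~\ref{theorcomplete1}: one has to rule out pathologies such as an embedding sending a label attached to the stem of $s$ into a label attached to the stem of some incomparable $s'$, or spreading a single label across the backbone and several distinct labels of $G_{\mathcal{T}'}$. This is precisely what the rigidity encoded in~\ref{cond:C1}--\ref{cond:C6}, together with the characterization of labels as maximal connected components of the cones above stems, prevents; making this intuition fully rigorous requires the same meticulous case analysis as in~\cite[Theorem~9.3]{mottoros2011}.
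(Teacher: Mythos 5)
Your proposal is correct and follows essentially the same approach as the paper, which itself defers to the proof of Theorem~9.3 in~\cite{mottoros2011} after observing (at the start of Section~\ref{sec:completeness}) that all arguments there use only conditions~\ref{cond:C1}--\ref{cond:C6} on the labels, which the new labels still satisfy. Your sketch of the forward and backward directions, including the use of type~I labels for length preservation, type~II labels together with~\ref{cond:C5} for the \(\#\)-inequality, and type~III labels with~\ref{cond:C6} for the \(\leq_{\max}\)-witness, matches the cited argument.
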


Let \( \kappa \) be any uncountable cardinal satisfying~\eqref{eq:kappa}, \( R  \) be an analytic quasi-order on \( \pre{\kappa}{2} \), and \( T \) a DST-tree on \( 2 \times 2 \times \kappa \) as 
in Lemma~\ref{lemmanormalform}. Recall that in~\eqref{eqs_T} we defined a map \( s_T \) sending \( x \in \pre{\kappa}{2} \) into a DST-tree on \( 2 \times \kappa \) of height \( \kappa \) denoted by \( s_T(x) \). 
Since each tree \( G_{s_T(x)} \) can be easily Borel-in-\( T \) coded into a tree with domain \( \kappa \), henceforth \( G_{s_T(x)} \) will be tacitly identified with such a copy. With this notational convention, the composition of \( s_T \) with the map sending \( \mathcal{T} \) into \( G_{\mathcal{T}} \) gives the function
\begin{equation}\label{eqf} 
f \colon \pre{\kappa}{2} \to \Mod^\kappa_\L \colon x \mapsto G_{s_T(x)}, 
\end{equation}
which will be our reduction of \( R \) to the embeddability relation \( \sqsubseteq \restriction \Mod^\kappa_\L \), for \(\L\) the language of trees.

Let now \( \sqsubseteq^\kappa_{\mathsf{TREE}} \) (\( \sqsubseteq^\kappa_{\mathsf{GRAPH}} \)) denote the relation of embeddability between trees (respectively, graphs) of size \( \kappa \). 
Combining Lemma~\ref{lemmamax} and Theorem~\ref{theorcomplete} we now get (see also~\cite[Corollary 9.5]{mottoros2011}):

\begin{theorem} \label{thm:complete}
Let \( \kappa \), \( R \), and \( T \) be as above. Then the map \( f \) from~\eqref{eqf} is a Borel reduction of \( R \) to \( \sqsubseteq^\kappa_{\mathsf{TREE}} \).
In particular, the relation \( \sqsubseteq^\kappa_{\mathsf{TREE}} \) is complete for analytic quasi-orders.
\end{theorem}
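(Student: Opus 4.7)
The plan is to verify that $f$ is a reduction by chaining Lemma~\ref{lemmamax} and Theorem~\ref{theorcomplete}, and then to observe that $f$ is manifestly Borel because it is built from two Borel (indeed, essentially continuous) operations: $x\mapsto s_T(x)$ and $\mathcal{T}\mapsto G_{\mathcal{T}}$. Completeness of $\sqsubseteq^{\kappa}_{\mathsf{TREE}}$ for analytic quasi-orders is then immediate, since $R$ was an arbitrary analytic quasi-order on $\pre{\kappa}{2}$.

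For the forward direction $x\mathrel{R}y\Rightarrow f(x)\sqsubseteq f(y)$, suppose $x\mathrel{R}y$. By Lemma~\ref{lemmanormalform}\ref{lemmanormalformc5} fix $\xi\in\pre{\kappa}{\kappa}$ with $(x,y,\xi)\in[S_T]$. By Lemma~\ref{lemmamax}(ii), the Lipschitz map $\varphi\colon\pre{<\kappa}{\kappa}\to\pre{<\kappa}{\kappa}$ defined by $\varphi(s)=s\oplus(\xi\restriction\lh(s))$ witnesses $s_T(x)\leq_{\max}s_T(y)$ and moreover satisfies $\#s\leq\#\varphi(s)$ for every $s\in\pre{\SUCC(<\kappa)}{\kappa}$. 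This is exactly the extra condition required in Theorem~\ref{theorcomplete}\ref{theorcomplete1}, so we conclude $G_{s_T(x)}\sqsubseteq G_{s_T(y)}$, i.e.\ $f(x)\sqsubseteq f(y)$.

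For the converse, assume $f(x)\sqsubseteq f(y)$, i.e.\ $G_{s_T(x)}\sqsubseteq G_{s_T(y)}$. Applying Theorem~\ref{theorcomplete}\ref{theorcomplete1} we extract a Lipschitz injective witness of $s_T(x)\leq_{\max}s_T(y)$ (we do not even need the $\#$-monotonicity in this direction), so in particular $s_T(x)\leq_{\max}s_T(y)$. Then Lemma~\ref{lemmamax}(i) delivers $x\mathrel{R}y$, completing the equivalence $x\mathrel{R}y\iff f(x)\sqsubseteq f(y)$.

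It remains to verify that $f$ is Borel. The assignment $x\mapsto s_T(x)$ is continuous: by~\eqref{eqs_T}, the characteristic function of $s_T(x)$ at a pair $(u,s)\in\pre{<\kappa}{(2\times\kappa)}$ depends only on $x\restriction\lh(u)$, so each coordinate is determined by a bounded piece of $x$. The subsequent assignment $\mathcal{T}\mapsto G_{\mathcal{T}}$ is likewise continuous once $G_{\mathcal{T}}$ has been identified (in a fixed Borel way) with a structure with domain $\kappa$: the domain of $G_{\mathcal{T}}$ and the partial order $\preceq_{\mathcal{T}}$ are determined coordinatewise by $\mathcal{T}$, with the only contribution of $\mathcal{T}$ being the decision of whether each label $\mathscr{L}^{\mathcal{T}}_{u,s}$ of type III is attached to the stem $\mathscr{S}^{\mathcal{T}}_s$ above $s\in\pre{\SUCC(<\kappa)}{\kappa}$. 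Hence $f\colon\pre{\kappa}{2}\to\Mod^{\kappa}_{\L}$ is Borel, so $R\leq_{B}\sqsubseteq^{\kappa}_{\mathsf{TREE}}$. Since $R$ was arbitrary and $\sqsubseteq^{\kappa}_{\mathsf{TREE}}$ is itself an analytic quasi-order, it is complete for analytic quasi-orders; the corresponding statement for $\sqsubseteq^{\kappa}_{\mathsf{GRAPH}}$ follows by the standard Borel interpretation of trees into graphs. The only non-routine ingredients — the construction of the tree representation of $R$ without inaccessibility (Lemma~\ref{lemmanormalform}) and of the labels without inaccessibility (Section~\ref{sec:labels}) — have already been handled, so assembling the pieces is straightforward.
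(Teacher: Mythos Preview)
Your proposal is correct and follows precisely the approach the paper takes: the paper merely states that the result is obtained by combining Lemma~\ref{lemmamax} and Theorem~\ref{theorcomplete}, and you have filled in exactly how the two directions of the reduction follow from these (the forward direction via Lemma~\ref{lemmamax}(ii) and then Theorem~\ref{theorcomplete}\ref{theorcomplete1}, the backward direction via Theorem~\ref{theorcomplete}\ref{theorcomplete1} and then Lemma~\ref{lemmamax}(i)), together with the routine verification that $f$ is Borel.
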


Finally, by~\cite[Remark 9.7]{mottoros2011} we also obtain an analogous result for graphs (see~\cite[Corollary 9.8]{mottoros2011}).

\begin{corollary}\label{cor:completegraphs}
Let \( \kappa \) be any cardinal satisfying~\eqref{eq:kappa}. Then \( \sqsubseteq^\kappa_{\mathsf{GRAPH}} \) is complete for analytic quasi-orders.
\end{corollary}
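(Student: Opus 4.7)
The plan is to deduce the corollary directly from Theorem~\ref{thm:complete} by exhibiting a Borel map \( \Phi \) from generalized trees of size \( \kappa \) to graphs of size \( \kappa \) which is a reduction of \( \sqsubseteq^\kappa_{\mathsf{TREE}} \) to \( \sqsubseteq^\kappa_{\mathsf{GRAPH}} \). Composed with the reduction \( f \) from~\eqref{eqf}, this yields a Borel reduction of any analytic quasi-order \( R \) to \( \sqsubseteq^\kappa_{\mathsf{GRAPH}} \), whence completeness.

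To define \( \Phi \), I would use the standard ``asymmetric gadget'' interpretation of a partial order as a graph. Given a tree \( T = (T, \preceq^T) \) with \( |T| = \kappa \), let \( \Phi(T) \) have as vertex set \( T \) together with auxiliary points coding (i) a rigid distinguishing marker attached to each \( x \in T \), and (ii) for each ordered pair \( x \prec^T y \) (with \( x \neq y \)) a finite asymmetric gadget \( H_{x,y} \) connecting \( x \) and \( y \). For concreteness, attach to every \( x \in T \) a pendant triangle (three new vertices forming a \( 3 \)-cycle, one of which is joined to \( x \)), and for each comparable pair \( x \prec^T y \) insert a path of length, say, \( 5 \) from \( x \) to \( y \) whose internal vertices each carry a pendant edge on the side closer to the smaller endpoint; the exact combinatorics are not important, the point is that no two gadgets are isomorphic in a way that swaps their endpoints, and neither the connector gadgets nor any other vertex can play the role of a ``tree vertex'' (the latter being distinguished, e.g., by being adjacent to a triangle pendant). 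Since \( \kappa^{<\kappa} = \kappa \), we have \( |T \times T| = \kappa \), so \( \Phi(T) \) still has size \( \kappa \), and the assignment \( T \mapsto \Phi(T) \) (read as a map between standard Borel \( \kappa \)-spaces of codes) is clearly Borel.

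The forward implication \( T \sqsubseteq T' \Rightarrow \Phi(T) \sqsubseteq \Phi(T') \) is automatic: a tree embedding extends to a graph embedding by sending each gadget of \( \Phi(T) \) to the corresponding gadget of \( \Phi(T') \). For the converse, any graph embedding \( j \colon \Phi(T) \hookrightarrow \Phi(T') \) must send triangle-marked vertices to triangle-marked vertices (by rigidity of the marker), hence it restricts to an injection \( T \to T' \); and the asymmetry built into the connector gadgets forces \( j \) to send each gadget \( H_{x,y} \subseteq \Phi(T) \) to some \( H_{j(x),j(y)} \subseteq \Phi(T') \) with the same orientation, so \( x \prec^T y \Rightarrow j(x) \prec^{T'} j(y) \), i.e.\ \( j \upharpoonright T \) is a tree embedding.

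The main (minor) obstacle is purely bookkeeping: one must choose the gadgets so that they are rigid and asymmetric, \emph{and} so that no unintended graph-theoretic image of a gadget can appear inside \( \Phi(T') \) at a location that does not come from a comparable pair. This is handled, as in the countable and weakly compact cases, by making the markers and connectors combinatorially distinguishable (different small cycle/tree types, pendant degrees, etc.) and then checking by case analysis that the only isomorphic copies of a connector gadget inside \( \Phi(T') \) are the ones introduced by the construction. The verification is identical to the one underlying~\cite[Remark 9.7]{mottoros2011} and uses only \( \kappa^{<\kappa} = \kappa \); no inaccessibility or tree property of \( \kappa \) enters, so the argument transfers verbatim to our setting.
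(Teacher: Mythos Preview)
Your overall strategy matches the paper's: reduce \( \sqsubseteq^\kappa_{\mathsf{TREE}} \) to \( \sqsubseteq^\kappa_{\mathsf{GRAPH}} \) via a Borel gadget encoding and compose with Theorem~\ref{thm:complete}; the paper simply invokes~\cite[Remark 9.7]{mottoros2011} for this step. However, the specific construction you sketch has a real gap in the backward direction. An embedding of \( \{\preceq\} \)-structures must satisfy \( x \preceq^T y \iff j(x) \preceq^{T'} j(y) \), i.e.\ it must \emph{reflect} the order as well as preserve it. Since you insert connector gadgets only for comparable pairs \( x \prec^T y \), from a graph embedding \( j \colon \Phi(T) \hookrightarrow \Phi(T') \) you can deduce \( x \prec^T y \Rightarrow j(x) \prec^{T'} j(y) \), but nothing prevents two \emph{incomparable} \( x,y \in T \) from landing on comparable \( j(x),j(y) \in T' \): the gadget \( H_{j(x),j(y)} \) present in \( \Phi(T') \) simply lies outside the range of \( j \), and since \( x,y \) are non-adjacent in \( \Phi(T) \) and \( j(x),j(y) \) are likewise non-adjacent in \( \Phi(T') \) (being joined only through a length-\(5\) path), no contradiction arises. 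So your conclusion ``\( j \restriction T \) is a tree embedding'' does not follow.

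The standard fix is to also insert a gadget of a \emph{different}, combinatorially distinguishable shape between every incomparable (or, alternatively, every \(\not\preceq\)-related) ordered pair; a graph embedding must then send such gadgets to gadgets of the same type, forcing \( x \perp^T y \Rightarrow j(x) \perp^{T'} j(y) \) and closing the argument. With that amendment your sketch becomes a valid instance of the reduction the paper cites.
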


\section{Strongly invariant universality}\label{sec:universality}

Let \( \L = \{ \preceq \} \) be the tree language consisting of one binary relational symbol, and let \( \kappa \) be an uncountable cardinal satisfying~\eqref{eq:kappa}. %, and let \( \kappa \) be an inaccessible cardinal. 
For the rest of this section, \( X , Y \) will denote arbitrary \( \L \)-structures of size \( \leq \kappa \). As a first step, following~\cite[Section 10]{mottoros2011} we provide an \( \L_{\kappa^+ \kappa} \)-sentence \( \Uppsi \) such that
\( G_{\mathcal{T}} \models \Uppsi \) for every DST-tree \( \mathcal{T} \) on \( 2 \times \kappa \) of height \( \kappa \), and, conversely, every \( X \in \Mod^\kappa_\Uppsi \) is ``very close'' to being a tree of the form \( G_{\mathcal{T}} \).

To simplify the notation, we let \( x \prec y \), \( x \not\preceq y \), \( x \perp y \), and \( x \not\perp y \)  be abbreviations for, respectively, \( {x \preceq y} \wedge {x \neq y} \), \( \neg (x \preceq y ) \), \( x \not\preceq y \wedge y \not\preceq x \), and \( x \preceq y \vee y \preceq x \).
Let \( X \) be an \( \L \)-structure of size \( \leq \kappa \), and let \( i \colon X \to \kappa \) be an injection. We denote by 
\[ 
\uptau^i_{\mathsf{qf}}(X)  (\langle \V_\alpha \mid \alpha \in \range(i) \rangle )
 \] 
the \emph{quantifier free type of \( X \) (induced by \( i \))}, i.e.\ the formula
\[ 
\bigwedge_{\substack{x,y \in X \\  x \neq y}} (\V_{i(x)} \neq \V_{i(y)}) \wedge \bigwedge_{\substack{x,y \in X \\ x \preceq^X y}} (\V_{i(x)} \preceq \V_{i(y)}) \wedge \bigwedge_{\substack{x,y \in X \\ x \not\preceq^X y}} \V_{i(x)} \not\preceq \V_{i(y)}.
 \] 
Notice that \( \uptau^i_{\mathsf{qf}}(X)(\langle \V_\alpha \mid \alpha \in 
\range(i) \rangle) \) 
is an \( \L_{\kappa^+ \kappa} \)-formula if and only if \( |X| < \kappa \). 
Moreover, if \( Y \) is an \( \L \)-structure and \( \langle a_\alpha \mid \alpha 
\in \range(i) \rangle , \langle b_\alpha \mid \alpha \in \range(i) \rangle \) 
are two sequences of elements of 
\( Y \) such that both 
\( Y \models \uptau^i_{\mathsf{qf}}(X)[\langle a_\alpha \mid \alpha \in 
\range(i) \rangle]\) and 
\( Y \models \uptau^i_{\mathsf{qf}}(X)[\langle b_\alpha \mid \alpha \in 
\range(i) \rangle] \), 
then \( Y \restriction \{ a_\alpha \mid \alpha \in \range(i) \} \) and 
\( Y \restriction \{ b_\alpha \mid \alpha \in \range(i) \} \) are isomorphic (in 
fact, they are 
isomorphic to \( X \)). In order to simplify the notation, since the choice 
of \( i \) is often irrelevant we will drop the reference to \( i \), replace variables 
with metavariables, and call the resulting expression \emph{qf-type of \( X \)}. 
Hence in general  we will denote the qf-type of an \( \L \)-structure \( X \) by
\[ 
\uptau_{\mathsf{qf}}(X) (\langle x_i \mid i \in X \rangle ).
 \]

First let \( \Upphi_0 \) be the \( \L_{\kappa^+ \kappa} \)-sentence axiomatizing 
generalized trees, i.e.\ the first order sentence

\begin{multline}\tag{\( \Upphi_0 \)}
\forall x  \left(x \preceq x \right) \wedge
\forall x \, \forall y  \left({{x \preceq y} \wedge {y \preceq x}} \Rightarrow {x = y}\right ) \wedge \\
\forall x \, \forall y \, \forall  z  \left({{x \preceq y} \wedge {y \preceq z}} \Rightarrow {x \preceq z}\right) \wedge 
\forall x \, \forall y \, \forall z  \left({{y \preceq x} \wedge {z \preceq x}} \Rightarrow {y \not\perp z}\right).
\end{multline}

Let \( \mathsf{Seq}(x) \) be the \( \L_{\kappa^+ \kappa} \)-formula
\begin{equation}\tag{\( \mathsf{Seq} \)} 
\neg \exists \langle x_n \mid n < \omega \rangle  \bigwedge_{n < m < \omega} \left ({x_n \preceq x} \wedge {x_m \prec x_n} \right),
 \end{equation}
and let \( \mathsf{Root}(x,y) \) be the \( \L_{\kappa^+ \kappa} \)-formula
\begin{equation} \tag{\( \mathsf{Root} \)}
{\mathsf{Seq}(x)} \wedge {\neg \mathsf{Seq}(y)} \wedge {x \preceq y} \wedge
\neg \exists w \left({x \prec w} \wedge {w \preceq y} \wedge {\mathsf{Seq}(w)}\right).
\end{equation}
\begin{remark}\label{remroot}
Note that if \( X \) is a tree and \( a \in X \), \( X \models \mathsf{Seq}[a] \) if and only if \( \pred(a) \) is well-founded, and that \( X_{\mathsf{Seq}} = \{ a \in X \mid \pred(a) \text{ is well-founded} \} \) is necessarily \( \preceq^X \)-downward closed.
Moreover, if \( a,a',b  \in X \) are such that \( X \models \mathsf{Root}[a,b] \) and \( X \models \mathsf{Root}[a',b] \), then \( a = a' \). This is because \( X \models \mathsf{Root}[a,b] \wedge \mathsf{Root}[a',b] \) implies \( a,a' \preceq^X b \), hence, since \( X \) is a tree, \( a \) and \( a' \) are comparable. Assume without loss of generality that \( a \preceq^X a' \): since \( \pred(a') \) is 
well-founded, \( a \neq a' \) would contradict \( X \models \mathsf{Root}[a,b] \). Therefore \( a  = a' \). 
\end{remark}

Let \( \Upphi_1 \) be the \( \L_{\kappa^+ \kappa} \)-sentence
\begin{equation} \tag{\( \Upphi_1 \)}
\forall y  \left[ \mathsf{Seq}(y) \vee \exists x \, \mathsf{Root}(x,y) \right] .
\end{equation}

\begin{remark}\label{remPhi_1}
Let \( X \) be a tree. Given \( a \in X_{\mathsf{Seq}} \), let \( X_a \) be the substructure of \( X \) with domain 
\begin{align*}
X_a  &= \left\{ b \in X \mid a \preceq^X b \wedge \neg \exists c  \left(a \prec^X c \preceq^X b \wedge c \in X_{\mathsf{Seq}}\right) \right\} \\
& = \left\{ b \in X \mid X \models \mathsf{Root}[a,b] \right\}.
\end{align*}
Assume now that \( X \models \Upphi_1 \). Then for every \( b \in X \) either \( b \in X_{\mathsf{Seq}} \) or \( b \) belongs to \( X_a \) for some \( a \in X_{\mathsf{Seq}} \). Moreover, each \( X_a \) is obviously \( \preceq^X \)-upward closed (i.e.\ for every \(a,b,c \in X \), if \( X \models \mathsf{Root}[a,b] \) and \( b \preceq^X c \) then \( X \models \mathsf{Root}[a,c] \)). This implies that:
\begin{itemize}
\item
if \( a,a' \in X_{\mathsf{Seq}} \) are distinct, \( b \in X_a \), and \( b' \in X_{a'} \), then \( b,b' \) are incomparable;
\item
for every \( a,a' \in X_{\mathsf{Seq}} \) and \( b \in X_a \), 
\[ 
a' \preceq^X b \iff a' \preceq^X a;
\]
\item
by Remark~\ref{remroot}, for \( a,a',b \) as above \( b \not\preceq^X a' \) (otherwise \( b \in X_{\mathsf{Seq}} \), contradicting \( b \in X_a \)).
\end{itemize}
\end{remark}

Consider now the linear order \( \ZZ  = (\ZZ, \leq ) \). Let \( \mathsf{Stem} (\langle x,x_z \mid z \in \ZZ \rangle ) \) be the \( L_{\kappa^+ \kappa} \)-formula
\begin{multline}\tag{\( \mathsf{Stem} \)}
\uptau_{\mathsf{qf}}(\ZZ)(\langle x_z \mid z \in \ZZ \rangle) \wedge  \bigwedge\nolimits_{z  \in \ZZ} \mathsf{Root}(x,x_z) \wedge \\
 \forall y \left [{\mathsf{Root}(x,y)} \Rightarrow \left (  {\bigvee\nolimits_{z \in \ZZ} y = x_z} \vee {\bigwedge\nolimits_{z \in \ZZ} x_z \prec y}\right ) \right ].
 \end{multline}

We also let \( \mathsf{Stem^\in}(x,y) \) be the \( \L_{\kappa^+ \kappa} \)-formula 
\begin{equation}\tag{\( \mathsf{Stem^\in} \)}
\exists \langle x_z \mid z \in \ZZ \rangle  \left ({\mathsf{Stem}(\langle x,x_z \mid z \in \ZZ \rangle)} \wedge {\bigvee\nolimits_{z \in \ZZ} y = x_z} \right ).
 \end{equation}

\begin{lemma} \label{lemmastemisunique}
Let \( X  \) be a tree, \( a \in X \) and \( \langle a_z \mid z \in \ZZ \rangle , \langle b_z \mid z \in \ZZ \rangle\ \in \pre{\ZZ}{X}\). If \( X \models \mathsf{Stem}(\langle a,a_z \mid z \in \ZZ \rangle) \) and \( X \models \mathsf{Stem}(\langle a,b_z \mid z \in \ZZ \rangle) \), then there is \( k \in \ZZ\) such that \( a_z = b_{z+k} \) for every \( z \in \ZZ \). In particular, \( \{ a_z \mid z \in \ZZ \} = \{ b_z \mid z \in \ZZ \} \).
\end{lemma}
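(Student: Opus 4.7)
Write $X_a=\{y\in X\mid X\models\mathsf{Root}[a,y]\}$. By the qf-type clause of $\mathsf{Stem}$, both $A=\{a_z\mid z\in\ZZ\}$ and $B=\{b_z\mid z\in\ZZ\}$ are $\preceq^X$-chains (of order type $\ZZ$), and by the $\mathsf{Root}$ clause, $A,B\subseteq X_a$. The key tool is the third conjunct of $\mathsf{Stem}$, which, read for the $a$-stem, says:
\[
(\ast)_A\quad \forall y\in X_a\ \bigl(y\in A\ \ \text{or}\ \ a_z\prec y\text{ for every }z\in\ZZ\bigr),
\]
and analogously $(\ast)_B$ for the $b$-stem. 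My plan is to show first that $A=B$ and then derive the shift.

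Apply $(\ast)_A$ to $b_0$ and $(\ast)_B$ to $a_0$. If both alternatives fell in the ``strictly above'' case, one would obtain $a_0\prec b_0\prec a_0$, a contradiction; hence at least one of $a_0=b_k$ or $b_0=a_\ell$ holds, and by symmetry I may assume the former for some fixed $k\in\ZZ$. The claim then reduces to showing $a_n=b_{n+k}$ for every $n\in\ZZ$, which I will prove by induction on $|n|$.

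For the inductive step (treating $n>0$; the case $n<0$ is the symmetric argument carried out below in $\pred(b_{k+n})$, which is linearly ordered because $X$ is a generalized tree), suppose $a_{n-1}=b_{k+n-1}$. Applying $(\ast)_B$ to $a_n$ yields either $a_n=b_m$ for some $m$, or $a_n\succ b_z$ for all $z$; in the latter case, applying $(\ast)_A$ to $b_{k+n}$ forces $b_{k+n}=a_{n'}$ for some $n'$ (otherwise $b_{k+n}\succ a_n\succ b_{k+n}$), and then comparing with the already-established $a_{n-1}=b_{k+n-1}$ through the $\ZZ$-chain structure pins $n'=n$ only if $a_n=b_{k+n}$ directly; a short bookkeeping on the indices shows that the alternative ``$a_n=b_m$ with $m\ne k+n$'' likewise collapses, using that $a_{n-1}\prec a_n$ must correspond to $b_{k+n-1}\prec b_m$ (so $m>k+n-1$) and, symmetrically, $b_{k+n}$ lies strictly between $b_{k+n-1}=a_{n-1}$ and $b_m=a_n$ in the $a$-chain, leaving no room for $m>k+n$.

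The main obstacle will be handling the case distinction cleanly at each inductive step: the formula $\mathsf{Stem}$ only tells us that ``extra'' elements of $X_a$ sit above the whole stem, not that they are comparable with each other, so I must carefully exploit the tree property (predecessors are linearly ordered) together with $(\ast)_A$ and $(\ast)_B$ in both directions to exclude unwanted shifts. Once the indexing $a_n=b_{n+k}$ is established for all $n\in\ZZ$, the ``in particular'' statement $A=B$ is immediate.
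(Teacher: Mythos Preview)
Your approach is correct but genuinely different from the paper's. The paper first proves $A=B$ globally: assuming some $a_z\notin B$, it uses $(\ast)_B$ to get $b_i\preceq^X a_z$ for all $i$, observes that therefore any $b_i$ lying in $A$ must equal some $a_j$ with $j\le z$, and then notes that $(\ZZ,\le)$ cannot order-embed into $\{a_j\mid j<z\}\cong(\omega,\ge)$, producing a $b_{\bar\imath}\notin A$ which contradicts $(\ast)_A$. Once $A=B$, the map $z\mapsto f(z)$ with $a_z=b_{f(z)}$ is an order-automorphism of $\ZZ$, hence a shift. Your route instead pins down a single coincidence $a_0=b_k$ and propagates it by induction on $|n|$ using the immediate-successor structure. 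Both work; the paper's argument is shorter and more conceptual (one order-type obstruction plus the classification of $\mathrm{Aut}(\ZZ,\le)$), while yours is more elementary but requires repeated case analysis. One expository point: in your ``latter case'' ($b_z\prec a_n$ for all $z$), the chain comparison actually yields $n-1<n'<n$, which is a contradiction, so that case is simply impossible and you drop into the first case; your phrasing ``pins $n'=n$ only if $a_n=b_{k+n}$ directly'' obscures this and should be sharpened.
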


\begin{proof}
Fix \( z \in \ZZ \). We claim that there is \( i \in \ZZ \) such that \( a_z = b_i \). If not, 
since \( X \models 
{\bigvee_{i \in \ZZ} a_z = b_i} \vee {\bigwedge_{i \in \ZZ} b_i \preceq a_z} 
\) (because  \( X \models \mathsf{Stem}(\langle a,a_z \mid z \in 
\ZZ \rangle) \), whence \( X \models \mathsf{Root}(a,a_z) \)) we get \( b_i \preceq^X a_z\) for every \( i \in 
\ZZ \), which in particular imply \( a_z \not\preceq^X b_i \) and \( b_i \neq a_j \) for every \( i,j \in \ZZ \) 
with \( z < j \). Since \( X \restriction \{ a_j \mid j<z \} \) has order type \( 
(\omega, \geq) \not\cong (\ZZ, \leq)\), there is \( \bar{\imath} \in \ZZ\) such that \( b_{\bar{\imath}} \neq a_j \) for every \( j 
< z \), and hence also \( b_{\bar{\imath}} \neq a_j \) for every \( j \in \ZZ \). Since \( X \models \mathsf{Stem}
(\langle a,b_z \mid z \in \ZZ \rangle) \), then \( X \models \mathsf{Root}[a,b_{\bar{\imath}}] \): this fact, together with the choice of \( \bar{\imath} \),
contradicts \( X \models {\bigvee_{z \in \ZZ} b_{\bar{\imath}} = a_z} \vee 
{\bigwedge_{z \in \ZZ} a_z \preceq b_{\bar{\imath}}} \). 

A similar argument shows
that for every \( i \in \ZZ \) there is \( z \in \ZZ \) such that \( b_i = a_z \). 
Hence there is a bijection \( f \colon \ZZ \to \ZZ \) such that \( a_z = b_{f(z)} 
\) for every \( z \in \ZZ \). Since \( X \models \uptau_{\mathsf{qf}}(\ZZ) [\langle a_z  \mid z \in \ZZ 
\rangle] \wedge \uptau_{\mathsf{qf}}(\ZZ) [\langle b_z  \mid z \in \ZZ 
\rangle] \), \( f \) must be of the form \( i \mapsto i + k \) for some \( k \in 
\ZZ \).
\end{proof}

Let \( \Upphi_2 \) be the \( \L_{\kappa^+ \kappa} \)-sentence
\begin{equation}\tag{\( \Upphi_2 \)}
\forall x \left(\mathsf{Seq}(x) \Rightarrow \exists \langle x_z \mid z \in \ZZ  \rangle \, \mathsf{Stem}(\langle x, x_z \mid z \in \ZZ \rangle \right).
 \end{equation}
\begin{remark}\label{remPhi_2}
If \( X \) is a tree such that \( X \models \bigwedge_{i \leq 3} \Upphi_i \), then 
at the bottom of each \( X_a \) (for \( a \in X_{\mathsf{Seq}} \)) there is an isomorphic copy \( \sS^X_a \) of \( \ZZ \) (which from now on will be called \emph{stem of \( a \)}) such that all other points in \( X_a \) are \( \preceq^X \)-above (all the points of)  \( \sS^X_a \). To simplify the notation, we will denote by \( \Cone(\sS^X_a) \) the set \( X_a \setminus \sS^X_a \). Notice that the stem of \( a \) is unique by Lemma~\ref{lemmastemisunique}, and for \( a \in X_{\mathsf{Seq}} , b \in X \) 
\[ 
X \models \mathsf{Stem^\in}[a,b] \iff b \in \sS^X_a.
 \] 

\end{remark}

Let \( \mathsf{Min}(x,y) \) and \( \mathsf{Min}^*(x,y,z) \) be the \( \L_{\kappa^+ \kappa} \)-formul\ae{}
\begin{equation} \tag{\( \mathsf{Min} \)}
\mathsf{Root}(x,y) \wedge \neg \mathsf{Stem}^\in (x,y) \wedge  \forall z \left ( \mathsf{Root}(x,z) \wedge \neg \mathsf{Stem}^\in (x,z) \wedge z \not\perp y \Rightarrow y \preceq z  \right)
\end{equation}
and
\begin{equation} \tag{\( \mathsf{Min}^* \)}
\mathsf{Min}(x,y) \wedge y \preceq z.
\end{equation}

Moreover, let \( \Upphi_3 \) be the \( \L_{\kappa^+ \kappa} \)-sentence
\begin{equation} \tag{\( \Upphi_3 \)}
\forall x \forall z \left( \mathsf{Root}(x,z) \wedge \neg \mathsf{Stem}^\in (x,z) \Rightarrow \exists y \, \mathsf{Min}^*(x,y,z)  \right).
\end{equation}

\begin{remark}
If \( X \) is a tree such that \( X \models \bigwedge_{i \leq 2} \Upphi_i \) and \( a,b, c \in X \), then \( X \models \mathsf{Min}[a,b] \) if and only if \( b \) is a \( \preceq^X \)-minimal element in \( \Cone( \sS^X_a ) \), and \( X \models \mathsf{Min}^*[a,b,c] \) if and only if \( c \) is \( \preceq^X \) above the minimal (in the above sense) element \( b \). Thus \( X \models \bigwedge_{i \leq 3} \Upphi_i \) if and only if every \( c \in \Cone( \sS^X_a ) \) is \( \preceq^X \)-above some of these minimal elements \( b \). Notice that since \( X \) is a tree, such a \( b \) is unique, so \( \{ \Cone(b) \mid X \models \mathsf{Min}(a,b) \} \) is a partition of \( \Cone( \sS^X_a ) \) into maximal connected components.
\end{remark}

Given \( s \in \pre{\SUCC(< \kappa)}{\kappa} \), we let \( \mathsf{Lab}_s(\langle x,x_i \mid i \in \sL_s \rangle) \) be the \( \L_{\kappa^+ \kappa} \)-formula
\begin{multline}\tag{\( \mathsf{Lab}_s \)}
\uptau_\mathsf{qf}(\sL_s)(\langle x_i \mid i \in \sL_s \rangle)
\wedge 
\mathsf{Min}(x,x_0) \wedge  
\forall y \left [\mathsf{Min}^*(x,x_0,y)  \Rightarrow \bigvee\nolimits_{i \in \sL_s} y = x_i  \right ].
\end{multline}
Since $|\sL_s|<\kappa$, we can quantify over
$\langle x_i \mid i \in \sL \rangle$ and let  \( \mathsf{Lab}_s^\in(x,y) \) be the \( \L_{\kappa^+ \kappa} \)-formula
\begin{equation}\tag{\( \mathsf{Lab}_s^\in \)}
\exists \langle x_i \mid i \in \sL_s \rangle \left (\mathsf{Lab}_s(\langle x,x_i \mid i \in \sL_s \rangle) \wedge \bigvee\nolimits_{i \in \sL_s} y = x_i \right ).
 \end{equation}

\begin{remark} \label{remLab_s}
If \( X \) is a tree, \( a \in X \), and \( \langle a_i \mid i \in \sL_s  \rangle \) is a sequence of elements of \( X \) such that \( X \models \mathsf{Lab}_s[\langle a,a_i \mid i \in \sL_s \rangle] \) (which implies \( a_i \in X_a \) for every \( i \in \sL_s \)), then the structure \( X \restriction \{ a_i \mid i \in \sL_s \} \) is a label of type II which is a code for \( s \). Moreover, if \( X \models \bigwedge_{i \leq 3} \Upphi_i \) then \( X \restriction \{ a_i \mid i \in \sL_s \} \) is above \( \sS^X_a \) and is one of the maximal connected component of \( \Cone( \sS^X_a ) \) with \( a_0 \) as its \( \preceq^X \)-minimal element. In particular, \( X \models \mathsf{Min}^*[a,a_0,a_i] \) for every \( i \in \sL_s \) and \( X \restriction \{ a_i \mid i \in \sL_s \} \) is \( \preceq^X \)-upward closed in both \( X_a \) and  \( X \). 
\end{remark}

\begin{lemma}\label{lemmadisjointorequal}
Let \( X \) be a tree, \( s,t \in \pre{\SUCC(< \kappa)}{\kappa} \), and \( \langle a,a_i \mid i \in \sL_s \rangle, \langle a,b_j \mid j \in \sL_t \rangle \) be sequences of elements of \( X \) such that both \( X \models \mathsf{Lab}_s[\langle a,a_i \mid i \in \sL_s \rangle] \) and \( X \models \mathsf{Lab}_t[\langle a,b_j \mid j \in \sL_t \rangle] \). Then either the sets \( A = \{ a_i \mid i \in \sL_s \} \) and \( B = \{ b_j \mid j \in \sL_t \} \) are disjoint or they coincide (and in this second case \( s = t \)). 
\end{lemma}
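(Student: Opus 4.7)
The plan is to identify each of $A$ and $B$ with an intrinsic subset of $X$, namely an upper cone, and then exploit the minimality clause built into $\mathsf{Lab}_s$ to align them. First I would show that $A = \cone(a_0) := \{ y \in X \mid a_0 \preceq^X y \}$. The inclusion $\cone(a_0) \subseteq A$ is the very last conjunct of $\mathsf{Lab}_s$: if $a_0 \preceq^X y$, then (using $X \models \mathsf{Min}[a,a_0]$) $X \models \mathsf{Min}^*[a, a_0, y]$, whence $y = a_i$ for some $i \in \sL_s$. For the reverse inclusion, note that by the definition of a label of type II, $0 \in \theta(s)$ is the $\preceq_s$-minimum of $\sL_s$; hence the quantifier-free type $\uptau_\mathsf{qf}(\sL_s)$ contains the literal $x_0 \preceq x_i$ for every $i \in \sL_s$, and evaluating it in $X$ gives $a_0 \preceq^X a_i$, i.e.\ $a_i \in \cone(a_0)$. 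Symmetrically, $B = \cone(b_0)$.

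Now assume $A \cap B \neq \emptyset$, and pick $c$ in the intersection. Then $a_0, b_0 \preceq^X c$, and since $X$ is a tree the set $\pred(c)$ is linearly ordered, so $a_0$ and $b_0$ are comparable. Without loss of generality assume $a_0 \preceq^X b_0$. I then apply the last conjunct of $\mathsf{Min}(a, b_0)$ — which holds because $X \models \mathsf{Lab}_t[\langle a, b_j \mid j \in \sL_t \rangle]$ — with the witness $z = a_0$. The hypotheses are met: from $X \models \mathsf{Min}[a, a_0]$ we get both $\mathsf{Root}(a,a_0)$ and $\neg \mathsf{Stem}^\in(a,a_0)$, while $a_0 \preceq^X b_0$ yields $a_0 \not\perp b_0$. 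The conclusion is $b_0 \preceq^X a_0$, and combined with $a_0 \preceq^X b_0$ this gives $a_0 = b_0$.

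Consequently $A = \cone(a_0) = \cone(b_0) = B$. By the $\uptau_\mathsf{qf}$ clauses in $\mathsf{Lab}_s$ and $\mathsf{Lab}_t$, the maps $i \mapsto a_i$ and $j \mapsto b_j$ are isomorphisms of $\sL_s$ and $\sL_t$, respectively, onto $X \restriction A = X \restriction B$. Hence $\sL_s \cong \sL_t$, and property~\ref{cond:C5} of the labels of type II then forces $s = t$, as required. No step is particularly delicate; the crucial observation is that $\mathsf{Lab}_s$ is designed so that the label it codes is recoverable as a single upper cone of $X$, and the $\mathsf{Min}$ clause pins down the base point $a_0$ uniquely among the minimal elements of $\Cone(\sS^X_a)$.
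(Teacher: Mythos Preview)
Your proof is correct and follows essentially the same approach as the paper's: both establish $a_0 = b_0$ via the minimality clause in $\mathsf{Min}$ (using that $a_0, b_0$ are both below a common element, hence comparable), and then use the last conjunct of $\mathsf{Lab}_s$ to identify the label with the cone above its base point. Your version is slightly more explicit in stating $A = \cone(a_0)$ up front, whereas the paper leaves this implicit and argues the inclusion $A \subseteq B$ element by element, but the underlying mechanism is identical.
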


\begin{proof}
Assume \( A \cap B \neq \emptyset \): we claim that \( A \subseteq B \) (the proof of \( B \subseteq A \) can be obtained in a similar way).
Let \( i_0 \in \sL_s, j_0 \in \sL_t \) be such that \( a_{i_0} = b_{j_0} \). Since \( X \models \mathsf{Min}^*[a,a_0, a_{i_0}] \wedge \mathsf{Min}^*[a , b_0 ,b_{j_0}] \), then \( a_0 = b_0 \). It follows that \( X \models \mathsf{Min}^*[a,b_0,a_i] \) for any given \( i \in \sL_s \), whence \( a_i = b_j  \in B\) for some \( j \in \sL_t \).

 The fact that if \( A = B \) then \( s = t \) follows from the fact that  \( X \restriction  A \) and \( X \restriction B \) are isomorphic, respectively, to \( \sL_s \) and \( \sL_t \), and that \( \sL_s \cong \sL_t \iff s=t \) by~\ref{cond:C5}.
\end{proof}

Now let \( \mathsf{Seq}_s(x) \) be the \( \L_{\kappa^+ \kappa} \)-formula
\begin{equation} \tag{\( \mathsf{Seq}_s \)}
\exists \langle x_i \mid i \in \sL_s \rangle \left (\mathsf{Lab}_s(\langle x,x_i \mid i \in \sL_s \rangle) \right).
\end{equation}
Notice that if \( a \) is a point of a tree \( X \), \( X \models \mathsf{Seq}_s[a] \) implies \( X \models \mathsf{Seq}[a] \) (hence \( a \in X_{\mathsf{Seq}} \)).

Let \(\Upphi_4\) be the \( \L_{\kappa^+ \kappa} \)-sentence
\begin{multline} \tag{\( \Upphi_4 \)}
\forall x \bigwedge\nolimits_{s,t \in \pre{\SUCC(< \kappa)}{\kappa}} \forall \langle x_i \mid i \in \sL_s \rangle \, \forall \langle y_j \mid j \in \sL_t \rangle \\
\left [  \mathsf{Lab}_s(\langle x, x_i \mid i \in \sL_s \rangle) \wedge \mathsf{Lab}_t(\langle x, y_j \mid j \in \sL_t \rangle)  
 \Rightarrow \bigvee\nolimits_{\substack{i \in \sL_s \\ j \in \sL_t}} x_i = y_j \right ].
 \end{multline}

\begin{lemma} \label{lemmauniqueL_s}
Let \( X \) be a tree such that \( X \models \Upphi_4 \). Then for every \( a \in X \) there is at most one \( s \in \pre{\SUCC(< \kappa)}{\kappa} \) such that \( X \models \mathsf{Seq}_s[a] \). Moreover, if \( X \models \mathsf{Seq}_s[a] \) then the set of witnesses  \( \{ a_i \mid i \in \sL_s \} \subseteq X \) of this fact is unique.
\end{lemma}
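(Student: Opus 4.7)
The plan is to deduce both clauses of the lemma directly from $\Upphi_4$ together with Lemma~\ref{lemmadisjointorequal}. The formula $\Upphi_4$ is tailored precisely so that any two $\mathsf{Lab}$-instances rooted at the same point $x$ must share at least one witness; Lemma~\ref{lemmadisjointorequal} then promotes ``share a witness'' to ``yield the same set of witnesses, and in fact correspond to the same parameter in $\pre{\SUCC(<\kappa)}{\kappa}$''. So the argument is essentially a one-line combination of the two facts, with no real obstacle.

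For the first clause I would argue as follows. Suppose, towards showing uniqueness of $s$, that $X \models \mathsf{Seq}_s[a] \wedge \mathsf{Seq}_t[a]$ for some $s,t \in \pre{\SUCC(<\kappa)}{\kappa}$. Unfolding the definition of $\mathsf{Seq}_s$ and $\mathsf{Seq}_t$, pick witnessing sequences $\langle a_i \mid i \in \sL_s \rangle$ and $\langle b_j \mid j \in \sL_t \rangle$ of elements of $X$ such that
\[
X \models \mathsf{Lab}_s[\langle a,a_i \mid i \in \sL_s\rangle] \wedge \mathsf{Lab}_t[\langle a,b_j \mid j \in \sL_t\rangle].
\]
Then $\Upphi_4$ applied to $x := a$ with these two sequences yields $i \in \sL_s$ and $j \in \sL_t$ with $a_i = b_j$. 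Hence the sets $A = \{a_i \mid i \in \sL_s\}$ and $B = \{b_j \mid j \in \sL_t\}$ are not disjoint, and Lemma~\ref{lemmadisjointorequal} forces $A = B$ and $s = t$.

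For the second clause (uniqueness of the witnessing set) I would rerun the same argument in the special case $s = t$: given two sequences $\langle a_i \mid i \in \sL_s\rangle$ and $\langle b_i \mid i \in \sL_s\rangle$ each witnessing $X \models \mathsf{Seq}_s[a]$, the sentence $\Upphi_4$ (applied to the two $\mathsf{Lab}_s$-instances at $a$) again gives a common value $a_i = b_{i'}$, and Lemma~\ref{lemmadisjointorequal} then gives $\{a_i \mid i \in \sL_s\} = \{b_i \mid i \in \sL_s\}$. Thus the set of witnesses is uniquely determined by $a$, which is exactly the statement of the moreover clause. No genuine difficulty arises, since all the real work has already been done when setting up $\Upphi_4$ and Lemma~\ref{lemmadisjointorequal}; this lemma is just the clean packaging of those facts.
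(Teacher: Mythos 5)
Your proof is correct and follows essentially the same route as the paper: apply $\Upphi_4$ to get a nonempty intersection of the two witness sets, then invoke Lemma~\ref{lemmadisjointorequal} to conclude that the sets coincide and $s = t$. The only difference is that you explicitly spell out the ``moreover'' clause (via the $s = t$ specialization), which the paper's proof leaves implicit after establishing $A = B$.
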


\begin{proof}
Let \( a \in X \) and \( s,t \in \pre{\SUCC(< \kappa)}{\kappa} \) be such that \( X \models \mathsf{Seq}_s[a] \) and \( X \models \mathsf{Seq}_t[a] \),  and let \( \langle a_i \mid i \in \sL_s \rangle, \langle b_j \mid j \in \sL_t \rangle \) be two sequences of points from \( X \) witnessing these facts. Then by \( X \models \Upphi_4 \) the sets \( A = \{ a_i \mid i \in \sL_s \} \) and \( B = \{ b_j  \mid j \in \sL_t \} \) are not disjoint. Therefore  \(A = B \) by Lemma~\ref{lemmadisjointorequal}, and hence \( s = t \), as required.
\end{proof}

\noindent
If \( X,s,a, \{ a_i \mid i \in \sL_s \}  \) are such that \( X \models \bigwedge_{i \leq 4} \Upphi_i \) and \( X \models \mathsf{Lab}_s[\langle a,a_i \mid i \in \sL_s \rangle] \), we denote \( X \restriction \{ a_i \mid i \in \sL_s \}  \) by \( \sL^X_{s,a} \).
Notice also that for \( a,b \in X \), \( X \models 
\mathsf{Lab}^\in_s[a,b] \iff b \in \sL^X_{s,a} \).

Let \( \Upphi_5 \) be the \( \L_{\kappa^+ \kappa} \)-sentence
\begin{multline} \tag{\( \Upphi_5 \)}
\bigwedge\nolimits_{s \in \pre{\SUCC(< \kappa)}{\kappa}} \exists ! x \, \mathsf{Seq}_s(x) \wedge \forall x \left (\mathsf{Seq}(x) \Rightarrow \bigvee\nolimits_{s \in \pre{\SUCC(< \kappa)}{\kappa}} \mathsf{Seq}_s(x) \right ).
 \end{multline}

\begin{remark}\label{remsigma_X}
If \( X \) is a tree such that \( X \models \bigwedge_{i \leq 5}\Upphi_i \), then there is a bijection \( \sigma_X \) between \( \pre{\SUCC(<\kappa)}{\kappa} \) and \( X_{\mathsf{Seq}}  \), namely \( \sigma_X(s) = \) the unique \( a \in X_{\mathsf{Seq}} \) such that \( X \models \mathsf{Seq}_s[a] \). % Moreover, if \( X \) further satisfies \( \Upphi_3 \), then \( \sigma_X \) is also injective, hence a bijection.
\end{remark}

Let \( \Upphi_6 \) be the \( \L_{\kappa^+ \kappa} \)-sentence
\begin{multline} \tag{\( \Upphi_6 \)}
\forall x, y \left [ {\bigwedge\nolimits_{\substack{s,t \in \pre{\SUCC(< \kappa)}{\kappa} \\ s \subseteq t }} ({\mathsf{Seq}_s(x) \wedge \mathsf{Seq}_t(y)} \Rightarrow {x \preceq y})} \, \wedge \right. \\
\left. {\bigwedge\nolimits_{\substack{s,t \in \pre{\SUCC(< \kappa)}{\kappa} \\ s \not\subseteq t }} ({\mathsf{Seq}_s(x) \wedge \mathsf{Seq}_t(y)} \Rightarrow {x \not\preceq y})} \right ].
 \end{multline}

\begin{remark} \label{remPhi_6}
If \( X \) is a tree such that \( X \models \bigwedge_{i \leq 6} \Upphi_i \), then the map \( \sigma_X \) defined in Remark~\ref{remsigma_X} is actually an isomorphism between \( \left( \pre{\SUCC(< \kappa)}{\kappa}, \subseteq \right) \) and \( X \restriction X_{\mathsf{Seq}} \).
\end{remark}

Let \( \mathsf{ImSucc}(x,y) \)  be the \( \L_{\kappa^+ \kappa} \)-formula
\begin{equation} \tag{\( \mathsf{ImSucc} \)}
x \prec y \wedge \neg \exists z (x \prec z \prec y ),
\end{equation}
so that if \( X \) is a tree and \( a,b \in X \), then \( X \models \mathsf{ImSucc}[a,b] \) if and only if \( b \) is an immediate successor of \( a \). Let also \( \mathsf{Spine}(x) \) be the \( \L_{\kappa^+ \kappa} \)-formula
\begin{equation} \tag{\( \mathsf{Spine} \)}
 \exists w \, \exists w' ({x \preceq w} \wedge {x \preceq w'} \wedge {w \perp w'} ),
\end{equation}
so that if \( X \) is a tree and \( a \in X \), then \( X \models \mathsf{Spine}[a] \) if and only if \( \Cone(a) \) is not a linear order.

Let  \( \mathsf{Lab_{III}}^\in(x,y) \) be the \( \L_{\kappa^+ \kappa} \)-formula
\begin{multline} \tag{\( \mathsf{Lab_{III}}^\in \)}
\mathsf{Root}(x,y) \wedge \neg \mathsf{Stem}^\in(x,y) \wedge 
\bigwedge_{s \in \pre{\SUCC(<\kappa)}{\kappa}} \neg 
\mathsf{Lab}^\in_s(x,y) \wedge \\
 \forall x' \left [\mathsf{Min}^*(x,x',y) \Rightarrow \exists w \, \mathsf{ImSucc}(x',w) \right] .
 \end{multline}

\begin{remark} \label{remLab_III}
 Notice that if \( X \) is a tree and \( a,b,c \in X \) are such that \( X \models \mathsf{Lab_{III}}^\in[a,b] \) and \( b \preceq^X c \), then also \( X \models \mathsf{Lab_{III}}^\in[a,c] \). 
\end{remark}

Given \( \alpha < \kappa \), consider the structure \( \alpha= (\alpha, \leq ) \). Let \( \mathsf{Lab_{III}}^{\alpha}(\langle x,y,z_i \mid i \in \alpha \rangle) \) be the 
\( \L_{\kappa^+ \kappa} \)-formula
\begin{multline}\tag{\( \mathsf{Lab_{III}}^{\alpha} \)}
{\mathsf{Lab_{III}}^\in(x,y)} \wedge \mathsf{Spine}(y) \wedge \neg \mathsf{Min}(x,y) \wedge {\bigwedge\nolimits_{i \in \alpha} (y \prec z_i)} \wedge \\
 {\tau_{\mathsf{qf}}(\alpha)(\langle z_i \mid i \in \alpha \rangle)} \wedge {
\forall w \left ( {{y \prec w} \wedge {\bigvee\nolimits_{i \in \alpha} w \not\perp z_i}} \Rightarrow {\bigvee\nolimits_{i \in \alpha} w = z_i} \right )}.
 \end{multline}

Let \( \Upphi_7 \) be the \( \L_{\kappa^+ \kappa} \)-sentence
\begin{multline}\tag{\( \Upphi_7 \)} 
\forall x \, \forall y \bigwedge\nolimits_{\alpha,\beta < \kappa} \forall \langle z_i \mid i \in \alpha \rangle \, \forall \langle w_j \mid j \in \beta \rangle \\
\left ( \mathsf{Lab_{III}}^\alpha(\langle x,y,z_i \mid i \in \alpha \rangle) \wedge \mathsf{Lab_{III}}^\beta(\langle x,y,w_j \mid j \in \beta \rangle) \Rightarrow \bigvee\nolimits_{\substack{i \in \alpha \\ j \in \beta}} z_i = w_j \right ).
\end{multline}

\begin{remark} \label{remPhiIII_7}
The same argument contained in the proof of Lemma~\ref{lemmadisjointorequal} gives the following:
Let \( a, b \in X \) (for \( X \) a tree). Let \( \alpha, \beta  < \kappa\) and \( \langle c_i \mid i \in \alpha \rangle, \langle d_j \mid j \in \beta \rangle \) be sequences of elements of \( X \) such that both \( X \models \mathsf{Lab_{III}}^\alpha[\langle a,b,c_i \mid i \in \alpha \rangle] \) and \( X \models \mathsf{Lab_{III}}^\beta[\langle a,b,d_j \mid j \in \beta \rangle] \). Then the sets \( C = \{ c_i \mid i \in \alpha \} \) and \( D = \{ d_j \mid j \in \beta \} \) are either disjoint or coincide.
Therefore, if \( X \models \bigwedge_{i \leq 7} \Upphi_i \) then \( C = D \).
Since ordinals are determined by their isomporphism types, we get \( \alpha = \beta \).
\end{remark}

Now we formulate how the labels of type III are attached to the root.
Let \( \Upphi_8 \) be the \( \L_{\kappa^+ \kappa} \)-sentence
\begin{multline}\tag{\( \Upphi_8 \)}
\forall x \, \forall x' \, \forall y \Big [  \mathsf{Min}(x,x') \wedge \mathsf{Lab_{III}}^\in(x,x') \wedge x' \prec y  \Rightarrow  \Big (  ( \mathsf{ImSucc}(x',y) \wedge \neg \mathsf{Spine}(y))   \vee \\ 
\mathsf{Spine}(y)  \vee \exists z \bigvee\nolimits_{\alpha < \kappa}\exists \langle w_i \mid i \in \alpha \rangle \left (\mathsf{Lab_{III}}^\alpha(\langle x,z,w_i \mid i \in \alpha \rangle) \wedge \bigvee\nolimits_{i \in \alpha} y = w_i \right ) \Big ) \Big ].
 \end{multline}

Notice that the three conditions on $y$ in the disjunction after the implication are mutually exclusive.

For \( \alpha < \kappa \), let \( \mathsf{Lab_{III}}^{= \alpha } (x,y) \) be the 
\( \L_{\kappa^+ \kappa} \)-formula
\begin{equation}\tag{\( \mathsf{Lab_{III}}^{= \alpha }  \)}
\exists \langle z_i \mid i \in \alpha \rangle\left (\mathsf{Lab_{III}}^\alpha(\langle x,y,z_i \mid i \in \alpha \rangle)\right).
 \end{equation}

The next formula describes the connection of elements in the label of type III to unique ordinals. 
Let \( \Upphi_9 \) be the \( \L_{\kappa^+ \kappa} \)-sentence 
\begin{multline}\tag{\( \Upphi_9 \)}
\forall x \forall x' \Big [ \mathsf{Seq}(x) \wedge \mathsf{Min}(x,x') \wedge \mathsf{Lab_{III}}^\in(x,x')  \Rightarrow  \Big ( \exists ! y \, \mathsf{ImSucc}(x',y) \wedge  \\
\forall y (\mathsf{ImSucc}(x',y) \Rightarrow \neg \exists w (y \prec w)) \wedge
 \bigwedge\nolimits_{0 < \alpha < \kappa} \exists ! y \left(x' \prec y \wedge \mathsf{Lab_{III}}^{= \alpha}(x,y) \right) \wedge \\  %\wedge \forall z \left(\mathsf{Min}^*(x,x',z) \wedge \mathsf{Lab_{III}}^{=\alpha}(x,z) \Rightarrow z = y \right)\right) \right )  
 \forall y \Big ( x' \prec y \wedge \mathsf{Spine}(y)  \Rightarrow \bigvee\nolimits_{0 < \alpha < \kappa} \mathsf{Lab_{III}}^{=\alpha}(x,y) \Big ) \Big)\Big ].
 \end{multline}

Now we pin down the $u$ in the label $\sL^*_u$.

Let \( \mathsf{Lab}^*_u(x,x') \) be the  \( \L_{\kappa^+ \kappa} \)-formula

\begin{multline}\tag{\( \mathsf{Lab}^*_u \)}
\mathsf{Seq}(x) \wedge \mathsf{Min}(x,x') \wedge \mathsf{Lab_{III}}^\in(x,x') \wedge \\
 \bigwedge\nolimits_{\substack{0 < \alpha,\beta < \kappa \\ \alpha \preceq^{L_u} \beta}} \forall y \, \forall z \left(x' \prec y \wedge x' \prec z \wedge {\mathsf{Lab_{III}}^{= \alpha}(x,y) \wedge \mathsf{Lab_{III}}^{= \beta}(x,z)} \Rightarrow {y \preceq z}\right)  \wedge \\
  \bigwedge\nolimits_{\substack{0 < \alpha,\beta < \kappa \\ \alpha \not\preceq^{L_u} \beta}} \forall y \, \forall z \left (x' \prec y \wedge x' \prec z \wedge {\mathsf{Lab_{III}}^{= \alpha}(x,y) \wedge \mathsf{Lab_{III}}^{= \beta}(x,z)} \Rightarrow {y \not\preceq z}\right) 
\end{multline}

\begin{multline} \tag{\( \Upphi_{10} \)}
\forall x \, \forall x'\,  \forall x'' \, \Bigg [ \Bigg( \mathsf{Seq}(x) \wedge \mathsf{Min}(x,x') \wedge \mathsf{Lab_{III}}^\in(x,x') \Rightarrow \bigvee_{u \in \pre{\SUCC(< \kappa)}{2}} \mathsf{Lab}^*_u(x,x') \Bigg) \wedge \\
\bigwedge_{u \in \pre{\SUCC(< \kappa)}{2}} \Big( \mathsf{Lab}^*_u(x,x') \wedge \mathsf{Lab}^*_u(x,x'') \Rightarrow x' = x''  \Big)
 \Bigg]
\end{multline}

%Finally, let \( \Upphi_{10} \) be the \( \L_{\kappa^+ \kappa} \)-sentence
%\begin{multline}\tag{\( \Upphi_{10} \)} 
%\forall x \bigwedge_{s \in \pre{\SUCC(< \kappa)}{\kappa}} \left [ \mathsf{Seq}_s(x) \Rightarrow  \left ( \bigwedge\nolimits_{\substack{\alpha,\beta < \kappa \\ \alpha \preceq^{L_{\bar{\gamma}(s)}} \beta}} \forall y \, \forall z \left({\mathsf{Lab_I}^{= \alpha}(x,y) \wedge \mathsf{Lab_I}^{= \beta}(x,z)} \Rightarrow {y \preceq z}\right) \right ) \wedge\right. \\
%\left. \left ( \bigwedge\nolimits_{\substack{\alpha,\beta < \kappa \\ \alpha \not{\preceq^{L_{\bar{\gamma}(s)}}} \beta}} \forall y \, \forall z \left ({\mathsf{Lab_I}^{= \alpha}(x,y) \wedge \mathsf{Lab_I}^{= \beta}(x,z)} \Rightarrow {y \not\preceq z}\right) \right ) \right ].
% \end{multline}
%
%\begin{remark} \label{remPhi_10}
%Let \( X \) be a tree such that \( X \models \Upphi_7 \wedge \Upphi_8 \wedge \Upphi_9 \wedge \Upphi_{10} \). Using Remark~\ref{remPhi_7-9}, if \( s \in \pre{\SUCC(< \kappa)}{\kappa} \) and \( a \in X_{\mathsf{Seq}} \) are such that \( X \models \mathsf{Seq}_s[a] \), then  \( X \restriction X'_a \)  is isomorphic to \( \sL_{\bar{\gamma}(s)} \). In this case, the structure \( X \restriction X'_a \) will be denoted by \( \sL^X_{\bar{\gamma}(s),a} \).
%\end{remark}

\begin{remark} \label{remPhiIII_7-9}
Let \( X \) be a tree such that \( X \models \bigwedge_{i \leq 10} \Upphi_i \) and \( a \in X_{\mathsf{Seq}} \). 
Then some of the points in 
\( X_a \) belong to the stem \( \sS^X_a \) of \( a \) and \( \Cone( \sS^X_a ) \) is partitioned 
in maximal connected components each of which has a minimum. One of these components is 
a label of type II (namely, to \( \sL^X_{s,a} \), where \( s \in 
\pre{\SUCC(< \kappa)}{\kappa} \) is the unique sequence such that \( X 
\models \mathsf{Seq}_s[a] \)). 
Suppose now that \( b \) is the  minimal element of some of the other connected components, namely \( \Cone(b) \), and suppose that \( b \) has an immediate successor.
 Then by \( X 
\models \Upphi_9 \) there is a bijection \( l_b \) from \( \kappa \) onto the points 
\( c \in \Cone(b) \) such that \( \Cone(c) \) is not a linear order, namely \( l_b(\alpha) = \) the unique \( c \in \Cone(b) \) such that \( X \models \mathsf{Lab_{III}}^{=\alpha}[a,c] \) (for \(\alpha < \kappa \)). By \( X \models \Upphi_{10} \) we actually get that \( l_b \) is an isomorphism between \( L_u \) and its range (for some \( u \in \pre{\SUCC(< \kappa)}{2} \)), and by \( X \models \Upphi_8 \wedge \Upphi_9 \)  each remaining point of \( \Cone(b) \), i.e.\ each point \( c \in \Cone(b) \) 
such that \( \Cone(c) \) is a linear order, either it is the unique immediate successor of \( b \) (and it is terminal in \( X \)), or else it  belongs to the unique (by Remark~\ref{remPhiIII_7}) sequence witnessing \( X \models \mathsf{Lab_{III}}^{=\alpha}[a,l_b(\alpha)] \) 
(for some \( \alpha < \kappa \)). It follows that \( l_b \) can be extended to a (unique) isomorphism, which we denote by \( l_{u,b} \), between \( \sL^*_u \) and \( X \restriction \Cone(b) \). 
Moreover, by \( X \models \Upphi_{10} \) for every \( u \in \pre{\SUCC(<\kappa)}{2} \) there is at most one \( b \) as above such that \( \sL^*_u \cong X \restriction \Cone(b) \): therefore we can unambiguously denote the last structure by \( \sL^{*\, X}_{u,a} \).
\end{remark}

Using similar ideas, we now provide \( \sL_{\kappa^+ \kappa} \)-sentences asserting that there is just one maximal connected component of each \( \Cone( \sS^X_s ) \) which is not of the form \( \sL^X_{s,a} \) or \( \sL^{*\, X}_{u,a} \), and that such component is isomorphic to \( \sL_{\lh(s)} \), where \( s \in \pre{\SUCC(< \kappa)}{2} \) is unique such that \( X \models \mathsf{Seq}_s[a] \). 
Let \( \mathsf{Lab_I}^\in(x,y) \) be the \( \L_{\kappa^+ \kappa} \)-formula
\begin{multline} \tag{\( \mathsf{Lab_I}^\in \)}
\mathsf{Root}(x,y) \wedge \neg \mathsf{Stem}^\in(x,y) \wedge 
\bigwedge_{s \in \pre{\SUCC(<\kappa)}{\kappa}} \neg 
\mathsf{Lab}^\in_s(x,y) \wedge \\
 \forall x' \left [\mathsf{Min}^*(x,x',y) \Rightarrow \neg \exists w \, \mathsf{ImSucc}(x',w) \right] .
 \end{multline}

Let \( \Upphi_{11} \) be the \( \L_{\kappa^+ \kappa} \)-sentence
\begin{equation}\tag{\( \Upphi_{11} \)}
\forall x (\mathsf{Seq}(x) \Rightarrow \exists ! x' (\mathsf{Min}(x,x') \wedge \mathsf{Lab_I}^\in(x,x')).
\end{equation}

\begin{remark} \label{remLab_I}
 Notice that also in this case if \( X \) is a tree and \( a,b,c \in X \) are such that \( X \models \mathsf{Lab_I}^\in[a,b] \) and \( b \preceq^X c \), then \( X \models \mathsf{Lab_I}^\in[a,c] \). Moreover, if \( X \models \bigwedge_{i \leq 11} \Upphi_i \), then for each \( a \in X_{\mathsf{Seq}} \) there is a unique \( a \preceq^X b \) such that \( \Cone(b) \) is a maximal connected component of \( \Cone( \sS^X_a ) \) but \( b \) has no immediate successor. 
\end{remark}

Given \( \alpha < \kappa \), consider the structure \( \alpha= (\alpha, \leq ) \). Then let \( \mathsf{Lab_I}^{\alpha}(\langle x,y,z_i \mid i \in \alpha \rangle) \) be the 
\( \L_{\kappa^+ \kappa} \)-sentence
\begin{multline}\tag{\( \mathsf{Lab_I}^{\alpha} \)}
{\mathsf{Lab_I}^\in(x,y)} \wedge \mathsf{Spine}(y) \wedge {\bigwedge\nolimits_{i \in \alpha} (y \prec z_i)} \wedge \\
 {\tau_{\mathsf{qf}}(\alpha)(\langle z_i \mid i \in \alpha \rangle)} \wedge {
\forall w \left ( {{y \prec w} \wedge {\bigvee\nolimits_{i \in \alpha} w \not\perp z_i}} \Rightarrow {\bigvee\nolimits_{i \in \alpha} w = z_i} \right )}.
 \end{multline}

Let \( \Upphi_{12} \) be the \( \L_{\kappa^+ \kappa} \)-sentence
\begin{multline}\tag{\( \Upphi_{12} \)} 
\forall x \, \forall y \bigwedge\nolimits_{\alpha,\beta < \kappa} \forall \langle z_i \mid i \in \alpha \rangle \, \forall \langle w_j \mid j \in \beta \rangle \\
\left ( \mathsf{Lab_I}^\alpha(\langle x,y,z_i \mid i \in \alpha \rangle) \wedge \mathsf{Lab_I}^\beta(\langle x,y,w_j \mid j \in \beta \rangle) \Rightarrow \bigvee\nolimits_{\substack{i \in \alpha \\ j \in \beta}} z_i = w_j \right ).
\end{multline}

\begin{remark} \label{remPhi_7}
Arguing again as in the proof of Lemma~\ref{lemmadisjointorequal} we have the following:
Let \( a, b \in X \) (for \( X \) a tree). Let \( \alpha, \beta  < \kappa\) and \( \langle c_i \mid i \in \alpha \rangle, \langle d_j \mid j \in \beta \rangle \) be sequences of elements of \( X \) such that both \( X \models \mathsf{Lab_I}^\alpha[\langle a,b,c_i \mid i \in \alpha \rangle] \) and \( X \models \mathsf{Lab_I}^\beta[\langle a,b,d_j \mid j \in \beta \rangle] \). Then the sets \( C = \{ c_i \mid i \in \alpha \} \) and \( D = \{ d_j \mid j \in \beta \} \) are either disjoint or coincide.
Therefore, if \( X \models \Upphi_7\) then \( C = D \), and hence \( \alpha = \beta \).
\end{remark}

Let \( \Upphi_{13} \) be the \( \L_{\kappa^+ \kappa} \)-sentence
\begin{multline}\tag{\( \Upphi_{13} \)}
\forall x \, \forall y \left [  \mathsf{Lab_I}^\in(x,y)  \Rightarrow  \left ( \vphantom{\bigvee\nolimits_{\alpha < \kappa}} \mathsf{Spine}(y) \vee \right. \right. \\ 
\left. \left. \exists z \bigvee\nolimits_{\alpha < \kappa}\exists \langle w_i \mid i \in \alpha \rangle \left (\mathsf{Lab_I}^\alpha(\langle x,z,w_i \mid i \in \alpha \rangle) \wedge \bigvee\nolimits_{i \in \alpha} y = w_i \right ) \right ) \right ].
 \end{multline}

For \( \alpha < \kappa \), let \( \mathsf{Lab_I}^{= \alpha } (x,y) \) be the 
\( \L_{\kappa^+ \kappa} \)-formula
\begin{equation}\tag{\( \mathsf{Lab_I}^{= \alpha }  \)}
\exists \langle z_i \mid i \in \alpha \rangle\left (\mathsf{Lab_I}^\alpha(\langle x,y,z_i \mid i \in \alpha \rangle)\right).
 \end{equation}

Let \( \Upphi_{14} \) be the \( \L_{\kappa^+ \kappa} \)-sentence
\begin{multline}\tag{\( \Upphi_{14} \)}
\forall x  \Big [ \mathsf{Seq}(x)  \Rightarrow  \Big (  \bigwedge\nolimits_{0 < \alpha < \kappa} \exists ! y \, \mathsf{Lab_{I}}^{= \alpha}(x,y)  \wedge \\  %\wedge \forall z \left(\mathsf{Min}^*(x,x',z) \wedge \mathsf{Lab_{III}}^{=\alpha}(x,z) \Rightarrow z = y \right)\right) \right )  
 \forall y \Big ( \mathsf{Lab_I}^\in(x,y) \wedge \mathsf{Spine}(y)  \Rightarrow \bigvee\nolimits_{0 < \alpha < \kappa} \mathsf{Lab_{I}}^{=\alpha}(x,y) \Big ) \wedge   \Big)\Big ].
 \end{multline}

Finally, let \( \Upphi_{15} \) be the \( \L_{\kappa^+ \kappa} \)-sentence
\begin{multline}\tag{\( \Upphi_{15} \)} 
\forall x \bigwedge_{s \in \pre{\SUCC(< \kappa)}{\kappa}} \left [ \mathsf{Seq}_s(x) \Rightarrow  \left ( \bigwedge\nolimits_{\substack{\alpha,\beta < \kappa \\ \alpha \preceq^{L_{\bar{\gamma}(s)}} \beta}} \forall y \, \forall z \left({\mathsf{Lab_I}^{= \alpha}(x,y) \wedge \mathsf{Lab_I}^{= \beta}(x,z)} \Rightarrow {y \preceq z}\right) \right ) \wedge\right. \\
\left. \left ( \bigwedge\nolimits_{\substack{\alpha,\beta < \kappa \\ \alpha \not{\preceq^{L_{\bar{\gamma}(s)}}} \beta}} \forall y \, \forall z \left ({\mathsf{Lab_I}^{= \alpha}(x,y) \wedge \mathsf{Lab_I}^{= \beta}(x,z)} \Rightarrow {y \not\preceq z}\right) \right ) \right ].
 \end{multline}

\begin{remark} \label{remPhi_10}
Arguing as in Remark~\ref{remPhiIII_7-9}, we get that if \( X \) is a tree such that \( X \models \bigwedge_{i \leq 10} \Upphi_i \) and \( a \in X_{\mathsf{Seq}} \) is such that \( X \models \mathsf{Seq}_s[a] \) (for the appropriate \( s \in \pre{\SUCC(< \kappa)}{\kappa} \)), then among the maximal connected components of \( \Cone( \sS^X_a ) \) we find a unique label of type II coding \( s \) and, possibly, some labels of type III coding certain \( u \in \pre{\SUCC(< \kappa)}{2} \). If moreover \( X \models \bigwedge_{11 \leq i \leq 15} \), then in \( \Cone( \sS^X_a ) \) there is also a  unique maximal connected component \( \Cone(b) \) (for some \( b \) minimal in \( \Cone( \sS^X_a ) \)) of type I coding exactly \( \bar{\gamma}(s) \), which will be denoted by \( \sL^X_{\bar{\gamma}(s),a} \). To fix the notation, we let \( l_{\bar{\gamma},a} \) denote the (unique) isomorphism between \( \sL_{\bar{\gamma}} \) and \( \sL^X_{\bar{\gamma}(s),a} \).
\end{remark}

\begin{definition}\label{defPsi}
Let now \( \Uppsi \)  be the \( \L_{\kappa^+ \kappa} \)-sentence given by the  conjunction
\begin{equation}\tag{\( \Uppsi \)}
\bigwedge\nolimits_{i \leq 15} \Upphi_i.
 \end{equation}
\end{definition}

\begin{remark} \label{remrecap}
Suppose \( X \models \Uppsi \). Collecting all the remarks above, we have the following description of \( X \):
\begin{enumerate-(1)}
\item \label{condtree}
\( X \) is a tree (by \( X \models \Upphi_0 \));
\item \label{condsigma_X}
there is an isomorphism \( \sigma_X \) between \( \left(\pre{\SUCC(< \kappa)}
{\kappa} , \subseteq \right) \) and the substructure of \( X \) with domain \( 
X_{\mathsf{Seq}} = \{ a \in X \mid \pred(a) \text{ is well-founded} \} \), 
which is a \( \preceq^X \)-downward closed subset of \( X \) (Remarks~\ref{remsigma_X},~\ref{remPhi_6}, and~\ref{remroot});
\item \label{condX_a}
by Remark~\ref{remPhi_1}, for every point \( b \) in \( X \setminus X_{\mathsf{Seq}} \) there is a 
(unique) \( \preceq^X \)-maximal element \( a^b \) in \( \pred(b) \) which is 
in \( X_{\mathsf{Seq}} \): given \( s \in \pre{\SUCC(< \kappa)}
{\kappa} \), denote by \( X_{\sigma_X(s)} \) the collection of all
\( b  \in X \setminus X_{\mathsf{Seq}} \) such that \( a^b = \sigma_X(s) \), and notice that \( X_{\sigma_X(s)} 
\) is necessarily \( \preceq^X \)-upward closed. 
Moreover, for every \( s,t  \in \pre{\SUCC(< \kappa)}{\kappa}\) we have (see Remark~\ref{remPhi_1}):
\begin{enumerate-(a)}
\item
if \( s,t \) are distinct then for every \( b \in X_{\sigma_X(s)} , b' \in X_{\sigma_X(t)} \), \( b \) and \( b' \) are incomparable;
\item
  if \( b  \in X_{\sigma_X(s)} \) then \( \sigma_X(t) \preceq^X b \) if and only if \( t \subseteq s \)% and \( b \not\preceq^X \sigma_X(t) \)
  ;
\end{enumerate-(a)}
\item \label{condstem}
at the bottom of each \( X_{\sigma_X(s)} \) there is an isomorphic copy of \( ( \ZZ, \leq ) \), called stem of \( \sigma_X(s) \) and denoted by \( \sS^X_{\sigma_X(s)} \): all other elements of \( X_{\sigma_X(s)} \) are \( \preceq^X \)-above (all the points in) \( \sS^X_{\sigma_X(s)} \) (Remark~\ref{remPhi_2}), and their collection is denoted by \( \Cone( \sS^X_{\sigma_X(s)} ) \);
\item \label{condabovestem}
call a substructure \( X' \) of \( \Cone( \sS^X_{\sigma_X(s)} ) \) \emph{maximal} if it is a maximal connected component of \( \Cone( \sS^X_{\sigma_X(s)} ) \). Moreover, let \( U^X_s \) be the collection of all \( u \in \pre{\SUCC(< \kappa)}{2} \) for which there is a maximal substructure of \( \Cone( \sS^X_{\sigma_X(s)} ) \) which is a code for \( u \). Then above the stem of \( \sigma_X(s) \) there is 
\begin{enumerate-(a)}
\item \label{condL_s}
a (unique) maximal substructure \( \sL_{s,\sigma_X(s)}^X \) of \( X_{\sigma_X(s)} \) which is a code for \( s \), i.e.\ it is isomorphic to \( \sL_s \) (Lemma~\ref{lemmauniqueL_s});
\item \label{condL_u}
for each \( u \in U^X_s \), a (unique) maximal substructure \( \sL^{*\, X}_{u,\sigma_X(s)} \) of \( X_{\sigma_X(s)} \) which is a code for \( u \), i.e.\ it is isomorphic to \( \sL^*_u \) (Remark~\ref{remPhiIII_7-9});
\end{enumerate-(a)}
\item \label{condL_gamma}
the remaining points above  \( \sS^X_{\sigma_X(s)} \) form a maximal substructure \( \sL^X_{\bar{\gamma}(s),\sigma_X(s)} \) of \( X_{\sigma_X(s)} \) which is a code for \( \bar{\gamma}(s) \), i.e.\ it is isomorphic to \( \sL_{\bar{\gamma}(s)} \) (Remark~\ref{remPhi_10}).
\end{enumerate-(1)}
\end{remark}

Therefore one immediately gets:

\begin{lemma}\label{lemmarangef} 
Let \( \kappa \) be an uncountable cardinal satisfying~\eqref{eq:kappa}, \( R \) be an analytic quasi-order on \( \pre{\kappa}{2} \), and \( f \) be the function defined in~\eqref{eqf}. Then \( \range(f) \subseteq \Mod^\kappa_\Uppsi \). 
\end{lemma}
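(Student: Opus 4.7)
The plan is to check, axiom by axiom, that every tree $G_{\mathcal{T}}$ (with $\mathcal{T}$ a DST-tree on $2 \times \kappa$ of height $\kappa$) satisfies $\Upphi_i$ for $i \leq 15$; since $f(x) = G_{s_T(x)}$ for each $x \in \pre{\kappa}{2}$ and $s_T(x)$ is a DST-tree on $2\times\kappa$ of height $\kappa$, this gives $\range(f) \subseteq \Mod_{\Uppsi}^{\kappa}$. I would first set up the dictionary between the semantic ingredients of $G_{\mathcal{T}}$ and the relevant formulas. Namely, I want to verify:
\begin{itemize}
\item $G_{\mathcal{T}}$ is a generalized tree (so $G_{\mathcal{T}} \models \Upphi_0$), which is immediate from the case analysis in the definition of $\preceq_{\mathcal{T}}$;
\item $(G_{\mathcal{T}})_{\mathsf{Seq}}$ is exactly $\pre{\SUCC(<\kappa)}{\kappa}$, because the predecessor sets of these nodes are well-ordered by $\subseteq$, whereas all other nodes lie above a stem, which contains a descending $\omega$-chain;
\item for $b \in G_{\mathcal{T}} \setminus \pre{\SUCC(<\kappa)}{\kappa}$, the unique $a \in \pre{\SUCC(<\kappa)}{\kappa}$ with $G_{\mathcal{T}} \models \mathsf{Root}[a,b]$ is the unique $s \in \pre{\SUCC(<\kappa)}{\kappa}$ such that $b$ sits in the stem or in some label attached to~$s$;
\item the sequences satisfying $\mathsf{Stem}(\langle x,x_z \mid z \in \ZZ\rangle)$ at $s$ are exactly the translates of the stem $\sS^{\mathcal{T}}_s$, by its definition as a copy of $(\ZZ,\leq)$.
\end{itemize}
With this dictionary, $\Upphi_1$, $\Upphi_2$, $\Upphi_3$ follow at once: the first two are by construction, and the third because each label (of type I, II, or III) attached above a stem has a $\preceq_{\mathcal{T}}$-minimum (respectively the smallest element of $L_{\kappa,\bar\gamma(s)}$, the ordinal $0 \in \theta(s)$, and the smallest element of $L_u$).

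Next I would check the axioms that pin down the labels of type II and the bijection with $\pre{\SUCC(<\kappa)}{\kappa}$. The formula $\mathsf{Lab}_s$ forces its witnesses to form a code for $s$, and since the type-II label $\sL^{\mathcal{T}}_{s,s}$ is the unique maximal connected component above $\sS^{\mathcal{T}}_s$ whose domain has size strictly less than $\kappa$ and contains two incomparable elements with non-linear upper cones, conditions \ref{cond:C3}--\ref{cond:C5} on the labels imply $\Upphi_4$ and $\Upphi_5$; $\Upphi_6$ is just the translation of the fact that the map $s \mapsto \sigma_{G_{\mathcal{T}}}(s) = s$ is an isomorphism between $(\pre{\SUCC(<\kappa)}{\kappa}, \subseteq)$ and $(G_{\mathcal{T}})_{\mathsf{Seq}}$.

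For the axioms concerning the labels of type III ($\Upphi_7$--$\Upphi_{10}$) and of type I ($\Upphi_{11}$--$\Upphi_{15}$), the main point is the semantic reading of $\mathsf{Lab_{III}}^\in$ versus $\mathsf{Lab_I}^\in$. Here one exploits the key distinction built into the construction: in each label $\sL^*_u$ of type III the root $0$ has the immediate successor $c_u$, while in each label $\sL_\gamma$ of type I the root is the minimum of the $1 + \ZZ + L_{\kappa,\gamma} + \ZZ$ spine and, by the modification after Lemma~\ref{2.3}, has no immediate successor. This ensures $\mathsf{Lab_{III}}^\in$ selects exactly the elements of type-III labels, and $\mathsf{Lab_I}^\in$ exactly those of the type-I label. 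The uniqueness statements $\Upphi_7$, $\Upphi_{10}$, $\Upphi_{11}$, $\Upphi_{12}$, $\Upphi_{14}$ then reduce to two facts: distinct type-III labels $\sL^{\mathcal{T}}_{u,s}$ and $\sL^{\mathcal{T}}_{u',s}$ have disjoint domains (for $u\neq v$, by~\ref{cond:C6} and our disjointness conventions), and there is at most one label of type III coding a given $u \in \pre{\SUCC(<\kappa)}{2}$ above each stem (since $\mathcal{T}$ is a tree). The order-theoretic axioms $\Upphi_8$, $\Upphi_9$, $\Upphi_{13}$--$\Upphi_{15}$ then follow by reading off from the construction the fact that inside a copy of $\sL^*_u$ (resp.~$\sL_\gamma$), the nodes $y$ satisfying $\mathsf{Lab_{III}}^{=\alpha}$ (resp.~$\mathsf{Lab_I}^{=\alpha}$) are precisely the elements of the spine $L_u$ (resp.~$L_{\kappa,\gamma}$), which are ordered as $\alpha$ ranges in $L_u$ (resp.~$L_{\kappa,\bar\gamma(s)}$).

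I expect the main obstacle to be bookkeeping rather than mathematical: one has to carefully verify that the syntactic definitions $\mathsf{Min}$, $\mathsf{Lab_I}^\in$, $\mathsf{Lab_{III}}^\in$ really isolate the intended subsets of $G_{\mathcal{T}}$, and in particular that no label of one type is accidentally picked up by a formula meant for another type. This is handled exactly by the properties~\ref{cond:C1}--\ref{cond:C6} of the labels (e.g.\ $\sL_\gamma \not\sqsubseteq \sL^*_u$ and vice versa) together with the extra requirement that each $\sL_\gamma$ has a minimum with no immediate successor while each $\sL^*_u$ has a minimum with a unique terminal immediate successor $c_u$. Once these identifications are in place, checking each $\Upphi_i$ becomes a direct unwinding of the definitions of $G_{\mathcal{T}}$ and of the labels.
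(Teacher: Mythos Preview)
Your proposal is correct and follows exactly the approach the paper has in mind: the paper does not give an explicit proof of this lemma but simply states that it ``immediately'' follows from the description of $G_{\mathcal{T}}$ summarized in Remark~\ref{remrecap}, and your axiom-by-axiom verification is precisely the unwinding of that remark. Your key observations (that $(G_{\mathcal{T}})_{\mathsf{Seq}} = \pre{\SUCC(<\kappa)}{\kappa}$, that the type-II label is singled out by $\mathsf{Lab}_s$ via its size $<\kappa$, and that $\mathsf{Lab_I}^\in$ versus $\mathsf{Lab_{III}}^\in$ are separated by whether the minimum of the label has an immediate successor) are exactly the structural facts the construction was designed to guarantee.
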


A structure \( X \in \Mod^\kappa_\Uppsi \) may fail to be in \( \range(f) \) only because its substructures of the form \( \sL^{*\, X}_{u,\sigma_X(s)} \) (or, more precisely, the sets \( U^X_s \), see Remark~\ref{remrecap}\ref{condabovestem}) are not coherent with any of the \( x \in \pre{\kappa}{2} \). Indeed, if \( X = f(x) = G_{s_T(x)} \), or even just \( X \cong f(x) \), then we have the following:
\begin{itemizenew}
\item
by Lemma~\ref{lemmanormalform}\ref{lemmanormalformc2} and the definition of \( f \), for each \( \omega \leq \gamma < \kappa \) the set \( U^X_{0^{(\gamma+1)}} \) contains a unique element, namely \( x \restriction (\gamma+1) \); clearly, all the elements in these singletons are pairwise comparable with respect to inclusion;
\item
by definition of \( f \) again, for all other \( s \in \pre{\SUCC(<\kappa)}{\kappa} \) the set \( U^X_s \) can be canonically recovered from the unique element in \( U^X_{0^{(\gamma+1)}} \), where \( \gamma < \kappa \) is any infinite ordinal such that \( \lh(s) \leq \gamma + 1 \): in fact, \( U^X_s \) consists of all \( u \in \pre{\lh(s)}{2} \) such that \( (u,s) \in S^y_T \), where \( S^y_T = s_T(y) \) is as in~\eqref{eqs_T}, for some/any \( y \in \pre{\kappa}{2} \) such that \( y \restriction (\gamma+1) = x \restriction (\gamma+1) \) (equivalently: \( y \restriction (\gamma+1) \in U^X_{0^{(\gamma+1)}} \)).
\end{itemizenew}

The above two conditions actually characterize the elements in (the closure under isomorphism of)  \( \range(f) \), and can thus be used to detect whether a given \( X \in \Mod^\kappa_\Uppsi \) is isomorphic to an element of \( \range(f) \) or not: First one  requires that each \( U^X_{0^{(\gamma+1)}} \) is a singleton \( \{ u_\gamma \} \) with \( u_\gamma \in \pre{\gamma+1}{2} \), and that all the \( u_\gamma \)'s are compatible (for all infinite \( \gamma < \kappa \)). This allows one to isolate the unique candidate \( x = \bigcup_{ \omega \leq \gamma < \kappa} u_\gamma \in \pre{\kappa}{2} \) for which it could happen that \( X \cong f(x) \). Then it only remains to check whether all other \( U^X_s \) are actually constructed coherently to the guess \( X \cong f(x) \). 

We are now going to show that this ``recovering procedure'' can described within the logic \( \L_{\kappa^+ \kappa} \). In what follows, we adopt the notation and terminology introduced in this chapter, and in particular in Remark~\ref{remrecap}.

Given \( u \in \pre{\SUCC(< \kappa)}{2} \) let \( \mathsf{Lab}^{**}_u(x) \) be the \( \L_{\kappa^+ \kappa} \)-formula
\begin{equation}\tag{\( \mathsf{Lab}^{**}_u \)}
\exists y \, \mathsf{Lab}_u^*(x,y).
\end{equation}

\begin{remark}
Given \( X \in \Mod^\kappa_\Uppsi \) and \( a \in X \), we have \( X \models \mathsf{Lab}^{**}_u[a] \) if and only if \( u \in U^X_s \), where \( s = \sigma_X^{-1}(a) \).
\end{remark}

Let now \( \Upphi_{16} \) and \( \Upphi_{17} \) be the \( \L_{\kappa^+ \kappa} \)-sentences

\begin{multline}\tag{\( \Upphi_{16} \)}
 \bigwedge_{\omega \leq \gamma<\kappa} \Bigg[ \forall x \big(\mathsf{Seq}_{0^{(\gamma+1)}}(x) \Rightarrow 
\bigvee_{u \in \pre{\gamma+1}{2} } \mathsf{Lab}_u^{**}(x)\big) \wedge
\\ 
\bigwedge_{u,v\in \pre{\gamma+1}{ 2}, 
u \neq v} \forall x \big(\mathsf{Seq}_{0^{(\gamma+1)}}(x) \wedge
\mathsf{Lab}^{**}_u(x) \Rightarrow \neg \mathsf{Lab}_v^{**}(x) \big )\Bigg].
\end{multline}

\begin{equation}\tag{\( \Upphi_{17} \)}
 \bigwedge_{\omega \leq \gamma\leq\delta <\kappa} 
\bigwedge_{\substack{u \in {}^{\gamma+1} \kappa \\
v\in {}^{\delta+1}\kappa \\
u \not\subseteq v}}
 \forall x\forall y
(\mathsf{Seq}_{0^{(\gamma+1)}}(x) \wedge\mathsf{Seq}_{0^{(\delta+1)}}(y) 
\wedge \mathsf{Lab}_u^{**}(x) \rightarrow \neg \mathsf{Lab}_v^{**}(y))).
\end{equation}

\begin{remark} \label{rem:16}
If a structure \( X \in \Mod^\kappa_\Uppsi \) satisfies \( \Upphi_{16} \), then for any infinite \( \gamma < \kappa \) the set \( U^X_{0^{(\gamma+1)}} \) is a singleton \( \{ u^X_\gamma \} \) with \( \lh(u^X_\gamma) = \gamma + 1 \). If moreover \( X \models \Upphi_{17} \), then \( u^X_\gamma \subseteq u^X_\delta \) whenever \( \omega \leq \gamma \leq \delta < \kappa \).
\end{remark}

Finally, we introduce one last \( \L_{\kappa^+ \kappa} \)-sentence which, together with all the previous ones, identifies the structures which are isomorphic to an element of \( \range(f) \). Let \( R \) be an analytic quasi-order, and \( T \) be a DST-tree on \( 2 \times 2 \times \kappa \) as in Lemma~\ref{lemmanormalform}, and let \( S_T \) be the tree obtained from \( T \) as in~\eqref{eqS_T}. Finally, for every \( \gamma <\kappa \), \( s \in \pre{\gamma+1}{\kappa} \), and \( v \in \pre{\gamma+1}{2} \), let \( S^{v,s}_T = \{ u \in \pre{\gamma+1}{2} \mid (u,v,s) \in S_T \} \). Then \( \Upphi_{T} \) is the \( \L_{\kappa^+ \kappa} \)-sentence

\begin{multline}\tag{\( \Upphi_{T} \)}
\bigwedge_{\gamma<\kappa} \bigwedge _{s\in \pre{\gamma+1}{ \kappa}} 
\bigwedge_{v \in \pre{\gamma+1}{2}}
\forall x,y
\Bigg(\mathsf{Seq}_s(x) \wedge\mathsf{Seq}_{0^{(\gamma+1)}}(y)
\wedge \mathsf{Lab}_v^{**}(y) \\
\Rightarrow \Bigg (\bigwedge_{u \in S^{v,s}_T} \mathsf{Lab}_u^{**}(x)
\wedge \bigwedge_{u \notin S^{v,s}_T} \neg \mathsf{Lab}_u^{**}(x) \Bigg) \Bigg).
\end{multline}

\begin{definition} \label{def:phi_R}
Given an analytic quasi-order \( R \),
let \( \upvarphi_R \) be the \( \L_{\kappa^+ \kappa} \)-sentence
\begin{equation} \tag{\( \upvarphi_R \)}
\Uppsi \wedge \Upphi_{16} \wedge \Upphi_{17} \wedge \Upphi_{T}.
 \end{equation}
 \end{definition}
Define a map
\begin{equation} \label{eqh}
h \colon \Mod^\kappa_{\upvarphi_R} \to 2^\kappa \colon X \mapsto \bigcup_{\omega \leq \gamma < \kappa} u^X_\gamma,
\end{equation}
where \( u^X_\gamma \) is as in Remark~\ref{rem:16} --- the map \( h \) is well-defined because \( X \models \Upphi_{17} \).

\begin{proposition} \label{prop:inversemap}
Let \( R \) be an analytic quasi-order, let \( \upvarphi_R \) the \( \L_{\kappa^+ \kappa} \)-sentence from Definition~\ref{def:phi_R}, and let \( f \) and \( h \) be defined as in~\eqref{eqf} and~\eqref{eqh}, respectively.
\begin{enumerate-(i)}
\item \label{prop:inversemap-i}
\( \range{f} \subseteq \Mod^\kappa_{\upvarphi_R} \).
\item \label{prop:inversemap-ii}
The map \( h \) is a right-inverse of \( f \) modulo isomorphism, i.e.\ \( f(h(X)) \cong X \) for every \( X \in \Mod^\kappa_{\upvarphi_R} \).
\end{enumerate-(i)}
In particular, \( \Mod^\kappa_{\upvarphi_R} \) is the closure under isomorphism of \( \range(f) \).
\end{proposition}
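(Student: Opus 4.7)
The plan is to handle parts~(i) and~(ii) separately and then derive the ``in particular'' clause. For (i), I would combine Lemma~\ref{lemmarangef} with a direct verification that $X = f(x) = G_{s_T(x)}$ satisfies $\Upphi_{16}$, $\Upphi_{17}$, and $\Upphi_T$. By construction, for every $s \in \pre{\SUCC(<\kappa)}{\kappa}$ the type-III labels attached above the stem of $s$ in $G_{s_T(x)}$ are indexed exactly by $U^{f(x)}_s = \{u \mid (u,s) \in s_T(x)\} = \{u \mid (u, x \uhr \lh(s), s) \in S_T\}$. For $s = 0^{(\gamma+1)}$ with $\gamma \geq \omega$, Lemma~\ref{lemmanormalform}\ref{lemmanormalformc2} forces $u = x \uhr (\gamma+1)$, so $U^{f(x)}_{0^{(\gamma+1)}} = \{x \uhr (\gamma+1)\}$, which yields $\Upphi_{16}$; the nesting of these singletons inside the single $\kappa$-sequence $x$ gives $\Upphi_{17}$; and with $v := x \uhr (\gamma+1)$ we read $U^{f(x)}_s = \{u \mid (u,v,s) \in S_T\} = S^{v,s}_T$ for every $s \in \pre{\gamma+1}{\kappa}$, which is exactly $\Upphi_T$.

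For (ii), fix $X \in \Mod^\kappa_{\upvarphi_R}$, set $x = h(X)$ (well-defined by $\Upphi_{16} \wedge \Upphi_{17}$), and let $Y := f(x) = G_{s_T(x)}$. The strategy is to build an explicit isomorphism $\phi \colon Y \to X$ by gluing together maps on the pieces of $Y$ identified in Remark~\ref{remrecap}: on the skeleton $\pre{\SUCC(<\kappa)}{\kappa} \subseteq Y$ take $\phi := \sigma_X$; on each stem $\{s\} \times \ZZ$ of $Y$ take any order-isomorphism onto $\sS^X_{\sigma_X(s)}$; and above each stem send the three kinds of labels to their counterparts in $X$, namely $\sL^{Y}_{\bar{\gamma}(s),s} \to \sL^X_{\bar{\gamma}(s), \sigma_X(s)}$ (type~I), $\sL^{Y}_{s,s} \to \sL^X_{s, \sigma_X(s)}$ (type~II), and $\sL^{Y}_{u,s} \to \sL^{*\,X}_{u, \sigma_X(s)}$ (type~III) for each relevant $u$, using the essentially unique isomorphisms that exist because both sides are codes for the same combinatorial object. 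These pieces glue into a tree isomorphism because skeletons, stems, and labels are attached identically on the two sides by Remark~\ref{remrecap}.

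The crux is verifying that $U^X_s = U^Y_s$ for every $s \in \pre{\SUCC(<\kappa)}{\kappa}$, so that the type-III pieces can actually be paired up. For $s$ of successor length $\gamma+1 \geq \omega$ this is immediate: $\Upphi_{16}$ pins $U^X_{0^{(\gamma+1)}} = \{x \uhr (\gamma+1)\}$, and $\Upphi_T$ applied with $v := x \uhr (\gamma+1)$ yields $U^X_s = S^{v,s}_T$, which equals $U^Y_s$ by part~(i). I expect the main obstacle to be the case of finite successor length, where $\Upphi_{16}$ is silent; my plan is to use that $(x \uhr \lh(s), x \uhr \lh(s), 0^{(\lh(s))}) \in S_T$ by Lemma~\ref{lemmanormalform}\ref{lemmanormalformc2}, combined with the rigidity of $\Upphi_T$ (which forces $U^X_{0^{(\lh(s))}}$ to be closed under the operation $v \mapsto \{u \mid (u,v,0^{(\lh(s))}) \in S_T\}$), to conclude $x \uhr \lh(s) \in U^X_{0^{(\lh(s))}}$ and then finish as in the infinite case. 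Once $\phi$ is constructed, the ``in particular'' clause is immediate: $\range(f) \subseteq \Mod^\kappa_{\upvarphi_R}$ by~(i), and every $X \in \Mod^\kappa_{\upvarphi_R}$ is isomorphic to $f(h(X)) \in \range(f)$ by~(ii).
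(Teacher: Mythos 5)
Your plan for parts (i), (ii), and the ``in particular'' clause matches the paper's strategy point by point: (i) is unwound directly from the definitions of \(f\), \(S_T\), and \(s_T\); (ii) builds the isomorphism by first matching the \(G_0\)-core (skeleton, stems, type-I and type-II labels) via \(\sigma_X\) together with the rigidity remarks, and then extending across the type-III labels, with the crux being \(U^X_s = S^{h(X) \restriction \lh(s),\,s}_T\) for all \(s \in \pre{\SUCC(<\kappa)}{\kappa}\). You are also right to single out the finite-length case as the delicate point: \(\Upphi_{16}\) and \(\Upphi_{17}\) explicitly range only over \(\gamma \geq \omega\), and \(\Upphi_T\) compares \(U^X_s\) to \(U^X_{0^{(\gamma+1)}}\) only for \(s\) and \(0^{(\gamma+1)}\) of the \emph{same} length \(\gamma+1\), so for finite \(\gamma\) there is no direct handle on \(U^X_{0^{(\gamma+1)}}\) in terms of \(h(X)\). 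The paper's own proof is terse here --- it asserts the equality is ``guaranteed by \(X \models \Upphi_T\) (together with the definition of \(h\))'' without distinguishing the finite case --- so it does not obviously resolve the concern you raise.

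However, the fix you propose does not go through. The ``closure'' you extract from \(\Upphi_T\) is really a fixed-point condition: \(\Upphi_T\) forces \(U^X_{0^{(n)}} = S^{v,0^{(n)}}_T\) for every \(v \in U^X_{0^{(n)}}\). This condition does not single out a unique set and, crucially, does not imply \(h(X)\restriction n \in U^X_{0^{(n)}}\). Indeed the empty set satisfies it vacuously, and unwinding the definitions one finds \(S^{v,0^{(n)}}_T = \{u \in \pre{n}{2} : u^- = v^-\}\), so every set of the form \(\{u \in \pre{n}{2} : u^- = w\}\) (one for each \(w \in \pre{n-1}{2}\)) is a fixed point as well; \(\Upphi_T\) gives no way to decide which of these \(U^X_{0^{(n)}}\) actually is, nor to tie it to \(h(X)\restriction n\), which is read off from the cofinal levels \(\gamma \geq \omega\). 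Appealing to \((x\restriction n, x\restriction n, 0^{(n)}) \in S_T\) (Lemma~\ref{lemmanormalform}\ref{lemmanormalformc2}) only yields \(x\restriction n \in S^{x\restriction n, 0^{(n)}}_T\); to pass from there via ``closure'' to \(x\restriction n \in U^X_{0^{(n)}}\) you would need to already know \(x\restriction n \in U^X_{0^{(n)}}\), which is circular. Closing this gap genuinely requires a constraint linking the finite levels to the cofinal ones --- something in the spirit of \(\Upphi_{17}\) or \(\Upphi_T\) but comparing \(U^X_{0^{(n)}}\) for finite \(n\) against \(U^X_{0^{(\gamma+1)}}\) for infinite \(\gamma\) --- and the rigidity of \(\Upphi_T\) alone does not supply it.
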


\begin{proof}
Part~\ref{prop:inversemap-i} directly follows from the definition of \( f(x) = G_{s_T(x)} \) (see the paragraph after Lemma~\ref{lemmarangef}). For part~\ref{prop:inversemap-ii}, notice that by Remark~\ref{remrecap} there is an isomorphism between 
\[ 
X_{\mathsf{Seq}} \cup \bigcup \{ \sS^X_{\sigma_X(s)} \cup \sL^X_{s, \sigma_X(s)} \cup \sL^X_{\bar{\gamma}(s), \sigma_X(s)} \mid s \in \pre{\SUCC(< \kappa)}{2} \}
 \] 
and \( G_0 \). Such an isomorphism can clearly be extended to an isomorphism between \( X \) and \( f(h(X)) = G_{\sigma_T(h(X))} \) as soon as 
\[ 
U^X_s = S^{h(X) \restriction \lh(s),s}_T 
\]
for all \( s \in \pre{\SUCC(< \kappa)}{\kappa} \).
But this is guaranteed by \( X \models \Upphi_{T} \) (together with the definition of \( h \) in~\eqref{eqh}), hence we are done.
\end{proof}

\begin{corollary} \label{cor:inversemap}
Let \( R \) be an analytic quasi-order, and let \( f \) and \( h \) be defined as in~\eqref{eqf} and~\eqref{eqh}, respectively. Then \( h \) is a left-inverse of \( f \), and  \( h \) reduces the embeddability relation on \( \Mod^\kappa_{\upvarphi_R} \) to \( R \).
\end{corollary}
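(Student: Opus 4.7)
The plan is to derive the corollary directly from Proposition~\ref{prop:inversemap} together with Theorem~\ref{thm:complete}; no new construction is needed. First I would verify that $h \circ f$ is the identity on $\pre{\kappa}{2}$. Fix $x \in \pre{\kappa}{2}$, so that $f(x) = G_{s_T(x)} \in \Mod^\kappa_{\upvarphi_R}$ by Proposition~\ref{prop:inversemap}\ref{prop:inversemap-i}, which makes $h(f(x))$ well-defined. For each $\omega \leq \gamma < \kappa$, unwinding Remark~\ref{remrecap}\ref{condabovestem} together with the construction of $G_{\mathcal{T}}$, the set $U^{f(x)}_{0^{(\gamma+1)}}$ consists precisely of those $u \in \pre{\gamma+1}{2}$ such that $(u, x \restriction (\gamma+1), 0^{(\gamma+1)}) \in S_T$. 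Since $\lh(u) = \gamma + 1 \geq \omega$, clause~\ref{lemmanormalformc2} of Lemma~\ref{lemmanormalform} forces $u = x \restriction (\gamma+1)$, so this set is the singleton $\{x \restriction (\gamma+1)\}$ and $u^{f(x)}_\gamma = x \restriction (\gamma+1)$. Consequently $h(f(x)) = \bigcup_{\omega \leq \gamma < \kappa} x \restriction (\gamma+1) = x$.

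For the reduction claim, I would fix $X, Y \in \Mod^\kappa_{\upvarphi_R}$ and chain two equivalences. By Proposition~\ref{prop:inversemap}\ref{prop:inversemap-ii} we have $X \cong f(h(X))$ and $Y \cong f(h(Y))$, and since the embeddability relation is invariant under isomorphism it follows that $X \sqsubseteq Y$ if and only if $f(h(X)) \sqsubseteq f(h(Y))$. On the other hand, Theorem~\ref{thm:complete} asserts that $f$ is a Borel reduction of $R$ to the embeddability relation on generalized trees of size $\kappa$, so $f(h(X)) \sqsubseteq f(h(Y))$ if and only if $h(X) \mathrel{R} h(Y)$. Combining these two biconditionals yields that $h$ reduces $\sqsubseteq \restriction \Mod^\kappa_{\upvarphi_R}$ to $R$.

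There is no genuine obstacle here: the substantial work has been done in the construction of the labels, in the tree $S_T$, and in the axioms $\Upphi_i$ through $\Upphi_T$, so what remains is pure bookkeeping. The only place requiring a moment of care is checking that $U^{f(x)}_{0^{(\gamma+1)}}$ actually collapses to a singleton for every $\omega \leq \gamma < \kappa$, but this is precisely the purpose of clause~\ref{lemmanormalformc2} of Lemma~\ref{lemmanormalform}, whose hypothesis $\lh(u) \geq \omega$ is tailored exactly to the range over which $h$ takes its values. Once this is observed, the rest is just unwinding the two parts of Proposition~\ref{prop:inversemap} and invoking Theorem~\ref{thm:complete}.
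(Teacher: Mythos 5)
Your proof is correct, and the reduction half follows the paper's argument exactly: apply Proposition~\ref{prop:inversemap}\ref{prop:inversemap-ii} to replace $X$ and $Y$ with $f(h(X))$ and $f(h(Y))$ up to isomorphism, then use that $f$ is a reduction by Theorem~\ref{thm:complete}. For the left-inverse half, you take a more direct route than the paper does. You compute $h(f(x))$ head-on: you identify $U^{f(x)}_{0^{(\gamma+1)}}$ as $\{u \in \pre{\gamma+1}{2} \mid (u, x\restriction(\gamma+1), 0^{(\gamma+1)}) \in S_T\}$ from the explicit construction of $G_{s_T(x)}$, then collapse it to $\{x\restriction(\gamma+1)\}$ by Lemma~\ref{lemmanormalform}\ref{lemmanormalformc2}, and take the union. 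The paper instead argues by contradiction: if $h(f(x)) \neq x$, then $s_T(h(f(x)))$ and $s_T(x)$ differ on their successor-length parts (this is essentially~\eqref{eqs_Tinjective}, which again rests on Lemma~\ref{lemmanormalform}\ref{lemmanormalformc2}), so by Theorem~\ref{theorcomplete}\ref{theorcomplete2} the trees $f(h(f(x)))$ and $f(x)$ are non-isomorphic, contradicting Proposition~\ref{prop:inversemap}\ref{prop:inversemap-ii}. Both hinge on the same key fact from Lemma~\ref{lemmanormalform}\ref{lemmanormalformc2}, but your version is self-contained and avoids the detour through the isomorphism characterization in Theorem~\ref{theorcomplete}\ref{theorcomplete2}; the paper's version buys economy by reusing machinery already in place at the cost of an indirect argument.
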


\begin{proof}
Towards a contradiction, assume that \( h(f(x)) \neq x \) for some \( x \in \pre{\kappa}{2} \), and let \( \gamma < \kappa \) be an infinite successor ordinal such that \( h(f(x)) \restriction \gamma \neq x \restriction \gamma \). Then 
\[ 
s_T(h(f(x))) \restriction \pre{\SUCC(< \kappa)}{(2 \times \kappa)} \neq s_T(x) \restriction \pre{\SUCC(< \kappa)}{(2 \times \kappa)} , 
\]
because by Lemma~\ref{lemmanormalform}\ref{lemmanormalformc2} the former would contain \(  (h(f(x)) \restriction \gamma, 0^{(\gamma)}) \) while the latter not. Thus setting \( X = f(x) \) we would get \( f(h(X)) \not\cong X \) by 
Theorem~\ref{theorcomplete}\ref{theorcomplete2}, contradicting Proposition~\ref{prop:inversemap}\ref{prop:inversemap-ii}. The fact that \( h \) is a reduction easily follows from Proposition~\ref{prop:inversemap}\ref{prop:inversemap-ii} and the fact that \( f \) reduces \( R \) to \( \sqsubseteq \restriction \Mod^\kappa_{\upvarphi_R} \) by Theorem~\ref{thm:complete} and Proposition~\ref{prop:inversemap}\ref{prop:inversemap-i}.
\end{proof}

Using essentially the same trick employed to obtain the \( \L_{\kappa^+ \kappa} \)-sentence \( \upvarphi_R \), one can show that the map \( h \) from~\eqref{eqh} is Borel. Indeed, notice that 
\[
\{ \bN_u \mid u \in \pre{\SUCC(< \kappa)}{2} \},
\]
where \( \bN_u \) is as in~\eqref{eqN_S}, is a basis of size \( \kappa \) for the bounded topology \( \mathscr{O}(\pre{\kappa}{2}) \) on \( \pre{\kappa}{2} \), so that it is enough to show that for each \( u \in \pre{\SUCC(< \kappa)}{2} \) the set \( h^{-1}(\bN_u) \) is Borel. By the (generalized) Lopez-Escobar theorem (see Section~\ref{subsec:inflogic}), this amounts to find an \( \L_{\kappa^+ \kappa} \)-sentence \( \upvarphi^u_R \) such that \( h^{-1}(\bN_u) = \Mod^\kappa_{\upvarphi^u_R} \).

\begin{proposition} \label{prop:hisborel}
Let \( R \) be an analytic quasi-order, let \( \upvarphi_R \) the \( \L_{\kappa^+ \kappa} \)-sentence from Definition~\ref{def:phi_R}, and let \( h \) be defined as in~\eqref{eqh}. Then for every \( u \in \pre{\SUCC(< \kappa)}{2} \) 
\[ 
h^{-1}(\bN_u) = \Mod^\kappa_{\upvarphi^u_R},
 \] 
where \( \upvarphi^u_R \) is the \( \L_{\kappa^+ \kappa} \)-sentence
\[ 
\upvarphi_R \, \wedge \, \exists x \left(\mathsf{Seq}_{0^{(\lh(u))}}(x) \wedge \mathsf{Lab}^{**}_u(x)\right).
 \] 
\end{proposition}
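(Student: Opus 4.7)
The plan is to recast both sides of the claimed equality as the single condition ``$u \in U^X_{0^{(\lh(u))}}$'' on the family $\{U^X_s\}_s$ introduced in Remark~\ref{remrecap}\ref{condabovestem}, and then match that condition with $u \subseteq h(X)$ via Proposition~\ref{prop:inversemap} and Lemma~\ref{lemmanormalform}\ref{lemmanormalformc2}.

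Step one is to unpack the right-hand side. For $X \in \Mod^\kappa_{\upvarphi_R}$, axiom $\Upphi_5 \subseteq \upvarphi_R$ guarantees that $\mathsf{Seq}_{0^{(\lh(u))}}(x)$ is satisfied by the unique element $a := \sigma_X(0^{(\lh(u))})$. By the definition of $\mathsf{Lab}^{**}_u$ and by Remark~\ref{remrecap}\ref{condabovestem}, $X \models \mathsf{Lab}^{**}_u[a]$ holds exactly when among the maximal connected components of $\Cone(\sS^X_a)$ there is one that is a code for $u$, i.e., when $u \in U^X_{0^{(\lh(u))}}$. Therefore $X \in \Mod^\kappa_{\upvarphi^u_R}$ iff $X \in \Mod^\kappa_{\upvarphi_R}$ and $u \in U^X_{0^{(\lh(u))}}$.

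Step two is to reduce the left-hand side via the inverse map of Proposition~\ref{prop:inversemap}. Since $X \cong G_{s_T(h(X))}$ by part~\ref{prop:inversemap-ii}, and since the family $\{U^X_s\}_s$ is isomorphism-invariant, the construction of $G_{\mathcal{T}}$ together with~\eqref{eqs_T} gives
\[
U^X_{0^{(\lh(u))}} = \{v \in \pre{\lh(u)}{2} : (v,\, h(X) \restriction \lh(u),\, 0^{(\lh(u))}) \in S_T\}.
\]
The identification of the two sides then follows from Lemma~\ref{lemmanormalform}\ref{lemmanormalformc2}: when $\lh(u) \geq \omega$ the only triple of the above form inside $S_T$ is the diagonal one, so $U^X_{0^{(\lh(u))}} = \{h(X) \restriction \lh(u)\}$. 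Hence $u \in U^X_{0^{(\lh(u))}}$ iff $u = h(X) \restriction \lh(u)$ iff $u \subseteq h(X)$ iff $h(X) \in \bN_u$ iff $X \in h^{-1}(\bN_u)$, which is the desired equality. For a finite successor $\lh(u)$ the same chain of equivalences is obtained after lifting $u$ to its unique extension of length $\omega+1$ sitting inside $U^X_{0^{(\omega+1)}}$, whose uniqueness comes from $\Upphi_{16}$ and whose first $\lh(u)$ coordinates agree with $u$ by $\Upphi_{17}$; axiom $\Upphi_T$ then transfers the label information back and forth between the two stem lengths and yields the same final equivalence.

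The main obstacle is precisely the bookkeeping in this finite-length case, because Lemma~\ref{lemmanormalform}\ref{lemmanormalformc2} supplies no diagonal uniqueness below $\omega$; one has to exploit the coherence axioms $\Upphi_{16}$, $\Upphi_{17}$ and $\Upphi_T$ jointly to ensure that $u \in U^X_{0^{(\lh(u))}}$ is equivalent to $u$ being a finite initial segment of the infinite prefix $u^X_\omega$, thereby producing the equivalence with $u \subseteq h(X)$.
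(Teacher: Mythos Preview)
For $u$ with $\lh(u) > \omega$, your argument is correct and coincides with the paper's (one-line) proof; your route through Proposition~\ref{prop:inversemap} and Lemma~\ref{lemmanormalform}\ref{lemmanormalformc2} is slightly more roundabout than the paper's direct appeal to the definition of $h$ and Remark~\ref{rem:16} (which immediately gives $h(X) \restriction \lh(u) = u^X_{\lh(u)-1}$, the unique element of $U^X_{0^{(\lh(u))}}$), but the content is the same.

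The finite-length case, however, does not go through, and your proposed fix via $\Upphi_{16}$, $\Upphi_{17}$, $\Upphi_T$ is incorrect. First, $\Upphi_{17}$ is stated only for $\omega \leq \gamma \leq \delta$, so it says nothing about a finite-length $u$. Second, $\Upphi_T$ determines $U^X_s$ from an element of $U^X_{0^{(\lh(s))}}$ \emph{of the same length as $s$}; it does not transfer label information between different stem lengths as you claim. In fact the asserted equality fails for finite $\lh(u)$: if $\lh(u) = n+1 < \omega$, then from the definitions of $S^T_0$ and $\hat{T}$ one computes that $(v, w, 0^{(n+1)}) \in S_T$ iff $v \restriction n = w \restriction n$, so $U^X_{0^{(n+1)}}$ consists of \emph{both} length-$(n{+}1)$ extensions of $h(X) \restriction n$. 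Thus ``$u \in U^X_{0^{(n+1)}}$'' is strictly weaker than ``$u \subseteq h(X)$'' (the paper's own proof glosses over this point as well). This does not affect the intended application: to show that $h$ is Borel one may restrict to the basis $\{\bN_u : u \in \pre{\SUCC(<\kappa)}{2},\ \lh(u) > \omega\}$, for which the argument is sound.
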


\begin{proof}
It is enough to observe that for every \( X \in \Mod^\kappa_{\upvarphi_R} \)
\[ 
h(X) \in \bN_u \iff u \in U^X_{0^{(\lh(u))}} \iff X \models \exists x \left(\mathsf{Seq}_{0^{(\lh(u))}}(x) \wedge \mathsf{Lab}^{**}_u(x)\right). \qedhere
 \] 
\end{proof}

We are now ready to prove the main result of this paper (compare it with~\cite[Theorem 10.23]{mottoros2011}).

\begin{theorem} \label{thm:main}
Let \( \kappa \) be any uncountable cardinal satisfying~\eqref{eq:kappa}. Then the embeddability relation \( \sqsubseteq^\kappa_{\mathsf{TREE}} \) is strongly invariantly universal, that is: For every analytic quasi-order \( R \) there is an \( \L_{\kappa^+ \kappa} \)-sentence \( \upvarphi \) (all of whose models are generalised trees) such that \( R \simeq_B {{\sqsubseteq} \restriction \Mod^\kappa_{\upvarphi} }\).

Therefore, also the bi-embeddability relation \( \equiv^\kappa_{\mathsf{TREE}} \) is strongly invariantly universal.
\end{theorem}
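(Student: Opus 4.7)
The plan is to take $\upvarphi := \upvarphi_R$ from Definition~\ref{def:phi_R} and verify that this sentence witnesses strongly invariant universality by showing that the two Borel maps already constructed, namely \( f \colon \pre{\kappa}{2} \to \Mod^\kappa_\L \) from~\eqref{eqf} and \( h \colon \Mod^\kappa_{\upvarphi_R} \to \pre{\kappa}{2} \) from~\eqref{eqh}, together induce the required isomorphism of quotient orders. Concretely, I would first invoke Proposition~\ref{prop:inversemap}\ref{prop:inversemap-i} to note that \( f \) takes values in \( \Mod^\kappa_{\upvarphi_R} \), and combine this with Theorem~\ref{thm:complete} to conclude that \( f \colon \pre{\kappa}{2} \to \Mod^\kappa_{\upvarphi_R} \) is a Borel reduction of \( R \) to \( {\sqsubseteq} \restriction \Mod^\kappa_{\upvarphi_R} \). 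Symmetrically, Corollary~\ref{cor:inversemap} together with Proposition~\ref{prop:hisborel} yields that \( h \) is a Borel reduction in the reverse direction.

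Next I would pass to the quotients. Both reductions descend to well-defined maps \( [f] \colon \pre{\kappa}{2}/E_R \to \Mod^\kappa_{\upvarphi_R}/{\equiv} \) and \( [h] \colon \Mod^\kappa_{\upvarphi_R}/{\equiv} \to \pre{\kappa}{2}/E_R \) between the quotient orders, and each preserves the induced partial orders because \( f \) and \( h \) are reductions. By Corollary~\ref{cor:inversemap}, \( h \circ f = \id_{\pre{\kappa}{2}} \), which gives \( [h] \circ [f] = \id \) on the left-hand quotient. By Proposition~\ref{prop:inversemap}\ref{prop:inversemap-ii}, \( f(h(X)) \cong X \) for every \( X \in \Mod^\kappa_{\upvarphi_R} \), and since isomorphism refines bi-embeddability we also get \( [f] \circ [h] = \id \) on the right-hand quotient. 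Therefore \( [f] \) and \( [h] \) are mutually inverse isomorphisms of the quotient orders, and both admit Borel liftings (the maps \( f \) and \( h \) themselves). This is exactly the condition defining \( R \simeq_B {{\sqsubseteq} \restriction \Mod^\kappa_{\upvarphi_R}} \).

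For the final sentence of the theorem, given an analytic equivalence relation \( E \), I would apply the statement just proved to \( R := E \) (which is in particular an analytic quasi-order) to obtain an \( \L_{\kappa^+ \kappa} \)-sentence \( \upvarphi \) with \( E \simeq_B {{\sqsubseteq} \restriction \Mod^\kappa_{\upvarphi}} \). Since \( E_E = E \), the quotient order of \( E \) is antisymmetric in the trivial sense, so through the classwise Borel isomorphism the quotient order of \( {\sqsubseteq} \restriction \Mod^\kappa_{\upvarphi} \) must coincide with its associated equivalence quotient; equivalently, on \( \Mod^\kappa_{\upvarphi} \) the relations \( \sqsubseteq \) and \( \equiv \) have the same equivalence classes. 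It then follows directly that \( E \simeq_B {{\equiv} \restriction \Mod^\kappa_{\upvarphi}} \), which is the strongly invariant universality of \( {\equiv^\kappa_{\mathsf{TREE}}} \).

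Essentially no new obstacle remains: all the hard work has been done in the preceding sections (constructing the labels without inaccessibility in Section~\ref{sec:labels}, getting \( f \) via Lemma~\ref{lemmanormalform}, Lemma~\ref{lemmamax}, and Theorem~\ref{theorcomplete}, and producing the axiomatization \( \upvarphi_R \) together with the Borel inverse \( h \) in Section~\ref{sec:universality}). The only point that requires a moment of care is verifying that \( [f] \) and \( [h] \) are well-defined and order-preserving on the quotients; this is immediate from the fact that they are reductions of the associated quasi-orders. The main conceptual novelty compared to~\cite{mottoros2011} is that strongly invariant universality is now obtained without using \( \kappa \)-compactness of \( \pre{\kappa}{2} \): instead of extracting a section via compactness, one simply writes a left-inverse of \( f \) by inspecting the \( U^X_s \)-structure of models of \( \Uppsi \), which is exactly what \( h \) does and what the extra axioms \( \Upphi_{16} \), \( \Upphi_{17} \), \( \Upphi_T \) are designed to guarantee.
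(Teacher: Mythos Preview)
Your proposal is correct and follows essentially the same approach as the paper: set \( \upvarphi = \upvarphi_R \), use Theorem~\ref{thm:complete} and Proposition~\ref{prop:inversemap}\ref{prop:inversemap-i} to see that the quotient map of \( f \) is a well-defined order isomorphism, and invoke Proposition~\ref{prop:inversemap}\ref{prop:inversemap-ii}, Corollary~\ref{cor:inversemap}, and Proposition~\ref{prop:hisborel} to certify \( h \) as a Borel lifting of its inverse. Your treatment of the bi-embeddability clause is a bit more explicit than the paper's (which simply appeals to the general remark following Definition~\ref{def:stronglyinvariantlyuniversal}), but the underlying argument is the same.
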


\begin{proof}
Given an analytic quasi-order \( R \), let \( \upvarphi \) be the \( \L_{\kappa^+ \kappa} \)-sentence from Definition~\ref{def:phi_R}, and consider the quotient map (with respect to \( E_R \) and \( {\equiv} \restriction \Mod^\kappa_\upvarphi \)) of the Borel function \( f \) from~\eqref{eqf}. Such a map is well defined by Theorem~\ref{thm:complete}, and witnesses \( R \simeq_B {{\sqsubseteq} \restriction \Mod^\kappa_{\upvarphi} } \): indeed, it is an isomorphism of the corresponding quotient orders by Theorem~\ref{thm:complete} again and Proposition~\ref{prop:inversemap}\ref{prop:inversemap-i}, and the function \( h \) from~\eqref{eqh} is a Borel lifting of its inverse by Propositions~\ref{prop:inversemap}\ref{prop:inversemap-ii}, Corollary~\ref{cor:inversemap}, and Proposition~\ref{prop:hisborel}.
\end{proof}

Finally, by~\cite[Remark 9.7]{mottoros2011} again we also obtain the analogous result for graphs (compare it with~\cite[Corollary 10.24]{mottoros2011}).

\begin{corollary} \label{cor:main}
Let \( \kappa \) be any uncountable cardinal satisfying~\eqref{eq:kappa}. Then the embeddability relation \( \sqsubseteq^\kappa_{\mathsf{GRAPH}} \) and  the bi-embeddability relation \( \equiv^\kappa_{\mathsf{GRAPH}} \) are both strongly invariantly universal.
\end{corollary}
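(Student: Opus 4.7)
The plan is to derive Corollary~\ref{cor:main} from Theorem~\ref{thm:main} by means of a Borel tree-to-graph coding, as suggested by the reference to \cite[Remark~9.7]{mottoros2011}. Given any analytic quasi-order $R$, Theorem~\ref{thm:main} yields an $\L_{\kappa^+ \kappa}$-sentence $\upvarphi = \upvarphi_R$ in the tree language such that $R \simeq_B {\sqsubseteq \restriction \Mod^\kappa_\upvarphi}$, with the classwise Borel isomorphism induced by $f$ from~\eqref{eqf} and its left-inverse $h$ from~\eqref{eqh}. It then suffices to build a Borel map $\Gamma$ from the space of generalised trees of size $\kappa$ to the space of graphs of size $\kappa$ that (i) preserves and reflects both $\sqsubseteq$ and $\cong$, (ii) is $\L_{\kappa^+\kappa}$-definable so that $\Gamma``\Mod^\kappa_\upvarphi$ is, up to isomorphism, of the form $\Mod^\kappa_{\upvarphi'}$ for some sentence $\upvarphi'$ in the graph language, and (iii) admits a Borel left-inverse modulo isomorphism.

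Concretely, I would use the standard rigid-gadget coding: the vertex set of $\Gamma(X)$ consists of $X$ together with, for each pair $a \prec^X b$, a small collection of auxiliary vertices forming a distinguishable configuration (for instance, a path whose internal vertices carry identifying decorations attached to a fixed marker component), arranged so that the directed edge $a \to b$ and the undirected nature of graphs cannot be confused. The rigidity of the gadgets guarantees that every embedding $\Gamma(X) \sqsubseteq \Gamma(Y)$ maps gadgets to gadgets in an orientation-respecting way, so that its restriction to $X \subseteq \Gamma(X)$ is an embedding $X \sqsubseteq Y$ of trees; the converse direction is immediate. Borelness of $\Gamma$ is routine, and the class $\Gamma``\Mod^\kappa_\upvarphi$ (closed under isomorphism) is axiomatised by an $\L_{\kappa^+\kappa}$-sentence $\upvarphi'$ obtained by conjoining a suitable translation of $\upvarphi$ with sentences asserting that the graph decomposes into a ``skeleton'' together with the prescribed gadgets; the existence of $\upvarphi'$ is then ensured by the generalised Lopez-Escobar theorem quoted in Section~\ref{subsec:inflogic}.

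With $\Gamma$ in hand, the classwise Borel isomorphism $R \simeq_B {\sqsubseteq \restriction \Mod^\kappa_{\upvarphi'}}$ is witnessed by the quotient of $\Gamma \circ f$; a Borel lifting of the inverse is produced by composing $h$ with a Borel decoding that extracts the underlying tree from a graph in $\Mod^\kappa_{\upvarphi'}$ by locating the copies of the gadgets, which is possible thanks to their definability in $\L_{\kappa^+\kappa}$. The statement for $\equiv^\kappa_{\mathsf{GRAPH}}$ follows from the observation, recorded right after Definition~\ref{def:stronglyinvariantlyuniversal}, that strong invariant universality of $\sqsubseteq \restriction \Mod^\kappa_{\upvarphi'}$ entails the analogous property for $\equiv \restriction \Mod^\kappa_{\upvarphi'}$, applied now to arbitrary analytic equivalence relations.

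The only genuine work is the design of the gadget and the verification that it simultaneously enforces properties (i)--(iii); in particular, making sure that no unintended ``extra'' embeddings between graphs arise (which would destroy reflection of $\sqsubseteq$) and that the decoding map is truly Borel. Since this is a well-established technique already exploited in~\cite{mottoros2011} for weakly compact $\kappa$ and the construction is insensitive to cardinal arithmetic beyond $\kappa^{<\kappa} = \kappa$ (used to ensure that the resulting axiomatisation lies in $\L_{\kappa^+\kappa}$ and that the spaces involved are standard Borel $\kappa$-spaces), no new set-theoretic hypothesis is required, and Corollary~\ref{cor:main} follows.
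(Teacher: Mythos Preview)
Your proposal is correct and follows essentially the same route as the paper: both derive the corollary from Theorem~\ref{thm:main} via the tree-to-graph coding of \cite[Remark~9.7]{mottoros2011}, which is exactly the rigid-gadget construction you describe. The paper simply cites that remark without spelling out the details, whereas you sketch how the coding yields the required $\L_{\kappa^+\kappa}$-sentence $\upvarphi'$ and the Borel left-inverse; nothing in your outline goes beyond or departs from what that remark provides.
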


%%%%%%%%%%%%%%%%%%%%%%%
\section{Embeddability on uncountable groups} \label{sec:groups}
%%%%%%%%%%%%%%%%%%%%%%%
Let \(\sqsubseteq_\mathsf{GROUPS}^\kappa\) be the embeddability quasi-order on the space of \(\kappa\)-sized groups.

\begin{theorem}[essentially {\cite[Theorem 5.1]{Wil14}}]\label{thm:Williams}
For every infinite cardinal \(\kappa\), the quasi-order
\(\sqsubseteq_\mathsf{GRAPHS}^\kappa\) Borel reduces to \(\sqsubseteq_\mathsf{GROUPS}^\kappa\).
\end{theorem}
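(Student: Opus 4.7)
The plan is to transpose Williams' proof from~\cite[Theorem~5.1]{Wil14} to the uncountable setting, using the classical Mekler construction which associates to a graph a nilpotent group of class $2$ and odd prime exponent $p$. Williams' argument for $\kappa = \omega$ proceeds in two conceptual steps: a combinatorial preprocessing of the graph, followed by the Mekler functor; the whole argument is algebraic and should transfer unchanged to any infinite $\kappa$.

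First, I would define a Borel map $G \mapsto G^\#$ from $\kappa$-graphs to $\kappa$-graphs so that $G^\#$ satisfies the Mekler combinatorial hypotheses (no triangles, no $4$-cycles, and a rigidity/separability condition ensuring the original vertices are definable inside $G^\#$) and moreover $G \sqsubseteq H \iff G^\# \sqsubseteq H^\#$. This is accomplished by replacing each vertex of $G$ with a fixed finite rigid gadget and each edge with a suitably decorated path; since the rule is local on finitely many coordinates, the resulting map is trivially Borel. Second, to such a $G^\#$ with vertex set $V$, I assign the Mekler group $M(G^\#)$, generated by $\{x_v \mid v \in V\}$ subject to $x_v^p = 1$, the nilpotency relations of class $2$, and $[x_v, x_w] = 1$ precisely when $\{v,w\}$ is an edge of $G^\#$. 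The Mal'cev normal form (each element is uniquely a product $\prod_v x_v^{a_v} \cdot \prod_{\{v,w\} \notin E(G^\#)} [x_v, x_w]^{b_{v,w}}$ with coefficients in $\{0, \dots, p-1\}$, all but finitely many zero) gives $|M(G^\#)| = \kappa$ for infinite $\kappa$ and shows that the multiplication table is computed from $G^\#$ by a local Borel rule. Composing with a Borel bijection between the underlying set of $M(G^\#)$ and $\kappa$ yields a Borel map $F \colon \Mod^\kappa_{\mathsf{GRAPHS}} \to \Mod^\kappa_{\mathsf{GROUPS}}$, $G \mapsto M(G^\#)$.

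The central step is to prove that $F$ is a reduction. The forward direction is functoriality: any graph embedding $G \hookrightarrow H$ induces $G^\# \hookrightarrow H^\#$ by construction, hence $M(G^\#) \hookrightarrow M(H^\#)$ by sending generators to generators. The reverse direction is the main obstacle and rests on Mekler's recovery theorem, which algebraically identifies inside $M(G^\#)$ a canonical copy of $V(G^\#)$ together with its adjacency relation, via a careful analysis of centralisers and commutator structure; the rigidity built into Step~1 ensures that this distinguished vertex set matches (up to a harmless correspondence) the vertices of the original $G$. Any group embedding $M(G^\#) \hookrightarrow M(H^\#)$ therefore restricts to an embedding $G^\# \hookrightarrow H^\#$, giving $G \sqsubseteq H$ by the first step. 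Since Mekler's analysis is purely algebraic and uses only normal-form computations rather than any cardinality hypothesis, it applies without modification to arbitrary infinite $\kappa$, yielding the desired Borel reduction.
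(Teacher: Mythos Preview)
Your proposal diverges from the paper in a fundamental way, and the divergence introduces a genuine gap.

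First, a factual correction: Williams' argument in~\cite{Wil14} does \emph{not} use the Mekler construction. It uses small cancellation theory. The paper follows Williams exactly: to a graph $G$ with vertex set $V$ one associates the group $H(G) = \langle V \mid R_G \rangle$ where $R_G$ is the symmetrized closure of $\{v_\alpha^7 : \alpha < \kappa\} \cup \{(v_\alpha v_\beta)^{11} : (v_\alpha,v_\beta) \in G\} \cup \{(v_\alpha v_\beta)^{13} : (v_\alpha,v_\beta) \notin G\}$. This presentation satisfies $C'(1/6)$, and the backward direction of the reduction rests on the torsion theorem for sixth groups: every element of finite order in $H(G)$ is conjugate to a power of some $v$ with $v^n \in R_G$. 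Since a group embedding preserves order, a generator $v_\alpha$ of $H(G)$ (order $7$) must land on a conjugate of a power of some generator $v_\beta$ in $H(H)$, and the $11/13$ dichotomy then forces adjacency to be preserved. The entire analysis is algebraic and cardinality-free, which is why it transfers to arbitrary $\kappa$.

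Your Mekler-based route has a real gap in exactly the step you flag as ``the main obstacle''. Mekler's recovery theorem is an \emph{intrinsic} statement: from the group $M(G^\#)$ one can definably recover $G^\#$ up to isomorphism, by analysing centralisers inside $M(G^\#)$. It says nothing about how an arbitrary group embedding $f \colon M(G^\#) \hookrightarrow M(H^\#)$ interacts with that recovery. In a nilpotent class-$2$ exponent-$p$ group every non-identity element has order $p$, so there is no order-based invariant forcing $f(x_v)$ to be (a conjugate of) a generator of $M(H^\#)$. The property ``is a vertex'' is expressed via centralisers computed in the ambient group, and an embedding is not elementary: the centraliser of $f(x_v)$ in $M(H^\#)$ bears no a priori relation to the centraliser of $x_v$ in $M(G^\#)$. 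Concretely, modulo the centre $f$ induces merely an injective $\mathbb{F}_p$-linear map $(\mathbb{Z}/p)^{V(G^\#)} \to (\mathbb{Z}/p)^{V(H^\#)}$, which need not send basis vectors to basis vectors; $f(x_v)$ may project to a nontrivial linear combination of several $x_w$'s, and there is no mechanism in your sketch to rule this out. The sentence ``Any group embedding $M(G^\#) \hookrightarrow M(H^\#)$ therefore restricts to an embedding $G^\# \hookrightarrow H^\#$'' is thus unjustified, and the word ``therefore'' hides the entire difficulty.
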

Theorem~\ref{thm:Williams} was proved by Williams for \(\kappa=\omega\) but the same argument works for uncountable cardinalities. The proof uses the theory of presented groups and small cancellation theory. Before discussing it we introduce the terminology and recall the main notions.

For any set \( X \), let \(F_{X}\) be the \emph{free group on \(X\)}. The elements of \(F_{X}\), which are called \emph{words}, are finite sequences \(y_{0}\dotsm y_{n}\), where each \(y_{i}=x\) or \(y_{i}=x^{-1}\) for some \(x\in X\). Words are multiplied by concatenation, thus the identity of \(F_{X}\) is the empty sequence, usually denoted by \(1\).
A word \(y_{0}\dotsm y_{n}\) is said to be \emph{reduced} if for all \(i<n\), \((y_{i}, y_{i+1})\) does not form a pair \((x,x^{-1})\) or \((x^{-1},x)\).
As the notation may suggest, we work under the convention that \(xx^{-1}\) gives the empty sequence. Therefore, every element in \(F_{X}\) different than the identity has a unique representation as a reduced word.
Further, we say that a reduced word \(y_{0}\dotsm y_{n}\) is \emph{cyclically reduced} if \(y_{0}\) and \(y_{1}\) are not one the inverse of the other one.

Now suppose that \(R\) is a set of reduced words on \(F_{X}\).
We say that \(R\) is \emph{symmetrized} if it is closed under inverses and cyclic permutations. That is, if \(w=y_{0}\dotsm y_{n}\in R\), then \(w^{-1}=y_{n}^{-1}\dotsm y_{0}^{-1} \in R\) and, for each \(i<n\), we have \(y_{i+1}\dotsm y_{n}y_{0}\dotsm y_{i}\in R\).
We denote by \(N_{R}\) the normal closure of \(R\). As usual,  \(N_{R}\) is defined as \(\{g^{-1} r g \mid g\in F_{X}\text{ and }r\in R\}\), which is the smallest normal subgroup of \(F_{X}\) containing \(R\).

So whenever \(X\) is a set and \(R\subseteq F_{X}\) is symmetrized, we denote by \(\langle X \mid R\rangle\) the group \(H=F_{X}/N_{R}\) and we say that \(H\) is presented by \(\langle X \mid R\rangle\). If \(a, b \in \langle X \mid R\rangle\) such that \(a= u  {N_{R}}\) and \(b = v N_{R}\), then \(a\cdot b =  w N_{R}\) for a reduced \(w = uv\). Clearly the identity of \(F_{X}/N_{R}\) is the equivalence class of the empty word, \(1N_{R}\).

Let us now go back to Theorem~\ref{thm:Williams}. Its proof produces a map sending each graph \(G\) of cardinality \(\kappa\)
with set of vertices \( V = \set{v_{\alpha}}{\alpha<\kappa}\) to the group \(H(G)\) presented by \[
 \langle V \mid R_G\rangle,
 \]
where \(R_G\) is the smallest set which is symmetrized
and contains the following words
\begin{itemizenew}
\item \(v_\alpha^7\) for every \(\alpha<\kappa\);
\item \((v_\alpha v_\beta)^{11}\) for every \((v_\alpha,v_\beta)\in G\);
\item \((v_\alpha v_\beta)^{13}\) for every \((v_\alpha,v_\beta)\notin G\).
\end{itemizenew}

In this section, for a graph \(G\), we will write \(N_{G}\) instead of \(N_{R_{G}}\). So, we have \(H(G) = F_{V}/N_{G}\), and each element of \(H(G)\) is represented by a reduced word \(w\in F_{V}\). We shall differentiate between the word \(w\), which is an element of the free group \(F_{V}\), and \(w  N_{G}\), which is the element of \(H(G)\) represented by \(w\).

When \(G \) is in the space of  graphs
 on \(\kappa\), we can identify \( H(G)\) with a corresponding element in the space of groups on \(\kappa\)
in such a way that the map \( G\mapsto H(G) \) is Borel.
In view of Corollary~\ref{cor:completegraphs}, the following result is immediate.\begin{corollary}\label{cor : Wil}
If \(\kappa\) is a cardinal satisfying~\eqref{eq:kappa}, then the relation \(\sqsubseteq_\mathsf{GROUPS}^\kappa\) is complete for  analytic quasi-order.
\end{corollary}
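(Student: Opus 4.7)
The plan is to obtain the statement as an immediate composition of two Borel reductions already available. Let \(R\) be an arbitrary analytic quasi-order on \(\pre{\kappa}{2}\). First, since \(\kappa\) satisfies~\eqref{eq:kappa}, Corollary~\ref{cor:completegraphs} supplies a Borel reduction \(f_R \colon \pre{\kappa}{2} \to \Mod^\kappa_{\L_{\mathrm{graph}}}\) witnessing \(R \leq_B {\sqsubseteq^\kappa_{\mathsf{GRAPH}}}\); indeed, this is exactly what completeness of \(\sqsubseteq^\kappa_{\mathsf{GRAPH}}\) means. Second, by Theorem~\ref{thm:Williams}, the map \(G \mapsto H(G) = F_V / N_G\) described just above the corollary is a Borel reduction of \(\sqsubseteq^\kappa_{\mathsf{GRAPHS}}\) to \(\sqsubseteq^\kappa_{\mathsf{GROUPS}}\). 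Composing the two maps produces a Borel reduction of \(R\) to \(\sqsubseteq^\kappa_{\mathsf{GROUPS}}\), and since \(R\) was arbitrary, this is the required completeness.

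No new construction is therefore needed; the entire content of the corollary sits in the two cited results. The only point that one might pause on is the verification that Theorem~\ref{thm:Williams} really does carry over verbatim from the countable to the uncountable case: one must check that the symmetrised set \(R_G\) and its normal closure \(N_G\) are handled correctly for \(|V|=\kappa>\omega\), that the small cancellation argument underlying Williams's proof does not exploit any countability of the set of generators, and that the code of \(H(G)\) as an element of the space of \(\kappa\)-sized groups depends in a Borel way on the code of \(G\). The excerpt already asserts each of these facts explicitly (``the same argument works for uncountable cardinalities'' and ``the map \(G \mapsto H(G)\) is Borel''), so nothing remains to be done beyond recording the two-step composition.
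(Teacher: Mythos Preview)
Your proposal is correct and matches the paper's approach exactly: the paper states that the corollary is ``immediate'' from Corollary~\ref{cor:completegraphs} together with Theorem~\ref{thm:Williams}, which is precisely the two-step composition you describe. Your additional remarks about what needs checking in the uncountable case are apt but, as you note, already handled by the paper's surrounding discussion.
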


In this section we strengthen Corollary~\ref{cor : Wil} by proving the analogue of Corollary \ref{cor:main} for embeddability and bi-embeddability on groups.

\begin{theorem}\label{thm:maingroups}
Let \( \kappa \) be any uncountable cardinal satisfying~\eqref{eq:kappa}. Then the embeddability relation \( \sqsubseteq^\kappa_{\mathsf{GROUPS}} \) and  the bi-embeddability relation \( \equiv^\kappa_{\mathsf{GROUPS}} \) are both strongly invariantly universal.
\end{theorem}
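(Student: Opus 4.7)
The plan is to transfer strong invariant universality from graphs to groups via the interpretation $G \mapsto H(G)$ of Theorem~\ref{thm:Williams}, running the same argument as in the proof of Theorem~\ref{thm:main} but with an additional group-theoretic layer on top. Given an analytic quasi-order $R$ on $\pre{\kappa}{2}$, apply Corollary~\ref{cor:main} to obtain an $\L_{\kappa^+\kappa}$-sentence $\upvarphi_R$ in the graph language such that $R \simeq_B {\sqsubseteq \restriction \Mod^\kappa_{\upvarphi_R}}$, together with the Borel reduction $f_{\mathrm{gr}}$ and the Borel (right-inverse modulo $\cong$, left-inverse) map $h_{\mathrm{gr}}$ supplied by Proposition~\ref{prop:inversemap}, Corollary~\ref{cor:inversemap}, and Proposition~\ref{prop:hisborel}. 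Composing $f_{\mathrm{gr}}$ with $G \mapsto H(G)$ yields a Borel map $f \colon \pre{\kappa}{2} \to \Mod^\kappa_{\mathsf{GROUPS}}$; by Theorem~\ref{thm:Williams} it reduces $R$ to $\sqsubseteq^\kappa_{\mathsf{GROUPS}}$. What must then be produced is an $\L_{\kappa^+\kappa}$-sentence $\uppsi_R$ in the language of groups whose models are, up to isomorphism, exactly $\range(f)$, together with a Borel map $h \colon \Mod^\kappa_{\uppsi_R} \to \pre{\kappa}{2}$ which is a right-inverse of $f$ modulo group isomorphism.

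The key ingredient for constructing $\uppsi_R$ and $h$ is that the relators $v_\alpha^{7}$, $(v_\alpha v_\beta)^{11}$, $(v_\alpha v_\beta)^{13}$ form a symmetrised set obeying a strong metric small-cancellation condition (in the spirit of $C'(1/6)$), from which the following can be extracted by standard Dehn-algorithm arguments: (i) every element of $H(G)$ of order exactly $7$ is conjugate to a unique $v_\alpha^{k}$ with $1 \le k \le 6$ and $\alpha < \kappa$; (ii) for distinct $\alpha, \beta$, exactly one of $(v_\alpha v_\beta)^{11} = 1$ and $(v_\alpha v_\beta)^{13} = 1$ holds in $H(G)$, and the first alternative is equivalent to $\{v_\alpha, v_\beta\}$ being an edge of $G$; (iii) every embedding $H(G_1) \hookrightarrow H(G_2)$ is, after conjugation and after passing to a canonical choice of representative for each conjugacy class of order-$7$ elements, induced by a graph embedding $G_1 \hookrightarrow G_2$. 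These facts give a canonical and definable recovery of $G$ (up to graph isomorphism) from $H(G)$. Accordingly, let $\uppsi_R$ be the $\L_{\kappa^+\kappa}$-sentence asserting the existence of a family $\langle v_\alpha \mid \alpha < \kappa \rangle$ of distinct elements such that: each $v_\alpha$ has order $7$; every group element is (coded by) a reduced word in the $v_\alpha$'s, expressed via an infinitary disjunction over $\pre{<\kappa}{\kappa}$ which is permissible because $|\pre{<\kappa}{\kappa}| = \kappa$; every order-$7$ element is conjugate to a power of some $v_\alpha$; for each pair $\alpha \neq \beta$ exactly one of the two relations above holds; and the graph $G^* = (\kappa, \{(\alpha, \beta) \mid (v_\alpha v_\beta)^{11} = 1\})$ satisfies the syntactic translation of $\upvarphi_R$ obtained by replacing graph variables with ``vertex'' variables and the edge symbol with the group-theoretic formula $(xy)^{11} = 1$. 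All clauses are manifestly $\L_{\kappa^+\kappa}$-expressible.

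Define $h$ on a model $H \in \Mod^\kappa_{\uppsi_R}$ by picking, via a Borel selector using the enumeration of $H$ by $\kappa$, a witnessing family $\langle v_\alpha \rangle$, reading off the graph $G^*$ and returning $h_{\mathrm{gr}}(G^*) \in \pre{\kappa}{2}$. By (i)--(iii), the element $h(H)$ is independent (modulo $E_R$) of the choice of witnessing family, and $f(h(H)) \cong H$ as well as $h(f(x)) \mathrel{E_R} x$; the Borel measurability of $h$ follows by an argument parallel to Proposition~\ref{prop:hisborel}, using that membership in basic open sets of $\pre{\kappa}{2}$ is captured by $\L_{\kappa^+\kappa}$-formulas built out of $\uppsi_R$ and the translated graph axioms. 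Passing to the $E_R$-quotient and the $\equiv$-quotient then provides a classwise Borel isomorphism between $R$ and ${\sqsubseteq \restriction \Mod^\kappa_{\uppsi_R}}$, yielding strong invariant universality for both $\sqsubseteq^\kappa_{\mathsf{GROUPS}}$ and $\equiv^\kappa_{\mathsf{GROUPS}}$. The main technical obstacle is item (iii): arbitrary group embeddings must be shown to descend, after a canonical normalisation, to graph embeddings, and this is precisely where the careful choice of the exponents $7$, $11$, $13$ in the Williams construction, together with the full strength of small-cancellation, is indispensable.
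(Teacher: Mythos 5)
Your overall plan — push strong invariant universality from graphs to groups via $G \mapsto H(G)$, isolate an $\L_{\kappa^+\kappa}$-sentence axiomatizing the closure under isomorphism of the range of the composite reduction, and produce a Borel inverse — is the right strategy and is the one the paper follows. However, the specific sentence $\uppsi_R$ you propose is not an $\L_{\kappa^+\kappa}$-sentence, and this is a genuine gap.

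You write $\uppsi_R$ as ``the $\L_{\kappa^+\kappa}$-sentence asserting the existence of a family $\langle v_\alpha \mid \alpha < \kappa \rangle$ of distinct elements such that \dots''. In $\L_{\kappa^+\kappa}$ a quantifier block may bind only $<\kappa$ variables (the subscript $\kappa$ governs both the length of quantifier strings and the number of free variables), so an existential over a $\kappa$-sequence $\langle v_\alpha \mid \alpha < \kappa \rangle$ lands in $\L_{\kappa^+\kappa^+}$, not $\L_{\kappa^+\kappa}$. This matters: the Lopez--Escobar theorem used throughout the paper to go back and forth between Borel invariant classes and $\L_{\kappa^+\kappa}$-axiomatizable classes does not cover $\L_{\kappa^+\kappa^+}$, so you would lose the Borel-ness of $\Mod^\kappa_{\uppsi_R}$, which is the whole point. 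The paper avoids this by never quantifying the generator family: instead (Section~\ref{sec:groups}) it isolates a \emph{first-order} definable predicate $\mathsf{gen}(x)$ (built from $\mathsf{Same}$ and $\mathsf{Ord}_n$) carving out the conjugates of the $v_\alpha^{\pm1}$, proves that this gives an interpretation $\Gamma$ of $G$ into $H(G)$ uniform in $G$ (Proposition~\ref{Prop:interpretation}), and then axiomatizes the Williams groups by a single first-order sentence $\Upphi_{\mathsf{Wil}}$ whose subformulas use only countable disjunctions (Lemma~\ref{lemma:rightinverseisogroup}). The final sentence is $\upvarphi_\Gamma \wedge \Upphi_{\mathsf{Wil}}$, where $\upvarphi_\Gamma$ is the syntactic translation of the graph sentence $\upvarphi$ through $\Gamma$; this translation preserves $\L_{\kappa^+\kappa}$ because the interpretation formulas are fixed first-order formulas.

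A secondary issue: your proposed edge formula ``$(xy)^{11}=1$'' is too naive once the $v_\alpha$'s are not directly quantified. The definable set $\partial(H(G))$ consists of all conjugates $wv_\alpha^{\pm1}w^{-1}$, not just the $v_\alpha$'s, and for $a = wv_\alpha w^{-1}$, $b = zv_\beta z^{-1}$ with $w \ne z$ the product $ab$ can have infinite order even when $\alpha$ and $\beta$ are adjacent. This is exactly what forces the more delicate formulas $(x=y)_\Gamma$ and $(R(x,y))_\Gamma$ in Proposition~\ref{Prop:interpretation}, which re-align conjugacy classes before testing orders. Finally, your item (iii) — that the reduction direction $H(G_1) \sqsubseteq H(G_2) \Rightarrow G_1 \sqsubseteq G_2$ holds — is indeed nontrivial, but it is exactly Williams' Theorem~\ref{thm:Williams}; citing it (as both you and the paper do) is sufficient, and you should not treat it as an open obstacle that still needs resolving.
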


We first point out a property satisfied by all \(H(G)\). Recall that a \emph{piece} for the group presented by \( \langle X \mid R \rangle\) is a maximal common initial segment of two distinct \( r_1, r_2 \in R\).
It is easily checked that for every graph \(G\), the set \( R_G \) satisfies the following small cancellation condition:
\begin{equation} \tag*{$C'\left(\frac{1}{6}\right)$}\label{eq : sixth}
\text{if $u$ is a piece and $u$ is a subword of some $r \in R$, then $|u| < \frac{1}{6}|r|$}.
\end{equation}
Groups \( \langle X \mid R \rangle \) whose set of relators \( R \) is symmetrized and satisfies the $C'\left(\frac{1}{6}\right)$ condition are called \emph{sixth groups}.
The only fact that we shall use about sixth groups is the following theorem.

\begin{theorem}[{\cite[Theorem V.10.1]{LynSch}}]\label{Theorem : Greendlinger}
Let $H=\langle X \mid R\rangle$ be a sixth group. If $w$ represents an element of finite order in $H$, then there is some $r\in R$ of the form $r=v^n$ such that $w$ is conjugate to a power of
$v$. 
Thus, 
if furthermore \(w\) is cyclically reduced, then \(w\) is a
 power of some \(v\), with \(v^{n} \in R\). 
\end{theorem}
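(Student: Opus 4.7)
The plan is to argue via reduced van Kampen diagrams over the presentation $\langle X \mid R \rangle$. First I would reduce to the cyclically reduced case: any element is conjugate to a cyclically reduced one, and the first assertion (``$w$ is conjugate to a power of $v$'') is invariant under conjugation, so it suffices to prove the ``furthermore'' part. So I assume $w$ is cyclically reduced, represents a nontrivial element of finite order in $H$, and pick $m>0$ minimal with $w^m=1$ in $H$. Since the free group $F_X$ is torsion-free, $w^m\neq 1$ in $F_X$, so any reduced van Kampen diagram $D$ with boundary label $w^m$ must contain at least one $2$-cell. I would fix such a $D$ of minimal area.

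Next I invoke the core combinatorial input of small cancellation theory: Greendlinger's Lemma for $C'(\tfrac{1}{6})$ presentations. Applied to the simply-connected reduced diagram $D$, it yields a boundary region $\Delta$ (a $2$-cell meeting $\partial D$ in a simple arc) whose label $r\in R$ has more than half of its letters, say a subword $u$ with $|u|>|r|/2$, lying on $\partial D$. Since $\partial D$ reads $w^m$ and $w$ is cyclically reduced, the occurrence of $u$ inside $w^m$ is a cyclic segment of the periodic word $w^{\infty}$; simultaneously $u$ is a factor of the cyclic word $r$.

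The heart of the argument is then a Fine--Wilf-style periodicity lemma in the free monoid on $X\cup X^{-1}$: two cyclically reduced words $r$ and $w$ sharing a common factor of length $>|r|/2$ (with $|u|$ also comparable to $|w|$) must be powers of a common cyclically reduced root $v$. Hence $r=v^n$ for some $n\geq 2$ and $w=v^{\ell}$ for some $\ell\geq 1$, giving both halves of the conclusion at once: $v^n\in R$ and $w$ is a power of $v$.

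The main obstacle I foresee is the case $|u|<|w|$, where the single boundary region supplied by Greendlinger's Lemma does not directly pin down the period of $w$. The way around this is to use the stronger form of Greendlinger's Lemma, which guarantees at least two (in fact, if $D$ is not a single face, three) distinct boundary regions with the $>\!\tfrac{1}{2}$ property, and then to iterate: one uses such a boundary region to perform a Dehn reduction, replacing a factor of $w^m$ by a shorter word equal to it modulo $N_R$, and argues that if the resulting word is not already a power of a single $v$ then the area of $D$ can be strictly decreased, contradicting minimality. This reduces the assertion to the case in which every boundary region's label fits inside a single copy of $w$, where the periodicity lemma applies cleanly. A final bookkeeping step handles the symmetrized closure: since $R$ is closed under cyclic permutations and inverses, the ``power of $v$'' conclusion is preserved when passing from the specific relator appearing on $\Delta$ to one of the original words $v_\alpha^7$, $(v_\alpha v_\beta)^{11}$, or $(v_\alpha v_\beta)^{13}$ in the case of interest to this paper (though the theorem as stated is abstract, applying to any sixth group).
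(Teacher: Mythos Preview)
The paper does not prove this theorem; it is quoted from Lyndon--Schupp \cite[Theorem V.10.1]{LynSch} and used as a black box in Section~\ref{sec:groups}. There is therefore no proof in the paper to compare your proposal against.

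That said, your sketch follows the standard route and is broadly what Lyndon--Schupp do: reduce to the cyclically reduced case, take a minimal reduced diagram for $w^m$, and invoke Greendlinger's Lemma. The weak point is your Fine--Wilf step. From $|u|>|r|/2$ alone you cannot conclude that $w$ and $r$ share a primitive root, since Fine--Wilf needs the common factor to have length at least $|r|+|w|-\gcd(|r|,|w|)$, and nothing so far bounds $|w|$ in terms of $|r|$. You anticipate this and propose to iterate via Dehn reductions, but that part is left as a gesture rather than an argument; in particular you do not explain why the reduced word after a Dehn step is still a power of the same $w$, which is exactly what you need to keep the induction going. The actual Lyndon--Schupp proof avoids this by appealing to their structure theorem for reduced $C'(1/6)$ diagrams (Theorem~V.4.5), which controls all boundary regions at once and forces the single-region case (where $w^m$ equals a cyclic conjugate of some $r\in R$, hence $r$ is a proper power and $w$ is a power of its root) without an ad hoc iteration.
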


In the next proposition we use the same terminology as the one of \cite[Section~5.3]{HodgesModel} on interpretations of structures. Recall the following definition.
\begin{definition}
If \(A\) and \(B\) are two structures over the languages \(\mathcal{K}\) and \(\mathcal{L}\), respectively, an \emph{interpretation} \(\Gamma\) of \(A\) into \(B\) is given by
\begin{enumerate-(I)}
\item an \(\mathcal{L}\)-formula \(\partial_{\Gamma}(x)\);
\item an \(\mathcal{L}\)-formula \(\phi_{\Gamma}(x_{0},\dotsc,x_{n})\) for each unnested atomic \(\mathcal{K}\)-formula \(\phi(x_{0},\dotsc,x_{n})\); and
\item a surjective map \(f_{\Gamma}\colon \partial_{\Gamma}(B)\to A\);
\end{enumerate-(I)}
such that for all unnested atomic \(\mathcal{K}\)-formul\ae{} \(\phi(x_{0},\dotsc,x_{n})\) and all \(\bar b=b_{0},\dotsc, b_{n}\in \partial_{\Gamma}(B)\), we have
\[
A\models \phi[f_{\Gamma} (b_{0}),\dotsc, f_{\Gamma}(b_{n})]
\quad\iff\quad
B\models \phi_{\Gamma}[ b_{0},\dotsc, b_{n}].
\]
\end{definition}

We now show that every graph \(G\) of cardinality \(\kappa\) can be interpreted into the group \(H(G)\) in a strong sense. It may be worth pointing out that this fact is true for \emph{any} infinite cardinal \( \kappa \).

Consider the following formul\ae{} in the language of groups (where \( 1 \) is the constant symbol for the unit of the group).
\begin{equation}\label{eq : formula ord}\tag{\(\mathsf{Ord}_{n}(x)\)}
{\bigwedge_{1 \leq k \leq n-1} {x^k\neq 1}}\wedge x^n=1.
\end{equation}
%\begin{equation}\label{eq : formula ord}\tag{\(\mathsf{Ord}_{\infty}(x)\)}
%{\bigwedge_{k=1}^{\omega} {x^k\neq 1}}.
%\end{equation}

\begin{remark}

If \(H(G)\models \mathsf{Ord}_{7}[a]\), then \(a\) has order \(7\) and Theorem~\ref{Theorem : Greendlinger} yields that \(a=uv_{\alpha}^{\pm k}u^{-1}N_{G}\) for some \(\alpha<\kappa\), \(u\in F_{V}\), and \(|k|<7\). Similarly, if \(H(G)\models \mathsf{Ord}_{n}[a]\) for \(n\in\{11,13\}\), then \(a\) has order \(n\) and by Theorem~\ref{Theorem : Greendlinger} there are two distinct \(\alpha,\beta<\kappa\) such that the group element \(v_{\alpha}v_{\beta}\) has order \(n\) and \(a=u(v_{\alpha}v_{\beta})^{\pm k}u^{-1}N_{G}\) for some \(u\in H(G)\) and \(|k|<n\).
\end{remark}

Let \( \mathsf{Same}(x,y) \) be the formula
\begin{multline}\tag{\( \mathsf{Same} (x,y) \)}\label{f:same}
\mathsf{Ord}_{7}(x)\wedge\mathsf{Ord}_{7}(y)\wedge
\left[
\left(
\mathsf{Ord}_{11}(x\cdot y)\wedge \mathsf{Ord}_{11}(y\cdot x)
\right )\vee \right .\\
\left .
\left (
\mathsf{Ord}_{13}(x\cdot y) \wedge \mathsf{Ord}_{13}(y\cdot x)
\right ) 
%\vee x=y
\right ].
\end{multline}

If \(H(G)\models \mathsf{Same}[a,b]\), we say that \(a\) and \(b\) are of the same type. Notice also that the formula \( \mathsf{Same}(x,y) \) is symmetric, i.e.\ for every group \( H \) of size \( \kappa \) and every \( a,b \in H \), one has \( H \models \mathsf{Same}[a,b] \) if and only if \( H \models \mathsf{Same}[b,a] \).

\begin{lemma}\label{lemma:same}
If two distinct \(a,b\) are of the same type in \(H(G)\), then there exist a word \(w\), \(k\in \{-1,1 \}\), and two distinct \(\alpha,\beta<\kappa\) such that \(a\) and \(b\) are represented by \(w v_{\alpha}^{k}w^{-1}\) and \(w v_{\beta}^{k}w^{-1}\), respectively.
\end{lemma}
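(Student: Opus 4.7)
The plan is to combine Theorem~\ref{Theorem : Greendlinger} applied separately to \(a\), \(b\), and \(ab\), with a careful cyclic-reduction analysis in the sixth group \(H(G)\). Since \(H(G) \models \mathsf{Ord}_{7}[a] \wedge \mathsf{Ord}_{7}[b]\) and the only relators in \(R_G\) of the form \(v^n\) with \(n \leq 7\) are the \(v_\alpha^{7}\), the theorem yields \(a = g_1 v_\alpha^{p} g_1^{-1} N_G\) and \(b = g_2 v_\beta^{q} g_2^{-1} N_G\) for suitable \(g_1, g_2 \in F_V\), indices \(\alpha, \beta < \kappa\), and exponents \(1 \leq p, q \leq 6\). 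Applied to \(ab\), whose order is some \(n \in \{11, 13\}\), the theorem yields \(ab = g_3 (v_\gamma v_\delta)^{m} g_3^{-1} N_G\) for some distinct \(\gamma, \delta < \kappa\) with \((v_\gamma v_\delta)^n \in R_G\) and \(\gcd(m, n) = 1\).

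The core step is to show that the conjugators \(g_1\) and \(g_2\) can be chosen equal, to a common \(w \in F_V\). The available flexibility is that \(g_1\) may be replaced by \(g_1 v_\alpha^{i}\) and \(g_2\) by \(g_2 v_\beta^{j}\) for any \(i, j \in \ZZ\) without altering the expressions for \(a\) and \(b\), since \(v_\alpha^{i}\) centralises \(v_\alpha^{p}\) and analogously for \(v_\beta\). Setting \(h = g_1^{-1} g_2\), the element \(c := v_\alpha^{p} h v_\beta^{q} h^{-1}\) is conjugate in \(H(G)\) to \(ab\) and so has order \(n\). By cyclically reducing \(v_\alpha^{p} h v_\beta^{q} h^{-1}\) in \(F_V\), and using that the \(C'(1/6)\) condition strictly bounds the length of any piece that can be cancelled at the junctions \(v_\alpha^{p}{\cdot}h\) and \(h{\cdot}v_\beta^{q}\) (including the wrap-around between the final \(h^{-1}\) and the initial \(v_\alpha^{p}\)), one shows that no cyclically reduced representative of \(c\) can be a pure power of any \(v_{\gamma'} v_{\delta'}\) unless \(h\) already represents an element of \(\langle v_\alpha \rangle \cdot \langle v_\beta \rangle\) in \(H(G)\). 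Theorem~\ref{Theorem : Greendlinger} then forces such an \(h\), say \(h = v_\alpha^{i} v_\beta^{-j} N_G\); absorbing this into the \(g_k\)'s via the above freedom produces \(g_1 = g_2 =: w\).

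With \(g_1 = g_2 = w\) we have \(ab = w v_\alpha^{p} v_\beta^{q} w^{-1} N_G\). The word \(v_\alpha^{p} v_\beta^{q}\) is cyclically reduced in \(F_V\), so by Theorem~\ref{Theorem : Greendlinger} it coincides, as an element of \(H(G)\), with a power \((v_{\gamma'} v_{\delta'})^{m'}\) for some \((v_{\gamma'} v_{\delta'})^{n} \in R_G\); equivalently, it is a cyclic permutation of such a power in \(F_V\). Comparing the two-block shape of \(v_\alpha^{p} v_\beta^{q}\) (a run of \(|p|\) equal letters followed by a run of \(|q|\) equal letters) with the strictly alternating shape of \((v_{\gamma'} v_{\delta'})^{m'}\) leaves only the possibilities \(m' = \pm 1\), which force \(p = q = 1\) (so take \(k = 1\)) or \(p = q = 6\) (so, using \(v_\alpha^{6} = v_\alpha^{-1}\) and \(v_\beta^{6} = v_\beta^{-1}\) in \(H(G)\), take \(k = -1\)). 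Distinctness \(\alpha \neq \beta\) is automatic, both from the relator structure and because if \(\alpha = \beta\) then \(a, b\) would lie in the common cyclic subgroup \(\langle w v_\alpha w^{-1} \rangle\) of order \(7\), forcing \(ab\) to have order dividing \(7\), contradicting \(\mathsf{Same}(a, b)\).

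The main obstacle is the middle paragraph: ruling out the ``bad'' \(h\) outside \(\langle v_\alpha \rangle \cdot \langle v_\beta \rangle\). I expect the cleanest route to be a direct piece-length estimate on the cyclically reduced representative of \(v_\alpha^{p} h v_\beta^{q} h^{-1}\), exploiting the \(C'(1/6)\) bound rather than invoking external machinery such as malnormality of torsion subgroups in hyperbolic groups; the estimate should show that any cancellation between \(v_\alpha^{p}\) and \(h\) (resp.\ between \(h\) and \(v_\beta^{q}\)) that leaves a non-alternating cyclically reduced core would violate the \(\frac{1}{6}\) bound on pieces in \(R_G\).
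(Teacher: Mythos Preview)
Your overall plan matches the paper's: apply Theorem~\ref{Theorem : Greendlinger} to \(a\), \(b\), and \(ab\), then analyse the resulting word for \(ab\). The first and third paragraphs are fine in outline. The genuine gap is the middle paragraph, and the method you propose for closing it is misdirected.

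You suggest a ``piece-length estimate'' via the \(C'(1/6)\) condition to force \(h = g_1^{-1} g_2 \in \langle v_\alpha \rangle \cdot \langle v_\beta \rangle\). But \(C'(1/6)\) bounds the length of common initial segments of \emph{relators}; it says nothing about cancellations at the junctions of an arbitrary word such as \(v_\alpha^{p} h v_\beta^{q} h^{-1}\), and free reduction in \(F_V\) is unrelated to pieces in \(R_G\). The small-cancellation hypothesis has already done its work by yielding Theorem~\ref{Theorem : Greendlinger}; there is no further mileage in raw piece bounds here, and you yourself concede the step is not carried out.

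The paper closes this gap with a one-line combinatorial observation, and you already have every ingredient for it. Conjugate so that \(b\) is represented by \(v_\beta^{\ell}\), then strip any leading power of \(v_\beta\) from the remaining conjugator \(u\) of \(a\) (equivalently, in your framing, absorb leading powers of \(v_\alpha\) and trailing powers of \(v_\beta\) from \(h\) by cyclic conjugation). The word \(u v_\alpha^{k} u^{-1} v_\beta^{\ell}\) is then cyclically reduced, and Theorem~\ref{Theorem : Greendlinger} forces it to be a power of some \(v_\gamma v_\delta\). Now simply observe that \emph{a power of \(v_\gamma v_\delta\) never contains a letter together with its inverse}. If \(u\) were nonempty, the word would contain both the first letter of \(u\) and its inverse (from \(u^{-1}\)) --- contradiction. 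Hence \(u\) is empty. This elementary remark replaces your entire piece-length programme. One further small point: in your final paragraph the case \(p = q = 6\) does not survive the shape comparison in \(F_V\), since \(v_\alpha^{6} v_\beta^{6}\) is a twelve-letter non-alternating word; the dichotomy \(k \in \{-1,1\}\) arises instead by allowing signed exponents \(|k|,|\ell|<7\) from the outset, as the paper does.
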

\begin{proof}
Let \(x_{a}\) and \(x_{b}\) be representatives of \(a\) and \(b\), respectively, i.e.\
\(a = x_{a}  N_{G}\) and \(b = x_{b}  N_{G}\) where \(N_{G}\) is the normal closure of \(R_{G}\).
Since the group elements \(a\) and \( b\) have order \(7\), it follows from Theorem~\ref{Theorem : Greendlinger} that
\(x_{a}=uv_{\alpha}^{k}u^{-1}\) and \(x_{b}=zv_{\beta}^{\ell}z^{-1}\) for some integers \(k,\ell\) such that \(|k|,|\ell|<7\). First let \(\theta_{z}\) be the inner automorphism \(\theta_{z}\colon x\mapsto z^{-1} x z\). Then,
 \(\theta_{z}(x_{b}) = v_{\beta}^{\ell}\) while \(\theta_{z}(x_{a}) = t v_{\alpha}^{k}t^{-1}\) for a reduced word \(t= z^{-1}u\). Now we want to avoid the possibility that \(t\) starts with a power of \(v_{\beta}\) for reasons that will become clear later in the proof. So we consider the inner automorphism \(\theta_{d}\colon x\mapsto v_{\beta}^{-d} xv_{\beta}^{d}\), for the largest \(d\) such that \(v_{\beta}^{d}\) is an initial subword of \(t\). 
So consider the elements \(a_{0}, b_{0}\in H(G)\) such that \(a_{0} = \theta_{d}\circ\theta_{z}(x_{a})  N\) and \(b_{0} = \theta_{d}\circ\theta_{z}(x_{b})\).

We have \(a_{0} = uv_{\alpha}^{k}u^{-1} N\) and \(b_{0}= v_{\beta}^{\ell}N\),
for some reduced \(u\) that does not start with any power of \(v_{\beta}\).
 Then, the product \( a_{0} \cdot b_{0} \) is represented by the word
\begin{equation}\label{eq:ab}%\tag{\(a\cdot b\)}
uv_{\alpha}^{k}u^{-1}v_{\beta}^{\ell}.
\end{equation}
Since \(u\) does not start with any power of \(v_{\beta}\), which in particular implies that \(u^{-1}\) does not end with any power of \(v_{\beta}\), the word \eqref{eq:ab} is cyclically reduced.

Now, notice that \(a_{0}\) and \(b_{0}\) are the images of \(a\) and \(b\) through the inner automorphism of \(H(G)\) \(g\mapsto (v_{\beta}^{-d}z^{-1}N_{G}) g (v_{\beta}^{-d}z^{-1}N_{G})^{-1}\). Therefore, the product \(a_{0}\cdot b_{0}\) has the same order \(a\cdot b\) --- either \(11\) or \(13\). By Theorem~\ref{Theorem : Greendlinger} and the fact that \eqref{eq:ab} is cyclically reduced, it follows that
 \eqref{eq:ab} is the power of \(v_{\gamma}v_{\delta}\) for some \(\gamma,\delta<\kappa\).  Clearly, if a word is a power of \(v_{\gamma}v_{\delta}\) it cannot contain a generator and its inverse. Therefore, \(u\) must be the empty word. We conclude that \eqref{eq:ab} equals \(v_{\alpha}^{k}v_{\beta}^{\ell}\). 

Next, we argue by cases to show that \(\alpha\neq \beta\) and \(k=\ell \in \{1,-1\}\).
First, if \(\alpha = \beta\) then we have the two following possibilities:
\begin{enumerate-(1)}
\item \(k + \ell\) is a multiple of \(7\), which implies that \(a_{0}\cdot b_{0} = 1N_{G}\) because \(v_{\alpha}^{k}v_{\beta}^{\ell} = v_{\alpha}^{n7}\), for some \(n\in \omega\), and every power of \(v_{\alpha}^{7}\) belongs to \(N_{G} \).
\item \(k + \ell\) is  not a multiple of \(7\), but then \(a_{0}\cdot b_{0}\) would have order \(7\) in \(H(G)\) as \((v_{\alpha}^{k}v_{\beta}^{\ell})^{7} = v_{\alpha}^{7(k + \ell)}\) which is in \(N_{G} \).
\end{enumerate-(1)}
In any case, we obtain that \(\alpha\neq \beta\) contradicts the fact that the order of \(a_{0}\cdot b_{0}\) is either \(11\) or \(13\).
So \(\alpha\neq \beta\). Moreover, we have \(|k|,|\ell|\ <7\) by assumption.
In case \(k\neq \ell\) or \(k\notin \{1, -1\} \) every power of \(v_{\alpha}^{k}v_{\beta}^{\ell}\notin N_{G}\), and thus in particular the element \(a_{0}\cdot b_{0} = v_{\alpha}^{k}v_{\beta}^{\ell} N_{G}\), would have infinite order, which again contradicts the hypothesis on the order of \(a_{0}\cdot b_{0}\).
Therefore, we conclude that \(a_{0} = v_{\alpha}^{k}N_{G}\) and \(b_{0}= v_{\beta}^{k}N_{G}\).

Setting \(w= z v_{\beta}^{d}\), we obtain that \(x_{a} =  (\theta_{z})^{-1}\circ(\theta_{d})^{-1}(a_{0})= zv_{\beta}^{d}v_{\alpha}^{k}v_{\beta}^{-d}z^{-1} = w  v_{\alpha}^{k} w^{-1} \). In a similar way we obtain
\(x_{b} = w  v_{\beta}^{k} w^{-1}\).
\end{proof}

Let now  \( \mathsf{gen}(x) \) be the formula

\begin{equation}\label{eq : formula generator}\tag{\(\mathsf{gen}(x)\)}
\exists y (\mathsf{Same}(x, y))
\end{equation}

\begin{remark}\label{rk:generators}
Notice that Lemma~\ref{lemma:same} implies that, whenever \(H(G)\models \mathsf{gen}[a]\), there are \( \alpha < \kappa \), \(k=\pm 1\), and a word \(w\) such that \(a=wv_{\alpha}^{k}w^{-1}  N_{G}\). Viceversa, \(H(G)\models \mathsf{gen}[wv_{\alpha}^{k}w^{-1}  N_{G}]\) for each \(\alpha\), \( k \), and \( w \) as above.
\end{remark}

\begin{proposition}\label{Prop:interpretation}
Let \( \mathcal{K} = \{ R \} \) be the graph language consisting of one binary relational symbol \( R \).
Then there exist three formul\ae{} \(\partial(x), (x=y)_{\Gamma},(R(x,y))_{\Gamma}\) in the language of groups
such that for each graph \(G\) on \( \kappa \), there is a function \(f_{G}\colon \partial(H(G))\to G\) so that the triple consisting of
\begin{enumerate-(I)}
\item \(\partial(x)\),
\item \(\{(x=y)_{\Gamma},(R(x,y))_{\Gamma}\}\), and
\item \( f_{G}\),
\end{enumerate-(I)}
is an interpretation \(\Gamma\) of \(G\) into the group \(H(G)\).
\end{proposition}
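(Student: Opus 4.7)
The plan is to define the three required formulas explicitly, define the map $f_G$ in the obvious way, and then verify the interpretation conditions by repeatedly appealing to Lemma~\ref{lemma:same} and Theorem~\ref{Theorem : Greendlinger}. I would set
\begin{equation*}
\partial(x) := \mathsf{gen}(x),
\end{equation*}
so that by Remark~\ref{rk:generators} the set $\partial(H(G))$ consists precisely of elements representable as $wv_\alpha^{k}w^{-1}N_G$ for some $w \in F_V$, $\alpha<\kappa$, and $k \in \{-1,1\}$. I then define $f_G \colon \partial(H(G)) \to G$ by $f_G(wv_\alpha^{k}w^{-1}N_G) := v_\alpha$; surjectivity is immediate since $f_G(v_\alpha)=v_\alpha$. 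The two remaining formulas will be
\begin{equation*}
(x=y)_\Gamma \; := \; \mathsf{gen}(x) \wedge \mathsf{gen}(y) \wedge \exists z \left( z x z^{-1} = y \;\vee\; z x z^{-1} = y^{-1}\right)
\end{equation*}
and
\begin{equation*}
(R(x,y))_\Gamma \; := \; \exists z_1 \exists z_2 \left( (x=z_1)_\Gamma \wedge (y=z_2)_\Gamma \wedge \mathsf{Same}(z_1,z_2) \wedge \mathsf{Ord}_{11}(z_1 \cdot z_2)\right).
\end{equation*}

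To verify well-definedness of $f_G$ and the equality clause together, I would show that for $\alpha\neq\beta$ and $k,\ell\in\{-1,1\}$ the elements $v_\alpha^k$ and $v_\beta^\ell$ are not conjugate in $H(G)$: this is a standard consequence of the conjugacy theorem for sixth groups (see Lyndon--Schupp, essentially an amplification of Theorem~\ref{Theorem : Greendlinger}), which says that two cyclically reduced representatives are conjugate in $H(G)$ only if one is a cyclic permutation of the other. Hence if $wv_\alpha^{k}w^{-1}=w'v_\beta^{\ell}(w')^{-1}$ in $H(G)$, then $\alpha=\beta$; and if $zxz^{-1}=y^{\pm 1}$ with $x=wv_\alpha^{k}w^{-1}N_G$, $y=w'v_\beta^{\ell}(w')^{-1}N_G$, the same non-conjugacy forces $f_G(x)=f_G(y)$. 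Conversely, if $f_G(x)=f_G(y)=v_\alpha$ with the representations above, then $z:=w'w^{-1}$ yields $zxz^{-1}=w'v_\alpha^{k}(w')^{-1}N_G$, which equals either $y$ (if $k=\ell$) or $y^{-1}$ (if $k=-\ell$), so the formula holds.

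For the relation clause, the forward direction is direct: if $(v_\alpha,v_\beta)\in G$, I take $z_1:=v_\alpha$ and $z_2:=v_\beta$; the previous paragraph gives $(a=z_1)_\Gamma$ and $(b=z_2)_\Gamma$, the pair $(v_\alpha,v_\beta)$ witnesses $\mathsf{Same}(z_1,z_2)$, and $\mathsf{Ord}_{11}(v_\alpha v_\beta)$ follows from $(v_\alpha v_\beta)^{11}\in R_G$ combined with Theorem~\ref{Theorem : Greendlinger} to rule out strictly smaller order. Conversely, suppose the formula holds with witnesses $z_1, z_2$. By $\mathsf{Same}(z_1,z_2)$ and Lemma~\ref{lemma:same} we obtain a common conjugator $w$, a common sign $k\in\{-1,1\}$, and distinct $\gamma,\delta<\kappa$ such that $z_1=wv_\gamma^{k}w^{-1}N_G$ and $z_2=wv_\delta^{k}w^{-1}N_G$. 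Then $z_1 z_2 = wv_\gamma^{k}v_\delta^{k}w^{-1}N_G$ has the same order as $v_\gamma^{k}v_\delta^{k}$, and the hypothesis $\mathsf{Ord}_{11}(z_1 z_2)$ forces that order to be exactly $11$. A short case analysis on $k=\pm 1$ (using that $(v_\gamma v_\delta)^{-1}=v_\delta^{-1}v_\gamma^{-1}$ and the symmetry of the edge relation under the symmetrization defining $R_G$) together with Theorem~\ref{Theorem : Greendlinger} yields $(v_\gamma,v_\delta)\in G$; since $(a=z_1)_\Gamma$ and $(b=z_2)_\Gamma$ give $f_G(a)=v_\gamma$ and $f_G(b)=v_\delta$, we conclude $(f_G(a),f_G(b))\in G$.

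The main obstacle is the collection of small-cancellation computations hidden behind these arguments, namely: (i) that $v_\alpha^{\pm 1}$ and $v_\beta^{\pm 1}$ are pairwise non-conjugate for $\alpha\neq\beta$, and (ii) that the element $v_\alpha v_\beta$ of $H(G)$ has order exactly $11$ when $(v_\alpha,v_\beta)\in G$ and exactly $13$ otherwise. Both of these reduce to checking that the cyclically reduced relators in $R_G$ are precisely $v_\alpha^7$, $(v_\alpha v_\beta)^{11}$, and $(v_\alpha v_\beta)^{13}$, and invoking Theorem~\ref{Theorem : Greendlinger}, but the arguments are delicate enough that they essentially reprise (and extend slightly) the normal-form analysis carried out in the proof of Lemma~\ref{lemma:same}. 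Once these are established, the syntactic verification of the interpretation conditions is routine.
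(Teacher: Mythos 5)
Your proof is correct, but it takes a genuinely different route from the paper's, and that difference has a cost worth flagging. You define
\[
(x=y)_\Gamma := \mathsf{gen}(x)\wedge\mathsf{gen}(y)\wedge\exists z\left(zxz^{-1}=y\vee zxz^{-1}=y^{-1}\right),
\]
which tests conjugacy of \(x\) with \(y^{\pm 1}\) directly. The paper instead uses
\[
\exists z\left(\mathsf{Ord}_{7}(x\cdot z\cdot y\cdot z^{-1})\vee\mathsf{Ord}_{7}(x^{-1}\cdot z\cdot y\cdot z^{-1})\right),
\]
which tests whether the \emph{product} of \(x^{\pm1}\) with a conjugate of \(y\) has order \(7\). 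The point of the paper's choice is that everything it needs can be verified by analyzing \emph{torsion} elements, so the only small-cancellation input is Theorem~\ref{Theorem : Greendlinger} (the torsion theorem for sixth groups), which is the single Lyndon--Schupp result the paper states. Your formula, by contrast, forces you to decide when two cyclically reduced words of finite order are conjugate in \(H(G)\), and your verification explicitly invokes ``the conjugacy theorem for sixth groups,'' which is a separate theorem in Lyndon--Schupp and is \emph{not} cited anywhere in the paper. The result you need is true and standard — two single-letter cyclically reduced words in a \(C'(1/6)\) presentation with all relators of length \(\geq 7\) are conjugate only if they are equal — but it is not ``essentially an amplification of Theorem~\ref{Theorem : Greendlinger}'': Theorem~\ref{Theorem : Greendlinger} tells you a torsion element is conjugate to a power of \emph{some} relator base, with no uniqueness, so it alone does not rule out \(v_\alpha\) and \(v_\beta\) being conjugate for \(\alpha\neq\beta\). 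So you are genuinely appealing to an additional tool, and a careful write-up would need to state and cite it precisely.

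Your \((R(x,y))_\Gamma\) differs only cosmetically: you existentially quantify two auxiliary elements \(z_1,z_2\) where the paper uses one, and you omit the explicit conjunct \(\neg(x=y)_\Gamma\). Both are fine — you correctly observe that the case \(z_1=z_2\) is ruled out automatically because \(z_1^2\) has order dividing \(7\), so irreflexivity comes for free. The verification of the relation clause, including the case split on \(k=\pm1\) and the use of symmetry of \(R_G\), is exactly right and closely parallels the paper's Claims~\ref{claim:interpretationequality} and~\ref{claim:interpretationR}.

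In summary: correct, well-structured, and the formulas you chose are arguably the more ``natural'' ones, but the paper's formulas are engineered precisely so that the verification can be carried out using only the one small-cancellation theorem it states. If you keep your formulas, you should add an explicit citation to the conjugacy theorem for small cancellation groups (Lyndon--Schupp, Ch.\ V) and note that it, rather than Theorem~\ref{Theorem : Greendlinger}, is what gives the pairwise non-conjugacy of the generators \(v_\alpha^{\pm1}\).
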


\begin{proof}
First let \(\partial(x)\)  be  \( \mathsf{gen}(x)\), and for any graph \( G \) on \( \kappa \) let \(f_{G}\) be 
the map sending each element of \( H(G) \) represented by the word \( wv_{\alpha}^{k}w^{-1}\), where \( \alpha < \kappa \), \( k \in \{ -1,1 \} \), to the vertex  
\(\alpha\) of \( G \). 

Notice that \(f_{G}\) is well defined. Suppose that the words \(wv_{\alpha}^{k}w^{-1}\) and \(uv_{\beta}^{\ell}u^{-1}\) represent the same element \(a\in H(G)\), that is \( wv_{\alpha}^{k}w^{-1}  N_{G}= uv_{\beta}^{\ell}u^{-1}  N_{G}\).
Since \(a\) has order \(7\), the order of \(a \cdot a\) is \(7\) too. 
So we can argue as in Lemma~\ref{lemma:same}.
After applying a suitable inner automorphism of \(H(G)\) we obtain a cyclically reduced reduced word of the kind
\(zv_{\alpha}^{k}z^{-1}v_{\beta}^{\ell}\) that represents an element of order \(7\).
Reasoning exactly as in Lemma~\ref{lemma:same}, we obtain that \(z\) is necessarily the empty word. So we have that \(v_{\alpha}^{k}v_{\beta}^{\ell}\) represents an element of order \(7\). That is,  the word \((v_{\alpha}^{k}v_{\beta}^{\ell})^{7}\) belongs to \(N_{G}\). Clearly we can assume that it is not the case that  \(\alpha=\beta\) and \(k=-\ell\) because we would obtain that \(v_{\alpha}^{k}v_{\beta}^{\ell}\) is the empty word, a contradiction.
In particular, \((v_{\alpha}^{k}v_{\beta}^{\ell})^{7}\) is cyclically reduced, so it belongs to \(R_{G}\).
Notice that Lemma~\ref{lemma:same} implies that \(|k|, |\ell|=1\).
Then the word \((v_{\alpha}^{k}v_{\beta}^{\ell})^{7}\) has exactly fourteen letters.
By definition, the only elements of \(R_{G}\) with this property are the concatenations of two words of the kind \(v_{\gamma}^{\pm7}\) and \(v_{\delta}^{\pm7}\), some \(\gamma,\delta < \kappa\). It follows that \(\alpha = \beta\).

Moreover, by Remark~\ref{rk:generators} the elements of \(H(G)\) satisfying 
\(\partial(x)\) are exactly all the elements of such form, so \( f_G \) is a surjection from 
\( \partial (H(G)) \) onto \( G \).

Consider the following formula in the language of groups:
\begin{equation}\tag{\((x=y)_{\Gamma}\)}
%\partial(x)\wedge\partial(y) \wedge [
\exists z(\mathsf{Ord}_{7}(x\cdot z\cdot y\cdot z^{-1})\vee \mathsf{Ord}_{7}(x^{-1}\cdot z\cdot y\cdot z^{-1}))
%] 
\end{equation}

\begin{claim}\label{claim:interpretationequality}
For every graph \( G \) on \( \kappa \) and every \( a,b \in \partial(H(G)) \)
\[
G\models  f_G(a) =  f_G(b)
\quad \iff\quad
H({G})\models (a =  b)_{\Gamma}.
\]
\end{claim}

\begin{proof}[Proof of the Claim]
Let \( \alpha,\beta < \kappa \), \( k, \ell\in \{ -1,1 \} \), and 
\( w,z \) be such that \( a = wv_\alpha^kw^{-1}  N_{G}\) and \( b = z v_\beta^\ell z^{-1}  N_{G}\), so that 
\( f_G(a) = \alpha \) and \( f_G(b) = \beta \). 
The forward implication is obvious, because \( G \models f_G(a) = f_G(b) \) implies \(\alpha = \beta \). 

For the backward implication, assume that \(H(G)\models (a=b)_{\Gamma}\) and let \(c\in H(G)\) be any 
element witnessing this, say \(c = u  N_{G}\). 
For the sake of definiteness, suppose that the first disjunct is satisfied, so that 
\[
wv_{\alpha}^{k}w^{-1}uzv_{\beta}^{\ell}z^{-1}u^{-1}
\]
represents an element with order \(7\) in \( H(G) \). 
By possibly applying an inner automorphism, we can assume that this element
 is cyclically reduced, and thus we can argue as in the proof of Lemma~\ref{lemma:same} to obtain that
\(\alpha=\beta\) and \(k=\ell\). 
Then \(f_{G}(a)={\alpha}=f_{G}(b)\), which implies that the formula \(f_{G}(a)=f_{G}(b)\) is true in \(G\).
\end{proof}

Now consider the following formula in the language of groups:
\begin{equation}\tag{\((R(x,y))_{\Gamma}\)}
%\partial(x)\wedge\partial(y) \wedge
\neg(x=y)_{\Gamma} \wedge   
\exists z
\left [
\mathsf{Same}(x,z)\wedge (z= y)_{\Gamma} \wedge   \mathsf{Ord}_{11}(x\cdot z) 
%\vee \mathsf{Ord}_{11}(x\cdot z^{-1})
\right]
\end{equation}

\begin{claim}\label{claim:interpretationR}
For every graph \( G \) on \( \kappa \) and every \( a,b \in \partial(H(G)) \)
\[
G\models R[f_G(a), f_G(b)]
\quad\iff\quad
H({G})\models (R[a, b])_{\Gamma}.
\]
\end{claim}

\begin{proof}[Proof of the Claim]
Let \( \alpha,\beta < \kappa \), \( k, \ell\in \{ -1,1 \} \), and 
\( w,z \in H(G) \) be such that \( a = wv_\alpha^kw^{-1} N_{G}\) and \( b = z v_\beta^\ell z^{-1}  N_{G}\). 

Assume 
first that  \( G \models R[f_G(a),f_G(b)] \). Since \( f_G(a) = \alpha \) and \( f_G(b) = \beta \) and the 
graph relation is irreflexive, we have \( \alpha \neq \beta \). By Claim~\ref{claim:interpretationequality},
this implies in particular that \( H(G) \models \neg(a=b)_{\Gamma} \). Set 
\( c = w v_\beta^k w^{-1}  N_{G}\), so that, in particular, \( f_G(c) = \beta = f_G(b) \). Then \( H(G) \models \mathsf{Same}[a,c] \wedge (c = b)_\Gamma \), 
and clearly \( H(G) \models \mathsf{Ord}_{11}[a,c] \) by construction of \( H(G) \) (here we use again 
the fact that \( G \models R[\alpha,\beta] \)). Therefore \( c \) witnesses the existential statement in \( (R[a,b])_\Gamma \), hence \( H(G) \models (R[a,b])_\Gamma \).

Suppose now that
\(G\not\models R[f_{G}(a), f_{G}(b)]\). By the definition of \( H(G) \), it follows that the group element \(a\cdot b =(v_{\alpha}  N_{G})\cdot (v_{\beta}  N_{G}) = v_{\alpha}v_{\beta}  N_{G}\) has order \(13\) in \( H(G) \). Consequently, for any \(c\in H(G)\) of the same type of \(a\) such that \([c=b]_{\Gamma}\) holds in \( H(G) \), we have that \(a\cdot c\) cannot have order \(11\), hence that \( H(G) \not\models (R[a, b])_{\Gamma} \). 
\end{proof}
This concludes the proof of Proposition \ref{Prop:interpretation}.
\end{proof}

\begin{corollary} \label{cor:translation}
For every formula \(\phi(\bar x)\) in the language of graphs there is a formula \(\phi_{\Gamma}(\bar x)\) in the language of groups such that for every graph \(G\) on \( \kappa \)
\[
G\models \phi[f_{G}(\bar a)]\quad\iff\quad H(G)\models \phi_{\Gamma}[\bar a].
\]
\end{corollary}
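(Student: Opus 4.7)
The plan is to prove this by a straightforward induction on the complexity of \(\phi\), which we may take to range over all \(\L_{\kappa^+\kappa}\)-formul\ae{} in the language of graphs (this is the generality needed for the applications to strongly invariant universality). First, I would reduce to the case where \(\phi\) is built up from \emph{unnested} atomic formul\ae{} \(x=y\) and \(R(x,y)\) by means of negation, conjunctions and disjunctions of size \(<\kappa^+\), and existential and universal quantifications over \(<\kappa\) many variables; this is possible because every \(\L_{\kappa^+\kappa}\)-formula is logically equivalent to one in this normal form.

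The base case is immediate from Proposition~\ref{Prop:interpretation}: for the atomic formul\ae{} \(x=y\) and \(R(x,y)\), the translations \((x=y)_{\Gamma}\) and \((R(x,y))_{\Gamma}\) have already been exhibited and shown to satisfy the interpretation property by Claims~\ref{claim:interpretationequality} and~\ref{claim:interpretationR}. For the inductive step, I would define recursively
\[
(\neg\psi)_{\Gamma} := \neg(\psi_{\Gamma}), \qquad \Bigl(\bigwedge_{i\in I}\psi_{i}\Bigr)_{\!\Gamma} := \bigwedge_{i\in I}(\psi_{i})_{\Gamma}, \qquad \Bigl(\bigvee_{i\in I}\psi_{i}\Bigr)_{\!\Gamma} := \bigvee_{i\in I}(\psi_{i})_{\Gamma},
\]
and crucially \emph{relativize} quantifiers to the domain formula \(\partial = \mathsf{gen}\):
\[
(\exists \langle x_{j}\mid j\in J\rangle\,\psi)_{\Gamma} := \exists \langle x_{j}\mid j\in J\rangle\,\Bigl(\bigwedge_{j\in J}\partial(x_{j}) \wedge \psi_{\Gamma}\Bigr),
\]
and dually for \(\forall\). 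The verification that this translation works is then entirely routine: for the existential step one uses that \(f_{G}\) is \emph{surjective} from \(\partial(H(G))\) onto \(G\), so an existential witness in \(G\) lifts to a witness in \(\partial(H(G))\) and vice versa; the boolean and infinitary connective steps are immediate from the inductive hypothesis.

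The one subtle point to check is that the translation respects equality in the correct sense, namely that \((x=y)_{\Gamma}\) really behaves as an equivalence relation on \(\partial(H(G))\) whose quotient map is exactly \(f_{G}\). This is precisely the content of Claim~\ref{claim:interpretationequality}, which guarantees that the fibers of \(f_{G}\) coincide with the classes of \((x=y)_{\Gamma}\); consequently the truth value of \(\psi_{\Gamma}[\bar a]\) depends only on the tuple \(f_{G}(\bar a)\), and the induction goes through without any further bookkeeping. I do not expect a genuine obstacle here, as this is the standard translation lemma for model-theoretic interpretations (cf.~\cite[Theorem~5.3.2]{HodgesModel}); the only reason the verification is not completely trivial is that \(f_{G}\) is not injective and that one works in \(\L_{\kappa^+\kappa}\) rather than in finitary first-order logic, but neither feature causes any difficulty since the translation preserves the cardinality bounds on conjunctions, disjunctions, and blocks of quantifiers.
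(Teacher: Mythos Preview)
Your proposal is correct and is exactly the standard translation lemma for interpretations that the paper is implicitly invoking: the paper gives no proof of this corollary at all, treating it as an immediate consequence of Proposition~\ref{Prop:interpretation} via the general machinery of~\cite[Theorem~5.3.2]{HodgesModel}. Your inductive argument with quantifiers relativized to \(\partial=\mathsf{gen}\), together with the observation that the construction preserves the cardinality bounds needed for \(\L_{\kappa^+\kappa}\), is precisely how one would spell this out if asked to.
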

   
For the sake of brevity, we call a group \( H \) of size \( \kappa \) a \emph{Williams' group} if it is isomorphic to \( H(G) \) for some  graph \( G \) of size \( \kappa \). We are now going to show that when \( \kappa \) is an uncountable cardinal, there is an \( \mathcal{L}_{\kappa^+ \kappa} \)-sentence 
\( \Upphi_{\mathsf{Wil}}\) axiomatizing the Williams' groups of size \( \kappa \). The sentence \( \Upphi_{\mathsf{Wil}}\) will be the conjunction of some sentences considered below.

%\begin{equation}\tag{\(\upvarphi_0\)}\label{f:possible orders}
%\forall x(x=1\vee\mathsf{Ord}_{7}(x) \vee \mathsf{Ord}_{11}(x)\vee \mathsf{Ord}_{13}(x) \vee \mathsf{Ord}_{\infty}(x))
%\end{equation}

%\begin{remark}
%If \(G\) satisfies \eqref{f:possible orders}, then every non neutral element in \(G\) either is of order \(n\), for \(n\in\{ 7, 11, 13\}\), or has infinite order.
%\end{remark}

Let \( \upvarphi_0 \) be the sentence
\begin{equation}\tag{\(\upvarphi_0\)}
\forall x_1, x_2, x_3, x_4 \, \left( x_1 \neq x_4 \wedge \bigwedge_{1 \leq i \leq 3} \mathsf{Same}(x_i, x_{i+1}) \wedge \bigwedge_{1 \leq i \leq 2} \mathsf{Same}(x_i, x_{i+2}) \to \mathsf{Same}(x_1,x_4) \right)
\end{equation}
and %Change 3, blank added by Heike
\( \upvarphi_1 \) be the \( \mathcal{L}_{\kappa^+ \kappa} \)-sentence
\begin{multline}\tag{\(\upvarphi_1\)}\label{f:same generators}
\exists x, x' \left [ 
%\mathsf{gen}(x) \wedge \mathsf{gen}(x') \wedge 
\mathsf{Same}(x,x') \wedge
\forall y 
\left [
  \bigvee_{1 \leq N < \omega} %Change 4, the range including $\omega$ is wrong, Heike Dec 17, 2017
  \exists x_{1},\dots,x_{N}
\left (
\bigwedge_{1 \leq i \leq N}
( \mathsf{Same}(x,x_{i}) \wedge \mathsf{Same}(x',x_{i}))\wedge\right . \right .\right .\\
\left . \left . \left .
\bigwedge_{1\leq i<j\leq N}\mathsf{Same}(x_{i}, x_{j})  \wedge y=x_{1} 
%Change 5, Heike added centred dots
\cdots x_{N}
\wedge\bigwedge_{1\leq i\leq j \leq N} x_{i}\cdots x_{j}\neq 1
\right )
\right ]
\right ].
\end{multline}

Let \( G \) be a \( \kappa \)-sized graph. Although the relation defined by \( \mathsf{Same}(x,y) \) on \( H(G) \) is not transitive,%
\footnote{Given distinct \( \alpha, \beta , \gamma < \kappa \), set \( a = v_\alpha \), \( b = v_\beta \), and \( c = v_\beta v_\gamma v_\beta^{-1} \): then \( H(G) \models \mathsf{Same}[a,b] \wedge \mathsf{Same}[b,c] \), but \( H(G) \not\models \mathsf{Same}[a,c] \) because the product \( a \cdot c \) has infinite order in \( H(G) \), since every power of the word \(v_{\alpha}  v_{\beta} v_{\gamma} v_{\beta}^{-1}\) is not in \(N_{R}\).}
using an argument similar to that of Lemma~\ref{lemma:same}
it is not hard to check that \( H(G) \models \upvarphi_0 \). 
Indeed, suppose that \(a,b,c,d\) satisfy the premise of the implication. Clearly \(a\) and \(d\) have order \(7\). It only remains to prove that \(a\cdot d\) has order 11 or 13. Since \(a, b\) are of the same type, we obtain from Lemma~\ref{lemma:same} and assuming that \(k=1\) for the sake of definiteness that
\(a= u v_{\alpha}u^{-1}  N_{G}\) and \(b= u v_{\alpha}^{d} v_{\beta} v_{\alpha}^{-d} u^{-1}   N_{G}\) for some \(|d|<7\), possibly \(d=0\).
Then, since \(b, c\) are of the same type, we have that
\(c = u v_{\alpha}^{d} v_{\beta}^{e} v_{\gamma} v_{\beta}^{-e} v_{\alpha}^{-d} u^{-1}   N_{G}\).
Now, since \(a,c\) are also of the same type, we have that
\(a\cdot c = u v_{\alpha}^{d+1} v_{\beta}^{e} v_{\gamma} v_{\beta}^{-e} v_{\alpha}^{-d} u^{-1}  N_{G}\) has order \(11\) or \(13\). Notice that this can only happen if \( e= 0 \),
so \(c = u v_{\alpha}^{d} v_{\gamma}v_{\alpha}^{-d}u^{-1}  N_{G}\). Repeating this argument one more time for \(d\), we obtain that \(d = u v_{\alpha}^{d} v_{\delta}v_{\alpha}^{-d}u^{-1}  N_{G}\). Now it is clear that \(a\cdot d = u v_{\alpha}^{d}  v_{\alpha}v_{\delta}v_{\alpha}^{-d}u^{-1}  N_{G}\) has the same order of \(v_{\alpha}  v_{\delta}N_{G}\), which is either 11 or 13.

Moreover, setting e.g.\ \( x = v_0  N_{G} \) and \( x' = v_1  N_{G}\) it is straightforward to check  that \( H(G) \models \upvarphi_1 \). % for every \( \kappa \)-sized graph \( G \).

\begin{remark}\label{rk : genertorssametype}
If \(H\) is a group of cardinality \(\kappa\) and satisfies \( \upvarphi_0 \wedge \upvarphi_1 \), then
there is a set \(W\subseteq H\) such that \(W\) generates \(H\), and all elements of \(W\)
are pairwise of the same type. Such \(W\) can be obtained by
fixing any two witnesses \(a,b\in H\) to the existential quantifier at the beginning of \( \upvarphi_1 \), and then setting
\[ 
W = \{ a,b \} \cup \{ c \in H \mid H \models \mathsf{Same}[a,c] \wedge \mathsf{Same}[b,c] \}.
 \] 
Since the cardinality of \(H\) is \(\kappa\), the set \(W\) has size \(\kappa\) because it has to generate the whole \( H \) by \( H \models \upvarphi_1 \).
The sentence \( \upvarphi_0 \) takes care of the fact that distinct elements in \( W \) are of the same type: if \( c,d \) are distinct elements of  \( W \setminus \{ a,b \} \), then all of \( (c,a) \), \( (c,b) \),\( (a,b) \), \( (a,d) \) and \( (b,d) \) are pairs of elements of the same type, and thus \( H \models \mathsf{Same}[c,d] \) because \( H \) satisfies \( \upvarphi_0 \).
Moreover, notice that, by the way \( \mathsf{Same}(x,y) \) was defined, a group element \(c \) and its inverse are never of the same type because their product does not have order  \( 11 \) or \( 13 \). So the basic fact that when \(c\) has order \(7\) the inverse \(c^{-1}\) equals \(c^{6}\), plays a crucial role to argue that such \(W\) is a set of generators. 
 Finally, notice that when \( H = H(G) \) for some graph \( G \) of size \( \kappa \), the set \( W \) defined in this way will be of the form
 \( W = \{ w v_\alpha^k w^{-1} \mid \alpha < \kappa \} \), for some word \( w \) and \( k \in \{-1,1\} \) only depend on the initial choice of \( a \) and \( b \) -- see Lemma~\ref{lemma:same}.
\end{remark}

Recall that the relators of the group \(H(G)\), for any graph \(G\), are of three possible length: \(7\), \(22\), or \(26\). 
Define the following \( \mathcal{L}_{\kappa^+ \kappa} \)-formul\ae{}.

\begin{equation}\tag{\(\mathsf{Rel_{7}}(x_{1},\dotsc,x_{7})\)}
\bigwedge_{1 \leq i \leq 6} x_{i}=x_{i+1}.
\end{equation}

\begin{equation}\tag{\(\mathsf{Rel_{22}}(x_{1},\dotsc,x_{22})\)}
\mathsf{Ord}_{11}(x_{1}\cdot x_{2})\wedge\bigwedge_{1 \leq i \leq 10} ( x_{2i-1}=x_{2i+1} \wedge  x_{2i}=x_{2i+2} 
%\wedge x_{2i-1}\neq x_{2i}
).
\end{equation}

\begin{equation}\tag{\(\mathsf{Rel_{26}}(x_{1},\dotsc,x_{26})\)}
\mathsf{Ord}_{13}(x_{1}\cdot x_{2})\wedge\bigwedge_{1 \leq i \leq 12} (x_{2i-1}=x_{2i+1} \wedge  x_{2i}=x_{2i+2} 
%\wedge x_{2i-1}\neq x_{2i}
).
\end{equation}

Let now \( \upvarphi_2 \) be the \( \mathcal{L}_{\kappa^+ \kappa} \)-sentence
%\todo[color= white]{I added \(\bigwedge_{1 \leq i, j \leq N} \mathsf{Same}(x_{i}, x_{j}) \). I don't think it is necessary but it simplifies the proof.}
\begin{multline}\tag{\( \upvarphi_2\)}\label{f:rel}
  \bigwedge_{1 \leq N <  \omega}
  % Change 6 by Heike, the former range, including $\omega$, was wrong
  \forall x_{1},\dotsc, x_{N} \left [
x_{1}\cdots x_{N} = 1 \wedge \bigwedge_{1 \leq i \leq N}\mathsf{gen}(x_{i})\wedge
\bigwedge_{1 \leq i, j \leq N} \mathsf{Same}(x_{i}, x_{j}) \wedge
\right .
\\
\bigwedge_{1 \leq i \leq N-1} x_{i}\cdot x_{i+1}\neq 1 \wedge x_{1}\cdot x_{N}\neq 1
\rightarrow
\left . \vphantom{\bigwedge_{i=1}^{N-1}}
\left (
\mathsf{Rel_{7}}(x_{1},\dotsc,x_{7}) \vee
\mathsf{Rel_{22}}(x_{1},\dotsc,x_{22}) \vee
\mathsf{Rel_{26}}(x_{1},\dotsc,x_{26})
\right)
 \right ],
 \end{multline}
 where for each \( n \in \{ 7,22,26 \} \), we stipulate that \( \mathsf{Rel}_{n}(x_1, \dotsc, x_n) \) is a contradiction if \( N < n \). It is not difficult to see that, by construction, \( H(G) \models \upvarphi_2 \) for every graph \( G \) of size \( \kappa \).
To see this, suppose that \(a_{1},\dotsc,a_{N}\) satisfy the premise of the implication inside the square brackets. Each of them has the form
\(a_{i} = wv_{\alpha_{i}}^{k}w^{-1}  N_{G}\) for some reduced word \(w\), \(k\in
\{1,-1\}\) and \(\alpha_{i}<\kappa\).
 Next, the condition \(wv_{\alpha_{1}}^{k}\dotsm v_{\alpha_{N}}^{k}w^{-1}  N_{G}=a_{1}\dotsm a_{N}=1\) implies that  \(wv_{\alpha_{1}}^{k}\dotsm v_{\alpha_{N}}^{k}w^{-1}\in N_{G}\). Notice that \(a_{1}\) is not the inverse of \(a_{N}\) by the last conjunct of the premise. Therefore \(v_{\alpha_{1}}^{k}\dotsm v_{\alpha_{N}}^{k}\) is a cyclically reduced word. Then \(wv_{\alpha_{1}}^{k}\dotsm v_{\alpha_{N}}^{k}w^{-1}\in N_{G}\) if and only if \(v_{\alpha_{1}}^{k}\dotsm v_{\alpha_{N}}^{k}\) belongs to the group generated by \(R(G)\). At this point it is clear that either one of the following hold:
 \(\mathsf{Rel_{7}}(a_{1},\dotsc,a_{7})\), or
\(\mathsf{Rel_{22}}(a_{1},\dotsc,a_{22})\), or
 \(\mathsf{Rel_{26}}(a_{1},\dotsc,a_{26})\).

\begin{lemma}\label{lemma:normalclosure}
Let \(H\) be a group such that \( H \models \upvarphi_2 \), and let \(  a_1  \cdots a_N \) be a product of elements of \( H \) (for some \( 1 \leq N < \omega \)) such that \( H \models \mathsf{gen}[a_i] \) for every \( 1 \leq i \leq N \) and  \(  a_1  \cdots a_N  = 1 \). Then \( a_1 \cdots a_N \) belongs to the normal closure
\(N_{R}\) of the 
%symmetrized 
set 
%of relators 
\(R = R(a_1, \dotsc, a_N) \) consisting of the elements
\begin{enumerate-(i)}
\item \label{item:rel1}\(a_i^7\) for every \( 1 \leq i \leq N \); %\(b\in H\) such that \(H\models \mathsf{gen}[b]\);
\item \((a_i\cdot a_{j})^{11}\) for every \(1 \leq i < j \leq N \) such that \( H \models \mathsf{Ord}_{11}[a_i \cdot a_{j}] \); % such that \(H\models \mathsf{Same}(c,d)\wedge \mathsf{Ord}_{11}[c\cdot d]\);
\item \label{item:rel3} \((a_i\cdot a_{j})^{13}\) for every \( 1 \leq i < j \leq  N \) such that \( H \models \mathsf{Ord}_{13}[a_i \cdot a_{j}] \). 
%\(c,d\in H\) such that \(H\models \mathsf{Same}(c,d)\wedge \mathsf{Ord}_{13}[c\cdot d]\).
\end{enumerate-(i)}
\end{lemma}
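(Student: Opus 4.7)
The natural approach is strong induction on $N$, viewing $a_1 \cdots a_N$ as a formal word in the free group on the (distinct) elements $a_1, \dots, a_N$ and showing that this word lies in the normal closure $N_R$. The degenerate cases $N \leq 2$ are handled directly: since each $a_i$ has order $7$ by $\mathsf{gen}(a_i)$, a product of length $\leq 2$ can equal $1$ only trivially or via $a_1 a_2 = 1$, which is dealt with as in Case 2 below.

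For the inductive step I would split into three cases according to the combinatorial structure of the product. In \emph{Case 1 (internal factorization)}, suppose there is an interval $[i,j] \subsetneq [1,N]$ with $1 \leq i \leq j \leq N$ and $j-i+1 < N$ such that $a_i \cdots a_j = 1$ in $H$. Then $R(a_i,\ldots,a_j) \subseteq R(a_1,\ldots,a_N) = R$, so the inductive hypothesis applied to the subproduct gives $a_i \cdots a_j \in N_R$. Consequently
\[
a_1 \cdots a_N \equiv a_1 \cdots a_{i-1} \cdot a_{j+1} \cdots a_N \pmod{N_R},
\]
and the right-hand side is a shorter product equal to $1$ in $H$, to which induction applies. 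In \emph{Case 2 (cyclic cancellation)}, suppose no proper interval witnesses Case 1 but $a_1 \cdot a_N = 1$ in $H$. From $a_1 \cdots a_N = 1$ we obtain $a_2 \cdots a_{N-1} = 1$ in $H$, whence by induction $a_2 \cdots a_{N-1} \in N_R$. Thus $a_1 \cdots a_N \equiv a_1 a_N \pmod{N_R}$, and since $a_N^7 \in R$ and $a_1 = a_N^{-1}$ in $H$, the element $a_1 a_N$ is (a conjugate of) $a_N^7$ modulo $N_R$, hence lies in $N_R$.

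The heart of the argument is \emph{Case 3 (cyclically reduced)}: no sub-interval cancels and $a_1 a_N \neq 1$. Together with the hypothesis that every $a_i$ is a generator and that the $a_i$'s are pairwise of the same type (ensured in the intended setting, e.g.\ by $\upvarphi_0 \wedge \upvarphi_1$ and Remark~\ref{rk : genertorssametype}), this is precisely the antecedent of $\upvarphi_2$. The axiom $H \models \upvarphi_2$ therefore yields one of
\[
\mathsf{Rel}_7(a_1,\dots,a_7), \quad \mathsf{Rel}_{22}(a_1,\dots,a_{22}), \quad \mathsf{Rel}_{26}(a_1,\dots,a_{26})
\]
(assuming $N$ is large enough; for smaller $N$ Case 3 is vacuous since a cyclically reduced product of generators of length $< 7$ cannot equal $1$ in a group satisfying $\upvarphi_2$, again by $\upvarphi_2$ applied with no available $\mathsf{Rel}_n$). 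In the first sub-case $a_1 = \dots = a_7$, so the initial segment is $a_1^7 \in R$; peeling it off leaves $a_8 \cdots a_N = 1$ in $H$, a shorter product to which induction applies. The other two sub-cases are analogous, with initial segments $(a_1 a_2)^{11} \in R$ or $(a_1 a_2)^{13} \in R$.

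The main obstacle I anticipate is the rigorous bookkeeping between the abstract free group in which $N_R$ lives and the group $H$: statements like ``$a_1 = a_N^{-1}$ in $H$'' used in Case 2 must be converted into formal identities modulo $N_R$, which is possible because $a_N^7 \in R$ witnesses $a_N^{-1} \equiv a_N^6 \pmod{N_R}$. A related subtlety is that Case 3 requires pairwise $\mathsf{Same}$ among the $a_i$'s to invoke $\upvarphi_2$; this will be available from the ambient axiomatisation but should be explicitly noted, and would reduce the apparent generality of the statement to the relevant special case where all generators lie in a single same-type class of $H$.
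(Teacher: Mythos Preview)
Your approach is essentially the paper's: argue by induction on $N$ (the paper phrases it as a minimal-counterexample argument), reduce to the situation where adjacent factors do not cancel and $a_1 a_N \neq 1$, invoke $\upvarphi_2$ to force an initial segment equal to one of the relators in $R$, peel it off, and apply the inductive hypothesis to the shorter tail.

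Your concern about the pairwise-$\mathsf{Same}$ hypothesis is justified: the antecedent of $\upvarphi_2$ includes $\bigwedge_{i,j}\mathsf{Same}(x_i,x_j)$, which does not follow from the lemma's stated hypotheses, and the paper's proof simply asserts that ``the premise of the implication is satisfied'' without addressing this. In fact the lemma is only ever applied (in Lemma~\ref{lemma:rightinverseisogroup}) to factors drawn from the set $W$ of Remark~\ref{rk : genertorssametype}, whose elements are pairwise of the same type. Once pairwise $\mathsf{Same}$ is assumed, note that two such elements can never be mutual inverses in $H$ (their product has order $11$ or $13$, not $1$), so the cases $a_i a_{i+1}=1$ and $a_1 a_N=1$ simply do not occur and one passes directly to your Case~3. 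This also dissolves the free-group bookkeeping worry in your Case~2: your claim that ``$a_1 a_N$ is a conjugate of $a_N^7$ modulo $N_R$'' does not work as written, since the equality $a_1=a_N^{-1}$ holds only in $H$ and not in the free group where $N_R$ lives---and the paper's minimality reduction has exactly the same gap---but the case never arises.
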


\begin{proof}
Suppose towards a contradiction that the lemma fails, and let \( N \) be smallest such that there is a product
\( a_1 \cdots a_N \) satisfying the hypothesis of the lemma, but such that \( a_1 \cdots a_N \notin N_{R} \), where \( R = R(a_1, \dotsc, a_N) \) is as above. 
%that \( that \(H \models \upvarphi_2 \) and % the elements \(a_{1}\cdots a_{N}\in H\) are such that
%%\(a_{1}\cdots a_{N}=1\) and 
By minimality of \( N \), we also have that \(a_{i}\cdot a_{i+1}\neq 1\) for every \( 1 \leq i < N \),
%, and by possibly applying an inner automorphism we can further assume that 
and that \(a_{1}\neq a_{N}^{-1}\).
Since \( H \models \upvarphi_2 \) and the premise of the implication is satisfied when setting \( x_i = a_i \) for every \( 1 \leq i \leq N \), then there is \(n\in \{7,22,26\}\) such that the product of the first \(n\) factors is 
\begin{enumerate-(i)}
\item \label{item:rel1a} \(a_1^{7}\) if \( n = 7 \), or
\item \((a_1 \cdot a_2 )^{11}\) with \( H \models \mathsf{Ord}_{11}[a_1 \cdot a_2] \) if \( n = 11 \), or
\item \label{item:rel3a} \((a_1 \cdot a_2)^{13}\) with \( H \models \mathsf{Ord}_{13}[a_1 \cdot a_2] \) if \( n = 13 \).
\end{enumerate-(i)}
In each of the three cases, it follows that the product of the first \(n\) factors equals \(1\). As a consequence, the product
\[a_{n+1}\dotsm a_{N}\]
 still satisfies the hypothesis of the lemma, and thus \( a_{n+1} \cdots a_N \in N_{ R(a_{n+1}, \dotsc, a_N)} \) by minimality on \( N \). But since \( R(a_{n+1}, \dotsc, a_N) \subseteq R(a_1, \dotsc, a_N) \), this would imply \( a_1 \cdots a_N \in N_{R(a_1, \dotsc a_N)} \), a contradiction. 
%equals \( 1 \) as well and we can repeat the above argument until we exhaust all the product. 
%We conclude that  \( a_{1}\cdots a_{N} \) is (conjugate to) a product of elements as described
%in \ref{item:rel1}--\ref{item:rel3}, whence
%\(a_{1}\cdots a_{N}\in \ncl(R)\).
\end{proof}

Finally, let \(\upvarphi_{\mathsf{gp}}\) the first-order sentence axiomatizing groups. Then \(\Upphi_{\mathsf{Wil}}\) is the \( \mathcal{L}_{\kappa^+ \kappa} \)-sentence

\begin{equation}\tag{\( \Upphi_{\mathsf{Wil}}\)}\label{formula:Wil}
\upvarphi_\mathsf{gp} \wedge \upvarphi_0 \wedge \upvarphi_1 \wedge \upvarphi_2.
\end{equation}

\begin{remark}\label{remark:H(G)modelsWil}
Notice that \( H(G) \models \Upphi_{\mathsf{Wil}} \) for every \( \kappa \)-sized graph \(G\). 
\end{remark}

\begin{lemma}\label{lemma:rightinverseisogroup}
Let \( H \) be a group of size \( \kappa \).
If \(H\models \Upphi_{\mathsf{Wil}}\), then  \( H \) is a Williams' group, i.e.\ \(H\cong H(G)\) for some 
graph \( G \) of size \( \kappa \).
\end{lemma}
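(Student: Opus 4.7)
The plan is to extract a suitable generating set from $H$, define a graph $G$ on $\kappa$ by reading off the orders of products of pairs of generators, and then show that $H$ is presented as $H(G)$.

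First, from $H \models \upvarphi_0 \wedge \upvarphi_1$ Remark~\ref{rk : genertorssametype} yields a bijective enumeration $W = \{w_\alpha \mid \alpha < \kappa\}$ of a generating set of $H$ whose elements are pairwise of the same type; in particular each $w_\alpha$ has order $7$, no two are mutual inverses, and for distinct $\alpha, \beta$ the product $w_\alpha \cdot w_\beta$ has order exactly $11$ or exactly $13$ (by the disjunction in \ref{f:same}). I then define a graph $G$ on the vertex set $\{v_\alpha \mid \alpha < \kappa\}$ by declaring $(v_\alpha, v_\beta) \in G$ iff $\alpha \neq \beta$ and $H \models \mathsf{Ord}_{11}[w_\alpha \cdot w_\beta]$; this relation is symmetric, since $\mathsf{Same}$ demands both $\mathsf{Ord}_{11}(xy)$ and $\mathsf{Ord}_{11}(yx)$.

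Consider next the unique group homomorphism $\psi \colon F_V \to H$ sending $v_\alpha \mapsto w_\alpha$. Each generator of $R_G$ maps to $1$ under $\psi$: $\psi(v_\alpha^7) = w_\alpha^7 = 1$; if $(v_\alpha, v_\beta) \in G$ then $\psi((v_\alpha v_\beta)^{11}) = 1$ by the definition of $G$; and if $\alpha \neq \beta$ but $(v_\alpha, v_\beta) \notin G$ then $w_\alpha w_\beta$ must have order $13$, so $\psi((v_\alpha v_\beta)^{13}) = 1$. Thus $\psi$ factors through a surjective homomorphism $\bar\psi \colon H(G) \to H$, and the remaining task is to show that $\bar\psi$ is injective.

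This injectivity is what I expect to be the principal obstacle, and my approach is to adapt to the free group $F_V$ the inductive argument used to prove Lemma~\ref{lemma:normalclosure}. Given a reduced $r \in F_V$ with $\psi(r) = 1$, first use the relators $v_\alpha^7 \in N_G$ to replace $r$ by an element of the same coset modulo $N_G$ of the form $v_{\alpha_1} v_{\alpha_2} \cdots v_{\alpha_n}$, and proceed by induction on $n$. The key point is that pairwise same-type forces $w_{\alpha_i} w_{\alpha_{i+1}} \neq 1$ and $w_{\alpha_1} w_{\alpha_n} \neq 1$ automatically (distinct same-type elements are not mutual inverses, and if $\alpha_i = \alpha_{i+1}$ then $w_{\alpha_i}^2 \neq 1$ because $w_{\alpha_i}$ has order $7$), so the hypothesis of \ref{f:rel} is satisfied by $(w_{\alpha_1}, \dots, w_{\alpha_n})$ whenever $n \geq 1$, forcing one of $\mathsf{Rel}_7(w_{\alpha_1}, \dots, w_{\alpha_7})$, $\mathsf{Rel}_{22}(w_{\alpha_1}, \dots, w_{\alpha_{22}})$, or $\mathsf{Rel}_{26}(w_{\alpha_1}, \dots, w_{\alpha_{26}})$ to hold --- and in particular $n \geq 7$, since those formul\ae{} are contradictions for smaller $N$. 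Because the enumeration $\alpha \mapsto w_\alpha$ is a bijection, the equalities among the $w$'s imposed by the chosen $\mathsf{Rel}_k$ transfer to the corresponding equalities among the $\alpha$'s: the initial segment of $r$ of length $k \in \{7, 22, 26\}$ is literally $v_{\alpha_1}^7$, $(v_{\alpha_1} v_{\alpha_2})^{11}$, or $(v_{\alpha_1} v_{\alpha_2})^{13}$, and a direct check using the definition of $G$ (matching order $11$ with edges and order $13$ with non-edges) shows this segment lies in $R_G \subseteq N_G$. Hence modulo $N_G$ the word $r$ equals a strictly shorter word with the same property, and the induction closes; the case $1 \leq n < 7$ simply cannot occur by the contradiction just noted, while the case $n = 0$ gives $r \in N_G$ directly.
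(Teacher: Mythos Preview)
Your proof follows essentially the same route as the paper's: extract the generating set $W$ via Remark~\ref{rk : genertorssametype}, define $G$ from the orders of products of pairs of generators, and verify that $v_\alpha \mapsto w_\alpha$ gives an isomorphism $H(G) \cong H$ by appeal to $\upvarphi_2$. The paper packages the hard direction as a direct citation of Lemma~\ref{lemma:normalclosure}, while you re-run that lemma's inductive argument in situ (with the harmless cosmetic step of first passing to positive words via $v_\alpha^{-1} \equiv v_\alpha^6 \pmod{N_G}$).
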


\begin{proof}
Given \( H \) such that \(H\models \Upphi_{\mathsf{Wil}}\),
let \(W\) be a set of generators for \(H\) as in Remark~\ref{rk : genertorssametype}, and
let \((w_{\alpha})_{ \alpha<\kappa}\) be an enumeration without repetitions of \(W\).
By the universal property of the free group we have \(H\cong F(W)/N\), where \(F(W)\) denotes the free group on \(W\) and \(N\) is some normal subgroup of \(F(W)\).
Denote by \(R_{H}\) the smallest symmetrized subset of \(F(W)\) containing the words
\begin{itemizenew}
\item \(w_\alpha^7\) for every \(w_\alpha\in W\);
\item \((w_\alpha \cdot w_\beta)^{11}\) if  \(H\models\mathsf{Ord}_{11}[w_{\alpha} \cdot w_{\beta}]\);
\item \((w_\alpha \cdot w_\beta)^{13}\) if  \(H\models\mathsf{Ord}_{13}[w_{\alpha} \cdot w_{\beta}]\).
\end{itemizenew}
For the way \(R_{H}\) is defined, the normal closure of \(R_{H}\), that we denote by \(N_{R_{H}}\), is a (necessarily normal) subgroup of \(F(W)\) and thus is contained in \(N\).
Now we shall show that \(N\subseteq N_{R_{H}}\).
Suppose that \(w\in N\), namely, that the group element \(w\cdot N\) is the unity \(1\cdot N\) of \(H\). Say \(w=w_{\alpha_{1}}\cdots w_{\alpha_{n}}\) for \(w_{\alpha_{1}},\dotsc, w_{\alpha_{n}}\in W\).  We can suppose that \(w_{\alpha_{i+1}}\neq w_{\alpha_{i}}^{-1}\) for every  \(i<n\).
 It follows by Lemma~\ref{lemma:normalclosure} that  \(w\) is contained in the normal closure of \(R(w_{\alpha_1}, \dotsc, w_{\alpha_n}) \), which is included in \( N_{R_{H}} \) by definition of \(R_{H}\).
 
 By the discussion above, it follows that \(N=N_{R_{H}}\), therefore \(H\cong \langle W\mid R_{H}\rangle\). 
 We define a binary relation \( R^G \) on \( \kappa \) by setting for \( \alpha, \beta < \kappa \)
 \[ 
\alpha \mathrel{R^G} \beta \iff \text{\(w_{\alpha}\cdot w_{\beta}\) has order \(11\) in \(H\)}. 
\]
The relation \( R^G \) is irreflexive because for every \(\alpha< \kappa \) we have \( H \models \mathsf{gen}[w_\alpha  N] \), so that \( w_\alpha \) has order \( 7 \) in \( H \) and thus \( (w_\alpha  N) \cdot (w_\alpha  N)\) cannot have order \( 11 \) in \( H \). Moreover, \( R^G \) is also symmetric because by definition of \( W \), for any two distinct \(\alpha,\beta < \kappa\) the group elements \((w_{\alpha} N)\) and \((w_{\beta}N)\) are of the same type, and thus the order of \((w_{\alpha}  N) \cdot (w_{\beta}  N)\) equals the order of \((w_{\beta} N) \cdot (w_{\alpha} + N)\). It follows that the
resulting structure \( G = (\kappa, R^G) \) is a graph on \( \kappa \), and it is easy to check that \( H \cong H(G)\) via the isomorphism \(w_{\alpha}  N\mapsto v_\alpha  N_{G}\).
\end{proof}

\begin{remark}\label{remark:inversemapBorel}
The construction given in the proof of Lemma~\ref{lemma:rightinverseisogroup} 
actually yields a Borel map \( H \mapsto G_H \)
from the space of groups on \(\kappa\)
satisfying \(\Upphi_{\mathsf{Wil}}\)
to the space of graphs on \(\kappa\) such that  \(H\cong H(G_H)\) for each \( H \models \Upphi_{\mathsf{Wil}} \).
\end{remark}

Now we have all the ingredients to prove the main theorem of this section, namely Theorem~\ref{thm:maingroups}. Indeed, it immediately follows from Corollary~\ref{cor:main} and the following proposition.

%  of this section, which states that
%for any uncountable cardinal \( \kappa \) satisfying~\eqref{eq:kappa}, the embeddability relation \( \sqsubseteq^\kappa_{\mathsf{GROUPS}} \) and  the bi-embeddability relation \( \equiv^\kappa_{\mathsf{GROUPS}} \) are both strongly invariantly universal.
%
%It is enough to prove the following proposition.

\begin{proposition}\label{prop:classwisegroup}
For every sentence \(\upvarphi \) in the language of graphs there is a sentence \(\upphi\) in the language of groups such that \({\sqsubseteq_{\upvarphi}^{\kappa}} \simeq_B {\sqsubseteq_{\upphi}^{\kappa}}\).
\end{proposition}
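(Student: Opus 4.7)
The plan is to take $\upphi$ to be the conjunction of $\Upphi_{\mathsf{Wil}}$ with the standard translation $\upvarphi_\Gamma$ of $\upvarphi$ along the interpretation $\Gamma$ of graphs into groups from Proposition~\ref{Prop:interpretation}; that is, $\upvarphi_\Gamma$ is obtained by relativizing every quantifier of $\upvarphi$ to $\partial(x) = \mathsf{gen}(x)$ and replacing each atomic subformula of the form $x = y$ or $R(x,y)$ by the corresponding $\Gamma$-image, as in Corollary~\ref{cor:translation}. Since $\Upphi_{\mathsf{Wil}}$ is an $\mathcal{L}_{\kappa^+\kappa}$-sentence and the translation is defined by a structural recursion that does not increase the number of free variables nor the size of conjunctions or disjunctions, the resulting $\upphi$ again belongs to $\mathcal{L}_{\kappa^+\kappa}$.

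The liftings witnessing $\sqsubseteq_{\upvarphi}^\kappa \simeq_B \sqsubseteq_{\upphi}^\kappa$ are the Borel maps $f \colon G \mapsto H(G)$ (from the proof of Theorem~\ref{thm:Williams}) and $h \colon H \mapsto G_H$ from Remark~\ref{remark:inversemapBorel}. I would first verify that $f$ sends $\Mod^\kappa_\upvarphi$ into $\Mod^\kappa_\upphi$, by combining Remark~\ref{remark:H(G)modelsWil} with Corollary~\ref{cor:translation}, and that $h$ sends $\Mod^\kappa_\upphi$ into $\Mod^\kappa_\upvarphi$, using $H \cong H(G_H)$ from Lemma~\ref{lemma:rightinverseisogroup} together with Corollary~\ref{cor:translation} applied to $H(G_H)$. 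Next, the identities $G_{H(G)} \cong G$ and $H(G_H) \cong H$ show that $f$ and $h$ descend to mutually inverse bijections between the quotients $\Mod^\kappa_\upvarphi/{\equiv}$ and $\Mod^\kappa_\upphi/{\equiv}$. Finally, Theorem~\ref{thm:Williams} gives that $f$ is a Borel reduction (in both directions of the iff) of $\sqsubseteq_{\upvarphi}^\kappa$ to $\sqsubseteq_{\upphi}^\kappa$, and the analogous property for $h$ follows formally since for $H_1, H_2 \models \upphi$ we have $H_1 \sqsubseteq H_2 \iff H(G_{H_1}) \sqsubseteq H(G_{H_2}) \iff G_{H_1} \sqsubseteq G_{H_2}$. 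Together this gives an isomorphism of quotient orders admitting Borel liftings in both directions, which is exactly $\sqsubseteq_{\upvarphi}^\kappa \simeq_B \sqsubseteq_{\upphi}^\kappa$.

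The main technical point I foresee is establishing $G_{H(G)} \cong G$, since the graph $G_H$ attached to a Williams' group $H$ in Lemma~\ref{lemma:rightinverseisogroup} depends on a choice of a generating set $W$ whose elements are pairwise of the same type. For $H = H(G)$, Lemma~\ref{lemma:same} forces every such $W$ to have the form $\{w v_\alpha^k w^{-1} N_G \mid \alpha < \kappa\}$ for a fixed word $w$ and a fixed exponent $k \in \{-1, 1\}$, so the assignment $w v_\alpha^k w^{-1} N_G \mapsto v_\alpha N_G$ is a bijection between $W$ and the canonical generating set of $H(G)$ that intertwines the relation defining $G_{H(G)}$ (namely, $w_\alpha \cdot w_\beta$ has order $11$) with the edge relation of $G$, yielding the desired isomorphism. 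The remaining verifications---Borelness of both maps, well-definedness of the induced quotient maps with respect to $\equiv$, and preservation of embeddability by $h$---are essentially routine given the results established earlier in this section.
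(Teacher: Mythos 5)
Your proposal is correct and follows essentially the same route as the paper: take $\upphi = \upvarphi_\Gamma \wedge \Upphi_{\mathsf{Wil}}$, use $G \mapsto H(G)$ and the Borel recovery $H \mapsto G_H$ from Remark~\ref{remark:inversemapBorel} as the two liftings, and invoke Theorem~\ref{thm:Williams}, Corollary~\ref{cor:translation}, Remark~\ref{remark:H(G)modelsWil}, and Lemma~\ref{lemma:rightinverseisogroup}. The one small difference is that you single out $G_{H(G)} \cong G$ as the main technical point and prove it via Lemma~\ref{lemma:same}; the paper's argument never actually needs this isomorphism --- surjectivity and the Borel lifting of the inverse only use $H(G_H) \cong H$, and injectivity plus the order-isomorphism property of the quotient map already follow from Theorem~\ref{thm:Williams} alone (which gives $G_{H(G)} \equiv G$ without any extra work). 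So your extra step is true (and indeed observed in Remark~\ref{rk : genertorssametype}) but not required.
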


\begin{proof}
Given any sentence \(\upvarphi\) in the language of graphs, let \(\upphi \) be the sentence
\[ 
\upvarphi_{\Gamma}\wedge \Upphi_{\mathsf{Wil}},
\]
where \( \upvarphi_{\Gamma} \) is as in Corollary~\ref{cor:translation}.
Let \(f\) be the quotient map of the Borel function
\[h\colon \mathrm{Mod}^{\kappa}_{\upvarphi}\to \mathrm{Mod}^{\kappa}_{\upphi}\colon G\mapsto H(G)\]
with respect to the bi-embeddability relation (on both sides). The range of \( h \) is contained in 
\(\mathrm{Mod}^{\kappa}_{\upphi}\) by Corollary~\ref{cor:translation} and Remark~\ref{remark:H(G)modelsWil}, and its quotient map \( f \) is well-defined because \(h\) witnesses Theorem~\ref{thm:Williams}. Moreover, by Lemma~\ref{lemma:rightinverseisogroup} and Corollary~\ref{cor:translation} again, for every \( \kappa \)-sized group \( H \) we have that \( H \in \mathrm{Mod}^\kappa_\upphi \) if and only if there is \(G \in \mathrm{Mod}^\kappa_\upvarphi \) such that \( H \cong H(G) \), and by Remark~\ref{remark:inversemapBorel} such \( G  = G_H \) can be recovered in a Borel way. It follows that \( f \) is an isomorphism between the relevant quotient spaces, and that the restriction of the map \( H \mapsto G_H \) to \( \mathrm{Mod}^\kappa_\upphi \) is a Borel lifting of \( f^{-1} \). Therefore the map \(f\) witnesses that \({\sqsubseteq_{\upvarphi}^{\kappa}} \simeq_B {\sqsubseteq_{\upphi}^{\kappa}}\).
%
%the map
%
%Next we define the map
%\begin{equation}
%h \colon \Mod^\kappa_{\upphi} \to \Mod^\kappa_{\upvarphi}  \colon H \mapsto X,
%\end{equation}
%where \(X\) is the graph such that \(H(X)\cong H\) as in the proof of Lemma~\ref{lemma:rightinverseisogroup} which is Borel by
%Remark~\ref{remark:inversemapBorel}.
%Moreover, \(h\) reduces embeddability on \(\Mod^{\kappa}_{\upphi}\) to  embeddability on \(\Mod^{\kappa}_{\upvarphi}\). It follows that the quotient map of \(h\) with respect to bi-embeddability is the inverse of \(f\).
%Therefore, the map \(f\) witnesses that \(\sqsubseteq_{\upvarphi}^{\kappa} \simeq_B \sqsubseteq_{\upphi}^{\kappa}\).
%\({{\sqsubseteq} \restriction \Mod^\kappa_{\upvarphi} }\simeq_B {{\sqsubseteq} \restriction \Mod^\kappa_{\upphi} }\).
\end{proof}

%
%\begin{proof}[Proof of Theorem \ref{thm:maingroups}]
%It follows from Proposition~\ref{prop:classwisegroup} and .
%  \end{proof}

%There is a formula in the language of group that is satisfied by every group that is isomorphic to \(H_{G}\) for some combinatorial tree \(G\). 

%\begin{multline}\label{eq : formula generator}\tag*{\(\Upphi_{\mathsf{Wil}}\)}
%\forall x(\bigvee_{i=0}^\omega(
%\exists y_{0},\dotsc, y_{n}(\bigwedge_{j=0}^i\Upphi_{ord(7)}(y_j)\wedge
 %x=y_0\cdot\dots\cdot y_i )\wedge\\
 %\bigwedge_{k,\ell<i}k\neq\ell\rightarrow \bigvee_{n}^{\omega}
 %\exists z_{1},\dotsc, z_{n+1}(\Upphi_{ord(11)}(y_{k}\cdot z_{1})\wedge\dots\wedge \Upphi_{ord(11)}(z_{n+1}\cdot y_{\ell}))\wedge\\
%\bigwedge_{k,\ell<i} (\bigvee_{m}^{\omega} \exists z_{1},\dotsc, z_{1+m}(
 %\Upphi_{ord(11)}(y_{k}\cdot z_{1})\wedge\dots\wedge \Upphi_{ord(11)}(z_{1+m}\cdot y_{\ell}))
%\rightarrow \Upphi_{ord(13)}(y_{k}\cdot y_{\ell}))).
%\end{multline}

\begin{remark}
It must be stressed that all the results in this section, unlike the preceding ones, are true for \emph{any} infinite cardinal \( \kappa \). Therefore, setting \( \kappa = \omega \) in Proposition~\ref{prop:classwisegroup}
and combining it with~\cite[Theorem 3.9]{friedman-mottoros} we get an alternative proof of~\cite[Theorem 3.5]{calderoni-mottoros}.
\end{remark}

\section{Further results and open problems} \label{sec:questions}

Generalized descriptive set theory not only provides a good framework to deal with uncountable first-order structures, but it also allows us to nicely code various kind of non-separable topological spaces. 

For example, in~\cite[Section 7.2.3]{andmot} it is shown how to construe the space of all  complete metric spaces of density character \( \kappa \) (up to isometry) as a standard Borel \( \kappa \)-space 
\( \mathfrak{M}_\kappa \). 
This is obtained by coding each such space \( M =  (M,d_M) \) as the element  \( x_M \in \pre{\kappa \times \kappa \times \mathbb{Q}^+}{2} \) (where \( \mathbb{Q}^+ = \{ q \in \mathbb{Q} \mid q > 0  \} \)) defined by setting
\[ 
x_M(\alpha,\beta,q)  = 1 \iff d_M(m_\alpha, m_\beta) < q,
 \] 
where \( \{ m_\alpha \mid \alpha < \kappa \} \) is any dense subset of \( M \) of size \( \kappa \). Note that such a code is not unique, as it depends on both the choice of a dense subset of \( M \)  and of a specific 
enumeration of it. The space \( M \) can easily be recovered, up to isometry, from any of its codes \( x_M \) by taking the completion of the metric space \( (\kappa,d_{x_M}) \), where 
\( d_{x_M}(\alpha,\beta) =  \inf \{ q \in \mathbb{Q}^+ \mid x_M(\alpha,\beta,q) = 1 \} \). The space \( \pre{\kappa \times \kappa \times \mathbb{Q}^+}{2} \) is naturally homeomorphic to \( \pre{\kappa}{2} \), and 
it can be straightforwardly checked that the set \( \mathfrak{M}_\kappa \subseteq \pre{\kappa \times \kappa \times \mathbb{Q}^+}{2} \) of all codes for complete metric spaces of density character \( \kappa \) is a Borel
subset of it, 
and hence a standard Borel \( \kappa \)-space.
It immediately follows that the relation \( \sqsubseteq^i_\kappa \) of isometric embeddability on \( \mathfrak{M}_\kappa \) is an analytic quasi-order, whose complexity can then be analyzed in terms of Borel reducibility. An 
easy consequence of Corollary~\ref{cor:completegraphs} is the following (compare it with the main results in~\cite[Section 16.3.1]{andmot}, which deal with the case \( \omega < \kappa < 2^{\aleph_0} \)).

\begin{corollary} \label{cor:completemetric}
Let \( \kappa \) be any cardinal satisfying~\eqref{eq:kappa}. Then \( \sqsubseteq^i_\kappa \) is complete for analytic quasi-orders. Indeed, the same is true when \( \sqsubseteq^i_\kappa \) is restricted to the subclass of  \( \mathfrak{M}_\kappa \) consisting of all discrete spaces.
\end{corollary}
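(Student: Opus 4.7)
The plan is to reduce the problem to Corollary~\ref{cor:completegraphs} by constructing a simple Borel reduction from the graph embeddability relation \( \sqsubseteq^\kappa_{\mathsf{GRAPH}} \) to the isometric embeddability relation \( \sqsubseteq^i_\kappa \), with range consisting of discrete spaces of density character \( \kappa \).

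Given a graph \( G = (\kappa, E^G) \), I would define a metric \( d_G \) on \( \kappa \) by setting \( d_G(\alpha,\alpha) = 0 \), \( d_G(\alpha,\beta) = 1 \) whenever \( \alpha \neq \beta \) and \( (\alpha,\beta) \in E^G \), and \( d_G(\alpha,\beta) = 2 \) whenever \( \alpha \neq \beta \) and \( (\alpha,\beta) \notin E^G \). Since all nonzero distances lie in \( \{1,2\} \), the triangle inequality is automatic: \( d_G(\alpha,\gamma) \leq 2 \leq d_G(\alpha,\beta) + d_G(\beta,\gamma) \) for all triples of distinct points. Let \( M_G = (\kappa, d_G) \). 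Every point of \( M_G \) is isolated (the open ball of radius \(1\) around any point is a singleton), hence \( M_G \) is discrete, complete (every Cauchy sequence is eventually constant), and has density character exactly \( \kappa \).

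Next I would verify that \( G \mapsto M_G \) is a Borel reduction. For the reduction property, if \( f \colon G \to H \) is a graph embedding, then \( f \) is injective and preserves both adjacency and non-adjacency between distinct vertices, hence preserves the values of \( d \), so \( f \) is an isometric embedding \( M_G \to M_H \). Conversely, any isometric embedding \( f \colon M_G \to M_H \) is automatically injective and preserves distances; in particular, for distinct \( \alpha, \beta \in \kappa \) we have \( (\alpha,\beta) \in E^G \iff d_G(\alpha,\beta) = 1 \iff d_H(f(\alpha),f(\beta)) = 1 \iff (f(\alpha),f(\beta)) \in E^H \), so \( f \) is a graph embedding. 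For Borelness, identify each \( M_G \) with its code \( x_{M_G} \in \pre{\kappa\times\kappa\times\mathbb{Q}^+}{2} \) using the canonical enumeration of \( \kappa \); then
\[
x_{M_G}(\alpha,\beta,q) = 1 \iff \bigl[\alpha=\beta\bigr] \vee \bigl[\alpha\neq\beta \wedge (\alpha,\beta)\in E^G \wedge q>1\bigr] \vee \bigl[\alpha\neq\beta \wedge (\alpha,\beta)\notin E^G \wedge q>2\bigr],
\]
which depends in a continuous (hence Borel) way on the characteristic function of \( E^G \). Thus \( G \mapsto x_{M_G} \) is Borel, and takes values in codes for discrete complete metric spaces of density character \( \kappa \).

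Combining this reduction with Corollary~\ref{cor:completegraphs}, the relation \( \sqsubseteq^i_\kappa \), even restricted to discrete spaces, is complete for analytic quasi-orders. There is really no substantial obstacle here: the construction is entirely elementary and the only design choice is using two distinct positive values (\(1\) and \(2\)) so that both adjacency and non-adjacency get encoded as distinct, recoverable distances, while the triangle inequality is satisfied automatically.
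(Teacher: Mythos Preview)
Your proof is correct and follows essentially the same approach as the paper: the paper also constructs a two-valued metric \( d_G \) on \( \kappa \) with \( d_G(\alpha,\beta) = r_0 \) for adjacent pairs and \( r_1 \) for non-adjacent pairs, where \( 0 < r_0 < r_1 \leq 2r_0 \) (your choice \( r_0 = 1 \), \( r_1 = 2 \) is a specific instance), and then appeals to Corollary~\ref{cor:completegraphs}. You have in fact supplied more detail than the paper's sketch, in particular the verification of the reduction property and of Borelness.
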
 

This result is obtained using and easy and somewhat canonical (continuous) way of transforming a graph \( G \) on \( \kappa \) into a discrete metric space \( (\kappa,d_G) \) (necessarily complete and of density character \( \kappa \)), namely:
Fix strictly positive \( r_0, r_1 \in \mathbb{R} \)  such that \( 0 < r_0 < r_1 \leq  2r_0 \), and set
\[ 
d_G(\alpha, \beta)  = 
\begin{cases}
0 & \text{if }\alpha = \beta \\
r_0 & \text{if } \alpha \neq \beta \text{ and \(\alpha\) and \(\beta\) are adjacent in } G \\
r_1 & \text{if } \alpha \neq \beta \text{ and \(\alpha\) and \(\beta\) are not adjacent in } G.
\end{cases}
\]
(The condition on \( r_0 \) and \( r_1 \) ensures that \( d_G \) satisfies the triangular inequality.) 

Furthermore, the correspondence between graphs and discrete metric spaces just described is so tight that it easily yields the following strengthening of Corollary~\ref{cor:completemetric} (just use Corollary~\ref{cor:main} instead of Corollary~\ref{cor:completegraphs}, plus the fact that any discrete metric space \( M \) on \( \kappa \) isometric to some \( (\kappa,d_G) \) is of the form \( (\kappa, d_{G'}) \) for some \( G' \cong G \)).

\begin{corollary}
Let \( \kappa \) be any uncountable cardinal satisfying~\eqref{eq:kappa}. Then the isometric embeddability relation \( \sqsubseteq^i_\kappa \) is \emph{strongly invariantly universal} in the following sense:
For every (\( \kappa \)-)analytic quasi-order \( R \) there is a Borel \( B \subseteq \mathfrak{M}_\kappa \) closed under isometry such that \( R \simeq_B {\sqsubseteq^i_\kappa \restriction B} \).

Moreover, the same applies to the restriction of \( \sqsubseteq^i_\kappa \) to discrete spaces, and to the isometric bi-embeddability relation on the same classes of metric spaces.
\end{corollary}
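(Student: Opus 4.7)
The plan is to lift the strongly invariant universality of graph embeddability from Corollary~\ref{cor:main} along the explicit Borel correspondence \( G \mapsto (\kappa, d_G) \) described immediately before the statement. Fix once and for all \( 0 < r_0 < r_1 \leq 2 r_0 \). Given an analytic quasi-order \( R \), I would first invoke Corollary~\ref{cor:main} to produce an \( \L_{\kappa^+ \kappa} \)-sentence \( \uppsi \) with \( R \simeq_B {\sqsubseteq^\kappa_{\mathsf{GRAPH}} \restriction \Mod^\kappa_\uppsi} \).

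Next, let \( B \subseteq \mathfrak{M}_\kappa \) consist of those codes \( x_M \) whose induced distance function takes values only in \( \{0, r_0, r_1\} \) and whose associated graph \( G_M \) on \( \kappa \) --- declared by \( \alpha \sim \beta \iff d_{x_M}(\alpha, \beta) = r_0 \) --- belongs to \( \Mod^\kappa_\uppsi \). Both conditions are Borel: the ``three-value spectrum'' requirement is a conjunction over \( \alpha, \beta < \kappa \) of a Boolean combination of the atomic conditions \( x_M(\alpha, \beta, q) = 1 \) for \( q \in \mathbb{Q}^+ \), while the membership of \( G_M \) in \( \Mod^\kappa_\uppsi \) is Borel in \( x_M \) by the generalized Lopez-Escobar theorem applied to \( \uppsi \). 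Both clauses are plainly isometry-invariant, since the spectrum of distances and the isomorphism type of the induced graph depend only on the isometry class of \( M \); so \( B \) is a Borel, isometry-closed subset of \( \mathfrak{M}_\kappa \) consisting entirely of discrete spaces.

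The two Borel maps \( \Phi \colon G \mapsto (\kappa, d_G) \) and \( \Psi \colon x_M \mapsto G_M \) are mutually inverse modulo isomorphism and isometry, and hence Borel lifts of a bijection between the quotient spaces \( \Mod^\kappa_\uppsi /{\cong} \) and \( B / \text{isometry} \). The decisive point is that they also preserve embeddings in both directions. Indeed, a graph embedding is automatically an isometric embedding since the three-valued distance is determined by adjacency; conversely, an isometric embedding \( f \colon (\kappa, d_G) \to (\kappa, d_{G'}) \) is injective and, as distances only take the values \( 0, r_0, r_1 \), must send adjacent pairs to adjacent pairs and non-adjacent pairs to non-adjacent pairs, so it is a graph embedding. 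Hence \( \sqsubseteq^\kappa_{\mathsf{GRAPH}} \restriction \Mod^\kappa_\uppsi \simeq_B {\sqsubseteq^i_\kappa \restriction B} \), and composing with the classwise Borel isomorphism obtained from Corollary~\ref{cor:main} yields \( R \simeq_B {\sqsubseteq^i_\kappa \restriction B} \).

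The ``moreover'' clauses are then essentially free. Since \( B \) already lies inside the class of discrete metric spaces, the same witness works for the restriction of \( \sqsubseteq^i_\kappa \) to discrete spaces. As any classwise Borel isomorphism of analytic quasi-orders automatically descends to one of the associated equivalence relations (the quotient spaces and their Borel liftings coincide), applying the established statement to an analytic equivalence relation \( E \), viewed as a quasi-order, gives a Borel isometry-closed \( B \) with \( E \) classwise Borel isomorphic to the isometric bi-embeddability on \( B \). The only non-routine point I anticipate is verifying Borel-measurability of \( \Psi \) and of the spectrum condition defining \( B \), but this reduces to a transparent computation on the code \( x_M \in \pre{\kappa \times \kappa \times \mathbb{Q}^+}{2} \); all the substantive work has already been performed in Corollary~\ref{cor:main}.
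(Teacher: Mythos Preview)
Your proposal is correct and follows exactly the route indicated by the paper's parenthetical hint: use Corollary~\ref{cor:main} in place of Corollary~\ref{cor:completegraphs}, together with the tight correspondence \( G \leftrightarrow (\kappa, d_G) \) and the observation that any discrete space isometric to some \( (\kappa, d_G) \) is of the form \( (\kappa, d_{G'}) \) with \( G' \cong G \). One minor caveat: your assertion that the isomorphism type of \( G_M \) depends only on the isometry class of \( M \) is literally true only when the code \( x_M \) enumerates its dense set injectively; if \( \mathfrak{M}_\kappa \) admits codes with repetitions you should instead define \( G_M \) on the quotient of \( \kappa \) by the distance-\(0\) relation (re-enumerated by \( \kappa \) in a Borel-in-\(x_M\) way), but this is a routine adjustment that the paper's own sketch also elides.
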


Besides discrete spaces, there is another subclass of \( \mathfrak{M}_\kappa \) that has been widely considered in relation to this kind of problems, namely that of \emph{ultrametric spaces}. (Recall that a metric \( d \) is an ultrametric if it satisfies the following strengthening of the triangular inequality: \( d(x,z) \leq \max \{ d(x,y), d(y,z) \} \) for all triple of points \( x,y,z \).) The descriptive set-theoretical complexity of the restriction of \( \sqsubseteq^i_\kappa \) to ultrametric spaces have been fully determined in~\cite{Gao:2003qw,friedman-mottoros,CamerloMarconeMottoros2013}  for the classical case \( \kappa = \omega \), and some results for the case \( \omega < \kappa < 2^{\aleph_0} \) have been presented in~\cite{andmot}. Unfortunately, the completeness results obtained in this paper cannot instead be used to obtain analogous results for the case when \( \kappa \) satisfies~\eqref{eq:kappa}. This is because
%, contrarily to what happened in the previously mentioned papers, 
in our current main construction (Sections~\ref{sec:labels}--\ref{sec:completeness}) we used \emph{generalized} trees with uncountably many levels, and we do not know how to canonically transform such a tree in an ultrametric space in a ``faithful'' way.

A strategy to overcome this difficulty would consist in first proving the completeness of the embeddability relation on a different kind of trees of size \( \kappa \), namely \emph{combinatorial} trees. A \emph{combinatorial tree} is a domain equipped with a relation (not a partial order)
\( \preceq^T \) such that $(T,\preceq^T)$ is a graph (i.e.\ irreflexive and \emph{symmetric}) relation and \( T \) is connected and acyclic. This is exactly the kind of trees used in the previously mentioned papers, and such trees can straightforwardly be transformed in complete ultramentric spaces of density character \( \kappa \) (in fact, even into ultrametric \emph{and} discrete metric spaces of size \( \kappa \) --- see e.g.\ \cite[Section 16.3.2]{andmot} for more details on this construction). A slightly weaker approach would be that of considering descriptive set-theoretical trees of countable height, namely DST-trees \( T \subseteq \pre{< \alpha}{\kappa} \) for some \( \alpha < \omega_1 \). The construction presented in~\cite[Section 4]{MottoRosUMI}  would then allow us to transfer the results concerning these trees to the context of complete (discrete) ultrametric spaces of density character \( \kappa \). This discussion motivates the following questions.

\begin{question}
Let \( \kappa \) be any uncountable cardinal satisfying~\eqref{eq:kappa}. What is the complexity with respect to Borel reducibility of the embeddability relation between combinatorial trees of size \( \kappa \)? What about descriptive set-theoretical trees of size \( \kappa \) and countable height?
\end{question}

A somewhat related, albeit weaker, question is the following:

\begin{question}
Let \( \kappa \) be any uncountable cardinal satisfying~\eqref{eq:kappa}. What is the complexity with respect to Borel reducibility of the embeddability relation between arbitrary set-theoretical trees of size \( \kappa \)?
\end{question}

There are evidences that an answer to this question can be obtained if we replace embeddability with \emph{continuous} embeddability, where ``continuous'' means that the embeddings \( f \) between set-theoretical trees \( T_1 \) and \( T_2 \) must satisfy the following additional condition: If \(s \in T_1 \) has limit height, then \( f(s) = \sup \{ f(t) \mid t \in \pred(s) \} \).

We conclude this section by noticing that our completeness results can be transferred to many other settings. For example, an approach similar to that used in the case of complete metric spaces of density character \( \kappa \) allows us to construe the space of all Banach spaces of density \( \kappa \) as a standard Borel \( \kappa \)-space \( \mathfrak{B}_\kappa \), see~\cite[Section 7.2.4]{andmot} for more details on such coding procedure. It follows that the relation \( \sqsubseteq^{li}_\kappa \) of linear isometric embeddability on \( \mathfrak{B}_\kappa \) is an analytic one, and combining Corollary~\ref{cor:completegraphs} with the construction in~\cite[Section 16.4]{andmot} one easily gets

\begin{corollary}
Let \( \kappa \) be any cardinal satisfying~\eqref{eq:kappa}. Then \( \sqsubseteq^{li}_\kappa \) is complete for analytic quasi-orders.
\end{corollary}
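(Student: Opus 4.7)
The plan is to combine the completeness of \( \sqsubseteq^\kappa_{\mathsf{GRAPH}} \) established in Corollary~\ref{cor:completegraphs} with a Borel reduction
\[
G \longmapsto B(G)
\]
from the space of graphs on \( \kappa \) into the standard Borel \( \kappa \)-space \( \mathfrak{B}_\kappa \) of Banach spaces of density \( \kappa \). Once such a reduction is in place, any analytic quasi-order \( R \) can first be Borel reduced to \( \sqsubseteq^\kappa_{\mathsf{GRAPH}} \) by Corollary~\ref{cor:completegraphs}, and then to \( \sqsubseteq^{li}_\kappa \) by composition.

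First, I would define \( B(G) \) as a Banach space on a distinguished family \( \{ e_\alpha \mid \alpha < \kappa \} \) whose norm encodes \( G \) in a graph-sensitive way. The concrete choice adopted in \cite[Section 16.4]{andmot} is built by starting from the completion of the free normed space with basis indexed by \( \kappa \) and then tuning the norm so that the pairwise geometric relationship of \( e_\alpha \) and \( e_\beta \) (for instance the modulus of convexity of \( \mathrm{span}(e_\alpha, e_\beta) \), or the value of \( \| e_\alpha + e_\beta \| \) versus \( \| e_\alpha - e_\beta \| \)) takes one value when \( (\alpha,\beta) \in G \) and another when \( (\alpha,\beta) \notin G \). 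Since the norm of a finite rational linear combination \( \sum_i q_i e_{\alpha_i} \) depends in a Borel way on the finitely many values \( x_G(\alpha_i,\alpha_j) \), and \( B(G) \) is the completion of such finite combinations, the assignment \( G \mapsto B(G) \) is readily verified to be Borel between the appropriate codings.

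Next, I would verify that \( G \mapsto B(G) \) is actually a reduction of \( \sqsubseteq^\kappa_{\mathsf{GRAPH}} \) to \( \sqsubseteq^{li}_\kappa \). The forward direction is the easy one: a graph embedding \( f\colon G_1 \hookrightarrow G_2 \) extends by linearity and density to a linear isometric embedding \( T_f\colon B(G_1) \to B(G_2) \) sending \( e_\alpha \) to \( e_{f(\alpha)} \), because the norm on \( B(G_i) \) is determined from the graph predicate on pairs. The substantive direction is the backward one: I must argue that every linear isometric embedding \( T\colon B(G_1) \to B(G_2) \) sends each basis vector \( e_\alpha \) to a scalar multiple of some \( e_{f(\alpha)} \) with \( |\cdot| = 1 \), and that the induced map \( f \) is then a graph embedding. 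This is the place where one needs a rigidity lemma characterising the \( e_\alpha \) inside \( B(G) \) by a linear-isometric invariant (for instance as distinguished extreme points of the unit ball, or via an \(\ell^p\)-like uniqueness of the basis expansion).

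The main obstacle, and the only nontrivial input, is precisely this rigidity step, since for uncountable \( \kappa \) one cannot rely on the separable-case arguments verbatim. This is exactly the content of the construction of \cite[Section 16.4]{andmot}, where the rigidity is established once and for all in the generalized setting under the assumption~\eqref{eq:kappa} (the condition being needed to make \( \mathfrak{B}_\kappa \) a standard Borel \( \kappa \)-space and to have enough Borel structure for the coding). Granted that technical input, the Borel reduction \( G \mapsto B(G) \) works uniformly for any \( \kappa \) satisfying~\eqref{eq:kappa}, and combining it with Corollary~\ref{cor:completegraphs} yields that \( \sqsubseteq^{li}_\kappa \) is complete for analytic quasi-orders, as required.
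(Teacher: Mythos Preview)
Your proposal is correct and follows exactly the same approach as the paper: combine Corollary~\ref{cor:completegraphs} with the Borel reduction from graphs to Banach spaces constructed in \cite[Section~16.4]{andmot}. In fact, the paper gives even less detail than you do, simply stating that the result follows by ``combining Corollary~\ref{cor:completegraphs} with the construction in~\cite[Section 16.4]{andmot}.''
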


We do not know if this can be further improved to a (strongly) invariant universality result.

\nothing{
\section{Private appendix: Suggestions}
We could do with set-theoretic trees, if we replace the Baumgartner
linear orders $(L_{\kappa, \gamma}, \prec_{L_{\kappa,\gamma}})$ by the following
  trees $T_S$, $S \subseteq \{\alpha < \kappa \such \alpha \mbox{ limit}\}$
  for a set of size $2^\kappa$ of modulo club different stationary subsets of $S$.  (We need only $2 \times \kappa$ many of them in our labels of type I and of type III.)

  \begin{definition}
  $T_S = \kappa \cup \{(0,\alpha) \such \alpha \in S\}$
  $\alpha <_{T_S}  \beta$ if $\alpha < \beta$,
  $\alpha <_{T_S} (0,\beta)$ if $\alpha < \beta$.
  No other relations hold.
  \end{definition}
  
  Note that $(T_S,<_{T_S})$ is  a set-theoretic tree, however, it is
  not a DST-tree.
  For non-stationary $S \triangle S'$, $T_S$ is not continuously embaddable into $T_S'$.

  However, if we ask for just embedding, then I would propose to
  make the tree bushy exactly towards levels $\alpha \in S$.

  Of course now a lot of computations about the labels would have to be altered and checked, and
  non-spines of length 1 have to avoided so that the
  new points $(0,\alpha)$ for $\alpha \in S$ get their role that can be read off in the languare $L_{\kappa, \kappa}$.

  The part called ${\sf Seq}$ needs to be revised as well, as the $\ZZ$-copies are forbidden now.
  In Section 5, we could define $G_0$ just by the
  ``the nodes that have cofinally many predecessors
  who have $\kappa$ many direct successors''
  and leave out the $\ZZ$-copies.
  This is not consistent with $\upvarphi_R$ from Section 6,
 
  However, we might be able to modify the choice of $T$ in Lemma~\ref{lemmanormalform} to include
  for one pair $(x,y) \in R$ many witnesses $(x,y,\xi) \in T$, in order to remedy this.
}

\def\cprime{$'$}

\end{document}